\newcommand\myshade{85}
\colorlet{mylinkcolor}{violet}
\colorlet{mycitecolor}{Green}
\colorlet{myurlcolor}{Aquamarine}
\newcommand{\texorpdfstring}[2]{#1} \newcommand{\phantomsection}{} }
\def\namedlabel#1#2{\begingroup
  #2\def\@currentlabel{#2}\phantomsection\label{#1}\endgroup
}
\numberwithin{equation}{section}
\newtheorem{theorem}{Theorem}[section]
\newtheorem{corollary}[theorem]{Corollary}
\newtheorem{lemma}[theorem]{Lemma}
\newtheorem{claim}{Claim}[theorem]
\newtheorem{proposition}[theorem]{Proposition}
\newtheorem*{theorem*}{Theorem}
\theoremstyle{definition}
\newtheorem{definition}[theorem]{Definition}
\newtheorem{fact}[theorem]{Fact}
\newtheorem{question}[theorem]{Question}
\theoremstyle{remark}
\newtheorem{remark}[theorem]{Remark}
\newtheorem*{claim*}{Claim}
\DeclareMathOperator{\Homeo}{Homeo}
\DeclarePairedDelimiter{\Card}{\lvert}{\rvert}
\DeclarePairedDelimiter{\set}{\{}{\}}
\newcommand{\N}{\mathbb{N}}
\newcommand{\acts}{\curvearrowright}
\newcommand{\restr}[1]{\upharpoonright #1}
\newcommand{\cB}{\mathcal{B}}
\newcommand{\cC}{\mathcal{C}}
\newcommand{\cD}{\mathcal{D}}
\newcommand{\cE}{\mathcal{E}}
\newcommand{\cI}{\mathcal{I}}
\newcommand{\cL}{\mathcal{L}}
\newcommand{\cM}{\mathcal{M}}
\newcommand{\bO}{\mathbf{O}}
\newcommand{\cP}{\mathcal{P}}
\newcommand{\cQ}{\mathcal{Q}}
\newcommand{\bbR}{\mathbb{R}}
\newcommand{\cR}{\mathcal{R}}
\newcommand{\cS}{\mathcal{S}}
\newcommand{\bU}{\mathbf{U}}
\newcommand{\cU}{\mathcal{U}}
\newcommand{\bV}{\mathbf{V}}
\newcommand{\cV}{\mathcal{V}}
\newcommand{\bW}{\mathbf{W}}
\newcommand{\cW}{\mathcal{W}}
\newcommand{\cZ}{\mathcal{Z}}
\renewcommand{\phi}{\varphi}
\newcommand{\homeo}{\mathrm{Homeo}}
\newcommand{\dom}{\mathrm{dom}}
\newcommand{\codom}{\mathrm{codom}}
\newcommand{\sub}{\subseteq}
\newcommand{\Fraisse}{Fra{\"\i}ss{\'e}\xspace}
\renewcommand{\mod}{\text{ mod}\,}
\DeclarePairedDelimiter{\walk}{\langle}{\rangle}
\newcommand{\E}{\mathbin{E}}
\newcommand{\ball}[2]{[#2]_{#1}}
\newcommand{\chains}{\Phi}
\newcommand{\runion}{\bigvee}
\newcommand{\conc}{{}^{\frown}}
\newcommand{\refines}{\preceq}
\newcommand{\rmap}[2]{\rho^{#1}_{#2}}
\newcommand{\refex}{\sqsubseteq}
\newcommand{\opwalk}[2]{\bO_{#1, #2}}
\DeclareMathOperator{\Int}{Int}
\DeclareMathOperator{\wind}{wd}
\DeclareMathOperator{\weight}{weight}
\DeclareMathOperator{\Bd}{Bd}
\DeclareMathOperator{\Cl}{Cl}
\DeclareMathOperator{\diam}{diam}
\DeclareMathOperator{\Star}{Star}
\DeclareMathOperator{\Core}{Core}
\DeclareMathOperator{\len}{lh}
\DeclareMathOperator{\Imm}{Im} 
\title{Surfaces and other Peano Continua with no Generic Chains}
\date{}
\author{Gianluca Basso}
\address{Gianluca Basso}
\email{gianluca.basso@protonmail.com}
\urladdr{https://gianlucabasso.com}
\author{Alessandro Codenotti} 
\address{Alessandro Codenotti, Dipartimento di Matematica, Universit\'{a} di Bologna, Piazza di Porta S. Donato 5, 40126 Bologna, Italy}
\email{alessandro.codenotti@unibo.it}
\author{Andrea Vaccaro}
\address{Andrea Vaccaro, Mathematisches Institut, Fachbereich Mathematik und Informatik der
Universit\"at M\"unster, Einsteinstrasse 62, 48149 M\"unster, Germany}
\email{avaccaro@uni-muenster.de}
\urladdr{https://sites.google.com/view/avaccaro}
\subjclass[2020]{Primary 37B45, Secondary 03E15, 05C38, 54F16}
\thanks{
  G.B. was partially supported by Istituto Nazionale di Alta Matematica
  ``Francesco Severi''.
  A.C. and A.V. were supported by the Deutsche Forschungsgemeinschaft (DFG, German Research Foundation) under Germany’s Excellence Strategy EXC 2044–-390685587, Mathematics Münster: Dynamics–Geometry–Structure, through SFB 1442. 
  A.C. was moreover supported by the ERC Starting Grant 101077154 ``Definable Algebraic Topology''.
  A.V. was moreover supported by the ERC Advanced Grant 834267--AMAREC}
\begin{document}

\begin{abstract}
	The space of chains on a compact connected space encodes all the different ways of continuously growing out of a point until exhausting the space.
	A chain is \emph{generic} if its orbit under the action of the underlying homeomorphism group is comeager.
	In this paper we show that a large family of topological spaces do not have a generic chain: in addition to all manifolds of dimension at least 3, for which the result was already known, our theorem covers all compact surfaces except for the sphere and the real projective plane – for which the question remains open – as well as all other homogeneous Peano continua, circle excluded.
	If the spaces are moreover strongly locally homogeneous, which is the case for any closed manifold and the Menger curve, we prove that chains cannot be classified up to homeomorphism by countable structures, and that the underlying homeomorphism groups have non-metrizable universal minimal flows, with all orbits meager,
	in contrast to the case of 1-dimensional manifolds.
	The proof of the main result is of combinatorial nature, and it relies on the creation of a dictionary between open sets of chains on one side, and walks on finite connected graphs on the other.
\end{abstract}
\vspace*{-0.5cm}

\maketitle

\section{Introduction}
\label{sec:Introduction}

Given a metrizable compact connected space $X$, a \emph{maximal chain of compact connected sets of $X$} – just \emph{chain}, from now on –, is a collection of compact connected subsets of $X$ which is linearly ordered by inclusion and maximal with respect to this property.
Equivalently, it is a homeomorphic image of the interval $[0,1]$ in the hyperspace $C(X)$ of compact connected subsets of $X$ such that $0$ is mapped to a singleton, $1$ to the point $X \in C(X)$ and for each $s<t$, the image of $s$ is a subset of the image of $t$ \cite{illanes1999hyperspaces}*{Lemma 14.7}.
The collection $\chains(X)$ of all such chains therefore represents all the different ways one can start, say at time $0$, from a point of $X$, and then continuously grow out of it, until, at time $1$, they have exhausted $X$.

A classification of such chains up to homeomorphisms of $X$ is only feasible for very simple spaces, such as the interval or the circle (see \Cref{sec:appendix-cicle-arc}).
At the very least, we can ask for which spaces there is \emph{essentially} only one chain, that is, whether there is $\cC \in \chains(X)$ such that its orbit under the action $\Homeo(X) \acts \chains(X)$ is comeager in $\chains(X)$, endowed with its natural compact hyperspace topology.
In such case, we say that $\cC$ is a \emph{generic chain}.

Focusing on compact manifolds, the chain on the circle $S^1$ consisting of all intervals centered around some point $x_0$ is generic in $\chains(S^1)$, and a similar statement holds for the closed interval (see \Cref{prop:arc-has-generic-chains} and \Cref{thm:genericS1}). On the other hand, Gutman, Tsankov and Zucker proved in \cite{GTZ} that no closed manifold of dimension at least $3$ has generic chains, and nor does the Hilbert cube.
The case of surfaces was left open. A corollary of our main theorem gives the following.
\begin{theorem} \label{thm:surfaces}
	If $X$ is a compact surface other than the sphere or the real projective plane, then $X$ has no generic chain.
\end{theorem}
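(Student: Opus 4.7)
The plan is to obtain \Cref{thm:surfaces} as a corollary of the paper's main technical result, which — per the abstract — translates the non-existence of a generic chain into a combinatorial condition on walks in finite connected graphs arising from coverings of $X$. The task then is to check that every compact surface other than $S^2$ and $\mathbb{RP}^2$ furnishes a covering satisfying that condition.

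First I would extract the distinguishing topological feature. By the classification of compact surfaces, $X \neq S^2, \mathbb{RP}^2$ is equivalent to the existence of a two-sided, non-separating simple closed curve $\gamma \subset X$ (two-sidedness can be arranged by passing to a boundary-parallel curve in a small annular neighborhood in the non-orientable case). The two exceptional surfaces are precisely the ones in which every simple closed curve separates the underlying space — for $S^2$ by the Jordan curve theorem, for $\mathbb{RP}^2$ since a one-sided generator of $\pi_1$ has disk complement while every two-sided simple closed curve bounds a disk. This is the one piece of topology I want to feed into the combinatorial machinery.

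Next I would fix a finite cover of $X$ refined around $\gamma$, inducing a finite connected graph $G$ (e.g.\ the dual graph or nerve of the cover) in which the non-separation of $\gamma$ manifests as a cycle whose removal leaves a connected spanning subgraph. Via the dictionary between open subsets of $\chains(X)$ and walks on $G$ established as the technical heart of the paper, the non-separating curve should force $G$ to support two families of walks (those performing a full loop around $\gamma$ and those staying on one side) whose corresponding open sets in $\chains(X)$ are simultaneously nonempty and remain disjoint after applying $\Homeo(X)$. A generic chain $\cC$ would have to lie comeagerly in each — a contradiction. The main theorem will bundle this dichotomy into a clean graph-theoretic criterion, and the verification that $G$ meets it should be essentially formal from the existence of $\gamma$.

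The main obstacle, already tackled in the bulk of the paper, is the construction of the dictionary itself: ensuring that a comeager orbit on $\chains(X)$ genuinely transfers to a comeager orbit in a suitable space of walks on $G$, and that the non-separating cycle triggers the combinatorial obstruction \emph{cofinally} across arbitrarily fine refinements of the cover. Once that apparatus is in place, the surface-theoretic input reduces to a one-line appeal to the classification, and it is clear in hindsight why $S^2$ and $\mathbb{RP}^2$ remain open: there is simply no non-separating curve to feed the machine.
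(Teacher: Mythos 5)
Your reduction to the paper's combinatorial machinery is the right instinct, but the topological input you propose is wrong for a large part of the theorem's scope. The statement covers all \emph{compact} surfaces, including those with boundary, and your claimed equivalence — $X \neq S^2, \bbR P^2$ iff $X$ contains a two-sided non-separating simple closed curve — fails there: the disk, the annulus, any planar surface with boundary, and the M\"obius band contain no such curve (in the M\"obius band every two-sided simple closed curve in the interior either bounds a disk or is boundary-parallel, hence separates; your suggested fix of passing to the boundary of an annular neighborhood of a one-sided curve produces a \emph{separating} curve). The paper instead splits the argument: closed surfaces other than $S^2$ and $\bbR P^2$ are handled via the existence of a \emph{circular covering} (\Cref{prop:vaguely-circular-surfaces}, proved by unicoherence of the two exceptional surfaces, not by the classification of curves), which feeds into condition \eqref{itm:circular} of \Cref{thm:the-theorem-introduction}; surfaces with boundary are handled via condition \eqref{itm:nls-curve}, using that a boundary component is a simple closed curve which is \emph{not locally separating} and lies in a planar (collar) open set. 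Without this second clause your argument proves nothing for, say, the disk.

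There is a second, more conceptual problem with the mechanism you describe. You want two families of walks whose open sets of chains ``remain disjoint after applying $\Homeo(X)$,'' so that a generic chain cannot lie comeagerly in both. But for closed surfaces the action $\Homeo_0(X) \acts \chains(X)$ is minimal, hence topologically transitive, so \emph{any} two nonempty open sets can be made to intersect by some homeomorphism; your obstruction as stated cannot occur. The actual obstruction is the failure of the local weak amalgamation condition in Rosendal's criterion (\Cref{fact:rosendal}): one produces, inside every candidate $\bV$, two walks winding many times in opposite directions around a robust cycle, and shows they cannot be reconciled by any homeomorphism lying in a \emph{fixed small neighborhood of the identity}, because such homeomorphisms move winding numbers by a bounded amount (\Cref{lem:close-walks}). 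The quantifier structure — disjointness under a prescribed identity neighborhood, cofinally in the refinement order — is where all the work lies, and your sketch elides it in a way that, taken literally, is false.
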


We obtain \Cref{thm:surfaces} as consequence of a stronger statement which not only recovers the aforementioned result of \cite{GTZ}, but which moreover covers a vast class of Peano continua going well
beyond the context of manifolds.

A \emph{Peano continuum} is a metrizable compact connected space which is locally connected.
Among notable examples which are not manifolds are the Sierpi\'{n}ski carpet and the Menger curve. These are universal one-dimensional continua obtained by iteratively carving out the square and the cube, respectively, similarly to how the Cantor space is obtained from the unit interval.

Before stating our main theorem, we introduce some basic terminology.
A subset $K \sub X$ is \emph{locally separating} if $U \setminus K$ is disconnected for some connected open $U \sub X$.
A space is \emph{planar} if it admits an embedding in $\bbR^2$, and it is \emph{locally non-planar} if none of its nonempty open sets are planar.
Recall that a \emph{simple closed curve} in $X$ is the image of an embedding of the circle $S^1$ into $X$.
Let us say that a Peano continuum $X$ \emph{has a circular covering} if it admits a covering $O_0, \dots, O_{\ell-1}$ in $\ell\ge 4$ \emph{connected} open subsets such that $\Cl(O_i) \cap \Cl(O_j) \neq \emptyset$ if and only if $\Card{i-j} \le 1 \mod \ell$, and $O_i \setminus \bigcup_{j\ne i} O_j$ is connected for each $i<\ell$.

Our main result is the following.

\begin{theorem}
	\label{thm:the-theorem-introduction}
	Let $X$ be a Peano continuum with no locally separating points.
	If $X$ either
	\begin{enumerate}
		\item \label{itm:non-planar} has a locally non-planar open subset,
		\item \label{itm:nls-curve} has a planar open set containing a simple closed curve which is not locally separating, or
		\item \label{itm:circular} has a circular covering,
	\end{enumerate}
	then $X$ has no generic chain.
\end{theorem}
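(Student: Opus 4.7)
The plan is to build the combinatorial dictionary promised in the abstract, translating the question ``does $\chains(X)$ have a comeager $\Homeo(X)$-orbit?'' into a combinatorial question about walks on finite graphs attached to open covers of $X$. Concretely, to each chain $\cC = (C_t)_{t \in [0,1]}$ and each suitably refined finite open cover $\mathcal{O}$ of $X$, I would associate a walk $w_{\cC,\mathcal{O}}$ on a finite graph $G_{\mathcal{O}}$ whose vertices are connected pieces determined by $\mathcal{O}$ and whose edges record adjacencies, registering the time-ordered sequence of pieces that the growing set $C_t$ first touches. The first task is to verify the dictionary: that the basic open neighborhoods of $\cC$ in $\chains(X)$ are, up to taking refinements of $\mathcal{O}$, specified by prescribing an initial portion of $w_{\cC,\mathcal{O}}$. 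This uses that $C(X)$ carries the Vietoris topology, that $X$ is locally connected, and, crucially, that the absence of locally separating points lets one realize small perturbations of $\cC$ as honest chains with a prescribed walk.

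With this dictionary in place, I would argue by contradiction. If a generic chain $\cC_0$ existed, then its walk-type in every cover $\mathcal{O}$ would be, up to the combinatorial action induced by $\Homeo(X)$ on $G_{\mathcal{O}}$, the unique one realized on a comeager set of chains. The core combinatorial input here is a rigidity statement: the action of $\Homeo(X)$ on walks factors through a controlled finite group of symmetries of $G_{\mathcal{O}}$, and hence cannot alter intrinsic invariants of $w_{\cC,\mathcal{O}}$ such as how many times it traverses a given cycle of $G_{\mathcal{O}}$ or which essentially distinct cycles it uses. The no-locally-separating-points hypothesis will be used to guarantee both that enough local homeomorphisms exist to make chains generic and that these homeomorphisms respect the combinatorial structure being tracked.

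To finish, in each of the three cases I would exhibit within $\chains(X)$ two non-empty open sets of chains whose associated walks carry manifestly incompatible combinatorial invariants; since a comeager orbit cannot meet both sets while being $\Homeo(X)$-invariant, this forces a contradiction. In case (3), the circular covering $O_0, \dots, O_{\ell-1}$ provides a distinguished cycle in $G_{\mathcal{O}}$, and one constructs open sets of chains whose walks traverse this cycle in different cyclic orders or a different number of times. Case (2) reduces to case (3) by extracting a local circular covering from a thin neighborhood of the non-locally-separating simple closed curve sitting inside the given planar open set. Case (1) is analogous in spirit: a locally non-planar open subset contains, after refinement, a theta-graph-like configuration producing two inequivalent cycles in $G_{\mathcal{O}}$ and hence distinguishable walk-types that cannot be identified by any homeomorphism of $X$. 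The main obstacle throughout is the second step, namely the combinatorial rigidity of the $\Homeo(X)$-action on walks, since one must quantify precisely how much a homeomorphism of $X$ can reshuffle the pieces of a cover without destroying the invariants that drive the final argument; this is where the creation of the ``walk dictionary'' of the abstract earns its keep.
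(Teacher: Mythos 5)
Your first paragraph is broadly on target: the paper does build exactly such a dictionary, using brick partitions $\cW$ and the open sets $\opwalk{\cW}{w}$ of chains determined by a walk $w$ on the nerve graph, and these do form a $\pi$-basis of $\chains(X)$. But the engine you propose in the second and third paragraphs does not work, for two reasons. First, the concluding step --- ``exhibit two nonempty open sets of chains with incompatible invariants; a comeager orbit cannot meet both'' --- is simply false as stated: a comeager orbit is dense, so it meets \emph{every} nonempty open set of chains. Moreover, walk-type is not an invariant of a chain at all: a single chain lies in many sets $\opwalk{\cW}{w}$ with wildly different winding behaviour as $\cW$ is refined, so no contradiction can be extracted from a chain merely ``having'' two incompatible walk-types. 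What is actually needed, and what your outline omits, is Rosendal's local weak-amalgamation criterion for comeager orbits: genericity forces that for every neighbourhood $H_0$ of the identity and every basic open $\bU$ there is a smaller $\bV$ such that any two open $\bW_0,\bW_1\subseteq\bV$ are matched by some $h\in H_0$. The contradiction then comes from producing, inside $\bV$, two walks winding many times in opposite directions around a ``robust'' cycle, and showing that no homeomorphism \emph{close to the identity} can match them --- closeness being the quantitative hypothesis that makes winding number usable.

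Second, your proposed rigidity statement --- that the $\Homeo(X)$-action on walks ``factors through a controlled finite group of symmetries of $G_{\mathcal O}$'' --- is not true and is not how the control is achieved. A homeomorphism does not act on the nerve graph of a fixed cover; it moves the cover to an entirely different one. The paper instead approximates an arbitrary $h$ by a monotone epimorphism $\alpha\colon\cZ\to\cW$ between \emph{different} brick partitions (a nontrivial discretization theorem), and the only rigidity available is that when $h$ lies in a small identity neighbourhood, the two resulting epimorphisms are at sup-distance at most $1$ in a fixed coarse partition; a separate lemma then shows that walks tracked by such nearby epimorphisms have winding numbers differing by at most $2$. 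Without this chain of quantitative reductions your case analysis (which correctly identifies where the cycles come from in each of the three hypotheses, though the key step in case (1) is a disjoint-paths property rather than a theta-graph) has nothing to push against, so the argument as proposed has a genuine gap at its core.
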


Let us parse the above hypotheses for $X$ a compact manifold (we refer to \cite{MR2954043} for background and definitions).
A compact manifold has a locally separating point if and only if it is $1$-dimensional, and it has a locally non-planar open subset if and only if it is at least $3$-dimensional.
A boundary of a surface is a simple closed curve which is not locally separating and lies in a planar open set.
Finally, the sphere and the real projective plane are the only closed surfaces without a circular covering (see \Cref{prop:vaguely-circular-surfaces}).

We prove \Cref{thm:the-theorem-introduction} by first establishing a correspondence between Peano continua and open sets of chains on one side, and finite connected graphs and walks on these graphs on the other.
We then find a combinatorial necessary condition for the existence of a generic chain on a Peano continuum (\Cref{thm:from-Rosendal-to-walks}), and prove that it is does not hold under our assumptions.

The combinatorial condition is an \emph{off-by-one} weak amalgamation principle, roughly stating that each walk can be refined by another walk in such a way that any two different walks refining it can be reconciled by a small perturbation.
We extract the combinatorial idea at the core of the main proof of \cite{GTZ} – that chains can be made irreconcilable by winding enough times in opposite directions – and discretize it, producing irreconcilable walks which wind around circular subgraphs.

The discretization relies on a handful of classical continuum-theoretical black-boxes, all of which are stated in \Cref{sec:Brick}.
In the following two sections, these black-boxes are used to derive fundamental combinatorial statements about graphs and walks on graphs.
The proof of \Cref{thm:the-theorem-introduction} is purely combinatorial.

This approach to the study of topological and dynamical properties of compact metrizable spaces by combinatorial means traces its roots to the field of \emph{projective \Fraisse theory}, first introduced in \cite{Irwin2006}.
In rough terms, one constructs an infinite compact zero-dimensional graph, the \emph{combinatorial model} of $X$, and finite approximations of such graph, and studies them to derive properties about $X$, leaning on the dictionary between the combinatorics of countable structures and the dynamics of Polish non-archimedean groups put forth in the seminal paper \cite{KPT}.
The aforementioned Menger curve was studied under this lens in \cite{PS22}, and
it follows from the results in \cite{BK:Lelek}, again obtained in a projective \Fraisse theoretical framework, that the Lelek fan, a one-dimensional continuum which is not locally connected, has a generic \emph{downward closed} chain.

Nonetheless, our strategy is innovative in that it is the first instance in which non-trivial \emph{necessary} conditions for the existence of a comeager orbit for an action of a homeomorphism group of a compact connected space have been translated to a combinatorial statement.
Such statement is a weakening of the weak amalgamation principle which was isolated as a criterion for the existence of comeager orbits in certain actions of automorphism groups of countable structures by Ivanov \cite{MR1777786}, and independently by Kechris and Rosendal \cite{MR2308230}.
In this paper, we restrict ourselves to finite combinatorial objects, without explicit mention of the combinatorial models of the spaces involved, though our proofs could also be used to derive the analogous statements for such combinatorial models and their groups of automorphisms.

\Cref{thm:the-theorem-introduction} has important dynamical consequences for the groups of homeomorphisms involved.
If $G$ is a topological group, we call a continuous action $G \acts Y$ on a compact space a \emph{$G$-flow}.
A $G$-flow is \emph{minimal} if every orbit is dense.
Both classifying minimal flows of a group $G$ and, vice versa, classifying groups based on the properties of their minimal flows are central goals in \emph{topological dynamics}.
Some groups of homeomorphisms have very simple minimal dynamics: the only minimal flow of the group $\Homeo_+([0,1])$ of orientation preserving homeomorphisms of the interval is the trivial action and the only non-trivial minimal flow of $\Homeo_+(S^1)$ is the action on the circle itself (see \cite{Pestov}).

A number of dividing lines regarding the complexity of the minimal dynamics of a topological group have been studied, and are particularly well understood when $G$ is Polish, i.e., separable and completely metrizable.
A Polish group is \emph{extremely amenable} if all of its minimal flows are trivial, it is \emph{CAP} if they are metrizable (see \cite{CAP}), and it has the \emph{generic point property} if every minimal flow has a comeager orbit (see \cite{AKL}).
Every extremely amenable group is CAP, every Polish CAP group has the generic point property (see \cite{ben2017metrizable}), and no locally compact non-compact group has the generic point property.
Such properties can also be stated in terms of the \emph{universal minimal flow} of $G$: CAP groups are also known as groups \emph{with metrizable universal minimal flow.}

Until recently (see \cite{kaleido}), there was essentially only one known example of Polish group with the generic point property which is not CAP (see \cite{kwiat}).
In \cite{GTZ} it is proved that the subgroup $\Homeo_0(X)$ of homeomorphisms isotopic to the identity is not CAP, for any closed surface $X$, but it was left open whether such groups have the generic point property.
Since for a closed surface $X$ the flow $\Homeo_0(X) \acts \chains(X)$ is minimal (\cite{gutman_minimal}), \Cref{thm:surfaces} allows us to cover all but two of these cases.
\begin{theorem} \label{thm:no_gpp}
	If $X$ is a closed surface which is not the sphere or the real projective plane, then $\Homeo_0(X)$ does not have the generic point property.
\end{theorem}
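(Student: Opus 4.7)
The plan is to obtain \Cref{thm:no_gpp} as a short consequence of \Cref{thm:surfaces} together with the cited minimality result from \cite{gutman_minimal}. The strategy is contrapositive: assume that $\Homeo_0(X)$ has the generic point property and produce a generic chain on $X$, contradicting \Cref{thm:surfaces}.

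More explicitly, I would first record that $\chains(X)$ is a compact metrizable space and that the evaluation action $\Homeo_0(X) \acts \chains(X)$ is continuous, so that this action is a $\Homeo_0(X)$-flow. By \cite{gutman_minimal}, when $X$ is a closed surface this flow is minimal. Under the assumption that $\Homeo_0(X)$ has the generic point property, every minimal $\Homeo_0(X)$-flow has a comeager orbit; in particular there exists $\cC \in \chains(X)$ whose $\Homeo_0(X)$-orbit is comeager in $\chains(X)$.

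The final step is to promote this to a generic chain for the full homeomorphism group. Since $\Homeo_0(X)$ is a subgroup of $\Homeo(X)$, the orbit $\Homeo(X) \cdot \cC$ contains $\Homeo_0(X) \cdot \cC$ and is therefore also comeager, so $\cC$ is a generic chain. This contradicts \Cref{thm:surfaces}, which applies precisely to closed surfaces different from the sphere and the real projective plane.

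There is essentially no technical obstacle: the only mild point to verify is that the inclusion of orbits really does transfer comeagerness, which is immediate from the fact that a superset of a comeager set is comeager, together with the fact that $\chains(X)$ is a Baire space. All the genuine difficulty has been absorbed into the proof of \Cref{thm:the-theorem-introduction} (from which \Cref{thm:surfaces} follows) and into the minimality statement of \cite{gutman_minimal}.
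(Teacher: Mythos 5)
Your proof is correct and follows essentially the same route as the paper: the paper likewise combines the minimality of $\Homeo_0(X)\acts\chains(X)$ from \cite{gutman_minimal} with the non-existence of a generic chain (\Cref{thm:surfaces}), using the observation that a comeager $\Homeo_0(X)$-orbit would sit inside a comeager $\Homeo(X)$-orbit. The only cosmetic difference is that you phrase it contrapositively while the paper argues directly (compare the proof of \Cref{thm:dynamical-consequences}).
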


The following fascinating questions remain open.

\begin{question}[\cite{GTZ}*{Question 1.3}]
	\label{que:sphere}
	Is there a generic chain on the sphere $S^2$? Does $\Homeo(S^2)$ have the generic point property? What about the real projective plane?
\end{question}

Let us end this introduction with a series of corollaries of \Cref{thm:the-theorem-introduction}.
A space $X$ is \emph{homogeneous} if the action $\Homeo(X) \acts X$ is transitive.
Homogeneous Peano continua are either the circle, or a closed surface, or are locally non-planar (see \cite{MR2674033}*{Remark 8}).
\begin{corollary}
	\label{cor:homo-chains}
	No homogeneous Peano continuum except for the circle and, possibly, the sphere and real projective plane, has a generic chain.
\end{corollary}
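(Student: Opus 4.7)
My plan is a case-split on the trichotomy cited in the statement: by \cite{MR2674033}*{Remark 8}, any homogeneous Peano continuum $X$ is either the circle $S^1$, a closed surface, or locally non-planar. The circle is excluded by hypothesis, and $S^2$ and $\mathbb{RP}^2$ correspond exactly to the ``possibly'' clause, so only the remaining surfaces and the locally non-planar continua need to be handled.

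For a closed surface $X$ different from $S^2$ and $\mathbb{RP}^2$, I would invoke \Cref{thm:surfaces} directly. For the locally non-planar case, $X$ itself serves as a locally non-planar open subset of $X$, so condition \eqref{itm:non-planar} of \Cref{thm:the-theorem-introduction} is automatic; the only real content is verifying the hypothesis that $X$ has no locally separating points. Here homogeneity does the essential work: since $\Homeo(X) \acts X$ is transitive, the locus of locally separating points is either empty or all of $X$, and I plan to rule out the second alternative by appealing to the classical characterization of $S^1$ as (essentially) the only homogeneous Peano continuum in which every point is locally separating. Equivalently, one can observe directly that a nonempty locally non-planar open set must contain some point whose removal fails to disconnect a small connected neighborhood, by essentially dimensional considerations. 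Either route then allows us to invoke \Cref{thm:the-theorem-introduction}\eqref{itm:non-planar} and conclude that $X$ has no generic chain.

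The main (modest) obstacle is nailing down a clean justification of the no-locally-separating-points step in the third case. If a standalone reference for the classical characterization is not readily available, one can argue it by hand: by homogeneity, the local structure at every point agrees with that at some point inside a locally non-planar open subset, and a locally non-planar connected open set cannot be composed entirely of locally separating points without contradicting the local connectedness and richness of its subcontinua. Once this step is in place, the corollary reduces to a clean application of \Cref{thm:surfaces} and \Cref{thm:the-theorem-introduction}\eqref{itm:non-planar}.
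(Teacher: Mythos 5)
Your proposal takes essentially the same route as the paper: the trichotomy of \cite{MR2674033}*{Remark 8}, then \Cref{thm:surfaces} for closed surfaces other than $S^2$ and the projective plane, and \Cref{thm:the-theorem-introduction}.\eqref{itm:non-planar} for the locally non-planar case, with the locally-separating-points hypothesis discharged exactly as in your primary route (invariance under the transitive action plus the classical fact that the circle is the only homogeneous Peano continuum with a locally separating point). One caution on your fallback: the ``dimensional considerations'' argument does not work as stated, since the Menger curve is one-dimensional and locally non-planar, and its lack of locally separating points is a nontrivial theorem rather than a dimension count, so you should rely on the homogeneity/classical-characterization argument.
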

An example of a homogeneous Peano continuum is the Menger curve, which is the unique one-dimensional locally non-planar Peano continuum without locally separating points (see \cite{MR0096180}).
Another is the Hilbert cube, for which the non-existence of generic chain was already proved in \cite{GTZ}.

On the other hand, the Sierpi\'{n}ski carpet, which is the unique one-dimensional planar Peano continuum without locally separating points (see \cite{whyburn}), is not homogeneous.
Nonetheless, it has plenty of non-locally-separating simple closed curves, since it can be constructed as the complement, in the sphere, of countably many pairwise disjoint open disks whose union is dense in the sphere itself.
In particular, any open set is planar and contains a non-locally-separating simple closed curve, so  the Sierpi\'{n}ski carpet falls within the scope of \Cref{thm:the-theorem-introduction}.

A one-dimensional Peano continuum with no locally separating points is either the Menger curve or has a nonempty open connected subset homeomorphic to an open subset of the Sierpi\'{n}ski carpet (see \cite{MR1314981}*{Lemma 1.1}). Therefore
\begin{corollary}
	\label{cor:one-dimensional-no-generic-chain}
	One-dimensional Peano continua without locally separating points do not have a generic chain.
\end{corollary}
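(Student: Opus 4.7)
The plan is to dispatch both horns of the dichotomy from \cite{MR1314981}*{Lemma 1.1} recalled above by checking one of the three hypotheses of \Cref{thm:the-theorem-introduction} in each; the requirement that $X$ have no locally separating points is built into the corollary, so no additional verification is needed on that front.

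If $X$ is the Menger curve, \cite{MR0096180} identifies it as the unique one-dimensional locally non-planar Peano continuum without locally separating points, so $X$ is itself a locally non-planar open subset of itself, immediately verifying clause (\ref{itm:non-planar}). If instead $X$ contains a connected open set $U$ homeomorphic to an open subset $V \subseteq \cC$ of the Sierpi\'{n}ski carpet, then $U$ is planar because $V$ is. To exhibit inside $U$ a simple closed curve which is not locally separating, I would use the realization of $\cC$ as the complement in $S^2$ of a pairwise disjoint family of open disks $\{D_n\}$ whose union is dense in $S^2$ and with $\diam(D_n) \to 0$: for all sufficiently large $n$ one has $\partial D_n \subseteq V$, and the curve $\gamma_V \coloneqq \partial D_n$ is a simple closed curve not locally separating in $\cC$, since removing it from any connected open subset of $\cC$ merely strips a portion of the rim of the already-absent disk $D_n$. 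Pushing $\gamma_V$ forward through the homeomorphism $V \cong U$ yields the desired simple closed curve $\gamma_U \subseteq U$.

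The only delicate step is the local-to-global upgrade: to meet clause (\ref{itm:nls-curve}) of \Cref{thm:the-theorem-introduction} literally, I need $\gamma_U$ to be non-locally-separating not just in $U$ but in $X$. Suppose toward a contradiction that $W \subseteq X$ is a connected open witness with $W \setminus \gamma_U = P \sqcup Q$ a nontrivial clopen decomposition. At any $p \in \gamma_U \cap W$, a sufficiently small connected open neighborhood $N \subseteq W \cap U$ corresponds through the homeomorphism to a connected open subset of $\cC$, so by the non-local-separation of $\gamma_V$ in $\cC$ the set $N \setminus \gamma_U$ is connected; it is moreover nonempty, since $\gamma_V$ has empty interior in the one-dimensional continuum $\cC$. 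Therefore $N \setminus \gamma_U$ lies entirely in $P$ or entirely in $Q$, unambiguously assigning each point of $\gamma_U \cap W$ a ``$P$-type'' or ``$Q$-type'' label. Gluing the $P$-type part of $\gamma_U \cap W$ to $P$ and the $Q$-type part to $Q$ produces a clopen partition of $W$ into two nonempty open sets, contradicting connectedness of $W$.

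I expect this transfer step to be the only place requiring genuine work; the rest of the argument is a bookkeeping invocation of the dichotomy and of \Cref{thm:the-theorem-introduction}. Once the transfer is in place, clause (\ref{itm:nls-curve}) applies in the Sierpi\'{n}ski case and clause (\ref{itm:non-planar}) in the Menger case, and the non-existence of a generic chain follows.
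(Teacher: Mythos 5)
Your proposal is correct and follows essentially the same route as the paper: the dichotomy of \cite{MR1314981}*{Lemma 1.1}, clause \eqref{itm:non-planar} of \Cref{thm:the-theorem-introduction} for the Menger curve, and clause \eqref{itm:nls-curve} via the boundary of a complementary disk in the Sierpi\'{n}ski case, with your two-sided labelling argument correctly filling in the local-to-global transfer of non-local-separation that the paper leaves implicit. The only slip is the phrase ``for all sufficiently large $n$'': most disks $D_n$ lie outside $V$, so one should say ``for some $n$'', which density of $\bigcup_n D_n$ and vanishing diameters indeed provide.
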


We prove in \Cref{thm:local-transitive} that a sufficient condition for minimality of the action $\Homeo(X) \acts \chains(X)$, when $X$ is a Peano continuum without locally separating points, is for $X$ to be \emph{strongly locally homogeneous}, see \Cref{sec:Minimal_spaces_of_chains_and_dynamical_consequences} for the definition. For example, all closed manifolds of dimension at least $2$ are strongly locally homogeneous.
Since these spaces are in particular homogeneous, they have no generic chain, by \Cref{cor:homo-chains}, except possibly if they are the sphere or the real projective plane. Therefore:
\begin{corollary} \label{corollary:no_gpp}
	If $X$ is a strongly locally homogeneous Peano continuum which is not the circle, the sphere, or the real projective plane, then $\Homeo(X)$ does not have the generic point property.
	In particular, its universal minimal flow is not metrizable and has no comeager orbit.
\end{corollary}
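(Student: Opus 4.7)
The plan is to combine \Cref{thm:local-transitive} with \Cref{cor:homo-chains} to exhibit a minimal $\Homeo(X)$-flow without a comeager orbit, and then invoke the classical fact from \cite{ben2017metrizable} (cited in the introduction) that every Polish group with metrizable universal minimal flow has the generic point property.

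First I would verify the hypotheses of \Cref{thm:local-transitive}, namely that $X$ has no locally separating points. Strong local homogeneity forces homogeneity, and since the property of being locally separating is $\Homeo(X)$-invariant, homogeneity means either all points or no points are locally separating. Invoking the classification of homogeneous Peano continua recalled in the introduction, any such $X$ other than the circle is either a closed surface or is locally non-planar; in both cases a small connected open neighborhood of any point remains connected upon removing the point, so no point is locally separating. \Cref{thm:local-transitive} then yields minimality of $\Homeo(X) \acts \chains(X)$, while \Cref{cor:homo-chains}, applied to the homogeneous $X$ under the hypothesis that $X$ is not the circle, sphere, or real projective plane, rules out a comeager orbit. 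This produces the desired minimal $\Homeo(X)$-flow without comeager orbit, showing $\Homeo(X)$ lacks the generic point property.

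For the ``in particular'' clause, observe that $\Homeo(X)$ is Polish; were its universal minimal flow metrizable, then $\Homeo(X)$ would be CAP and would therefore have the generic point property by \cite{ben2017metrizable}, contradicting what has just been shown. The step I expect to be most delicate is the verification that strong local homogeneity excludes locally separating points once the circle is avoided; my plan is to invoke the classification of homogeneous Peano continua as a black box, though a direct argument from strong local homogeneity itself should also be possible by transporting a neighborhood basis of one point to any other and reducing to the purely one-dimensional case.
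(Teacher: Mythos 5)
Your proposal is correct and follows the paper's own route (\Cref{thm:dynamical-consequences}): minimality of $\Homeo(X)\acts\chains(X)$ via \Cref{thm:local-transitive}, absence of a comeager orbit via \Cref{cor:homo-chains}, and the ``in particular'' clause from the fact that Polish CAP groups have the generic point property. One caution on the step you yourself flag as delicate: local non-planarity alone does \emph{not} imply that puncturing a small connected neighborhood leaves it connected (a wedge of two Menger curves is locally non-planar yet has a locally separating point), so the verification that $X$ has no locally separating points must genuinely pass through homogeneity -- i.e.\ the classification-type results you propose to cite as a black box, which is also what the paper implicitly relies on -- rather than the direct puncturing argument.
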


Among other examples of spaces which fit the hypotheses of \Cref{corollary:no_gpp} are the Menger curve, the universal $k$-dimensional compactum $\mu^k$, and any compact connected $\mu^k$-manifold, for $k\ge 1$ (see \cite{MR0920964}*{Theorem 3.2.2}).

For the spaces in \Cref{corollary:no_gpp}, we are also able to prove that the action $\Homeo(X) \acts \chains(X)$ is \emph{generically turbulent}, which implies a strong non-classifiability result for the orbit equivalence relation of the action.
We say that chains on $X$ are \emph{classifiable by countable structures} if there is a Borel function $\cM \colon \chains(X) \to \mathrm{Mod}(\cL)$ to the space of structures with domain $\N$ in some countable relational language $\cL$ such that
\begin{equation*}
	\cC' \in \Homeo(X) \cdot \cC \Longleftrightarrow \cM(\cC) \cong \cM(\cC'),
\end{equation*}
that is, if we can reduce the orbit equivalence relation among chains to the isomorphism relation between the associated structures.
By Hjorth's theorem (see \cite{hjorth2000classification}) we obtain:

\begin{corollary} \label{corollary:no_classification}
	If $X$ is a strongly locally homogeneous Peano continuum which is not the circle, the sphere, or the real projective plane, then chains on $X$ are not classifiable by countable structures.
\end{corollary}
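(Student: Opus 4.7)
The plan is to reduce \Cref{corollary:no_classification} to an application of Hjorth's theorem on turbulent actions (see \cite{hjorth2000classification}), by establishing that the continuous action of the Polish group $\Homeo(X)$ on the Polish space $\chains(X)$ is \emph{turbulent}.

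Turbulence requires three properties: every orbit is dense, every orbit is meager, and every local orbit is somewhere dense in its defining neighborhood. The first is the minimality granted by \Cref{thm:local-transitive}, since $X$ is strongly locally homogeneous and has no locally separating points. The second follows from \Cref{cor:homo-chains}: under the stated hypotheses on $X$ there is no generic chain, so no orbit is comeager; by Effros' theorem, a non-meager orbit in our setting would be a dense $G_\delta$ in $\chains(X)$ and therefore comeager, so every orbit is in fact meager.

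The main obstacle is the local orbit density condition: for every $\cC \in \chains(X)$, every basic open neighborhood $U$ of $\cC$, and every identity neighborhood $V \sub \Homeo(X)$, the set of chains reachable from $\cC$ by a finite sequence of $V$-sized steps lying inside $U$ must be somewhere dense in $U$. The strategy is to leverage the combinatorial dictionary between open sets of chains and walks on finite graphs that underpins the main theorem: $U$ is coded by the collection of chains whose associated walks refine a fixed walk $w$, while $V$ is controlled by a bound on the mesh of the support of a homeomorphism. Strong local homogeneity supplies, on any sufficiently fine open cover of $X$, an abundance of homeomorphisms supported in individual pieces that realize prescribed local rearrangements; these automatically lie in $V$ once the mesh is fine enough. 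One then shows that any two chains in $U$ whose associated walks refine $w$ finely enough can be interpolated by a sequence of such elementary local moves, with every intermediate chain still coded by a walk refining $w$ and hence still in $U$. This is a constructive, walk-theoretic realization of the off-by-one weak amalgamation \emph{inside} $U$, and it is the technical heart of the proof; the difficulty lies in arranging the incremental moves so that the refinement constraints defining $U$ are preserved at each step while still producing enough different endpoints to cover an open subset of $U$.

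Once turbulence is verified, Hjorth's theorem yields directly that the orbit equivalence relation of $\Homeo(X) \acts \chains(X)$ is not Borel reducible to the isomorphism relation on any Borel class of countable structures, which is the content of \Cref{corollary:no_classification}.
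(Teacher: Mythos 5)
Your proposal is correct and follows essentially the same route as the paper: turbulence of $\Homeo(X) \acts \chains(X)$ combined with Hjorth's theorem (\Cref{fact:Hjorth}), with minimality and the absence of a comeager orbit supplied by \Cref{thm:local-transitive} and \Cref{cor:homo-chains} (your Effros detour to ``all orbits meager'' is harmless but unnecessary, since the paper's notion of turbulence only requires that no orbit be comeager). The one thing to note is that the local-orbit-density condition you single out as the remaining technical obstacle is already the second conclusion of \Cref{thm:local-transitive}, proved there by exactly the kind of step-by-step, locally supported moves you sketch, so no further argument is needed.
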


This is in contrast with the case of $1$-dimensional manifolds, whose chains are classifiable by countable structures and have a generic element, as we prove in \Cref{sec:appendix-cicle-arc}.
The techniques in this appendix are distinct from those in the other sections.

We remark that the first part of \Cref{que:sphere} could be answered in the positive by proving that chains on the sphere are classifiable by countable structures, but we believe that this is not the case.

In \Cref{sec:appendix-sierpinski}, we prove that $\Homeo(S) \acts \chains(S)$ is minimal, for the Sierpi\'{n}ski carpet $S$, and thus that $\Homeo(S)$ does not have the generic point property, and its universal minimal flow is not metrizable and has no comeager orbit.
The proof is rather lengthy and technical; it could foreseeably be simplified by finer results on the degree of transitivity of the diagonal actions $\Homeo(S) \acts S^n$, which we were not able to find in the literature.
We leave the question of classifiability by countable structures of chains on the Sierpi\'{n}ski carpet open.

\subsection{Plan of the paper}

Sections \ref{S.pc} and \ref{sec:walks-on-graphs} are devoted to preliminaries on chains and walks on graph, respectively.
In section \ref{sec:Brick} we recall Bing's definition of brick partitions of Peano continua and state Bing's theorem – which serves as our main discretization tool – and other relevant facts about them.

In section \ref{sec:Walks on Brick Partitions and their Refinements} we construct the dictionary between open sets of chains of Peano continua and walks on graphs, and prove the required discretization results to establish the combinatorial necessary condition for the existence of a generic chain, which is \Cref{thm:from-Rosendal-to-walks}.
Section \ref{sec:The_proof} is the heart of the paper and contains the proof of \Cref{thm:the-theorem-introduction}, which we obtain as a sum of Theorems \ref{thm:the-theorem}, \ref{thm:local-hp-robust-cycle0}, \ref{thm:local-hp-robust-cycle1}, and \ref{thm:circular-covering-robust}, whose proofs are completely combinatorial.
Section \ref{sec:Minimal_spaces_of_chains_and_dynamical_consequences} is devoted to the question of minimality and generic turbulence of $\Homeo(X)\acts \chains(X)$, and their consequences for classifiability of chains and topological dynamics of $\Homeo(X)$.

Finally, appendices \ref{sec:appendix-cicle-arc} and \ref{sec:appendix-sierpinski} are devoted to the cases of $1$-dimensional manifolds, and of the Sierpi\'{n}ski carpet, respectively.

\subsection*{Acknowledgements}
We are grateful to Riccardo Camerlo, Aleksandra Kwiatkowska, Aristotelis Panagiotopoulos, and Todor Tsankov for helpful discussions at different stages of the project.
We wish to thank the anonymous referees for their detailed and insightful comments.

\section{Peano Continua and Chains} \label{S.pc}
In this preliminary section we briefly introduce Peano continua and their spaces of maximal connected chains. We refer to \cites{nadler2017continuum, illanes1999hyperspaces} for a more thorough presentation of these topics.

A \emph{continuum} is a compact connected metrizable space. A \emph{Peano continuum} is a locally connected continuum. An \emph{arc} is a homeomorphic image of the interval $[0,1]$; a \emph{continuum theoretical graph} is a space composed of finitely many arcs that intersect each other at most in their endpoints; a \emph{continuum theoretical tree} is a connected continuum theoretical graph that contains no simple closed curve.

We refer the reader to \cite{nadler2017continuum}*{Chapter VIII} for the proofs of the following basic properties of Peano continua which will be implicitly used throughout the rest of the paper.
\noindent\begin{enumerate}
	\item Every Peano continuum is arcwise connected. \item Every open connected subset of a Peano continuum is arcwise connected. \item Whenever $U$ is an open connected subset of a Peano continuum and $F\subseteq U$ is a finite set of points, there exists a continuum theoretical tree $T\subseteq U$ with $F\subseteq T$.
\end{enumerate}

Given a topological space $X$ and $Y\subseteq X$ we denote by $\Cl(Y)$, $\Int(Y)$ and $\Bd(Y)$ respectively the closure, interior and boundary of $Y$.

Given a family $\cW$ of open subsets of a space $X$ and $C \subseteq X$, we define
\begin{equation*}
	\Star(C, \cW) \coloneqq \{ W \in \cW : \Cl(W) \cap C \not = \emptyset \}.
\end{equation*}
It is generally more common to define $\Star(C,\cW)$ as the set of those $W\in\cW$ that meet $C$, without taking their closures first. Our definition is slightly different, but it will simplify the notation throughout the paper.

If $V$ is an open set, let
\begin{equation*}
	\cW(V) \coloneqq \{ W \in \cW : W\subseteq V\},
\end{equation*}
and
\begin{equation*}
	\Core(V, \cW) \coloneqq \{ W \in \cW : \Cl(W) \subseteq V \}.
\end{equation*}
Note that $\Core(V, \cW) = \cW(V) \setminus \Star(\Bd(V), \cW)$.

An open set $U$ is \emph{regular} if $U = \Int(\Cl(U))$.
If $U, V \sub X$ are regular open sets, let $U \vee V$ be the regular open set $\Int(\Cl(U) \cup \Cl(V))$ and, similarly, let $\runion \cU \coloneqq \Int(\Cl(\bigcup_{U \in \cU}U))$, for any family $\cU$ of regular open sets.

Let $\Homeo(X)$ denote the set of all homeomorphisms from $X$ onto itself. The \emph{compact-open topology} on $\Homeo(X)$ is the topology generated by sets of the following form, with $K$ being a compact subset of $X$ and $U$ an open one:
\begin{equation*}
	\{f\in\Homeo(X) : f[K]\subseteq U\}.
\end{equation*}
If $X$ is a compact metrizable space, the compact-open topology on $\Homeo(X)$ agrees with the topology of uniform convergence and turns $\Homeo(X)$ into a Polish topological group.

\subsection{The Space of Maximal Connected Chains} \label{ss.chains}

Given a topological space $X$, let $K(X)$ denote the set of compact subspaces of $X$. The \emph{Vietoris topology} on $K(X)$ is generated by sets of form $\set{K \in K(X) : K \sub U}$ and $\set{K \in K(X) : K \cap U \ne \emptyset}$, for $U$ open in $X$.
If $X$ is compact, the Vietoris topology on $K(X)$ is also compact and the subspace $C(X)\subseteq K(X)$ of \emph{subcontinua}, i.e. compact connected sets, is closed (hence compact itself).
If $X$ is moreover metrizable, say by a metric $d$, then the associated \emph{Hausdorff metric} on $K(X)$, defined by
\begin{equation*}
	d_H(K, K') \coloneqq \max \set*{\sup_{x \in K} d(x,K'), \sup_{x' \in K'} d(K,x')},
\end{equation*}
is compatible with the Vietoris topology.
Given $U_0, \dots, U_{n-1}$ \emph{regular} open sets, we define the open set:
\begin{equation*}
	O(U_0, \dots, U_{n-1}) \coloneqq \left\{ K \in K(X) : K \cap U_i \not = \emptyset \text{ for every } i < n \text{ and } K \subseteq \runion_{i<n} U_i \right\}.
\end{equation*}

A collection $\cC$ of compact subsets of $X$ is called a \emph{chain} (a \emph{nest} in \cite{illanes1999hyperspaces}) if it is linearly ordered by inclusion, that is, if for every $C_1,C_2\in\cC$, either $C_1\subseteq C_2$ or $C_2 \subseteq C_1$. A chain $\cC\subseteq K(X)$ is \emph{maximal} if it is not properly contained in any other chain, and it is \emph{connected} if it consists of connected sets.

If $X$ is compact and $\cC\subseteq K(X)$ is a maximal chain, then $\cC$ is compact (as a subspace of $K(X)$), hence it can be thought of as a point in $K(K(X))$.
If moreover $X$ is a continuum, every
maximal connected chain is the image of a topological embedding $[0,1]\to C(X)$, so it is in particular an element of $C(C(X))$ (see \cite{illanes1999hyperspaces}*{Lemma 14.7}).
In this case, we define the space $\chains(X)$ of \emph{maximal connected chains on $X$} as
\begin{equation*}
	\chains(X)\coloneqq\set{\cC\in C(C(X)) : \cC\text{ is a maximal connected chain}},
\end{equation*}
which is a compact subset of $C(C(X))$ (see \cite{uspenskij2000universal}*{\S2}).
The map $\chains(X) \to X$ that to each chain $\cC$ associates the unique element of the singleton $\bigcap \cC$ – the so-called \emph{root} of $\cC$ – is continuous.
Given a compatible metric $d$ on $X$, we denote by $d_\chains$ the Hausdorff metric on $\chains(X)$ relative to $d_H$.

\medskip

\emph{From now on, all chains we consider are maximal and connected.}

\medskip

The action $\Homeo(X) \acts X$ naturally induces a continuous action $\Homeo(X) \acts \chains(X)$ defined as $h\cdot\cC \coloneqq \{h[C] : C\in\cC\}$, for $h\in\Homeo(X)$ and $\cC\in\chains(X)$.
We say that a chain $\cC \in \chains(X)$ is \emph{generic} if $\Homeo(X)\cdot \cC$ is comeager in $\chains(X)$.

\section{Walks on Graphs}
\label{sec:walks-on-graphs}

\textit{Throughout this paper, the word graph refers to finite connected reflexive undirected graphs.}

\medskip
In this section we present definitions and terminology on graphs and walks, as well as some preliminary results on the notion of winding number, which is introduced in \Cref{def:wd-for-any-walk}.

Let $(G,E)$ be a graph.
We denote by $d_G$, or just $d$ if there is no risk of confusion, the \emph{path metric} on $G$.
If $x \in G$ and $A \sub G$, we write $x \E A$ if there is $y \in A$ with $x \E y$, and we denote by $\ball{n}{A} \coloneqq \set{x \in G : d_G(x, A) \le n}$ the closed ball of $d_G$-radius $n$ around $A$.
If $A = \set{x}$, we write $\ball{n}{x}$.

If $f, g \colon (G,E) \to (H,E')$ are functions between the same two graphs, we denote by $d_{\sup}(f, g) \coloneqq \max\set{d_H(f(x), g(x)) : x \in G }$ their distance according to the supremum metric.

A function $f \colon (G,E) \to (H,E')$ is a \emph{monotone epimorphism} (also called \emph{connected epimorphism} in \cite{PS22}) if it is a homomorphism of graphs (that is, $f(x)\E' f(y)$ whenever $x \E y$) such that the preimage of each nonempty connected subset of $H$ is nonempty and connected in $G$.
Equivalently, it is a homomorphism of graphs which is surjective on edges and such that the preimage of each vertex of $H$ is nonempty and connected (see \cite{PS22}*{Lemma 1.1}).
In particular, monotone epimorphisms are onto.

A \emph{walk} on $(G,E)$ is a sequence of vertices $w = \walk{ w(0), \dots, w(\ell-1)}$ such that $w(i) \E w(i+1)$ for every $i < \ell-1$.
We say that $\ell$ is the \emph{length} of $w$ and denote it by $\len(w)$.
We admit the empty walk, which has length $0$.
We denote by $\Imm(w) \coloneqq \set{w(i) : i < \len(w)}$.
Adopting a common programming language convention, let us write $w(-1)$ in lieu of $w(\len(w)-1)$ for the last element of $w$.
With a slight abuse of notation, we shall write $\ball{n}{w}$
for the set $[\Imm(w)]_n$.

If $f \colon (G,E) \to (H,E')$ is a graph homomorphism and $w$ is a walk on $(G, E)$, let $f \cdot w$ be the induced walk $\walk{f(w(0)), \dots, f(w(-1))}$ on $(H, E')$.

A walk $w$ is
\begin{enumerate}
	\item
	      \emph{reduced} if $w(i) \ne w(i+1)$ for every $i < \len(w)-1$;
	\item
	      \emph{uncrossed} if whenever $i, j < \len(w)-1$ are such that $w(i) = w(j)$, then $w(i+1) = w(j+1)$;
	\item
	      a \emph{path} if $w(i) = w(j)$ if and only if $i = j$;
	\item
	      a \emph{spaced path} if $w(i) \E w(j)$ if and only if $| i - j | \le 1$ for every $i,j < \len(w)$.
\end{enumerate}

If $A, B \sub C \sub G$ are nonempty subsets, a \emph{path from $A$ to $B$ in $C$} is a path $w$ on $G$ such that $w(0) \in A, w(-1) \in B$ and $w(i) \in C \setminus (A \cup B)$ for all $0<i<\len(w)-1$.
Such a path always exists if $C$ is connected.
If $w$ is a path, we refer to $w(0), w(-1)$ as its \emph{endpoints}.

A walk is a \emph{circular path} if it has length at least $3$, $w(i) \ne w(j)$ for each $i < j <\len(w)-1$, and $w(-1) = w(0)$.
We modify the definition of spaced for circular paths accordingly.
We also define the \emph{circular length} of a circular path $w$ as $\len(w)-1$.

Given two walks $w, v$ on $(G, E)$, we say that $v$ is \emph{confined} in $w$ if $\Imm(v) \sub \Imm(w)$,
and that $v$ is \emph{confined} in some $A \subseteq G$ if $\Imm(v) \sub A$.
We say that $w$ \emph{refines} $v$, in symbols $w \refex v$, if:
\begin{center}
	for every $s \le \len(v)$, there is $t \le \len(w)$ such that $\set{w(i) : i < t} = \set{v(j) : j <s}$.
\end{center}
In words: $w$ starts on the same vertex as $v$, it then visits each \emph{new} vertex that $v$ visits, in the same order, possibly backtracking from time to time, until it covers all that has been visited by $v$, after which it is allowed to roam free.

We say that $w$ \emph{monotonically} refines $v$ if there is a strictly increasing function $f \colon \len(v) \to \len(w)$ with $f(0)=0$ and such that
\begin{center}
	$w(i) = v(j)$ for all $f(j) \le i < f(j+1)$ when $j < \len(v)-1$,
\end{center}
and $w(i) = v(-1)$ for $i\ge f(\len(v)-1)$.
If $w$ monotonically refines $v$, then it refines $v$, and $w,v$ are confined to one another.

Given two walks $w, v$ on $(G, E)$ such that $w(-1) \E v(0)$, let $w \conc v$ be the walk obtained by the \emph{concatenation} of $w$ and $v$.
Note that each reduced uncrossed walk $w$ admits a \emph{unique} decomposition $v \conc c \conc \cdots \conc c \conc s$, where $v$ is a path, $c$ is a circular path, and $s$ is an initial segment of $c$ ($v, c$ or $s$ could be empty).

Finally, we will say that a reduced uncrossed walk is a \emph{lasso} if its decomposition is $v \conc c$, for a path $v$ and a nonempty circular path $c$.

Monotone maps allow lifting walks, as we see in the following lemma.

\begin{lemma}[Monotone Lifting]
	\label{lem:lemmetto-monotone}
	Let $f \colon (G,E) \to (H,E')$ be a monotone epimorphism between graphs, and let $w$ be a reduced walk on $(H, E')$.
	Then there is a reduced walk $v$ on $(G, E)$ such that $f \cdot v$ monotonically refines $w$.
	Moreover, $v$ can be chosen to be uncrossed or a (spaced/circular/spaced circular) path whenever $w$ is.
\end{lemma}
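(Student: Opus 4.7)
The plan is to construct $v$ by concatenating segments $\sigma_0,\sigma_1,\dots,\sigma_{\len(w)-1}$, where each $\sigma_i$ is a reduced walk confined to the fiber $f^{-1}(w(i))$, joined by edges in $G$ lifting the transitions $w(i)w(i+1)$. The two defining features of monotone epimorphisms provide exactly the ingredients needed: edge-surjectivity bridges consecutive fibers, while connectedness of each fiber allows rerouting freely inside it.

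Concretely, pick any $x_0 \in f^{-1}(w(0))$ and set $y_0 := x_0$. Inductively, given $y_i \in f^{-1}(w(i))$ with $i < \len(w)-1$, use edge-surjectivity of $f$ to choose an edge $a_{i+1}b_{i+1}$ in $G$ with $f(a_{i+1}) = w(i)$ and $f(b_{i+1}) = w(i+1)$, pick a reduced walk $\sigma_i$ from $y_i$ to $a_{i+1}$ inside the connected subgraph $f^{-1}(w(i))$ (just the single vertex $\walk{y_i}$ if $y_i = a_{i+1}$), and set $y_{i+1} := b_{i+1}$. Finally let $\sigma_{\len(w)-1} := \walk{y_{\len(w)-1}}$ and define $v := \sigma_0 \conc \sigma_1 \conc \cdots \conc \sigma_{\len(w)-1}$; the concatenation is well-defined because $\sigma_i(-1) = a_{i+1}$ is adjacent to $\sigma_{i+1}(0) = b_{i+1}$. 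The walk $v$ is reduced within each $\sigma_i$ by construction, and at each join because $a_{i+1}$ and $b_{i+1}$ lie in distinct fibers. Taking $\phi(j)$ to be the starting position of $\sigma_j$ in $v$ gives a strictly increasing $\phi\colon \len(w) \to \len(v)$ with $\phi(0)=0$ witnessing that $f\cdot v$ monotonically refines $w$, since on $[\phi(j), \phi(j+1))$ the walk $f\cdot v$ is constantly equal to $w(j)$.

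The moreover clause follows by refining the same construction. When $w$ is a path the fibers $f^{-1}(w(i))$ are pairwise disjoint, so choosing each $\sigma_i$ to be a simple path inside its fiber makes $v$ a path. When $w$ is circular, I replace the trivial last segment by a final transition edge from $f^{-1}(w(-1))$ to $f^{-1}(w(0))$ together with a reduced walk in $f^{-1}(w(0))$ back to $x_0$, rendering $v$ circular. The lasso case is handled by applying the path construction to the path part of $w$ and the circular construction to the circular part, joining them at a vertex above their common endpoint.

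I expect the main subtlety to arise in the circular-path (and lasso-path) case: the initial segment $\sigma_0$ and the closing walk both live in the shared fiber $f^{-1}(w(0))$, and must together form a simple path for $v$ to qualify as a circular path. The fix is to first choose the exit vertex $a_1 \in f^{-1}(w(0))$ and the landing vertex $b \in f^{-1}(w(0))$, then invoke connectedness of $f^{-1}(w(0))$ to pick a simple path from $b$ to $a_1$ inside it, and finally let $x_0$ be any intermediate vertex on this simple path, which splits it into the closing walk from $b$ to $x_0$ and $\sigma_0$ from $x_0$ to $a_1$. The same device handles the junction fiber in the lasso-path case.
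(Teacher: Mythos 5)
Your proof is correct and follows essentially the same route as the paper's: an inductive, fiber-by-fiber lift in which edge-surjectivity supplies the transitions between consecutive fibers and connectedness of the fibers supplies the in-fiber routing (the paper packages both steps as choosing a path from $P_i(-1)$ into $f^{-1}(w(i+1))$ inside the connected set $f^{-1}(w(i))\cup f^{-1}(w(i+1))$, which amounts to the same thing). Your treatment of the circular/lasso case, splitting a simple path in the base fiber at the chosen basepoint, is a valid way of carrying out what the paper dismisses with ``adapt the last step accordingly.''
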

\begin{proof}
	Denote $A_i \coloneqq f^{-1}(w(i))$, for each $i < \len{w}$.
	By hypothesis, they are all connected subsets of $G$.
	Let $P_1$ be a path from $A_1 \cap \ball{1}{A_{0}}$ to $A_1 \cap \ball{1}{A_{2}}$ in $A_1$, of minimal length.
	Since $P_1$ is of minimal length, it is spaced, and any $P_1(j)$ which is not an endpoint of $P_1$ has distance at least $2$ from $A_{0}$ and $A_{2}$.

	Suppose we are given a path $P_i$ in $A_i$, for some $1 \le i < \len(w)-2$, such that $P_i(-1) \E A_{i+1}$ and such that any $P_i(j)$ which is not an endpoint of $P_i$ has distance at least $2$ from $A_{i+1}$.
	Let $P_{i+1}$ be a path from $A_{i+1} \cap \ball{1}{P_i(-1)}$ to $A_{i+1} \cap \ball{1}{A_{i+2}}$ in $A_{i+1}$, of minimal length.
	Then $P_{i+1}$ is spaced, it is nonempty because $w$ is reduced, and
	\begin{equation}
		\label{eq:far-away}
		d_G(P_{i+1}(j), P_i) \ge 2, \text{ for each } 0< j < \len(P_{i+1})-1.
	\end{equation}

	Let $x_0 \in A_0$ be adjacent to $P_1(0)$ and $x_{-1} \in A_{\len(w)-1}$ be adjacent to $P_{\len(w)-2}(-1)$.
	Then the walk $v \coloneqq \langle x_0 \rangle \conc P_1 \conc \cdots P_{\len(w)-2} \conc \langle x_{-1}\rangle$ is reduced and such that $f\cdot v$ monotonically refines $w$, and it is a path whenever $w$ is.

	We prove that $v$ is spaced whenever $w$ is.
	Let $i_0, j_0 < \len(v)$ with $| i_0 - j_0 | > 1$ be given, and let $i, j < \len(w)$ be such that $f(v(i_0)) = w(i)$ and $f(v(j_0)) = w(j)$.
	If $i = j$, then $v(i_0), v(j_0) \in P_i$, which is spaced, so $d_G(v(i_0), v(j_0))>1$.
	If $| i - j | > 1$, then $d_H(w(i), w(j))>1$, since $w$ is spaced, so also $d_G(v(i_0), v(j_0))>1$.
	Otherwise, suppose without loss of generality that $j = i +1$.
	We conclude by \eqref{eq:far-away}.

	\textbf{If $w$ is a circular path:} let $P_0$ be a path from $A_{0} \cap \ball{1}{P_{\len(w)-2}(-1)}$ to $A_0 \cap \ball{1}{P_1(0)}$ in $A_0$, of minimal length.
	As above, $P_0$ is spaced and any $P_0(j)$ which is not an endpoint has distance at least $2$ from $P_{\len(w)-2}$ and $P_1$.
	The walk $v \coloneqq P_0 \conc P_1 \conc \cdots P_{\len(w)-2} \conc \langle P_0(0)\rangle$ is a circular path such that $f\cdot v$ monotonically refines $w$, which is spaced whenever $w$ is.

	\textbf{If $w$ is uncrossed:} let $w = w_0 \conc c \conc \dots \conc c \conc s$, with $w_0$ a path, $c$ a circular path, and $s$ an initial segment of $c$, be its decomposition.
	Apply the previous constructions to $w_0$ to obtain a path $v_0$ in $G$, and to $c$ to obtain a circular path $c'$ in $G$.
	Let $s'$ be the shortest initial segment of $c'$ such that $f \cdot s'$ monotonically refines $s$.

	Let $A \coloneqq f^{-1}(w_0(-1))$ and $B \coloneqq f^{-1}(c(0))$, so that $A \E B$.
	Let $Q_0$ be a path from $A \cap \ball{1}{v_{0}(-1)}$ to $A \cap \ball{1}{B}$ in $A$, and $Q_1$ be a path from $B \cap \ball{1}{Q_{0}(-1)}$ to $B \cap \ball{1}{c'}$ in $B$ ($Q_0$ and/or $Q_1$ could be empty).
	Then $v \coloneqq v_0 \conc Q_0 \conc Q_1 \conc c' \conc \dots \conc c' \conc s'$ is an uncrossed walk such that $f\cdot v$ monotonically refines $w$.
\end{proof}

\subsection{The Winding Number of a Walk}

\begin{definition}
	\label{def:wd-for-any-walk}
	Let $(G, E)$ be a graph
	and $C$ be a circular path on $G$ of circular length $\ell$.
	For each ordered pair of vertices $(x,y)$ in $G$, we define its \emph{weight} relative to $C$ to be:

	\begin{equation*}
		\weight_C(x,y) \coloneqq \begin{cases}
			1  & \text{if there is } i<\ell \text{ such that } x = C(i),\, y = C(i+1 \mod \ell)  \\
			-1 & \text{if there is } i<\ell \text{ such that } y = C(i), \, x = C(i+1 \mod \ell) \\
			0  & \text{otherwise.}
		\end{cases}
	\end{equation*}
	Notice that $\weight_C$ is well-defined since $C$ is a path.
	For any walk $w$ on $G$, we define its \emph{winding number around $C$} as
	\begin{equation*}
		\wind_C(w) \coloneqq \sum_{i<\len(w)-1} \weight_C(w(i), w(i+1)).
	\end{equation*}
\end{definition}

We devote the end of the section to a few easy lemmas on walks and their winding numbers which will be applied later in the paper.
The impatient reader is advised to skip them and return to consult them during the proof of \Cref{thm:the-theorem}. The keen reader, on the other hand, is encouraged to compare Lemma \ref{lem:wd-refine-initial-segment} with \cite{GTZ}*{Lemma 2.12} and Lemma \ref{lem:close-walks} with \cite{GTZ}*{Lemma 2.10}.
For ease of reference, the graphs are called $\cS$ and $\cW$ rather than $(G, E)$ or $(H, E')$, as elsewhere in this section.

\begin{lemma}
	\label{lem:wd-monotonically-refine}
	Let $\cS$ be a graph
	and $C$ be a circular path on $\cS$.
	Let $v$ and $w$ be walks on $\cS$ such that $w$ monotonically refines $v$.
	Then $\wind_C(w) = \wind_C(v)$.
\end{lemma}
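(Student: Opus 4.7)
The plan is to unpack the sum defining $\wind_C(w)$ according to the refinement structure and show that it collapses term-by-term to the sum defining $\wind_C(v)$.

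Let $f\colon \len(v) \to \len(w)$ be the strictly increasing function witnessing that $w$ monotonically refines $v$, with $f(0)=0$, and set $f(\len(v)) \coloneqq \len(w)$ for notational convenience. Then we can partition the indices $\{0, 1, \dots, \len(w)-2\}$ appearing in the sum $\wind_C(w) = \sum_{i<\len(w)-1} \weight_C(w(i), w(i+1))$ according to which interval $[f(j), f(j+1))$ contains $i$. For $i$ in the interior of such an interval, $w(i) = w(i+1) = v(j)$, so the pair $(w(i), w(i+1))$ is a loop at $v(j)$. The first step is to observe that, since $C$ is a \emph{path} (in particular, $C(i) \ne C(i+1 \mod \ell)$ for every $i$), the definition of $\weight_C$ yields $\weight_C(x,x) = 0$ for every vertex $x$; hence these interior terms contribute $0$.

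The remaining terms correspond to the transitions: for each $j < \len(v)-1$, the index $i = f(j+1)-1$ satisfies $w(i) = v(j)$ and $w(i+1) = v(j+1)$, so it contributes $\weight_C(v(j), v(j+1))$. Summing these contributions over $j$ gives exactly
\begin{equation*}
\sum_{j<\len(v)-1} \weight_C(v(j), v(j+1)) = \wind_C(v).
\end{equation*}

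The only potential subtlety is bookkeeping at the right end: the tail of $w$ beyond index $f(\len(v)-1)$ consists of the constant value $v(-1)$, again contributing only loop terms of weight $0$, so nothing extra is added. Combining these observations yields $\wind_C(w) = \wind_C(v)$. No step is truly an obstacle here; the lemma is essentially a verification that the weight function vanishes on self-loops and is preserved by inserting such loops, which is the combinatorial content of monotone refinement.
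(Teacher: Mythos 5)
Your proof is correct and follows essentially the same route as the paper's: both arguments observe that, because $C$ is a circular \emph{path}, $\weight_C(x,x)=0$, so the repeated-vertex terms inside each block $[f(j), f(j+1))$ and in the constant tail vanish, leaving exactly the transition terms $\weight_C(v(j),v(j+1))$. Your write-up is in fact slightly more careful about the bookkeeping at the right end than the paper's version.
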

\begin{proof}
	By assumption there is a strictly increasing function $f \colon \len(v) \to \len(w)$ with
	\begin{equation*}
		f(0)=0 \text{ and such that } w(i) = v(j) \text{ for all } f(j) \le i < f(j+1).
	\end{equation*}
	In particular
	$\weight_C(w(i), w(i+1)) =0$
	whenever there is $j< \len(v)$ with $f(j) \le i < i+1 < f(j+1)$, since in this case $w(i)= w(i+1)$.
	Therefore:
	\begin{align*}
		\wind_C(w) & = \sum_{j < \len(v) -1} \weight_C(w(f(j)), w(f(j +1))) \\
		           & = \sum_{j < \len(v) -1} \weight_C(v(j), v(j+1))        \\
		           & = \wind_C(v).
	\end{align*}
\end{proof}

\begin{lemma}
	\label{lem:wd-refine-initial-segment}
	Let $\cS$ be a graph and $C$ be a circular path on $\cS$.
	Let $\cW$ be a graph and $\alpha \colon \cW \to \cS$ be a graph homomorphism.
	Let $w$ be a spaced path on $\cW$ and $z$ be a walk on $\cW$ which refines and is confined to an initial segment of $w$.
	Then
	\begin{equation*}
		\wind_C(\alpha \cdot z) = \wind_C(\alpha \cdot \walk{w(0), \dots, w(j)}),
	\end{equation*}
	where $j < \len(w)$ is the unique index such that $w(j) = z(-1)$.
\end{lemma}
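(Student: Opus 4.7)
The plan is to coordinatise the walk $z$ by its position on the underlying path traced out by $w$. Since $w$ is a spaced path, the vertices of any initial segment $\walk{w(0), \dots, w(k)}$ are pairwise distinct, and $w(i) \E w(i')$ precisely when $|i-i'| \le 1$. As $z$ is confined to such an initial segment, each $z(p)$ equals $w(\phi(p))$ for a unique index $\phi(p) \in \{0, \dots, k\}$, and the edge condition on $z$ forces $|\phi(p+1) - \phi(p)| \le 1$. Thus $\phi$ is itself a walk on the discrete path $\{0, \dots, k\}$; by the refinement condition it starts at $\phi(0) = 0$, and by hypothesis it ends at $\phi(\len(z)-1) = j$.

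Next I would re-group the sum defining $\wind_C(\alpha \cdot z)$ according to the behaviour of $\phi$ at each step. For $0 \le i < k$, let $f_i$ count the steps with $(\phi(p), \phi(p+1)) = (i, i+1)$ and $b_i$ the steps with $(\phi(p), \phi(p+1)) = (i+1, i)$. Steps with $\phi(p+1) = \phi(p)$ contribute $0$, since then $\alpha(z(p)) = \alpha(z(p+1))$ and $\weight_C$ vanishes on the diagonal. Using the antisymmetry $\weight_C(y,x) = -\weight_C(x,y)$, the winding number collapses to
\begin{equation*}
	\wind_C(\alpha \cdot z) = \sum_{i=0}^{k-1} (f_i - b_i)\, \weight_C\bigl(\alpha(w(i)), \alpha(w(i+1))\bigr).
\end{equation*}

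The crucial computation, and the one I expect to demand some care (though it remains elementary), is a conservation-of-flow identity yielding $f_i - b_i = 1$ for $i < j$ and $0$ for $i \ge j$. Setting $\chi_i(p) \coloneqq [\phi(p) \ge i+1]$, telescoping gives $f_i - b_i = \sum_p(\chi_i(p+1) - \chi_i(p)) = \chi_i(\len(z)-1) - \chi_i(0) = [j \ge i+1] - 0$. Plugging this back in, the sum truncates to
\begin{equation*}
	\wind_C(\alpha \cdot z) = \sum_{i<j} \weight_C\bigl(\alpha(w(i)), \alpha(w(i+1))\bigr) = \wind_C\bigl(\alpha \cdot \walk{w(0), \dots, w(j)}\bigr),
\end{equation*}
which is exactly what the lemma asserts. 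The only other points worth verifying in passing are that refinement genuinely forces $z(0) = w(0)$ and that confinement together with $w$ being a spaced path makes the index function $\phi$ well defined; both are immediate from the definitions.
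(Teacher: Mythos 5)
Your proof is correct. It differs from the paper's in its bookkeeping rather than in substance: the paper proves the identity by induction on $\len(z)$, peeling off the last step of $z$ and checking the three possible moves ($z(\ell-1) = w(i-1)$, $w(i)$, or $w(i+1)$) that spacedness allows; you instead regroup the defining sum by the edges of the underlying path, letting $f_i - b_i$ record the net number of crossings of the edge $\{w(i), w(i+1)\}$, and compute this net crossing number by telescoping the indicator $\chi_i$. Both arguments rest on the same two facts — that confinement to a spaced path forces each step of $z$ to move the index by at most $1$, and that $\weight_C$ is antisymmetric and vanishes on the diagonal — so the content is the same, but your version makes explicit something the induction leaves implicit: the winding number of $\alpha\cdot z$ depends only on the indices of its first and last vertices along $w$, so the refinement hypothesis is needed only to guarantee $z(0) = w(0)$. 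This "path-independence" formulation is marginally more general and arguably cleaner; the paper's induction is shorter to write down. The small points you flag at the end (well-definedness of $\phi$, and that refinement forces $z(0)=w(0)$ via the case $s=1$ of the definition of $\refex$) are indeed immediate, so there is no gap.
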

\begin{proof}
	We proceed by induction on the length $\ell$ of $z$.
	If $\ell = 1$, then
	\begin{equation*}
		\wind_C(\alpha \cdot z) = 0 = \wind_C(\alpha \cdot \walk{w(0)}).
	\end{equation*}
	For $\ell>1$, suppose that the statement is true for $\walk{z(0), \dots, z(\ell-2)}$.
	Suppose $z(\ell-2) = w(i)$, for some $i<\len(w)$.
	Since $z$ refines and is confined to an initial segment of $w$, which is a spaced path, it must be that $z(\ell-1)$ is equal to either $w(i-1) , w(i)$, or $w(i+1)$.
	In each case, a straightforward computation gives the desired result.
\end{proof}

\begin{lemma}
	\label{lem:close-walks}
	Let $\cS$ be a graph and $C$ be a spaced circular path on $\cS$ of circular length $\ell \ge 4$.
	Let $z, z'$ be walks of the same length $k$ on $\cS$, which are confined to $C$ and such that, for each $i<k$, $d_{\cS}(z(i), z'(i)) \le 1$, and also, for $i<k-1$, $d_{\cS}(z'(i), z(i+1)) \le 1$.
	Then $\Card{\wind_C(z)- \wind_C(z')}\le 2$.
\end{lemma}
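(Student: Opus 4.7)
The plan is to reduce the statement to integer arithmetic by lifting positions along the cycle $C$. Since $C$ is a spaced circular path of length $\ell \ge 4$, two of its vertices $C(p), C(q)$ are $\cS$-adjacent if and only if $q - p \equiv -1, 0, 1 \pmod \ell$, and these three residues are pairwise distinct because $\ell \ge 4$. As $z$ and $z'$ are confined to $C$, we may write $z(i) = C(a_i)$ and $z'(i) = C(b_i)$ for $a_i, b_i \in \bbZ/\ell\bbZ$. I would then pick integer lifts $A_i, B_i \in \bbZ$ of $a_i, b_i$ with $A_{i+1} - A_i \in \{-1,0,1\}$ and $B_{i+1} - B_i \in \{-1,0,1\}$ for each $i$, starting from any $A_0$ and from a $B_0$ with $B_0 - A_0 \in \{-1,0,1\}$; this is possible by $d_\cS(z(0), z'(0)) \le 1$. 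With these lifts, $\weight_C(z(i), z(i+1)) = A_{i+1} - A_i$ and similarly for $z'$, so telescoping gives $\wind_C(z) = A_{k-1} - A_0$ and $\wind_C(z') = B_{k-1} - B_0$.

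The heart of the argument is then the inductive claim that $|B_i - A_i| \le 1$ for every $i < k$. The base case $i = 0$ holds by construction. For the inductive step, assume $|B_i - A_i| \le 1$. Then $B_i - A_{i+1} = (B_i - A_i) - (A_{i+1} - A_i) \in \{-2, -1, 0, 1, 2\}$; on the other hand, the hypothesis $d_\cS(z'(i), z(i+1)) \le 1$ forces $B_i - A_{i+1} \equiv -1, 0, \text{ or } 1 \pmod \ell$, and since $\ell \ge 4$ neither $2$ nor $-2$ is congruent to any of these residues. Hence $|B_i - A_{i+1}| \le 1$. Repeating the same trick with $B_{i+1} - A_{i+1} = (B_{i+1} - B_i) + (B_i - A_{i+1}) \in \{-2, \dots, 2\}$ and the hypothesis $d_\cS(z(i+1), z'(i+1)) \le 1$ upgrades this to $|B_{i+1} - A_{i+1}| \le 1$, closing the induction.

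To conclude, $\wind_C(z') - \wind_C(z) = (B_{k-1} - A_{k-1}) - (B_0 - A_0)$, and both summands on the right have absolute value at most $1$, so the difference has absolute value at most $2$. The only delicate point in executing the plan is the lifting bookkeeping: the condition $\ell \ge 4$ is exactly what ensures a difference of $\pm 2$ in $\bbZ$ cannot be mistaken, modulo $\ell$, for one of $-1, 0, 1$, and this is the entire reason the estimate succeeds. Everything else reduces to a mechanical telescoping check.
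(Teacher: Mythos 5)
Your proof is correct, and it is in substance the same induction as the paper's: your invariant $|B_i - A_i|\le 1$ is exactly the paper's running term $\weight_C(z(i),z'(i))$, and the two uses of $\ell\ge 4$ in each inductive step correspond to the paper's two weight-additivity identities $\weight_C(z'(i),z(i))+\weight_C(z(i),z(i+1))=\weight_C(z'(i),z(i+1))$ and $\weight_C(z'(i),z(i+1))+\weight_C(z(i+1),z'(i+1))=\weight_C(z'(i),z'(i+1))$. Lifting to integer coordinates along the cycle is a clean way of making those identities automatic, and your telescoped conclusion $(B_{k-1}-A_{k-1})-(B_0-A_0)$ is precisely the paper's exact formula $\wind_C(z')=\weight_C(z'(0),z(0))+\wind_C(z)+\weight_C(z(-1),z'(-1))$.
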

\begin{proof}
	We prove that under the hypotheses:
	\begin{equation*}
		\wind_C(z') = \weight_C(z'(0), z(0)) + \wind_C(z) + \weight_C(z(-1), z'(-1)).
	\end{equation*}
	{
	The conclusion then follows immediately, since the values $\weight_C(z'(0), z(0))$ and $\weight_C(z(-1), z'(-1))$ have absolute values at most $1$.}

	We show by induction that for each $i<k$:
	\begin{multline}
		\label{eq:induction}
		\wind_C(\walk{z'(0), \dots, z'(i)}) = \weight_C(z'(0), z(0)) + \\ + \wind_C(\walk{z(0), \dots, z(i)}) + \weight_C(z(i), z'(i)).
	\end{multline}
	If $i=0$, then $\weight_C(z'(0), z(0)) = -\weight_C(z(0), z'(0))$, so \eqref{eq:induction} holds.

	Suppose \eqref{eq:induction} is true for $i< k-1$ and let us prove it for $i+1$.
	By definition of $\wind_C$ it suffices to verify that
	\begin{multline*}
		\weight_C(z'(i), z'(i+1)) = \weight_C(z(i), z(i+1)) +\\ + \weight_C(z(i+1), z'(i+1)) - \weight_C(z(i), z'(i)).
	\end{multline*}
	But $- \weight_C(z(i), z'(i)) = \weight_C(z'(i), z(i))$ and, using that $z'(i), z(i+1)$ lie in $C$ at distance at most $1$ we get
	\begin{equation*}
		\weight_C(z'(i), z(i)) + \weight_C(z(i), z(i+1)) = \weight_C(z'(i), z(i+1)).
	\end{equation*}
	Also $z'(i), z'(i+1)$ are at distance at most $1$, so
	\begin{equation*}
		\weight_C(z'(i), z(i+1)) + \weight_C(z(i+1), z'(i+1)) = \weight_C(z'(i), z'(i+1)),
	\end{equation*}
	and we are done.
\end{proof}

\section{Brick Partitions}
\label{sec:Brick}

In this section we summarize some well-known definitions and facts about brick partitions of Peano continua which will be of use in the next sections.

A \emph{quasi-partition} of a space $X$ is a finite family $\cW$ of pairwise disjoint open subsets whose union is dense in $X$.
If $d$ is a metric on $X$, the \emph{mesh} of a quasi-partition $\cW$, denoted $\mathrm{mesh}_d(\cW)$, is the maximum of the diameters of its elements.
Each quasi-partition $\cW$ gives rise to a finite graph $(\cW, E)$ – its \emph{nerve graph} –, namely the graph whose vertex set is $\cW$
and such that there is an edge $W \E W'$ if and only if $\Cl(W) \cap \Cl(W') \not = \emptyset$.
To avoid confusion, we say that $A \sub \cW$ is \emph{$\E$-connected} when $A$ is connected as a subgraph of $(\cW, E)$. With a slight abuse of notation, we shall denote the edge relation on nerve graphs of different quasi-partitions always with the letter $E$ (it will always be clear from the context to which quasi-partition we are referring to).

A metric space $X$ is \emph{uniformly locally connected} if for every $\varepsilon>0$ there exists $\delta>0$ such that for all $x,y\in X$, $d(x,y)<\delta$ implies that there is a connected set $C\subseteq X$ with $x,y\in C$ and $\diam(C)<\varepsilon$. Peano continua are uniformly locally connected, and in a Peano continuum the connected set $C$ can be always assumed to be an arc.

A quasi-partition $\cW=\{W_0,\ldots,W_{n-1}\}$ of a Peano continuum $X$ is called a \emph{brick partition} if $W_i$ is a connected and uniformly locally connected regular open set for every $i<n$, and $W_i\vee W_j$ is uniformly locally connected for all $i,j<n$, with respect to some/each compatible metric on $X$.
In particular, if $W_i \E W_j$ then $W_i \vee W_j$ is connected.

Given a quasi-partition $\cU$ of a Peano continuum $X$ and a partition $\cP = \set{\cU_0, \dots, \cU_{n-1}}$ of $\cU$ into $\E$-connected subsets, the \emph{amalgam} of $\cU$ with respect to $\cP$ is the quasi-partition $\set{\runion \cU_0, \dots, \runion \cU_{n-1}}$ of $X$.
The amalgam of a brick partition is again a brick partition.

A family of open sets $\cW$ is said to \emph{refine} another one $\cV$, in symbols $\cW \refines \cV$, if every element of $\cW$ is contained in an element of $\cV$.
In the case of brick partitions, a refinement $\cW \refines \cV$ gives rise to a monotone epimorphism from the nerve graph of $\cW$ to that of $\cV$, given by mapping $W \mapsto V$ whenever $W \sub V$.
We denote such map by $\rmap{\cW}{\cV} \colon \cW \to \cV$.
Given two brick partitions $\cW, \cV$ and two walks $w, v$ on $\cW$ and $\cV$ respectively,
we write $(\cW, w) \refines (\cV, v)$ whenever $\cW \refines \cV$ and $\rmap{\cW}{\cV} \cdot w \refex v$.

We will make use of the following fundamental result of Bing from \cites{bing, bing2} implicitly whenever we need to produce arbitrarily fine brick partitions (i.e. with arbitrarily small mesh) of a Peano continuum.
\begin{theorem}[{\cites{bing, bing2}}]\label{thm: brick partitioning theorem}
	Any brick partition of a Peano continuum can be refined by brick partitions of arbitrarily small mesh.
\end{theorem}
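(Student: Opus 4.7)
The plan is to establish Bing's theorem by local refinement, working brick by brick and patching things together. Given a brick partition $\cW = \set{W_0, \dots, W_{n-1}}$ of a Peano continuum $X$ and $\varepsilon > 0$, I would first reduce to the problem of refining a single brick while keeping the other bricks essentially unchanged (up to minor adjustments along shared boundaries). That is, I would prove: for any brick $W \in \cW$ of diameter at least $\varepsilon$, there is a refinement $\cW'$ of $\cW$ which differs from $\cW$ only by replacing $W$ (and possibly cutting small collars of its neighbors) with several smaller bricks. Iterating this finitely many times yields a refinement of mesh less than $\varepsilon$.

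To carry out the single-brick subdivision, I would fix a compatible metric on $X$ and invoke a Bing--Moise type convex metric, so that small closed balls are connected. Inside $\Cl(W)$ I would take a fine finite cover by connected open sets of diameter less than $\varepsilon/2$, each contained in $W$ or in $W \vee W_i$ for some neighbor $W_i$, and construct a Voronoi-like partition of $W$ with respect to a finite $\varepsilon/4$-dense set of points, using the convex metric to make each cell connected and uniformly locally connected. The key tool is the existence of a \emph{partitioning} in Bing's sense: a family of small connected subsets of $X$ whose boundaries are pairwise disjoint and nowhere dense, which can be obtained by carefully choosing cuts along the one-dimensional skeleton of the Voronoi diagram.

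The heart of the argument, and the main technical obstacle, lies in verifying the two brick-partition axioms after subdivision: that each new cell $V$ is connected and uniformly locally connected, and that the union $V \vee V'$ of each pair of adjacent new cells is also uniformly locally connected. Connectedness of a cell follows from its construction as a union of overlapping small connected open sets. Uniform local connectedness is more delicate; I would argue using the Peano property that each cell inherits a uniformly locally connected structure from a carefully chosen system of connecting arcs of controlled diameter. For the pairs $V \vee V'$, the critical observation is that the boundary separating them should itself lie in a connected open set of small diameter containing points on both sides, so that any two nearby points in $V \vee V'$ can be joined by a short arc even when they sit on opposite sides of the cut.

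Finally, boundary matching across the original bricks needs to be handled so that the new cells form a coherent quasi-partition of $X$. I would arrange the refinement so that cuts inside a brick $W$ meet $\Bd(W)$ transversally in a controlled way, and then simultaneously refine the bordering bricks $W_i$ along the same boundary traces. With appropriately chosen compatible subdivisions of neighboring bricks, the global family of new cells is automatically a quasi-partition refining $\cW$, and the local verification from the previous paragraph upgrades it to a brick partition.
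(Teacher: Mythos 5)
This statement is not proved in the paper at all: it is Bing's partitioning theorem, which the authors import as a black box from \cites{bing, bing2} and use implicitly whenever they need fine brick partitions. So the only question is whether your sketch actually constitutes a proof of Bing's theorem, and it does not: the step you yourself identify as ``the heart of the argument'' --- verifying that each new cell $V$ is uniformly locally connected and that $V \vee V'$ is uniformly locally connected for adjacent cells --- is precisely the entire content of the theorem, and your proposal only gestures at it. A Voronoi cell with respect to a convex metric is connected, but nothing in the construction controls its boundary: the cut between two cells can be wild enough that two points at distance $\delta$ on opposite sides of it cannot be joined by a small arc \emph{inside} $V \vee V'$, and saying that the separating boundary ``should itself lie in a connected open set of small diameter containing points on both sides'' is a restatement of what must be proved, not an argument. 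Bing's actual proof circumvents this by building each partition element inductively as a carefully controlled finite union of overlapping small connected open sets (a chain/grille construction), with the uniform local connectedness of elements and of unions of adjacent elements established by an explicit quantitative bookkeeping at each stage; none of that machinery appears in your outline.

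Two further points. First, the reduction to refining one brick at a time ``while cutting small collars of its neighbors'' is itself problematic: modifying a neighbor $W_i$ destroys the hypotheses you are relying on for $W_i$ and for $W_i \vee W_j$, and it is not clear that the iteration preserves the quasi-partition structure (pairwise disjoint regular open sets with dense union) or terminates with all pairwise conditions intact; Bing refines all elements simultaneously for exactly this reason. Second, invoking the Bing--Moise convex metrization theorem does not genuinely lower the difficulty: that theorem is of the same depth, and its classical proofs proceed \emph{via} partitionings, so while it is not formally circular to use it as a black box, it leaves you no closer to the uniform local connectedness estimates that are the actual obstacle.
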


Recall that a subset $K \sub X$ is \emph{locally separating} if there is an open connected set $U \subseteq X$ such that $U \setminus K$ is disconnected,
that a space is \emph{planar} if it admits an embedding in $\bbR^2$, and it is \emph{locally non-planar} if none of its nonempty open sets are planar.
We will use the following facts, which easily follow from the definitions.

\begin{fact}
	\label{fact:points-on-boundaries}
	Let $X$ be a Peano continuum and let $\cW$ be a brick partition of $X$ and $W \E W'$ be in $\cW$.

	\begin{enumerate}
		\item \label{itm:bing}
		      There is a point $x \in \Cl(W)\cap \Cl(W') \cap (W \vee W')$;
		\item \label{itm:nls}
		      if moreover $X$ has no locally separating points, then there are two distinct such points;
		\item \label{itm:nls_set} if $K \sub X$ is a non-locally-separating subset with empty interior, then $x$ can be chosen disjoint from $K$.
	\end{enumerate}
\end{fact}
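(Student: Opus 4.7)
The plan is to deduce all three items from a single template: the set $W \vee W' = \Int(\Cl(W) \cup \Cl(W'))$ is an open \emph{connected} set – this is part of the brick partition axiom recalled just before the statement – and it is covered by $\Cl(W) \cup \Cl(W')$ directly from the definition of $\vee$. From here, standard connectedness arguments do the work.

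For \eqref{itm:bing}, I would set $A \coloneqq \Cl(W) \cap (W \vee W')$ and $B \coloneqq \Cl(W') \cap (W \vee W')$. Both are relatively closed in $W \vee W'$, and both are nonempty: since $W, W'$ are regular open we have $W \sub \Int \Cl(W) \sub W \vee W'$, so $W \sub A$, and symmetrically $W' \sub B$. Their union is all of $W \vee W'$. Connectedness of $W \vee W'$ then forces $A \cap B \ne \emptyset$, and any point in this intersection lies in $\Cl(W) \cap \Cl(W') \cap (W \vee W')$.

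For \eqref{itm:nls}, I would argue by contrapositive: if $A \cap B = \{x\}$ were a single point, then $(W \vee W') \sminus \{x\} = (A \sminus \{x\}) \cup (B \sminus \{x\})$ would be a decomposition into two disjoint relatively closed pieces, each nonempty since $W \sminus \{x\} \sub A \sminus \{x\}$ and $W' \sminus \{x\} \sub B \sminus \{x\}$ remain nonempty open sets. Hence $\{x\}$ would locally separate $X$, contradicting the hypothesis that $X$ has no locally separating points.

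Part \eqref{itm:nls_set} runs along the same lines with $K$ in place of $\{x\}$: non-local-separation of $K$ inside the connected open set $W \vee W'$ gives that $(W \vee W') \sminus K$ is connected, while $A \sminus K$ and $B \sminus K$ are relatively closed sets covering it, so they must meet — producing the desired point in $(\Cl(W) \cap \Cl(W') \cap (W \vee W')) \sminus K$ — as soon as both are nonempty. This last nonemptiness is the only spot that requires mild care: it holds provided $K$ has empty interior, which is the case in all intended applications (where $K$ is typically a finite set or a finite union of simple closed curves), since then $W \sminus K$ is dense in $W$ and hence nonempty, and similarly for $W'$. I expect this little boundary issue concerning $\Int K$ to be the only real subtlety; otherwise the argument is a direct application of connectedness.
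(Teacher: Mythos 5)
The paper offers no proof of this Fact at all---it is introduced with ``the following facts, which easily follow from the definitions''---so there is nothing to compare against; your argument is correct and is surely the intended one. For \eqref{itm:bing} and \eqref{itm:nls} everything checks out: $W \vee W'$ is connected because $\cW$ is a brick partition and $W \E W'$; the sets $A = \Cl(W) \cap (W \vee W')$ and $B = \Cl(W') \cap (W \vee W')$ are relatively closed, cover $W \vee W'$, and contain $W$ and $W'$ respectively (regularity gives $W = \Int(\Cl(W)) \sub W \vee W'$), so they must meet; and if they met only in a single point $x$, then $(W\vee W')\sminus\set{x}$ would split into the two disjoint nonempty relatively closed pieces $A\sminus\set{x}$ and $B\sminus\set{x}$, making $x$ locally separating.

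Your caveat on \eqref{itm:nls_set} is a genuine observation about the \emph{statement} rather than a gap in your proof: as written, item \eqref{itm:nls_set} fails if $K$ swallows a brick (e.g.\ $W \sub K$ forces $\Cl(W)\cap(W\vee W')\sub K$, since $\Cl(W)\cap(W\vee W')\sub\Cl(W)$ and every point of $\Cl(W)\cap(W\vee W')$ is a limit of points of $W$), so one needs some hypothesis guaranteeing $A \sminus K \ne \emptyset \ne B \sminus K$; emptiness of $\Int(K)$ suffices. In the one place the paper invokes \eqref{itm:nls_set}, namely \Cref{prop:disjoin-gamma-compatible}, the set $K=\gamma$ is a simple closed curve in a Peano continuum with no locally separating points, which necessarily has empty interior (otherwise $X$ would contain an open arc, all of whose points are locally separating). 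So your proof covers every use the paper makes of the Fact.
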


We also need these facts on the existence of certain arcs.

\begin{fact}
	\label{fact:existence-of-arcs}
	Let $X$ be a Peano continuum and $W \sub X$ a connected and uniformly locally connected regular open set.
	\begin{enumerate}
		\item \label{itm:existence-of-arcs} For any $x, y \in \Cl(W)$, there is an arc $\gamma \sub W \cup \set{x,y}$ connecting $x, y$ (\cite{nadler2017continuum}*{8.32(a)}).
		\item \label{itm:existence-of-arcs-nls} If $X$ has no locally separating points, for any $A, B \sub \Cl(W)$, disjoint and containing exactly two points each, there are two disjoint arcs $\gamma_0, \gamma_1 \sub W \cup A \cup B$, connecting $A$ to $B$ (\cite{MR1562166}*{Lemma 1}).
		\item \label{itm:existence-of-arcs-cross} If $X$ has no locally separating points and $W$ is locally non-planar, then for any $x_0, x_1, y_0, y_1 \in \Cl(W)$ all distinct, there are two disjoint arcs $\gamma_0, \gamma_1 \sub W \cup \set{x_0, x_1, y_0, y_1}$ connecting $x_0$ to $y_0$ and $x_1$ to $y_1$, respectively (\cite{MOT}*{Theorem 3.13}).
		\item \label{itm:existence-of-arcs-nls-arc} If $K \sub X$ is a non-locally-separating $F_\sigma$ set with empty interior, and $x, y \in \Cl(W)$, there is an arc $\lambda \sub (W \setminus K) \cup \set{x,y}$ connecting $x, y$.
	\end{enumerate}
\end{fact}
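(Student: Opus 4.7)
My plan is to handle items (1)--(3) by directly invoking the classical results cited beside each: item (1) is the arc-in-closure lemma from Nadler's textbook, item (2) is Whyburn's two-arc lemma for Peano continua without locally separating points, and item (3) is the Moore--Thompson ``crossing arcs'' theorem. These serve as black boxes and require no further argument beyond citation.

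For item (4), which has no citation, I would argue as follows. First, apply item (1) to obtain an arc $\gamma \subseteq W \cup \{x,y\}$ from $x$ to $y$; reading the statement with $x,y \notin K$, the task reduces to modifying the interior of $\gamma$ so as to avoid $K$. The crucial input is the hypothesis on $K$: since $K$ is closed and non-locally separating, for every connected open $V \subseteq X$ the set $V \setminus K$ is connected; being open in the Peano continuum $X$, it is also locally connected, hence arcwise connected. In particular $K$ has empty interior, so $W \setminus K$ is dense in $W$.

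The modification is local. The set $\gamma \cap K$ is a compact subset of the open set $W$, at positive distance from $\{x,y\}$ and from $\Bd(W)$. For each $p \in \gamma \cap K$ choose a small connected open neighborhood $U_p \subseteq W$ whose closure avoids $\{x,y\}$; by non-local separation $U_p \setminus K$ is arcwise connected. A Lebesgue-number argument covers $\gamma \cap K$ by finitely many such $U_{p_0}, \dots, U_{p_n}$ with pairwise disjoint closures, chosen so that on each connected component $I_i$ of $\gamma^{-1}(U_{p_i})$ both endpoints of $\gamma|_{I_i}$ lie in $W \setminus K$ (using that $K$ is nowhere dense, entry/exit parameters can be perturbed slightly off $\gamma^{-1}(K)$). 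On each such piece, replace $\gamma|_{I_i}$ by an arc in $U_{p_i} \setminus K$ joining the same endpoints; outside $\bigcup_i U_{p_i}$ leave $\gamma$ unchanged. The resulting concatenation is a path from $x$ to $y$ inside $(W \cup \{x,y\}) \setminus K$, and since any path between distinct points in a Hausdorff space contains an arc with the same endpoints lying in its image, this yields the desired $\lambda$.

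The main obstacle I foresee is the simultaneous bookkeeping in the patching step: one must guarantee that the replacement neighborhoods have pairwise disjoint closures, that each detour is confined to its own neighborhood, and that the entry/exit points chosen on $\gamma$ all lie outside $K$. Each condition is individually satisfiable using compactness of $\gamma \cap K$ and nowhere-denseness of $K$, but making them mutually compatible requires a careful shrinking procedure; this, rather than any deep use of the brick partition, is the delicate part of the argument.
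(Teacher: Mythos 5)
For items (1)--(3) you do exactly what the paper does: it states them as black boxes with the same citations and offers no proofs, so there is nothing to compare there.

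For item (4) the paper again gives no proof, so your argument stands on its own, and it has a genuine gap. The assertion ``in particular $K$ has empty interior, so $W \setminus K$ is dense in $W$'' does not follow from $K$ being closed and non-locally separating: in $X=[0,1]$ the set $K=[0,1/2]$ is closed, has nonempty interior, and $U\setminus K$ is an interval (possibly empty), hence connected, for every connected open $U$. This false claim is load-bearing, since you invoke nowhere-density of $K$ to ``perturb entry/exit parameters slightly off $\gamma^{-1}(K)$.'' Moreover, even when $K$ is nowhere dense in $X$, the set $\gamma^{-1}(K)$ may contain nontrivial intervals (the arc produced by item (1) can run inside $K$), so the endpoints of the components $I_i$ cannot in general be pushed off $K$. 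There is a further obstruction to the covering step: if $\gamma\cap K$ has a connected piece of diameter larger than the admissible neighborhoods $U_p$, it cannot be covered by finitely many open sets with pairwise disjoint closures, because a connected set covered by such a family must lie in a single member.

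The patching strategy is also unnecessary; you state the key fact but do not apply it at the right scale. The set $W\setminus K$ is itself a connected open subset of a Peano continuum (connected because $W$ is connected open and $K$ is non-locally separating), hence arcwise connected, and it is nonempty because $x\in\Cl(W)$ and $x\notin K$ force some neighborhood of $x$ to meet $W$ while missing the closed set $K$. So choose $x'\in W\setminus K$ close to $x$ and, using uniform local connectedness of the brick $W$ exactly as in the proof of item (1), an arc from $x$ to $x'$ inside $W\cup\{x\}$ of diameter less than $d(x,K)$; do the same at $y$; join $x'$ to $y'$ by an arc in $W\setminus K$; and extract an arc with endpoints $x,y$ from the union of these three arcs. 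This two-step reduction avoids all the bookkeeping and uses no false lemma.
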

\begin{proof}[Proof of \Cref{fact:existence-of-arcs}.(\ref{itm:existence-of-arcs-nls-arc})]
	By \cite{MOT}*{Proposition 2.5}, $\Bd(W)$ is non-locally-separating in $\Cl(W)$, thus so is $K \cup \Bd(W)$, see \cite{MOT}*{Theorem 2.2}.
	Let $K' = (K \cup \Bd(W)) \setminus \set{x, y }$.
	We claim that $K'$ is still non-locally-separating in $\Cl(W)$.
	Indeed, suppose that $U \sub \Cl(W)$ is a nonempty open connected set such that $U \setminus K'$ is not connected.
	On the other hand $U \setminus K$ is connected, and this is possible only if $K \cup \Bd(W)$ contains a connected component of $U \setminus K'$, which gives a contradiction since both $K$ and $\Bd(W)$ have empty interior.
	In particular, $K'\cap \Cl(W)$ is a non-locally-separating $F_\sigma$ subset of $\Cl(W)$.
	The conclusion then follows by \cite{MR0007095}*{II.5.5}, which states that all connected, locally connected, $G_\delta$ subsets of a complete metric space are arcwise connected.
\end{proof}

\section{Walks on Brick Partitions and their Refinements}
\label{sec:Walks on Brick Partitions and their Refinements}

We now apply the framework developed in \Cref{sec:walks-on-graphs} to walks on nerve graphs of brick partitions of Peano continua. The highlight of this section is \Cref{thm:from-Rosendal-to-walks}, where we show that the existence of a generic chain on $X$ implies an \emph{off-by-one} weak amalgamation principle for walks on (nerve graphs of) brick partitions of $X$. In \Cref{sec:The_proof} we shall derive our non-existence result in \Cref{thm:the-theorem-introduction} from the failure of such principle.

The ingredients of the proof of \Cref{thm:from-Rosendal-to-walks} are a general criterion for the existence of a comeager orbit due to Rosendal, and two discretization results: \Cref{cor:refinement_walk} for open sets of chains on $X$, and \Cref{thm:approximate-homeo-with-epi} for homeomorphism of $X$. Such discretization process revolves around the next definition, which explains how to associate open subsets of $\chains(X)$ to walks on brick partitions.

\begin{definition} \label{def:Ww}
	Let $X$ be a Peano continuum and $\cW$ a brick partition on it. Given a walk $w$ on $\cW$, we define the following open subset of $\chains(X)$:
	\begin{equation*}
		\opwalk{\cW}{w} \coloneqq \left\{ \cC \in \chains(X) : \cC \cap O\left(w(0), \dots, w(i-1)\right) \not = \emptyset \text{ for every } i < \len(w) \right\}.
	\end{equation*}
\end{definition}
Notice that $\opwalk{\cW}{w}$ is nonempty: for each $i< \len(w)$, choose a point $x_i \in w(i)$, and let $\gamma_i$ be an arc connecting $x_i$ to $x_{i+1}$ which is contained in $w(i) \vee w(i+1)$, which exists by \Cref{fact:existence-of-arcs}.\eqref{itm:existence-of-arcs}.
Call $K_0 \coloneqq \set{x_0}$ and $K_i \coloneqq \bigcup_{j < i} \gamma_j$ for each $0 < i < \len(w)$.
Then $K_i$ is a subcontinuum of $X$ such that $K_i \in O\left(w(0), \dots, w(i-1)\right)$, and $K_i \sub K_{i+1}$.
\cite{illanes1999hyperspaces}*{Theorem 14.6} then ensures that there is $\cC \in \chains(X)$ such that $K_i \in \cC$, for each $i< \len(h)$, so $\cC \in \opwalk{\cW}{w}$.

Also notice that for each walk $w$ on $\cW$ there is a reduced walk $\bar w$ such that $\opwalk{\cW}{\bar w} = \opwalk{\cW}{w}$.

It is immediate from the definitions that if $w', w$ are walks on a brick partition $\cW$,
\begin{equation*}
	\text{if }  \quad w' \refex w \quad \text { then } \quad  \opwalk{\cW}{w'} \sub \opwalk{\cW}{w},
\end{equation*}
and, more in general, that whenever $\cW \refines \cV$ and $w, v$ are walks on $\cW, \cV$, respectively
\begin{equation}
	\label{eq:refine-iff-subset}
	\text{if }  \quad \rmap{\cW}{\cV} \cdot w \refex v \quad \text { then } \quad  \opwalk{\cW}{w} \sub \opwalk{\cV}{v}.
\end{equation}

If $\Imm(w) \sub  \Imm(w')$, we have a strong converse to the above:
\begin{equation}
	\label{eq:refine-if-subset}
	\text{if }  \quad  \opwalk{\cW}{w'} \cap \opwalk{\cW}{w} \ne \emptyset \quad \text { then } \quad   w' \refex w.
\end{equation}
Indeed, if $ w' \not \refex w$ but $\Imm(w) \sub  \Imm(w')$, then there are initial segments $\set{w(0), \dots, w(i-1)}, \set{w'(0), \dots, w'(j-1)}$ of $w, w'$, respectively, which are incomparable under inclusion, so no chain can intersect both $O(w(0), \dots, w(i-1))$ and $O(w'(0), \dots, w'(j-1))$.

In particular, if $w, w'$ are two walks with $\Imm(w) = \Imm(w') = \cW$, then $w \refex w'$ if and only if $w'\refex w$.
We introduce the notation
\begin{equation*}
	\chains(\cW) \coloneqq \set{\opwalk{\cW}{w} : \Imm(w)  = \cW}
\end{equation*}
and stress that this is a set of pairwise disjoint open subsets of $\chains(X)$ by the remark above.

The following proposition shows that more is true: $\chains(\cW)$ is a quasi-partition whose mesh is controlled by that of $\cW$.

\begin{proposition}
	\label{prop:induced-quasi-partition}
	Let $X$ be a Peano continuum, and $\cW$ be a brick partition of $X$. Fix a compatible metric $d$ on $X$ and the induced metric $d_\chains$ on $\chains(X)$.
	Then $\chains(\cW)$ is a quasi-partition of $\chains(X)$ such that $\mathrm{mesh}_{d_\chains}(\chains(\cW)) \le \mathrm{mesh}_{d}(\cW)$.
	Moreover, whenever $\cU$ is a brick partition such that $\cU \refines \cW$, then $\chains(\cU) \refines \chains(\cW)$.
\end{proposition}

The proof of the proposition relies on the following lemma.

\begin{lemma}
	\label{lem:criterion-close-chains}
	Let $X$ be a Peano continuum and $\cU$ a brick partition of $X$.
	If $\cC, \cC' \in \chains(X)$ are such that
	\begin{equation}
		\label{eq:inclusion-star}
		\set{\Star(K, \cU) : K \in \cC} \sub \set{\Star(K', \cU) : K' \in \cC'},
	\end{equation}
	then $d_{\chains}(\cC, \cC') \le \mathrm{mesh}_d(\cU)$.
\end{lemma}
\begin{proof}
	Set $\varepsilon \coloneqq \mathrm{mesh}_d(\cU)$.  Recall that in order to show that $d_{\chains}(\cC, \cC') \le \varepsilon$, it suffices to prove that for every $K \in \cC$ there is $K' \in \cC'$ such that $d_H(K,K') \le \varepsilon$, and symmetrically that for every $K' \in \cC'$ there is $K \in \cC$ such that  $d_H(K,K') \le \varepsilon$.

	To this end, fix  $K \in \cC$.
	By hypothesis, there is $K' \in \cC'$ such that $\Star(K, \cU) = \Star(K', \cU)$.
	It follows that $d_H(K, K') \le \varepsilon$.

	Fix next $K' \in \cC'$ and let $K \in \cC$ be the minimum element such that $\Star(K', \cU) \sub \Star(K, \cU)$. If $K$ is the root of $\cC$, then
	\begin{equation*}
		K \sub \bigcap \set{\Cl(U) : U \in \Star(K, \cU)} \sub \bigcap \set{\Cl(U) : U \in\Star(K', \cU)},
	\end{equation*}
	which implies $d_H(K,K') \le \varepsilon$. Otherwise, by maximality and connectedness of $\cC$ (as a subset of $C(X)$, see \cite{illanes1999hyperspaces}*{Lemma 14.7}), see , we have that
	\begin{equation*}
		K = \Cl\left(\bigcup \set{F \in \cC : F \subsetneq K}\right).
	\end{equation*}
	The minimality assumption on $K$ and \eqref{eq:inclusion-star} entail $\Star(F, \cU) \subsetneq \Star(K', \cU)$ for every $F \in \cC$ with $F \subsetneq K$, hence we have that $K \sub  \Cl(\bigcup \Star(K', \cU))$. This, along with $\Star(K', \cU) \sub \Star(K, \cU)$, is sufficient to deduce $d_H(K, K') \le \varepsilon$.
\end{proof}

\begin{proof}[Proof of \Cref{prop:induced-quasi-partition}]
	Let $\varepsilon \coloneqq \mathrm{mesh}_{d}(\cW)$. Given a walk $w$ on $\cW$ such that $\Imm(w) = \cW$, any two chains $\cC, \cC' \in \opwalk{\cW}{w}$ satisfy
	\begin{equation*}
		\set{\Star(K, \cU) : K \in \cC} = \set{\Star(K', \cU) : K' \in \cC'}.
	\end{equation*}
	\Cref{lem:criterion-close-chains} then shows that the $d_{\chains}$-diameter of each $\opwalk{\cW}{w} \in \chains(\cW)$ is at most $\varepsilon$.

	The moreover statement follows from \eqref{eq:refine-iff-subset}. We can thus use it to prove that $\bigcup \chains(\cW)$ is dense in $X$: more precisely, since $\cW$ can be refined by arbitrarily fine brick partitions, it suffices to show that $\bigcup \chains(\cW)$ is $\varepsilon$-dense. Fix some $\cC \in \chains(X)$.
	The set $\set{\Star(K, \cW) : K \in \cC}$
	consists of a finite number of $E$-connected subsets of $\cW$ linearly ordered by inclusion
	\begin{equation*}
		\cW_0 \subset \cdots \subset \cW_{m-1} = \cW.
	\end{equation*}
	Let $w$ be any walk which starts in $\cW_0$, and successively covers each $\cW_i$, taking care of not exiting it before having visited each of its elements.
	By \Cref{lem:criterion-close-chains},  for any $\cC' \in \opwalk{\cW}{w}$, one has $d_\chains(\cC, \cC') \le \varepsilon$.

\end{proof}

\begin{corollary}
	\label{cor:refinement_walk}
	Let $X$ be a Peano continuum, and fix a brick partition $\cU$ and a walk $u$ on $\cU$.
	Let $\bV \subseteq \opwalk{\cU}{u} \sub \chains(X)$ be a nonempty open set.
	There exist a brick partition and a reduced walk $(\cV, v) \refines (\cU, u)$ such that $\opwalk{\cV}{v} \subseteq \bV$.
\end{corollary}
\begin{proof}
	Up to passing to a smaller open set, we can suppose that $\bV$ is the $d_{\chains}$-ball of radius $\varepsilon$ centered around some $\cC \in \opwalk{\cU}{u}$.
	Let $\cV \refines \cU$ be a brick partition whose mesh is smaller than $\varepsilon$.
	By \Cref{prop:induced-quasi-partition}, there is a reduced walk $v$ on $\cV$ such that $\Imm(v)  = \cV$ and $\cC \in \Cl(\opwalk{\cV}{v})$.
	By the same proposition, the diameter of $\opwalk{\cV}{v}$ is smaller than $\varepsilon$, so $\opwalk{\cV}{v} \sub \bV \sub \opwalk{\cU}{u}$.
	Since $\Imm(v)  = \cV$, we can apply \eqref{eq:refine-if-subset} and conclude that $\rmap{\cV}{\cU}\cdot v \refex u$.
\end{proof}

We state a preliminary lemma which will be needed later on.
Recall from \Cref{sec:walks-on-graphs} that $\ball{1}{z}$, for a walk $z$ on $\cZ$, is the set of $ Z' \in \cZ$ such that there is $Z \in \Imm(z)$ with $Z \E Z'$.

\begin{lemma}[Spacing out Paths]
	\label{lem:path-to-spaced}
	Let $X$ be a Peano continuum, and fix a brick partition $\cW$ and a path $w$ on $\cW$.
	There are a brick partition $\cZ \refines \cW$ and a spaced path $z$ on $\cZ$ such that $\rmap{\cZ}{\cW} \cdot z$ monotonically refines $w$ and $\rmap{\cZ}{\cW}(\ball{1}{z}) \sub \Imm(w)$.
	If $w$ is moreover circular, $z$ can also be chosen to be circular.
\end{lemma}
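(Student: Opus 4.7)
The plan is to construct a simple arc $\gamma$ in $X$ tracing $w$ in order, and then refine $\cW$ to a partition $\cZ$ so fine that the $\cZ$-bricks containing finitely many evenly spaced samples of $\gamma$ form the required spaced path. For each $i<k-1$ pick $x_i \in \Cl(W_i) \cap \Cl(W_{i+1}) \cap (W_i \vee W_{i+1})$ via \Cref{fact:points-on-boundaries}(\ref{itm:bing}); being in the interior of $\Cl(W_i) \cup \Cl(W_{i+1})$ forces $x_i \notin \Cl(W_j)$ for $j \notin \set{i, i+1}$. Fix interior points $x_{-1} \in W_0$ and $x_{k-1} \in W_{k-1}$, and via \Cref{fact:existence-of-arcs}(\ref{itm:existence-of-arcs}) build arcs $\gamma_i \sub W_i \cup \set{x_{i-1}, x_i}$ connecting $x_{i-1}$ to $x_i$. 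Their concatenation $\gamma\colon[0,1] \to X$ is a simple arc, because non-adjacent $\gamma_i, \gamma_j$ lie in disjoint subsets of $X$ while adjacent ones share only the pivot $x_i$.

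Two quantitative bounds will control the mesh. Let $D \coloneqq \min_{i<k} \dist\bigl(\gamma_i,\, \bigcup_{j \notin \set{i-1, i, i+1}} \Cl(W_j)\bigr)$, positive by compactness and disjointness. Since $\gamma$ is a simple arc, $h(r) \coloneqq \inf\set{\dist(\gamma(s), \gamma(t)) : |s - t| \ge r}$ is strictly positive for every $r > 0$. By uniform continuity of $\gamma$, fix an integer $N$ so large that every subsegment $\gamma([i/N, (i+1)/N])$ has diameter below $D/10$ and crosses at most one transition point $x_j$; shift the sampling grid slightly if necessary so that no sample $\gamma(i/N)$ coincides with any $x_j$. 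Apply \Cref{thm: brick partitioning theorem} to obtain $\cZ \refines \cW$ of mesh strictly less than $\epsilon \coloneqq \min(D/3, h(2/N)/3)$.

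For each $i \le N$ let $Z_i$ be any $\cZ$-brick whose closure contains $\gamma(i/N)$; then $Z_i \sub W_{j(i)}$, where $\gamma(i/N) \in W_{j(i)}$. Let $z$ be the walk $(Z_i)_{i \le N}$ with consecutive duplicates collapsed. Consecutive entries of $z$ are $E$-adjacent, since $\gamma$ transits between them through a shared boundary. For indices $i, j$ with $|i - j| \ge 2$ one has $\dist(\gamma(i/N), \gamma(j/N)) \ge h(2/N) > 3\epsilon$; if $Z_i \E Z_j$, their closures would share a point lying within $\epsilon$ of both samples, forcing the distance between samples to fall below $2\epsilon$, a contradiction. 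Hence $Z_i \ne Z_j$ and $Z_i \notE Z_j$ across non-consecutive positions, and $z$ is a spaced path. Each subsegment of $\gamma$ crosses at most one transition, so $\rmap{\cZ}{\cW} \cdot z$ visits $W_0, \dots, W_{k-1}$ in order and monotonically refines $w$. The bound $\epsilon < D/3$ yields $\rmap{\cZ}{\cW}(\ball{1}{z}) \sub w$: any $Z' \E Z_i$ with $Z_i \sub W_j$ has $\Cl(Z')$ within $2\epsilon < D$ of $\gamma_j$, forcing $Z' \sub W_{j'}$ for some $j' \in \set{j-1, j, j+1}$. The circular and lasso cases follow the same recipe, taking $\gamma$ to be a simple closed curve (resp.\ a simple closed curve joined to a simple arc). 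The main obstacle is preserving the spaced condition in tandem with the refinement and neighborhood constraints; this is overcome by pushing the mesh below the simplicity modulus $h$ of $\gamma$.
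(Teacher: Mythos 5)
Your first step — threading an arc $\gamma$ through the bricks of $w$ via the pivot points $x_i \in \Cl(w(i))\cap\Cl(w(i+1))\cap(w(i)\vee w(i+1))$ and arcs $\gamma_i \sub w(i)\cup\set{x_i,x_{i+1}}$ — is exactly the paper's. The gap is in the discretization. You fix $N$ first, then take $\cZ$ of mesh $\epsilon \le h(2/N)/3$, and let $Z_i$ be \emph{a single brick} of $\cZ$ whose closure contains the sample $\gamma(i/N)$. But consecutive samples $\gamma(i/N)$ and $\gamma((i+1)/N)$ are at distance on the order of $D/10$, which is in general vastly larger than the mesh $\epsilon$; the subarc joining them crosses a long chain of tiny bricks of $\cZ$, and there is no reason whatsoever that $\Cl(Z_i)\cap\Cl(Z_{i+1})\ne\emptyset$. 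The justification ``$\gamma$ transits between them through a shared boundary'' is false: it transits through many intermediate bricks. So $z$ is not even a walk, and reversing the order of quantifiers does not save the argument, since two arbitrarily close points of $X$ can still lie in non-adjacent bricks (a thin brick can be wedged between them). The only robust way to get adjacency \emph{and} spacing simultaneously is to amalgamate: take a fine $\cZ'\refines\cW$, set $P_i\coloneqq\Star(\gamma_i,\cZ'(w(i)))$ (an $\E$-connected family, with consecutive stars meeting at $x_{i+1}$), and pass to the amalgam $\cZ$ of $\cZ'$ containing the bricks $\bigvee P_i$; the fineness condition $\ball{1}{\Star(\gamma_i,\cZ')}\sub\cZ'(w(i-1)\vee w(i)\vee w(i+1))$ then delivers both the spaced-path property and $\rmap{\cZ}{\cW}(\ball{1}{z})\sub w$. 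This amalgamation step is what your proposal is missing.

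A secondary, fixable imprecision: for the conclusion $\rmap{\cZ}{\cW}(\ball{1}{z})\sub w$ you must control the distance from $\gamma_i$ to the closures of \emph{all} bricks of $\cW$ other than $w(i-1),w(i),w(i+1)$ — including those not on the path $w$ — not just to the $\Cl(W_j)$ with $j$ an index of $w$, as in your definition of $D$. This is still positive because $x_i\in w(i)\vee w(i+1)$ forces $x_i$ to avoid the closure of every other brick of $\cW$, but as written your $D$ does not rule out neighbors of $z$ landing in bricks of $\cW\setminus w$.
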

\begin{proof}
	Find $x_0 \in w(0)$ and $x_{\len(w)} \in w(-1)$.
	For each $0< i < \len(w)$, let $x_i \in \Cl(w(i-1)) \cap \Cl(w(i)) \cap (w(i-1) \vee w(i))$, which exists by \Cref{fact:points-on-boundaries}.\eqref{itm:bing}.
	For each $i <\len(w)$, let $\gamma_i$ be an arc in $w(i) \cup \set{x_i, x_{i+1}}$ connecting $x_i, x_{i+1}$, which exist by \Cref{fact:existence-of-arcs}.\eqref{itm:existence-of-arcs}.
	Since $w$ is a path, all the $x_i$'s and all the $\gamma_i$'s are distinct, and the $\gamma_i$'s intersect at most in their endpoints.

	Let $\cZ' \refines \cW$ be a brick partition fine enough that $\ball{1}{\Star(\gamma_i, \cZ')} \sub \cZ'(w(i-1) \vee w(i) \vee w(i+1))$ for each $i<\len(w)$, and let $P_i \coloneqq \Star(\gamma_i, \cZ'(w(i)))$.
	Let $\cZ$ be an amalgam of $\cZ'$ which refines $\cW$ and contains each $\bigvee P_i$ for $i< \len(w)$.
	Then $z \coloneqq \walk{\bigvee P_0, \dots,\bigvee P_{\len(w)-1}}$ is a spaced path satisfying the conclusions of the proposition.

	If $w$ is circular, we can take $x_0 = x_{\len(w) }$ in $\Cl(w(-1)) \cap \Cl(w(0)) \cap (w(-1) \vee w(0))$ and add `$\!\!\mod \len(w)$' where needed in the proof above.
\end{proof}

\subsection{A Criterion for the Absence of Generic Chains}

We recall the following criterion due to Rosendal for verifying the existence of comeager orbits, whose proof appears in \cite{ben2017metrizable}*{Proposition 3.2}.
Recall that a continuous action $H \acts Y$ is \emph{topologically transitive} if $H \cdot \bU \cap \bV \ne \emptyset$ for any nonempty open sets $\bU, \bV \sub Y$.

\begin{theorem}[Rosendal's criterion] \label{fact:rosendal}
	Let $H$ be a Polish group, let $Y$ be a Polish space and let $H \curvearrowright Y$ be a continuous action. The following are equivalent:
	\begin{enumerate}
		\item \label{item1:rosendal} There is a comeager orbit in $H \acts Y$.
		\item \label{item2:rosendal} $H \acts Y$ is topologically transitive and for any open neighborhood $H_0$ of the identity in $H$ and any nonempty open set $\bU \subseteq Y$ there is a nonempty open $\bV \subseteq \bU$ such that for any nonempty open $\bW_0, \bW_1 \subseteq \bV$, $H_0 \cdot \bW_0 \cap \bW_1 \ne \emptyset$.
	\end{enumerate}
\end{theorem}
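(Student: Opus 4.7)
The plan is to prove the two implications separately. Direction \eqref{item1:rosendal} $\Rightarrow$ \eqref{item2:rosendal} should follow from the Effros theorem on openness of orbit maps for non-meager orbits of Polish group actions, while \eqref{item2:rosendal} $\Rightarrow$ \eqref{item1:rosendal} requires a Baire category fusion argument.

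For \eqref{item1:rosendal} $\Rightarrow$ \eqref{item2:rosendal}, assume $H \cdot y_0$ is comeager. Topological transitivity is immediate: for any nonempty open $\bU, \bV \sub Y$ the comeager orbit meets both, say at $h_0 \cdot y_0 \in \bU$ and $h_1 \cdot y_0 \in \bV$, and then $(h_1 h_0^{-1}) \cdot (h_0 \cdot y_0) = h_1 \cdot y_0$ witnesses $H \cdot \bU \cap \bV \ne \emptyset$. For the second clause, given $H_0 \ni e$ open, fix an open $H_1 \ni e$ with $H_1 H_1^{-1} \sub H_0$. For any nonempty open $\bU \sub Y$, pick $y_1 \in \bU \cap H \cdot y_0$; the Effros theorem applies because $H \cdot y_1 = H \cdot y_0$ is non-meager, so the orbit map $h \mapsto h \cdot y_1$ is open onto $H \cdot y_1$. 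This yields an open $\bV$ with $y_1 \in \bV \sub \bU$ and $\bV \cap H \cdot y_1 \sub H_1 \cdot y_1$. Any two nonempty open $\bW_0, \bW_1 \sub \bV$ meet the comeager orbit, providing $h_i \cdot y_1 \in \bW_i$ with $h_i \in H_1$; the element $h_1 h_0^{-1} \in H_1 H_1^{-1} \sub H_0$ satisfies $(h_1 h_0^{-1}) \cdot (h_0 \cdot y_1) = h_1 \cdot y_1 \in \bW_1$, giving the amalgamation.

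For \eqref{item2:rosendal} $\Rightarrow$ \eqref{item1:rosendal}, I would fix a countable basis $\{\bU_n\}$ of $Y$ and a decreasing symmetric neighborhood basis $\{H_m\}$ of the identity in $H$ with $H_{m+1} H_{m+1}^{-1} \sub H_m$. The strategy is to construct, via a countable fusion, a comeager set $Y^* \sub Y$ such that any two points of $Y^*$ lie in the same $H$-orbit; any $y \in Y^*$ then has $H \cdot y \supseteq Y^*$, hence comeager. At the stage of the fusion indexed by a pair $(n, m)$, I would apply condition \eqref{item2:rosendal} to the pair $(\bU_n, H_m)$ to obtain a shrinking $\bV(n, m) \sub \bU_n$ with the amalgamation property, and add to $Y^*$ the requirement that each orbit passing through $\bU_n$ passes through $\bV(n, m)$; topological transitivity ensures the resulting $H$-saturated open sets are dense.

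The main obstacle will be extracting, for a given pair $y, y' \in Y^*$, a single $h \in H$ with $h \cdot y = y'$. The standard device is to use the amalgamation to inductively produce a sequence $(h_m)_m \sub H$ with $h_m \cdot y$ converging to $y'$ and $h_{m+1} h_m^{-1} \in H_m$; the choice $H_{m+1} H_{m+1}^{-1} \sub H_m$ ensures that $(h_m)_m$ is Cauchy in the two-sided uniformity on $H$, hence convergent to some $h \in H$ by completeness, and continuity of the action yields $h \cdot y = y'$. The delicate bookkeeping is to orchestrate the fusion so that this back-and-forth can be carried out uniformly for all pairs $(y, y')$ in $Y^*$ rather than a single chosen pair, which amounts to a suitable diagonal enumeration of the countably many pairs of basic open sets and neighborhoods of the identity.
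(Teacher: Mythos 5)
The paper offers no proof of this statement: it is imported verbatim from \cite{ben2017metrizable}*{Proposition 3.2}, so the only benchmark is that source. Your direction \eqref{item1:rosendal} $\Rightarrow$ \eqref{item2:rosendal} is correct and is the standard argument: topological transitivity because the comeager orbit meets every nonempty open set, and the amalgamation clause by applying Effros's theorem at a point $y_1$ of the (non-meager) orbit inside $\bU$ to shrink $\bU$ to a $\bV$ with $\bV \cap H\cdot y_1 \sub H_1\cdot y_1$, then translating by $h_1h_0^{-1}\in H_1H_1^{-1}\sub H_0$.

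In the converse direction the architecture (comeager invariant set $Y^*$, back-and-forth conjugating any two of its points, completeness of $H$) is right, but one step as written fails. The condition $h_{m+1}h_m^{-1}\in H_m$ with $H_{m+1}H_{m+1}^{-1}\sub H_m$ makes $(h_m)_m$ Cauchy only in the \emph{right} uniformity: for $k>m$ it gives $h_kh_m^{-1}\in H_{m-1}$, but says nothing about $h_m^{-1}h_k = h_m^{-1}(h_kh_m^{-1})h_m$, which would require the conjugates $h_m^{-1}H_{m-1}h_m$ to be small. Polish groups are complete only in the two-sided (Raikov) uniformity, and one-sided Cauchy sequences need not converge; this is precisely the situation for the groups $\Homeo(X)$ relevant here, which are far from SIN. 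The standard repair is to choose the neighborhoods adaptively: once $h_m$ is built, use continuity of multiplication and inversion at $h_m$ to pick a basic neighborhood $H_{j(m)}$ of the identity such that every $g\in H_{j(m)}$ moves both $h_m$ and $h_m^{-1}$ by less than $2^{-m}$ in a fixed complete compatible metric on $H$, and invoke the amalgamation clause for that $H_{j(m)}$; since your $Y^*$ already encodes all pairs $(\bU_n,H_m)$, this costs nothing beyond reordering the fusion. A smaller omission: $Y^*$ must also be intersected with the comeager set of points with dense orbit, since the saturation requirements ``if the orbit meets $\bU_n$ then it meets $\bV(n,m)$'' are vacuous for points whose orbit closure misses $\bU_n$, and the back-and-forth cannot start without density.
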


The goal of this section is to obtain a combinatorial formulation of \eqref{item1:rosendal} $\Rightarrow$ \eqref{item2:rosendal} from \nameref{fact:rosendal} (\Cref{fact:rosendal}) when $Y=\chains(X)$ is the space of maximal connected chains on a Peano continuum $X$ and $H=\Homeo(X)$ is its homeomorphism group. This is done in \Cref{thm:from-Rosendal-to-walks}, after establishing our discretization theorem for homeomorphism of a Peano continuum, \Cref{thm:approximate-homeo-with-epi}.
The following is a generalization of \cite{MOT}*{Corollary 2.11} (where $\cU = \cV$).

\begin{proposition}
	\label{prop:approximate-core-refinement}
	Let $X$ be a Peano continuum and let $\cU, \cV$ be brick partitions of $X$.
	There is an arbitrarily fine brick partition $\cW \refines \cU$ such that:
	\begin{enumerate}
		\item \label{itm:nonempty-connected-core} $\Core(V, \cW)$ is nonempty and $\E$-connected for every $V \in \cV$;
		\item \label{itm:neighbor-core} for every $W \in \cW$ there are $V \in \cV$ and $W' \in \Core(V, \cW)$ such that $W \E W'$.
	\end{enumerate}
\end{proposition}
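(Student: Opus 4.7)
My approach generalizes the proof of the special case $\cU = \cV$ from \cite{MOT}*{Corollary 2.11}, which this proposition explicitly extends. I would apply Bing's theorem (\Cref{thm: brick partitioning theorem}) to produce a brick partition $\cW \refines \cU$ of arbitrarily small mesh $\delta > 0$; the key is to choose $\delta$ small enough relative to the internal geometry of each $V \in \cV$.

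For condition (1), I would fix, for each $V \in \cV$, a compact arcwise connected ``skeleton'' $K_V \subseteq V$ with $d(K_V, \Bd V) > 0$, taken to be large inside $V$; this is possible because $V$, being open in the Peano continuum $X$, is arcwise connected. Choosing $\delta < d(K_V, \Bd V)/3$ for every $V \in \cV$ ensures that every brick $W \in \cW$ meeting $K_V$ has $\Cl(W) \subseteq V$ and so lies in $\Core(V, \cW)$, yielding nonemptiness. The bricks meeting $K_V$ form an $\E$-connected subgraph, since $K_V$ is connected and covered by their closures. For any other $W \in \Core(V, \cW)$, I would connect it to this subgraph via a chain of core bricks by drawing an arc in $V$ from $W$ to $K_V$ that stays at distance $> \delta$ from $\Bd V$ and following the bricks along it: every such brick is again in $\Core(V, \cW)$ thanks to the smallness of the mesh.

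For condition (2), I would use the reflexivity of the nerve graph ($W \E W$ is automatic), so the condition is trivial for every $W$ already lying in some $\Core(V, \cW)$. For a ``shell'' brick $W$ whose closure meets $\Bd V$ for some $V \in \cV$, I would pick a point $p \in W$ and some $V' \in \cV$ with $p \in V'$, draw an arc in $V'$ from $p$ toward $K_{V'}$, and argue that for a fine enough $\delta$, the first brick along this arc that enters $\Core(V', \cW)$ is $\E$-adjacent to $W$ — a local analysis using that consecutive bricks along an arc share a point of the arc in their closures.

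\textbf{Main obstacle.} The delicate point is condition (2): the proposition demands direct $\E$-adjacency of every brick to a core brick, not merely connectivity through a chain of bricks. If the mesh choice alone fails to force this for certain shell bricks, I anticipate needing an amalgamation step at the end — merging each problematic shell brick with a neighboring cluster that already contains a core brick, while carefully preserving both the brick partition structure and the refinement of $\cU$. This would parallel the amalgamation technique used in the proof of \cite{MOT}*{Corollary 2.11}.
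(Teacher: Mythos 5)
There is a genuine gap, and it concerns condition \eqref{itm:nonempty-connected-core}, not just condition \eqref{itm:neighbor-core}. Your argument for $\E$-connectedness of $\Core(V,\cW)$ assumes that from an arbitrary $W\in\Core(V,\cW)$ you can draw an arc to the skeleton $K_V$ ``that stays at distance $>\delta$ from $\Bd(V)$.'' But membership in $\Core(V,\cW)$ only says $\Cl(W)\subseteq V$; such a brick can lie entirely within distance $\delta$ of $\Bd(V)$, so no arc starting in $W$ can keep the required clearance, and the bricks along any arc out of $W$ may well have closures meeting $\Bd(V)$. Indeed, for a fine brick partition the set $\Core(V,\cW)$ need \emph{not} be $\E$-connected: a small brick near $\Bd(V)$ can have closure inside $V$ while every $\E$-neighbor straddles the boundary, leaving it isolated from the ``deep'' core in the nerve graph. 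No choice of mesh fixes this, so the proposition cannot be obtained from Bing's theorem plus fineness alone; an amalgamation is essential already for condition \eqref{itm:nonempty-connected-core}. Your treatment of condition \eqref{itm:neighbor-core} has the same character: the ``first brick along the arc that enters $\Core(V',\cW)$'' is adjacent to its predecessor on the arc, not to $W$, and you defer the repair to an unspecified amalgamation.

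That amalgamation is in fact the entire content of the paper's proof, and it has to be designed globally rather than as a patch for ``problematic shell bricks.'' The paper starts from a fine $\cW_0\refines\cU$ and, \emph{within each} $U\in\cU$ (so that the amalgam still refines $\cU$), declares a fine brick to be of core type for $V$ exactly when it can be joined, inside $\cW_0(U\cap V)$ and avoiding $\Star(\Bd(V),\cW_0)$, to a continuum-theoretical tree $T_V\subseteq V$ chosen to meet \emph{every} $U\in\Star(V,\cU)$; the remaining bricks of $\cW_0(U)$ are grouped into their $\E$-connected components. One must then verify two non-obvious facts that your sketch does not address: that each leftover component is not accidentally contained in any $\Core(V,\cW_0)$ (so that after amalgamation $\Core(V,\cW)$ is exactly the set of amalgamated core blocks, which equals $\Star(T_V,\cW)$ and is therefore $\E$-connected), and that each leftover component, being a connected component of the complement of the core blocks inside the connected set $\cW_0(U)$, is necessarily $\E$-adjacent to a core block — which is precisely condition \eqref{itm:neighbor-core}. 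The choice of $T_V$ meeting every $U$ adjacent to $V$ is what guarantees there are core blocks in every relevant $U$ for this last step to work. As it stands, your proposal identifies the difficulty but does not supply the construction that resolves it.
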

\begin{proof}
	Up to taking a refinement we can assume that $\cU$ is already as fine as desired.
	For each $V \in \cV$, let $T_V$ be a continuum-theoretical tree in $V$ which meets every $U \in \Star(V,\cU)$.
	Let $\cW_0 \refines\cU$ be a brick partition fine enough so that $\Star(T_V, \cW_0) \cap \Star(\Bd(V), \cW_0) = \emptyset$ for all $V \in \cV$.

	Given $U \in \cU$ and $V \in \cV$, let $\cC^{U,V}$ be the set of elements $W \in \cW_0(U \cap V)$ for which there is a path from $W$ to
	$\Star(T_V \cap U , \cW_0)$ in $\cW_0(U \cap V)$ which avoids $\Star(\Bd(V), \cW_0)$.
	Note that $\cC^{U,V} \subseteq \Core(V, \cW_0)$, and that $\cC^{U,V}$ is nonempty if and only if $U \cap V \ne \emptyset$, by the fineness assumption on $\cW_0$.
	Let $\cC^{U,V}_0, \dots, \cC^{U,V}_{m_{U,V}-1}$ be the $\E$-connected components of $\cC^{U,V}$.

	For each $U \in \cU$, let $\cB^U \coloneqq \cW_0(U) \setminus \bigcup_{V \in \cV} \cC^{U,V}$, and call its $\E$-connected components $\cB^U_0, \dots, \cB^U_{n_U-1}$.
	We claim that if $\cB^U$ is nonempty, then for each $V \in \cV$ and $i<n_U$,
	\begin{equation}
		\label{eq:B_is-intersect-boundaries}
		\cB^U_i \nsubseteq \Core(V, \cW_0).
	\end{equation}
	Indeed, suppose that there is $W \in \cB^U_i \cap \Core(V, \cW_0)$.
	Since $\cW_0(U)$ is $E$-connected and $U \cap V \ne \emptyset$, there is a path $P$ from $W \in \cB^U_i$ to $\Star(T_V, \cW_0)$ in $\cW_0(U)$.
	By assumption $W \not \in \cC^{U,V}$, so there must be a minimal $k<\len(P)$ such that $P(k) \in \Star(\Bd(V), \cW_0)$.
	For each $j < k$, $P(j) \in \cW_0(U \cap V)$, so $P(j) \not \in \cC^{U,V}$, as otherwise there would be a path from $W$ to $\Star(T_V, \cW_0)$ in $\cW_0(U \cap V)$ which avoids $\Star(\Bd(V), \cW_0)$.
	But $P(j) \not \in \cC^{U,V'}$, for $V' \ne V$, because it is contained in $V$, so $P(j) \in \cB^U$.
	Moreover, $P(k) \in \cB^U$, since it belongs to $\Star(\Bd(V), \cW_0)$.
	Since $\cB^U_i$ is a connected component, $P(k) \in \cB^U_i$, so $\cB^U_i \nsubseteq \Core(V, \cW_0)$.

	Notice that
	\begin{equation*}
		\set{\cC^{U,V}_j : U \in \cU, V \in \cV, j < m_{U,V}} \cup \set{\cB^U_i : U \in \cU, i<n_U}
	\end{equation*}
	is a partition of $\cW_0$ in $\E$-connected sets, which refines $\cU$.
	Let $\cW$ be the amalgam of $\cW_0$ relative to this partition.

	Given $W \in \cW$ and $V \in \cV$, by \eqref{eq:B_is-intersect-boundaries} we have that
	\begin{equation} \label{eq:V(W)}
		W \in \Core(V,\cW) \iff W = \bigvee \cC^{U,V}_j \text{ for some } U \in \cU, j < m_{U,V}.
	\end{equation}
	This shows in particular that $\Core(V,\cW) = \Star(T_V, \cW)$, which is nonempty and $E$-connected.

	To verify item \eqref{itm:neighbor-core}, pick $W \in \cW$. If there is $V \in \cV$ such that $W \in \Core(V,\cW)$ then let $W' \coloneqq W$.
	If that is not the case, then by \eqref{eq:V(W)} we know that $W = \bigvee \cB^U_i$ for some $U \in \cU$ and $i<n_U$.
	Since $\cB^U_i$ is a connected component of $\cB^U$ and $\cW_0(U)$ is $E$-connected, there is some $V \in \Star(U,\cV)$ and $j < m_{U,V}$ such that $(\bigvee \cB^U_i) \E (\bigvee \cC^{U,V}_j)$.
	By \eqref{eq:V(W)} we then have $\bigvee \cC^{U,V}_j \in \Core(V,\cW)$, as desired.
\end{proof}

We recall that, under the identification of a partition $\cW$ with its nerve graph, the set $\ball{1}{W}$, for $W \in \cW$, is the set of all $W' \in \cW$ such that $W \E W'$.

\begin{theorem}
	\label{thm:approximate-homeo-with-epi}
	Let $X$ be a Peano continuum, $\cU, \cV$ be brick partitions on $X$, and let $h \in \homeo(X)$.
	There are a brick partition $\cW \refines \cU$ and a monotone epimorphism $\alpha \colon \cW \to \cV$ such that $h[\Cl(W)] \sub \bigvee \ball{1}{\alpha(W)}$ for each $W \in \cW$.

	Moreover, if $u,v$ are walks on $\cU,\cV$, respectively, such that $\opwalk{\cU}{u}\subseteq h^{-1}\cdot\opwalk{\cV}{v}$, then $\cW$ and $\alpha$ can be chosen so that there exists a walk $w$ on $\cW$ such that $\rmap{\cW}{\cU}\cdot w\refex u$ and $\alpha\cdot w\refex v$.
\end{theorem}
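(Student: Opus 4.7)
The plan is to apply \Cref{prop:approximate-core-refinement} with the given $\cU$ in its $\cU$ role and the pushforward brick partition $h^{-1}\cdot\cV \coloneqq \set{h^{-1}[V] : V\in\cV}$ in its $\cV$ role (this is a brick partition since $h$ preserves the defining uniform local connectedness across all compatible metrics). This yields, at any desired fineness, a brick partition $\cW\refines\cU$ such that $\Core(h^{-1}[V],\cW)$ is nonempty and $\E$-connected for each $V\in\cV$, and every $W\in\cW$ is $\E$-adjacent to some element of such a core. I define $\alpha\colon\cW\to\cV$ by setting $\alpha(W)\coloneqq V$ when $W\in\Core(h^{-1}[V],\cW)$, and, for a boundary $W$, by selecting some $V$ with $W\E W'\in\Core(h^{-1}[V],\cW)$.

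To secure the approximation $h[\Cl(W)]\sub\runion\ball{1}{\alpha(W)}$, I exploit the arbitrariness of the fineness of $\cW$. A short check shows $\Cl(V)\sub\runion\ball{1}{V}$ for every $V\in\cV$ (any $x\in\partial V$ only belongs to closures of $\cV$-elements in $\ball{1}{V}$, so small neighborhoods of $x$ lie in $\bigcup_{V'\in\ball{1}{V}}\Cl(V')$); hence by compactness there is a uniform $\delta>0$ with $d(\Cl(V), X\sminus\runion\ball{1}{V})>2\delta$ for all $V$. By uniform continuity of $h$ and \Cref{thm: brick partitioning theorem}, I refine $\cW$ further so that $\diam(h[\Cl(W)])<\delta$ for each $W$. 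The core case is immediate; in the boundary case, $\Cl(W)\cap h^{-1}[V]\neq\emptyset$ (from $W\E W'\in\Core(h^{-1}[V],\cW)$) yields a point of $h[\Cl(W)]$ in $V$, and the small diameter forces $h[\Cl(W)]\sub\runion\ball{1}{V}$. The analogous diameter argument applied to $h[\Cl(W_1)\cup\Cl(W_2)]$ for $W_1\E W_2$ forces $\alpha(W_2)\in\ball{1}{\alpha(W_1)}$, so $\alpha$ is a graph homomorphism. Each $\alpha^{-1}(V)$ is the union of the $\E$-connected $\Core(h^{-1}[V],\cW)$ with boundary elements each adjacent to it, hence nonempty and $\E$-connected. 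Surjectivity on edges $V_1\E V_2$ follows by choosing $\cW$ fine enough that a point $x\in\Cl(h^{-1}[V_1])\cap\Cl(h^{-1}[V_2])$ lies in $\Cl(W_1)\cap\Cl(W_2)$ for some $W_i\in\Core(h^{-1}[V_i],\cW)$.

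For the moreover part, the hypothesis rewrites as $\opwalk{\cU}{u}\sub\opwalk{h^{-1}\cdot\cV}{h^{-1}\cdot v}$, where $h^{-1}\cdot v\coloneqq\walk{h^{-1}[v(0)], h^{-1}[v(1)], \dots}$ is the pushed-forward walk on the nerve of $h^{-1}\cdot\cV$. Inspecting roots of chains in $\opwalk{\cU}{u}$ yields the key nonemptiness: $u(i)\cap h^{-1}[v(j)]\neq\emptyset$ whenever the two walks are simultaneously required to visit these cells, so the ``intersection cores'' $\rmap{\cW}{\cU}^{-1}(u(i))\cap\Core(h^{-1}[v(j)],\cW)$ are nonempty for fine enough $\cW$. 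I then construct $w$ by a joint inductive lift that interleaves steps along $u$ (lifted via $\rmap{\cW}{\cU}$ using \Cref{lem:lemmetto-monotone}) and along $v$ (lifted via $\alpha$), with each transition between intersection cores realized by an $\E$-edge guaranteed by the $\E$-connectedness of the cores and the nonemptiness above. The main technical obstacle is this joint construction: to arrange $\cW$, $\alpha$, and $w$ in tandem so that the approximation, the monotone epimorphism property of $\alpha$, and the twin refinement conditions $\rmap{\cW}{\cU}\cdot w\refex u$ and $\alpha\cdot w\refex v$ all hold simultaneously; this likely requires a further application of \Cref{prop:approximate-core-refinement} jointly incorporating $u$ and $h^{-1}\cdot v$ into the core structure of $\cW$, together with a careful choice of boundary assignments for $\alpha$ along the path traced by $w$.
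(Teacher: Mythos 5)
Your first part follows the paper's route (apply \Cref{prop:approximate-core-refinement} to $\cU$ and $h^{-1}\cdot\cV$, define $\alpha$ on cores and then on boundary cells by adjacency), and your metric $\delta$-argument for the approximation and for the homomorphism property is a legitimate substitute for the paper's combinatorial fineness conditions. However, your edge-surjectivity step fails as stated: if $W_i \in \Core(h^{-1}[V_i],\cW)$ then $\Cl(W_i) \sub h^{-1}[V_i]$, so a point $x \in \Cl(W_1)\cap\Cl(W_2)$ would lie in $h^{-1}[V_1]\cap h^{-1}[V_2] = \emptyset$. No choice of fineness can produce such a configuration. The correct mechanism (as in the paper) is to take an arc $\gamma \in O(V_1,V_2)$, choose $\cW$ fine enough that $\Star(h^{-1}(\gamma),\cW) \sub \cW(h^{-1}(V_1\vee V_2))$ and meets both cores, and take a path of $\cW$-cells along $h^{-1}(\gamma)$; since every cell of that path is assigned by $\alpha$ to $V_1$ or $V_2$, some edge of the path is mapped onto $V_1 \E V_2$.

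The more serious gap is the ``moreover'' clause, which you explicitly leave as ``the main technical obstacle.'' The interleaving you gesture at cannot be organized walk-by-walk, because nothing in the combinatorics of $u$ and $v$ alone tells you in what order the cells of $u$ and of $h^{-1}\cdot v$ must be visited. The missing idea is to fix a single chain $\cC \in \opwalk{\cU}{u} \sub h^{-1}\cdot\opwalk{\cV}{v}$ and use it as the scheduler: the witnessing continua $C^u_s \in O(u(0),\dots,u(s))$ and $C^v_t \in O(h^{-1}(v(0)),\dots,h^{-1}(v(t)))$ all belong to $\cC$, hence are linearly ordered by inclusion, and one may enumerate them as $C_0 \subsetneq \cdots \subsetneq C_{n-1}$. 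Choosing $\cW$ fine enough that $\Star(C^v_t,\cW) \sub \cW(\runion_{j\le t} h^{-1}(v(j)))$ and that each $\Star(C^v_t,\cW)$ contains a cell inside $\cW(h^{-1}(v(t)))$, one lets $w$ sweep through $\Star(C_0,\cW), \Star(C_1,\cW), \dots$ in order; the refinement $\rmap{\cW}{\cU}\cdot w \refex u$ then follows because $\cW$ refines $\cU$, and $\alpha\cdot w \refex v$ follows from the two fineness conditions. Without this device your construction of $w$, and hence the whole second half of the statement, remains unproved.
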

\begin{proof}
	Let $\cW$ be given by \Cref{prop:approximate-core-refinement} applied to $\cU$ and $h^{-1} \cdot \cV$.
	We claim that $\cW$ can be picked fine enough so that:
	\begin{enumerate}
		\item \label{itm:clique} if $W \E W'$ then $V \E V'$ for any $V \in \Star(h[\Cl(W)], \cV), V' \in \Star(h[\Cl(W')], \cV)$,
		\item \label{itm:spaced} for each $V \E V'$ there is a path $P$ in $\cW(h^{-1}(V \vee V'))$ connecting $\Core(h^{-1}(V),\cW)$ to $\Core(h^{-1}(V'),\cW)$.
	\end{enumerate}
	It is fairly clear how to satisfy the first condition, so we focus on the second one: as $\cV$ is a brick partition, given $V, V' \in \cV$ with $V \E V'$, by Facts \ref{fact:points-on-boundaries} and \ref{fact:existence-of-arcs} we can find an arc $\gamma \in O(V, V')$.
	By choosing $\cW$ fine enough we can assume that $\Star(h^{-1}(\gamma), \cW) \subseteq \cW(h^{-1}(V \vee V'))$ and that there are
	$W \in \Core(h^{-1}(V),\cW) \cap \Star(h^{-1}(\gamma), \cW)$
	and $W' \in \Core(h^{-1}(V'),\cW) \cap \Star(h^{-1}(\gamma), \cW)$.
	Now let $P$ be any path in $\Star(h^{-1}(\gamma), \cW)$ from $W$ to $W'$.
	Since this can be done for any pair of $E$-adjacent elements in $\cV$, the conclusion follows.

	We define $\alpha$ in two steps.
	First, for each $V \in \cV$ and $W \in \Core(h^{-1}(V),\cW)$, let $\alpha(W) \coloneqq V$.
	Then, by \Cref{prop:approximate-core-refinement}, for every $W \in \cW$ there is at least one $W' \E W$ on which $\alpha$ is defined.
	Pick one such $W'$ and let $\alpha(W) \coloneqq \alpha(W')$.
	Notice that in particular $W \E \cW(h^{-1}(\alpha(W)))$, so $h[\Cl(W)] \cap \Cl(\alpha(W)) \ne \emptyset$, that is:
	\begin{equation}
		\label{eq:alpha_in_star}
		\alpha(W) \in \Star(h[\Cl(W)], \cV).
	\end{equation}
	Moreover, if $W \in \cW(\runion_{V \in \cV_0} h^{-1}(V))$ for some $\cV_0 \sub \cV$, then
	\begin{equation}
		\label{eq:alpha_ball}
		\alpha(W) \in \cV_0,
	\end{equation}
	since if $W' \E W$ and $W' \in \Core(h^{-1}(V),\cW)$, then $V \in \cV_0$.

	Condition \eqref{itm:clique} and \eqref{eq:alpha_in_star} imply that $\alpha$ is a graph homomorphism, while condition \eqref{itm:spaced} and \eqref{eq:alpha_ball} imply that $\alpha$ is surjective on edges.

	For each $V \in \cV$, $\alpha^{-1}(V)$ contains $\Core(h^{-1}(V), \cW)$, which is nonempty and $\E$-connected by \Cref{prop:approximate-core-refinement}.\eqref{itm:nonempty-connected-core}, and by definition of $\alpha$ any other element of $\alpha^{-1}(V)$ is adjacent to $\Core(h^{-1}(V), \cW)$, so $\alpha^{-1}(V)$ is $\E$-connected.

	Notice that condition \eqref{itm:clique} implies that for each $W \in \cW$, $\Star(h[\Cl(W)], \cV)$ is a clique in $\cV$.
	Together with \eqref{eq:alpha_in_star},
	it follows that
	\begin{equation*}
		h[\Cl(W)] \sub \runion \Star(h[\Cl(W)], \cV) \sub \runion \ball{1}{\alpha(W)} \text{ for every } W \in \cW.
	\end{equation*}

	For the moreover, fix $\cC \in \opwalk{\cU}{u} \sub h^{-1} \cdot \opwalk{\cV}{v}$.
	For each $s < \len(u)$, let $C_s^u \in \cC$ be such that $C_s^u \in O(u(0), \dots, u(s))$, and for each $t < \len(v)$, let $C_t^v \in \cC$ be such that $C_t^v \in O(h^{-1}(v(0)), \dots, h^{-1}(v(t)))$.
	We can suppose that $C_0^u = C_0^v$ is the root $\bigcap \cC$ of $\cC$.

	Pick $\cW$ which is moreover fine enough that for each $t < \len(v)$:
	\begin{enumerate}[label=(\alph*)]
		\item \label{itm:t-v} $\Star(C_t^v, \cW) \sub \cW(\runion_{j \le t} h^{-1}(v(j)))$,
		\item \label{itm:W_t} there is $W_{t} \in \Star(C_t^v, \cW) \cap \cW(h^{-1}(v(t)))$.
	\end{enumerate}

	Let $C_0 \subsetneq C_1 \subsetneq \cdots \subsetneq C_{n-1}$ enumerate $\set{C_s^u}_{s < \len(u)} \cup \set{C_t^v}_{t < \len(v)}$ without repetitions,
	and let $w$ be any walk which starts in $W_{0}$ and covers all of $\Star(C_k, \cW)$ before passing to $\Star(C_{k+1}, \cW)$.
	Call $k_t< \len(w)$ the minimum such that $\Star(C_t^v, \cW) = \set{w(0), \dots, w(k_t)}$.

	Since $\cW$ refines $\cU$,
	\begin{equation*}
		O(\Star(C_s^u, \cW)) \sub O(u(0), \dots, u(s)),
	\end{equation*}
	for all $s < \len(u)$, so $\rmap{\cW}{\cU}\cdot w \refex u$, since $\Imm(\rmap{\cW}{\cU}\cdot w) = \rmap{\cW}{\cU}[\Imm(w)] \supseteq  \Imm(u)$.
	Fix $t< \len(v)$. By \ref{itm:t-v} and \eqref{eq:alpha_ball},
	\begin{equation*}
		\alpha[\set{w(0), \dots, w(k_t)}] = \alpha[\Star(C_t^v, \cW)] \sub \set{v(0), \dots,v(t)}.
	\end{equation*}
	On the other hand, for each $j \le t$, $W_j \in \set{w(0), \dots, w(k_t)}$, and by \ref{itm:W_t} and the definition of $\alpha$, $\alpha(W_j) = v(j)$, so
	\begin{equation*}
		\alpha[\set{w(0), \dots, w(k_t)}] = \set{v(0), \dots,v(t)}.
	\end{equation*}
	Since $t$ was arbitrary, $\alpha\cdot w \refex v$.
\end{proof}

Each monotone epimorphism $\alpha \colon \cW \to \cV$ between brick partitions of $X$ gives rise to an open subset of $\homeo(X)$:
\begin{equation*}
	H_{\alpha} \coloneqq \set{h \in \homeo(X) : h[\Cl(W)] \sub \bigvee \ball{1}{\alpha(W)}, \text{ for each } W \in \cW}.
\end{equation*}
While many such $H_\alpha$'s might be empty, \Cref{thm:approximate-homeo-with-epi} implies that their collection forms a basis for the compact-open topology on $\homeo(X)$.
In particular the open sets
\begin{equation}
	\label{eq:def-HV1}
	H_{\cV, 1} \coloneqq H_{\rmap{\cV}{\cV}} = \set{h \in \homeo(X) : h[\Cl(V)] \sub \bigvee \ball{1}{V}, \text{ for each } V \in \cV},
\end{equation}
form a basis of open neighborhood of the identity in $\homeo(X)$, when $\cV$ varies among brick partitions of $X$.

We are finally ready for the main result of this section.

\begin{theorem}
	\label{thm:from-Rosendal-to-walks}
	Let $X$ be a Peano continuum
	with a generic chain.
	Then for each brick partition $\cU$ of $X$ and walk $u$ on $\cU$
	there are a brick partition and a walk $(\cV, v) \refines (\cU, u)$ such that
	for any brick partition $\cW \refines \cV$ and any two walks $w_0, w_1$ on $\cW$ with $\rmap{\cW}{\cV}\cdot w_i \refex v$, for $i=0,1$,
	there are a brick partition $\cZ$, a walk $z$ on $\cZ$ and two monotone epimorphisms $\alpha_0, \alpha_1 \colon \cZ \to \cW$ satisfying:
	\begin{enumerate}
		\item $\alpha_i\cdot z \refex w_i$, for $i=0,1$;
		\item \label{itm:distance1}$d_{\sup}(\rmap{\cW}{\cU}\alpha_0, \rmap{\cW}{\cU}\alpha_1) \le 1$.
	\end{enumerate}
\end{theorem}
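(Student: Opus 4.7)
The plan is to apply Rosendal's criterion (Theorem \ref{fact:rosendal}) to the action $\Homeo(X) \acts \chains(X)$. Taking $H_0 := H_{\cU, 1}$, the basic identity neighborhood from \eqref{eq:def-HV1}, and $\bU := \opwalk{\cU}{u}$, the existence of a generic chain produces a nonempty open $\bV \sub \opwalk{\cU}{u}$ such that $H_{\cU, 1}\cdot \bW_0 \cap \bW_1 \ne \emptyset$ for all nonempty open $\bW_0, \bW_1 \sub \bV$. Proposition \ref{prop:refinement_walk} then provides a brick partition and walk $(\cV, v) \refines (\cU, u)$ with $\opwalk{\cV}{v} \sub \bV$; this is the pair claimed by the theorem.

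Given $\cW \refines \cV$ and walks $w_0, w_1$ on $\cW$ with $\rmap{\cW}{\cV}\cdot w_i \refex v$, by \eqref{eq:refine-iff-subset} we have $\opwalk{\cW}{w_i} \sub \opwalk{\cV}{v} \sub \bV$. The defining property of $\bV$ yields $h \in H_{\cU, 1}$ with $\opwalk{\cW}{w_0} \cap h^{-1}\cdot\opwalk{\cW}{w_1} \ne \emptyset$. A further application of Proposition \ref{prop:refinement_walk} to this nonempty open subset of $\opwalk{\cW}{w_0}$ produces $(\cW', w') \refines (\cW, w_0)$ with $\opwalk{\cW'}{w'} \sub h^{-1}\cdot\opwalk{\cW}{w_1}$. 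The ``moreover'' clause of Theorem \ref{thm:approximate-homeo-with-epi}, applied to $h$ with source $\cW'$, target $\cW$, and walks $w', w_1$, then furnishes a brick partition $\cZ \refines \cW'$, a monotone epimorphism $\alpha_1 \colon \cZ \to \cW$ with $h[\Cl(Z)] \sub \bigvee\ball{1}{\alpha_1(Z)}$ for each $Z \in \cZ$, and a walk $z$ on $\cZ$ satisfying $\rmap{\cZ}{\cW'}\cdot z \refex w'$ and $\alpha_1 \cdot z \refex w_1$. Setting $\alpha_0 := \rmap{\cZ}{\cW}$, which is a monotone epimorphism, a short verification that refinement composes along graph homomorphisms gives $\alpha_0\cdot z = \rmap{\cW'}{\cW}\cdot(\rmap{\cZ}{\cW'}\cdot z) \refex \rmap{\cW'}{\cW}\cdot w' \refex w_0$, establishing condition (1).

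Condition (2) amounts to showing, for every $Z \in \cZ$, that $U_0 := \rmap{\cZ}{\cU}(Z) = \rmap{\cW}{\cU}(\alpha_0(Z))$ and $U_1 := \rmap{\cW}{\cU}(\alpha_1(Z))$ lie at distance at most $1$ in the nerve graph of $\cU$. The key input is the additional fact extracted from the proof of Theorem \ref{thm:approximate-homeo-with-epi}: each $Z \in \cZ$ admits a neighbor $Z' \E Z$ (possibly $Z' = Z$) with $Z' \in \Core(h^{-1}(\alpha_1(Z)), \cZ)$, so that $h[\Cl(Z')] \sub \alpha_1(Z)$. Picking $x \in \Cl(Z) \cap \Cl(Z')$, we find that $h(x)$ lies simultaneously in $\alpha_1(Z) \sub U_1$ and, using $h \in H_{\cU, 1}$, in $h[\Cl(U_0)] \sub \bigvee\ball{1}{U_0}$. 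Since $U_1$ is a nonempty open set meeting $\bigvee\ball{1}{U_0}$ and $\cU$ is a quasi-partition, $U_1$ must coincide with some element of $\ball{1}{U_0}$.

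The main subtlety lies in this last step. As stated, Theorem \ref{thm:approximate-homeo-with-epi} only controls $h[\Cl(Z)]$ via a $\ball{1}{\cdot}$-neighborhood of $\alpha_1(Z)$, which by itself would only give $d_\cU(U_0, U_1) \le 3$. Obtaining the sharp bound of $1$ genuinely requires the ``core'' information about $\alpha_1$ available in the proof of that theorem, which could also be promoted to an extra clause of its statement.
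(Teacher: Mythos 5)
Your proposal is correct and follows the same skeleton as the paper's proof (Rosendal's criterion with $\bU = \opwalk{\cU}{u}$, \Cref{prop:refinement_walk} to produce $(\cV,v)$ and then $(\cW',w')$, the ``moreover'' clause of \Cref{thm:approximate-homeo-with-epi} to produce $\cZ$, $z$ and one of the two epimorphisms, with the other taken to be $\rmap{\cZ}{\cW}$). Where you genuinely diverge is in how the bound $d_{\sup}(\rmap{\cW}{\cU}\alpha_0,\rmap{\cW}{\cU}\alpha_1)\le 1$ is achieved. The paper inserts a preliminary refinement $(\cU_0,u_0)\refines(\cU,u)$ chosen fine enough that $d_{\cU_0}\le 2$ implies $d_{\cU}\le 1$ after projecting, takes $H_0\sub H_{\cU_0,1}$, and then only needs the \emph{stated} conclusion of \Cref{thm:approximate-homeo-with-epi} to get a distance-$2$ estimate in $\cU_0$, which collapses to distance $1$ in $\cU$. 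You instead work with $H_{\cU,1}$ directly and compensate by invoking the sharper ``core'' property of $\alpha_1$ --- that every $Z$ has a neighbour $Z'$ with $h[\Cl(Z')]\sub\alpha_1(Z)$ --- which is indeed how $\alpha$ is constructed in the proof of \Cref{thm:approximate-homeo-with-epi} but is not part of its statement; your density argument identifying $U_1$ with an element of $\ball{1}{U_0}$ then works because the elements of a quasi-partition are pairwise disjoint open sets with dense union. Both routes are valid; the paper's buys a proof that is modular in the sense that it uses \Cref{thm:approximate-homeo-with-epi} as a black box, at the cost of an extra layer of partitions, while yours is shorter but, as you correctly flag, requires either promoting the core property to an explicit clause of \Cref{thm:approximate-homeo-with-epi} or re-deriving it; as written the proposal is not self-contained on this one point, though the missing fact is true and its extraction is routine.
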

\begin{proof}
	Let $\cU$ and a walk $u$ on $\cU$ be given.
	Let $(\cU_0, u_0) \refines (\cU, u)$, with $\cU_0$ a brick partition fine enough that
	\begin{equation}
		\label{eq:distance2to1}
		d_{\cU}(\rmap{\cU_0}{\cU}(U), \rmap{\cU_0}{\cU}(U')) \le 1 \text{ whenever } d_{\cU_0}(U ,U') \le 2.
	\end{equation}
	Find a symmetric open neighborhood of the identity $H_0 \sub H_{\cU_0, 1}$ (see \eqref{eq:def-HV1}).
	Also, set $\bU \coloneqq \opwalk{\cU_0}{u_0}$.
	\nameref{fact:rosendal} (\Cref{fact:rosendal}) gives us a nonempty $\bV \sub \bU$.
	By \Cref{cor:refinement_walk}, there are $(\cV, v) \refines (\cU_0, u_0)\refines (\cU, u)$, with $\cV$ a brick partition, such that $\opwalk{\cV}{v} \subseteq \bV$.

	Suppose now that we are given a brick partition $\cW \refines \cV$ and two walks $w_0, w_1$ on $\cW$ with $\rmap{\cW}{\cV}\cdot w_i \refex v$, for $i=0,1$.
	Set $\bW_i \coloneqq \opwalk{\cW}{ w_i}$; then $\bW_i \sub \opwalk{\cV}{v} \subseteq \bV$, for $i=0,1$.
	Therefore by \nameref{fact:rosendal} there is $h \in H_0$ such that $h^{-1} \cdot \bW_0 \cap \bW_1$ is a nonempty open set. We can thus apply \Cref{cor:refinement_walk} to obtain $(\cZ', z') \refines (\cW, w_1)$ such that $\opwalk{\cZ'}{z'} \sub h^{-1} \cdot \bW_0 \cap \bW_1 \sub \bW_1$.

	By \Cref{thm:approximate-homeo-with-epi}, there are a brick partition $\cZ \refines \cZ'$, a walk $z$ on $\cZ$ and a monotone epimorphism $\alpha_0 \colon \cZ \to \cW$ such that $\rmap{\cZ}{\cZ'}\cdot z \refex z'$, $\alpha_0 \cdot z \refex w_0$ and such that, for each $Z \in \cZ$,
	\begin{equation}
		\label{eq:gamma_1-approx-h^-1}
		h[\Cl(Z)] \sub \runion \ball{1}{\alpha_0(Z)}.
	\end{equation}
	Call $\alpha_1 \coloneqq \rmap{\cZ}{\cW}$; then $\alpha_1\cdot z \refex w_1$.

	It remains to prove item \eqref{itm:distance1}.
	Fix $Z \in \cZ$.
	On the one hand,
	\begin{equation*}
		h[\Cl(Z)] \sub h[\Cl(\rmap{\cZ}{\cU_0}(Z))] \sub \runion \ball{1}{\rmap{\cZ}{\cU_0}(Z)},
	\end{equation*}
	since $h \in H_0 \sub H_{\cU_0, 1}$.
	On the other hand,
	\begin{equation*}
		h[\Cl(Z)] \sub \runion \ball{1}{\alpha_0(Z)} \sub \runion \ball{1}{\rmap{\cW}{\cU_0}\alpha_0(Z)},
	\end{equation*}
	by \eqref{eq:gamma_1-approx-h^-1}.
	Therefore, $\runion \ball{1}{\rmap{\cW}{\cU_0}\alpha_0(Z)} \cap \runion \ball{1}{\rmap{\cZ}{\cU_0}(Z)} \ne \emptyset$, so
	\begin{equation*}
		d_{\cU_0}(\rmap{\cW}{\cU_0}\alpha_0(Z),\rmap{\cZ}{\cU_0}(Z)) \le 2.
	\end{equation*}
	It follows from \eqref{eq:distance2to1} that $d_{\cU}( \rmap{\cW}{\cU}\alpha_0(Z), \rmap{\cZ}{\cU}(Z)) \le 1$.
	Since $Z \in \cZ$ was arbitrary, $d_{\sup}(\rmap{\cW}{\cU}\alpha_0, \rmap{\cW}{\cU}\alpha_1) = d_{\sup}(\rmap{\cW}{\cU}\alpha_0, \rmap{\cZ}{\cU}) \le 1$, as wished.
\end{proof}

\section{Absence of Generic Chains}
\label{sec:The_proof}

In this section we finally put all pieces together and prove \Cref{thm:the-theorem-introduction}.

\subsection{Walks on Peano continua with no locally separating points}

The first step towards \Cref{thm:the-theorem-introduction} is to show that the hypothesis that $X$ has no locally separating points allows us to only deal with walks which are spaced paths. We do so in \Cref{thm:walks-to-paths} (\nameref{thm:walks-to-paths}). This will be helpful when trying to use \Cref{lem:wd-monotonically-refine} and \Cref{lem:wd-refine-initial-segment} to compare winding numbers of walks defined on different partitions.

To do so, we need the following proposition, which is a combinatorial version of Arc Doubling (see \cite{MOT}*{Lemma 3.4}).
\begin{proposition}[Path Doubling]
	\label{prop:path-doubling}
	Let $X$ be a Peano continuum with no locally separating points.
	Let $\cU$ be a brick partition of $X$ and $u$ be a path on $\cU$.
	There are a brick partition $\cV \refines \cU$ and two paths $v_0, v_{1}$ on $\cV$ which have the same endpoints and are disjoint elsewhere and such that $\rmap{\cV}{\cU} \cdot v_i$ monotonically refines $u$ for $i= 0,1$.
\end{proposition}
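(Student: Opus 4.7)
The plan is to mimic the proof of \Cref{lem:path-to-spaced}: construct, inside $X$, two topologically disjoint arcs $\gamma^0, \gamma^1$ that traverse $u(0), u(1), \dots, u(\len(u)-1)$ in order, and then pick a fine enough brick partition so that the two arcs, after suitable amalgamation at the two ends, become the two desired paths. The crucial difference from that lemma is that the assumption that $X$ has no locally separating points allows me to produce, inside each intermediate brick, two disjoint arcs instead of just one, using \Cref{fact:existence-of-arcs}.\eqref{itm:existence-of-arcs-nls} in place of \Cref{fact:existence-of-arcs}.\eqref{itm:existence-of-arcs}.

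Concretely, I would first pick, via \Cref{fact:points-on-boundaries}.\eqref{itm:nls}, two distinct points $x_i^0, x_i^1 \in \Cl(u(i-1)) \cap \Cl(u(i)) \cap (u(i-1) \vee u(i))$ for each $0<i<\len(u)$, arranged to be all distinct. Then, for each $0 < i < \len(u)-1$, I would apply \Cref{fact:existence-of-arcs}.\eqref{itm:existence-of-arcs-nls} in $u(i)$ with $A = \{x_i^0, x_i^1\}$ and $B = \{x_{i+1}^0, x_{i+1}^1\}$ to extract disjoint arcs $\alpha_i^0, \alpha_i^1 \sub u(i) \cup A \cup B$; relabeling the pairs $(x_{i+1}^0, x_{i+1}^1)$ inductively, I may assume $\alpha_i^j$ joins $x_i^j$ to $x_{i+1}^j$ for $j = 0,1$. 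At the two boundary bricks $u(0)$ and $u(\len(u)-1)$, I would pick distinct points $p_0^0, p_0^1 \in u(0)$ and $p_\ell^0, p_\ell^1 \in u(\len(u)-1)$, chosen very close to each other, and apply the same fact again with $A = \{p_0^0, p_0^1\}$ versus $B = \{x_1^0, x_1^1\}$ (and analogously at the other end) to produce matching disjoint arcs $\alpha_0^0, \alpha_0^1, \alpha_{\len(u)-1}^0, \alpha_{\len(u)-1}^1$. The concatenations $\gamma^j \coloneqq \bigcup_{i<\len(u)} \alpha_i^j$ then give two disjoint arcs in $X$, each traversing $u(0), \dots, u(\len(u)-1)$ in order.

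I would then pick a brick partition $\cV_0 \refines \cU$ of small enough mesh so that: (i) for each intermediate $i$, the stars $P_i^j \coloneqq \Star(\alpha_i^j, \cV_0(u(i)))$ are nonempty, $\E$-connected, pairwise disjoint, and satisfy $\ball{1}{P_i^j} \sub \cV_0(u(i-1)\vee u(i) \vee u(i+1))$; and (ii) the bricks of $\cV_0$ containing $p_0^0, p_0^1$ (resp.\ $p_\ell^0, p_\ell^1$) can be joined by an $\E$-path of bricks entirely inside $u(0)$ (resp.\ $u(\len(u)-1)$). Setting $P_0$ to be $P_0^0 \cup P_0^1$ together with that connecting $\E$-path, and $P_{\len(u)-1}$ symmetrically, I amalgamate $\cV_0$ along the partition $\{P_0, P_1^0, P_1^1, \dots, P_{\len(u)-2}^0, P_{\len(u)-2}^1, P_{\len(u)-1}\}$ (extended by further $\E$-connected pieces to cover the complement) to obtain $\cV \refines \cU$, and define $v_j \coloneqq \walk{\runion P_0, \runion P_1^j, \dots, \runion P_{\len(u)-2}^j, \runion P_{\len(u)-1}}$.

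The main obstacle I anticipate is reconciling the disjointness and amalgamation requirements at the two endpoints: the two walks must share their first and last vertex in $\cV$ while being vertex-disjoint elsewhere, which forces a careful balance between the closeness of $p_0^0, p_0^1$ (and $p_\ell^0, p_\ell^1$) and the mesh of $\cV_0$, so that the amalgamated endpoint bricks stay within the correct element of $\cU$ and do not absorb any of the intermediate $P_i^j$. Once this is arranged, the remaining verifications — that each $v_j$ is a path, that $v_0$ and $v_1$ meet exactly at $\runion P_0$ and $\runion P_{\len(u)-1}$, and that $\rmap{\cV}{\cU}\cdot v_j$ monotonically refines $u$ — follow directly from the construction.
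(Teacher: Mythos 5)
Your proposal is correct and follows essentially the same route as the paper's proof: two boundary points per adjacent pair via \Cref{fact:points-on-boundaries}.\eqref{itm:nls}, two disjoint arcs per intermediate brick via \Cref{fact:existence-of-arcs}.\eqref{itm:existence-of-arcs-nls}, concatenation into two disjoint arcs, and a fine amalgamated brick partition whose star-sets along the arcs give the two walks. The only difference is at the endpoints, where the paper avoids your "main obstacle" entirely by not running the arcs into $u(0)$ and $u(-1)$ at all and simply amalgamating all of $\tilde\cV(u(0))$ and $\tilde\cV(u(-1))$ into the two shared endpoint vertices.
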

\begin{proof}
	For each $i< \len(u)-1$, let $A_i$ be a subset of cardinality 2 of $\Cl(u(i))\cap \Cl(u(i+1)) \cap (u(i) \vee u(i+1))$, which exists by \Cref{fact:points-on-boundaries}.\eqref{itm:nls}.
	Given $1 \le i< \len(u)-1$, \Cref{fact:existence-of-arcs}.\eqref{itm:existence-of-arcs-nls} gives two disjoint arcs $\gamma_i, \gamma'_i$ in $u(i) \cup A_{i-1} \cup A_{i}$ connecting $A_{i-1}$ to $A_{i}$.
	Since $u$ is a path, all the $A_i$'s and all the $\gamma_i$'s are distinct, and the $\gamma_i$'s intersect at most in their endpoints. The same holds for the $\gamma'_i$'s.
	Let $\lambda, \lambda'$ be the two disjoint arcs resulting from the concatenations of the $\gamma_i$'s and $\gamma'_i$'s respectively.

	Let $\tilde \cV \refines \cU$ be a brick partition fine enough that $\Star(\lambda, \tilde \cV) \cap \Star(\lambda', \tilde \cV) = \emptyset$.
	Let, for each $1\le i< \len(u)-1$:
	\begin{align*}
		P_i                            & \coloneqq \Star(\lambda, \tilde \cV(u(i))),  \\
		P'_i                           & \coloneqq \Star(\lambda', \tilde \cV(u(i))), \\
		P_0 = P'_0                     & \coloneqq \tilde \cV(u(0)),                  \\
		P_{\len(u)-1} = P'_{\len(u)-1} & \coloneqq \tilde \cV(u(-1)).
	\end{align*}
	Let $\cV$ be an amalgam of $\tilde \cV$ refining $\cU$ and containing each $\bigvee P_i$ and each $\bigvee P'_i$, for $i< \len(u)$.
	Then $v_0 \coloneqq \walk{\bigvee P_0, \dots, \bigvee P_{\len(u)-1}}$ and $v_1 \coloneqq \walk{\bigvee P'_0, \dots, \bigvee P'_{\len(u)-1}}$ are paths on $\cV$ satisfying the conclusions of the proposition.
\end{proof}

\begin{theorem}[Upgrading Walks to Paths]
	\label{thm:walks-to-paths}
	Let $X$ be a Peano continuum with no locally separating points.
	Let $\cV$ be a brick partition of $X$ and $v$ be a reduced walk on $\cV$.
	There is a spaced path $w$ on a brick partition $\cW$ such that $(\cW, w) \refines (\cV, v)$.
	If furthermore $v$ is uncrossed
	then $w$ can be chosen so that $\rmap{\cW}{\cV} \cdot w$ monotonically refines $v$.
\end{theorem}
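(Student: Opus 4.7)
The plan is to reduce both statements to producing a (not-necessarily-spaced) path $p$ on a brick partition $\cW \refines \cV$ such that $\rmap{\cW}{\cV} \cdot p$ refines $v$ (and, in the uncrossed case, monotonically refines $v$), and then to apply \Cref{lem:path-to-spaced} to $(\cW, p)$. The lemma yields a spaced path $z$ on a further refinement $\cZ \refines \cW$ with $\rmap{\cZ}{\cW} \cdot z$ monotonically refining $p$. Since monotone refinement commutes with pointwise application of a graph homomorphism, $\rmap{\cZ}{\cV} \cdot z = \rmap{\cW}{\cV}\cdot(\rmap{\cZ}{\cW}\cdot z)$ monotonically refines $\rmap{\cW}{\cV}\cdot p$, and composing with (or weakening to) the hypothesis on $p$ shows that $\rmap{\cZ}{\cV} \cdot z$ refines (respectively, monotonically refines) $v$. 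Thus $(\cZ, z)$ serves as the desired $(\cW, w)$.

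For the \emph{uncrossed} case, the hypothesis makes the assignment $\sigma(v(i)) \coloneqq v(i+1)$ a well-defined partial function on the vertices visited by $v$, so $v$ traces a single $\sigma$-orbit: a handle of distinct vertices followed by (possibly zero and possibly partial) repeated traversals of a simple cycle $c$ in $\cV$. If no cycle is ever entered then $v$ is already a path and I take $(\cW, p) = (\cV, v)$. Otherwise, if $c$ has length $L$ and is traversed $N$ times (counting a partial last traversal), I would iterate \Cref{prop:path-doubling} to produce, in a sufficiently fine refinement $\cW \refines \cV$, $N$ pairwise disjoint $\cW$-paths each refining $c$, linked consecutively at seam bricks near the cycle's closing edge to form one long spiraling path. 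Prepending the handle gives $p$, and $\rmap{\cW}{\cV} \cdot p$ monotonically refines $v$ by construction.

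For the \emph{general reduced} case, let $v_0, \ldots, v_{m-1}$ enumerate the distinct $\cV$-bricks visited by $v$ in order of first appearance. The key observation is that $\rmap{\cW}{\cV} \cdot p \refex v$ only requires $p$'s image to introduce the $v_k$'s in this order and to visit no other $\cV$-brick; in particular, the image need \emph{not} revisit any vertex. Since each $v_k$ (for $k \ge 1$) is $E$-adjacent in $\cV$ to some $v_{\pi(k)}$ with $\pi(k) < k$, the set $\{v_0, \ldots, v_k\}$ is $E$-connected for every $k$. I would build $p$ inductively: at step $k+1$, extend $p$ from its current $\cW$-endpoint through refined bricks projecting into $\{v_0, \ldots, v_k\}$ to a fresh $\cW$-brick refining $v_{k+1}$. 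Such an extension is possible in a sufficiently fine refinement obtained by iteratively applying \Cref{prop:path-doubling} — enabled by the no-locally-separating-points hypothesis via \Cref{fact:existence-of-arcs} — to create fresh disjoint passageways through previously used $\cV$-bricks.

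The main obstacle in both constructions is the bookkeeping required to control the iterated applications of Path Doubling: each new extension must live in a further refinement that produces $\cW$-bricks disjoint from those already used by $p$ without disturbing earlier segments, and all the constructed bricks must ultimately amalgamate into a single legitimate brick partition of $X$. This is ensured in principle by Bing's theorem and the freedom to refine arbitrarily finely, but carefully verifying that the resulting object is a path on a genuine brick partition is where the bulk of the proof's work will lie.
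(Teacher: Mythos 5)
Your overall architecture is reasonable and genuinely different in organization from the paper's: you first reduce to producing a (non-spaced) path and invoke \Cref{lem:path-to-spaced} only at the end, which is exactly what the paper does, but you then split into a global structural analysis (order of first appearance for reduced walks; the lasso decomposition ``handle followed by iterated laps of a simple cycle'' for uncrossed walks, which is a correct and nice observation), whereas the paper runs a single induction on $\len(v)$, extending by one vertex at a time. However, the heart of the theorem is precisely the step you defer to ``bookkeeping'': the claim that at each stage you can extend the path through bricks over $\set{v_0,\dots,v_k}$ to a fresh brick over $v_{k+1}$ while avoiding everything already used. This is not automatic from fineness: the already-built path can separate its own endpoint from the target inside the relevant sub-partition, and refining does not by itself resolve this because the new segment is forced to start exactly where the old one ends. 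The device the paper uses, and which is absent from your proposal, is to maintain as an invariant \emph{two} disjoint monotone copies $w_0,w_1$ of the entire path built so far (via \nameref{prop:path-doubling}), and then to resolve a collision of the connecting segment $\bar w$ by rerouting: if $\bar w$ first re-meets $w_0$, traverse $w_1$ fully and come back along $w_0$ to the meeting point (and symmetrically). Merely invoking Path Doubling ``to create fresh passageways'' does not substitute for this invariant, because the passageway still has to be spliced onto the existing path at its endpoint without creating a repeated vertex.

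A second concrete issue is the seam construction in your uncrossed case. Iterating \Cref{prop:path-doubling} produces copies that are disjoint \emph{except at their endpoints}; concatenating $N$ such copies at shared endpoints yields a walk that revisits those endpoints and is therefore not a path, and if instead you insist the copies be fully disjoint you lose the adjacency needed at the seams for the projection to monotonically refine $\dots,c(L-1),c(0),\dots$. One would need an $N$-fold disjoint-arcs statement with prescribed, pairwise distinct, pairwise adjacent terminal bricks — provable from \Cref{fact:existence-of-arcs}.\eqref{itm:existence-of-arcs-nls}, but not stated or derived in your proposal. So while nothing in your plan is wrong in spirit, the two places where an actual argument is required (collision avoidance at each extension, and the seams of the spiral) are exactly the places left open; the paper's two-track invariant is the missing idea that closes both.
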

\begin{proof}
	We proceed by induction on the length $\ell$ of $v$.
	If $\ell=1$, let $\cW \coloneqq \cV$ and $w \coloneqq v$.
	Suppose the result is true for $\ell \ge 1$ and let us prove it for
	$v = \walk{v(0), \dots, v(\ell)}$.
	By the induction hypothesis, there is a brick partition $\cZ \refines \cV$ and a spaced path $z$ on $\cZ$ such that $\rmap{\cZ}{\cV} \cdot z$ refines $\walk{v(0), \dots, v(\ell-1)}$, is confined to it, $\rmap{\cZ}{\cV}(z(-1)) = v(\ell -1)$, and the refinement is monotonic if the latter is uncrossed.

	By \nameref{prop:path-doubling} (\Cref{prop:path-doubling}), there are a brick partition $\cW \refines \cZ$ and spaced paths $w_0, w_1$ on $\cW$, which are disjoint except for their endpoints, and such that $\rmap{\cW}{\cZ} \cdot w_i$ monotonically refines $z$, for $i < 2$.
	Choose any $W \in \cW(v(\ell))$ with $W \E \cW(v(\ell-1))$ and let $\bar w$ be a path from $w_0(-1) = w_1(-1)$ to $W$ in $\cW(v(\ell-1)) \cup \set{W}$.

	\textbf{If $v$ is uncrossed:} Suppose without loss of generality that the \emph{last} time that $\bar w$ intersects the $\E$-connected component of $(w_0 \cup w_1) \cap \cW(v(\ell-1))$ containing $w_0(-1) = w_1(-1)$ is in $w_0(i)$, for some $i<\len(w_0)$.
	If after that step $\bar w$ is disjoint from $(w_0 \cup w_1)$, then
	we can simply take $w$ to be the concatenation of $w_0$ up to $w_0(i)$ and of the final segment of $\bar w$.
	Otherwise, let $w_{\epsilon}(j)$ be the \emph{first} time $\bar w$ intersects $(w_0 \cup w_1)$ after $w_0(i)$.

	If $\epsilon =0$, let $w$ be the walk that follows $w_1$ from start to end, then follows $w_0$ backwards from $w_0(-1)$ to $w_0(i)$, then $\bar w$ until $w_0(j)$, then $w_0$ until it exits $\cW(v(\ell-1))$ and crosses into $\cW(v(\ell))$, which is the case since $v$ is uncrossed and $\rmap{\cW}{\cV} \cdot w_0$ monotonically refines $\walk{v(0), \dots, v(\ell-1)}$.

	If $\epsilon =1$, let $w$ be the walk that follows $w_0$ from start to $w_0(i)$, then $\bar w$ until $w_1(j)$, then $w_1$ until it exits $\cW(v(\ell-1))$ and crosses into $\cW(v(\ell))$, which is the case since $v$ is uncrossed and $\rmap{\cW}{\cV} \cdot w_1$ monotonically refines $\walk{v(0), \dots, v(\ell-1)}$. In either case, $\rmap{\cW}{\cV} \cdot w$ monotonically refines $v$.

	\textbf{Otherwise,} we have to content ourselves to refine non-monotonically.
	Suppose without loss of generality that the \emph{last} time that $\bar w$ intersects $w_0 \cup w_1$ is in $w_0(j)$, for some $j<\len(w_0)$.
	Let $w$ be the walk that follows $w_1$ from start to end, then follows $w_0$ backwards from $w_0(-1)$ to $w_0(j)$ (this part might exit and then re-enter $\cW(v(\ell -1))$) and then $\bar w$ from $w_0(j)$ to $W$.

	By \nameref{lem:path-to-spaced} (\Cref{lem:path-to-spaced}), we can further ensure that $w$ is spaced.
\end{proof}

\subsection{Robust cycles}

The next step is to find a sufficient condition on $X$ under which
the conclusion of \Cref{thm:from-Rosendal-to-walks}, a necessary consequence to the existence of a generic chain on $X$, is violated.

Broadly speaking, our goal is to
isolate a region of $X$ (a circular subpartition to be precise) inside which there is a well-defined notion of winding number, while also making sure that we can refine and extend given paths in such region so as to be able to
reach arbitrarily large positive and negative values of winding number.
Once this is ensured, if $(\cV, v)$ is as in the statement of \Cref{thm:from-Rosendal-to-walks}, we can then find a brick partition
$\cW \refines \cV$ and two paths $w_0, w_1$ on $\cW$ refining $v$ and such that the winding number of $w_0$ is very large, while the one of $w_1$ is negative and very large in absolute value. If $X$ had a generic chain, \Cref{thm:from-Rosendal-to-walks} would permit to amalgamate up to distance one $(\cW, w_0)$ and $(\cW, w_1)$, and this would
lead to a contradiction by \Cref{lem:close-walks}, since $w_0$ and $w_1$ were chosen to have very distant winding numbers. The idea of using the winding number to show the failure of such amalgamation goes back to the proof of \cite{GTZ}*{Theorem 1.2}.

\begin{figure}
	\label{fig:robust}
	\centering{
		\includeinkscape[scale=1]{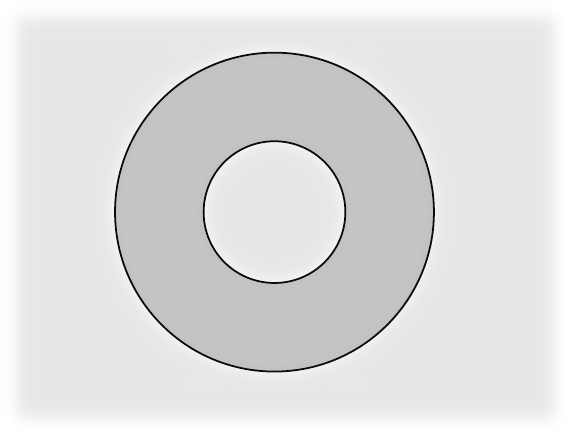}
	}
	\caption{A sketch of a robust cycle: only the relevant elements of the brick partitions appearing in \Cref{def:robust} are shown, and the lasso $v' \conc c$ is depicted as a non-self-intersecting curve.}
\end{figure}

Once this is done, all that is left is to show that the spaces considered in \Cref{thm:the-theorem-introduction} satisfy the necessary property to perform this construction (this is done in Theorems~\ref{thm:local-hp-robust-cycle0}, \ref{thm:local-hp-robust-cycle1} and \ref{thm:circular-covering-robust}). Such property is formalized in \Cref{def:robust} below. The intuitive and simplified idea
is to require the existence of a partition $\cS$ of $X$ with a \emph{robust} cycle $C$: by this we mean a cycle such that, given any brick partition $\cV \refines \cS$ and walk $v$ on $\cV$, even in the situation where $v$ intersects and apparently \emph{cuts} $C$, it is always possible to \emph{zoom in} (that is, find a refinement) to see that $C$ is still intact and in fact $v$ can
be prolonged with a disjoint circular path refining and confined in $C$ in a strong sense.

Such robust cycles do not exist, for instance, in the $2$-sphere $S^2$.
Indeed, any circular path $C$ on a brick partition of $S^2$ can be separated by an arc, and by all paths $v$ on sufficiently fine brick partitions containing such arc.
The resulting cut cannot be mended: any circular path which is confined to and winds around $C$, no matter on how fine a brick partition, must intersect $v$.

\begin{definition} \label{def:robust}
	Let us say that a Peano continuum $X$ has a \emph{robust cycle} if there are a brick partition $\cS$ with a spaced circular path $C$ of circular length $\ell \ge 4$, and a brick partition $\cU \refines \cS$ with a walk $u$ on $\cU$ such that for any brick partition and walk $(\cV, v) \refines (\cU, u)$ there are $(\cV', v'\conc c) \refines (\cV, v)$ where $v'\conc c$ is a lasso such that:
	\begin{equation}
		\label{eq:fattening-in-C}
		\rmap{\cU}{\cS}(\ball{2}{\rmap{\cV'}{\cU} \cdot c}) \sub C,
	\end{equation}
	and
	\begin{equation}
		\label{eq:winds-at-least-once}
		\Card{\wind_C(\rmap{\cV'}{\cS} \cdot c)} \ge \ell.
	\end{equation}
\end{definition}

The definition above is modeled on the statement of \Cref{thm:from-Rosendal-to-walks}, which in turn is structured following the statement of \nameref{fact:rosendal} (\Cref{fact:rosendal}): the pairs $(\cU, u), (\cV, v)$ in the Definition correspond to their counterparts in the Theorem.

\begin{theorem}
	\label{thm:the-theorem}
	Let $X$ be a Peano continuum with no locally separating points.
	If $X$ has a robust cycle, then $X$ has no generic chain.
\end{theorem}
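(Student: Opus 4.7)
The plan is to assume, for contradiction, that $X$ has a generic chain and to derive a contradiction from \Cref{thm:from-Rosendal-to-walks}. Let $\cS, C, \ell, \cU, u$ witness the robust cycle property. Applying \Cref{thm:from-Rosendal-to-walks} to $(\cU, u)$ yields a brick partition and walk $(\cV, v) \refines (\cU, u)$ enjoying the off-by-one amalgamation property (possibly after first upgrading $(\cU, u)$ to involve a spaced path via \Cref{thm:walks-to-paths}, which is where the no-locally-separating-points hypothesis enters). I then construct two walks on a common refinement of $\cV$ whose winding numbers around $C$, after projection to $\cS$, are simultaneously forced to be far apart by the robust cycle data and close together by the amalgamation, yielding a contradiction.

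Apply the robust cycle to $(\cV, v)$ to obtain a spaced lasso path $v' \conc c$ on $\cV' \refines \cV$ satisfying $\rmap{\cU}{\cS}(\ball{2}{\rmap{\cV'}{\cU} c}) \sub C$ and $|\wind_C(\rmap{\cV'}{\cS} c)| \ge \ell$. Let $c^{-1}$ be the reverse traversal of $c$, another spaced circular path on $\cV'$ disjoint from $v'$ and sharing the same vertex set, and set $w_0 := v' \conc c$ and $w_1 := v' \conc c^{-1}$. The walk $w_1$ also refines $v$ provided the $\cV$-projection of $c$ is already covered by that of $v'$; one arranges this either by an additional refinement of $\cV'$ or by iterating the robust cycle once more, so that the new loop lives in territory already traced by its preceding spaced-path part. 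Applying the amalgamation property of $(\cV, v)$ with $\cW := \cV'$ and $w_0, w_1$ produces a brick partition $\cZ$, a walk $z$ on $\cZ$, and monotone epimorphisms $\alpha_0, \alpha_1 \colon \cZ \to \cV'$ with $\alpha_i \cdot z \refex w_i$ and $d_{\sup}(\rmap{\cV'}{\cU}\alpha_0, \rmap{\cV'}{\cU}\alpha_1) \le 1$; composing with the $1$-Lipschitz monotone epimorphism $\rmap{\cU}{\cS}$ yields $d_{\sup}(\rmap{\cV'}{\cS}\alpha_0, \rmap{\cV'}{\cS}\alpha_1) \le 1$.

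Two opposing estimates on $|\wind_C(\rmap{\cV'}{\cS}(\alpha_0 \cdot z)) - \wind_C(\rmap{\cV'}{\cS}(\alpha_1 \cdot z))|$ complete the argument. A lasso-path generalization of \Cref{lem:wd-refine-initial-segment}, proved by the same induction on $\len(z)$, shows that each $\wind_C(\rmap{\cV'}{\cS}(\alpha_i \cdot z))$ equals the winding of the initial segment of $\rmap{\cV'}{\cS} w_i$ ending at $\alpha_i(z(-1))$, up to an absolute additive error; since $\alpha_0(z(-1))$ and $\alpha_1(z(-1))$ are at $\cU$-distance at most $1$ and $w_0, w_1$ share $v'$ but traverse their cycles oppositely, the difference of windings has absolute value at least $2\ell - O(1)$. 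On the other hand, the fattening condition forces that whenever $\rmap{\cV'}{\cU}(\alpha_i(z(j)))$ lies in $\rmap{\cV'}{\cU}(c)$, the partner $\rmap{\cV'}{\cU}(\alpha_{1-i}(z(j)))$ sits in the $1$-neighborhood, which maps into $C$ in $\cS$; outside this common ``in-$C$'' window the contributions to $\wind_C$ vanish (it registers only $C$-adjacencies) and cancel in the difference, while inside it both $\cS$-images lie in $C$ at pointwise distance $\le 1$, so \Cref{lem:close-walks} bounds the difference in partial windings by $2$. Combining gives $2\ell \le O(1)$ for an absolute constant, a contradiction once $\ell$ exceeds this constant; if a single application of the robust cycle does not suffice, iterating it to pump $|\wind_C(\rmap{\cV'}{\cS} c)|$ past the constant completes the contradiction. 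The main obstacle is the technical bookkeeping: establishing the lasso-path variant of \Cref{lem:wd-refine-initial-segment} with sharp error bounds, verifying (or systematically bypassing by iteration) that the reversal trick preserves refinement, and controlling the boundary transitions between the $v'$- and $c$-phases so that the constants in the upper estimate are truly absolute.
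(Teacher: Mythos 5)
Your overall strategy coincides with the paper's: produce two walks refining the given $(\cV,v)$ that wind around $C$ in opposite directions, feed them into \Cref{thm:from-Rosendal-to-walks}, and contradict \Cref{lem:close-walks}. However, two steps as written do not go through. First, the ``lasso-path generalization of \Cref{lem:wd-refine-initial-segment}'' is false. That lemma pins down $\wind_C(\alpha\cdot z)$ precisely because $w$ is a \emph{spaced path}, so a walk confined to an initial segment of $w$ has its winding determined by its endpoint. If instead $w_i=v'\conc c^{\pm1}$ is a lasso, its circular part revisits its starting vertex, and a walk $z$ with $\alpha_i\cdot z$ confined to $w_i$ and refining it may lap the loop arbitrarily many (positive or negative) times while ending at the same vertex, so its winding is not determined up to any additive error. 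This is exactly why the paper first converts the lassos $v\conc\bar w_i$ into genuine spaced paths via \Cref{thm:walks-to-paths} (which rests on \Cref{prop:path-doubling} and is the second essential use of the no-locally-separating-points hypothesis) before invoking \Cref{thm:from-Rosendal-to-walks}, and why it then truncates $z$ at the shortest initial segment $\bar z$ for which one of the two refinements is complete, so that the other image is still confined to an initial segment of a spaced path.

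Second, the error in your upper estimate is not an absolute constant. The claim that ``outside the in-$C$ window the contributions cancel in the difference'' is unjustified: $\weight_C$ can register an edge for $\alpha_0\cdot z$ but not for $\alpha_1\cdot z$ even at sup-distance $1$, and \Cref{lem:close-walks} applies only when \emph{both} walks are confined to $C$. The path part $v'$ of the lasso is not constrained by the fattening condition \eqref{eq:fattening-in-C} and may itself wind around $C$ by an amount $M_v$ depending on the adversarially given $(\cV,v)$. Your two estimates therefore give $2\ell\le O(M_v+M_c)$ rather than $2\ell\le O(1)$, and a single lap of $c$ cannot dominate this. Iterating the robust cycle does not repair it, since \Cref{def:robust} only guarantees winding of absolute value at least $\ell$ at each application, with no mechanism to accumulate. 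The paper's fix is to concatenate $k$ copies of $c_+$ (resp.\ $c_-$) with $k$ chosen \emph{after} computing $M_v$ and $M_c$, so that $kN>3M_v+M_c+2$, and then to split $\bar z$ as $\bar z_v\conc\bar z_0$, applying \Cref{lem:close-walks} only on the final piece $\bar z_0$, whose two images are confined to $C$, while bounding the winding of $\bar z_v$ by $M_v$ using the fattening condition. Without the multiplicity $k$ and the path upgrade, the contradiction does not close.
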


\begin{proof}
	The existence of a robust cycle gives us a brick partition $\cS$, a spaced circular path $C$ of circular length $\ell \ge 4$, a brick partition $\cU \refines \cS$, and and a walk $u$ on $\cU$.
	Up to taking a refinement, by \nameref{lem:lemmetto-monotone} (\Cref{lem:lemmetto-monotone}) we can suppose that
	\begin{equation}
		\label{eq:U-S_2-to-1}
		d_{\cS}(\rmap{\cU}{\cS}(U), \rmap{\cU}{\cS}(U')) \le 1 \text{ whenever } d_{\cU}(U ,U') \le 2.
	\end{equation}

	Suppose we are given a brick partition and a walk $(\cV, v_0) \refines (\cU, u)$. By definition of robust cycle, we can assume that, up to a refinement, $v_0$ is a lasso $v\conc c_+$ such that:
	\begin{equation}
		\label{eq:annuli}
		\rmap{\cU}{\cS}(\ball{2}{\rmap{\cV}{\cU} \cdot c_+}) \sub C,
	\end{equation}
	and
	\begin{equation*}
		\Card{\wind_C(\rmap{\cV}{\cS} \cdot c_+)} \ge \ell.
	\end{equation*}

	Let $c_{-}$ be the circular path going in the opposite direction of $c_+$.
	Set $N \coloneqq \wind_C(\rmap{\cV}{\cS} \cdot c_+)$,
	so that $\wind_C(\rmap{\cV}{\cS} \cdot c_{-}) = -N$.
	Up to switching the role of $c_+, c_{-}$, we might assume that $N\ge \ell>0$.

	Let
	\begin{equation*}
		M_c \coloneqq \max \set{\Card{\wind_C(\rmap{\cV}{\cS} \cdot t)}: t \text{ is an initial segment of } c_+ \text{ or } c_{-}},
	\end{equation*}
	and
	\begin{equation*}
		M_v \coloneqq \max \set{\Card{\wind_C(\rmap{\cV}{\cS} \cdot t)}: t \text{ is an initial or final segment of } v}.
	\end{equation*}

	Since $N \ge \ell >1$, there is $k \in \N$ such that
	\begin{equation*}
		k \cdot N > 3M_v + M_c + 2.
	\end{equation*}
	Let
	\begin{equation*}
		\bar w_0 \coloneqq \underbrace{c_+ \conc c_+ \conc \cdots \conc c_+}_{k\text{ times}}\quad \text{and} \quad \bar w_1 \coloneqq \underbrace{c_- \conc c_- \conc \cdots \conc c_-}_{k\text{ times}}.
	\end{equation*}

	Since $v \conc \bar w_0$ is uncrossed,  by \nameref{thm:walks-to-paths} (\Cref{thm:walks-to-paths}) there is a brick partition $\widetilde \cW \refines \cV$ and a spaced path $\tilde w_0$ on $\widetilde{\cW}$ such that $\rmap{\widetilde \cW}{\cV} \cdot \tilde w_0$ monotonically refines $v \conc \bar w_0$. Since $v \conc \bar w_1$ is uncrossed as well, by \nameref{lem:lemmetto-monotone} it can be monotonically refined by an uncrossed walk $\tilde w_1$ on $\widetilde{\cW}$. Hence, applying \nameref{thm:walks-to-paths} to $\tilde w_1$, we can find a brick partition $\cW \refines \widetilde \cW$ and a spaced path $w_1$ such that $\rmap{\cW}{\cV} \cdot w_1$ monotonically refines $v \conc \bar w_1$. Another application of \nameref{lem:lemmetto-monotone} to $\tilde w_0$ yields finally a spaced path $w_0$ on $\cW$ such that $\rmap{\cW}{\cV} \cdot w_0$ monotonically refines $v \conc \bar w_0$. We stress that $w_0, w_1$ are paths, \emph{not} lassos.

	\begin{claim}
		\label{claim:winding-the-****-around-c}
		\mbox{}
		\begin{enumerate}
			\item[(a0)] If $t$ is an initial segment of $w_0$ then:
			      \begin{equation*}
				      \wind_C(\rmap{\cW}{\cS} \cdot t) \ge - (M_v + M_c),
			      \end{equation*}
			\item[(a1)] If $t$ is an initial segment of $w_1$ then:
			      \begin{equation*}
				      \wind_C(\rmap{\cW}{\cS} \cdot t) \le M_v + M_c,
			      \end{equation*}
			\item[(b0)] If $t$ is a final segment of $w_0$ such that $\rmap{\cW}{\cV} \cdot t$ contains $\bar w_0$ then:
			      \begin{equation*}
				      \wind_C(\rmap{\cW}{\cS} \cdot t) > 2M_v + M_c +2,
			      \end{equation*}
			\item[(b1)] If $t$ is a final segment of $w_1$ such that $\rmap{\cW}{\cV} \cdot t$ contains $\bar w_1$ then:
			      \begin{equation*}
				      \wind_C(\rmap{\cW}{\cS} \cdot t) < -(2M_v + M_c +2).
			      \end{equation*}
		\end{enumerate}
	\end{claim}
	\begin{proof}[Sketch of Proof]
		We only show that (b0) holds, the other cases being analogous.
		Let such a $t$ be given.
		By definition, $\wind_C(\rmap{\cV}{\cS} \cdot \bar w_0 )> 3M_v + M_c + 2$, and for any final segment $s$ of $v$, $\wind_C(\rmap{\cV}{\cS} \cdot s) \ge -M_v$.
		Since $\rmap{\cW}{\cV} \cdot t$ monotonically refines a final segment of $v \conc \bar w_0$ containing $\bar w_0$, we conclude by \Cref{lem:wd-monotonically-refine}.
	\end{proof}

	We now apply \Cref{thm:from-Rosendal-to-walks}; hence towards a contradiction suppose there are a brick partition $\cZ$, two monotone epimorphisms $\alpha_0, \alpha_1 \colon \cZ \to \cW$, and a reduced walk $z$ on $\cZ$ such that $\alpha_i \cdot z \refex w_i$, for $i=0,1$, and
	\begin{equation}
		\label{eq:close-functions}
		d_{\sup}(\rmap{\cW}{\cU} \alpha_0, \rmap{\cW}{\cU} \alpha_1) \le 1.
	\end{equation}

	Let $\bar z$ be the shortest initial segment of $z$ such that there is $i \in \set {0,1}$ for which $\alpha_i \cdot \bar z \refex w_i$.
	Necessarily, $\alpha_i \cdot \bar z$ is confined in $w_i$.
	Let us suppose $i= 0$, the case $i=1$ being symmetrical.

	We will find a contradiction by examining $\wind_C(\rmap{\cW}{\cS} \alpha_1 \cdot \bar z)$.
	On one hand $\alpha_1 \cdot \bar z$ refines and is confined to an initial segment of the spaced path $w_1$, so by \Cref{claim:winding-the-****-around-c}.(a1) and \Cref{lem:wd-refine-initial-segment}
	\begin{equation}
		\label{eq:wd-2-z}
		\wind_C(\rmap{\cW}{\cS} \alpha_1 \cdot \bar z) \le M_v + M_c.
	\end{equation}

	On the other hand,
	write $\bar z$ as $\bar z_v\conc \bar z_0$, where $\bar z_0$ is the longest final segment such that
	\begin{equation}
		\label{eq:def-of-bar-z_1}
		\rmap{\cU}{\cS} (\ball{1}{\rmap{\cW}{\cU} \alpha_0 \cdot \bar z_0}) \sub C.
	\end{equation}
	Notice that $\bar z_v$ might be the empty walk (this is the case if $C = \cS$).
	In particular, $\alpha_0 \cdot \bar z_0$ refines and is confined to a final segment of $w_0$ whose image under $\rmap{\cW}{\cV}$ contains $\bar w_0$, and moreover $\alpha_0 \cdot \bar z_0(-1) = w_0(-1)$, so by \Cref{claim:winding-the-****-around-c}.(b0) and \Cref{lem:wd-refine-initial-segment}
	\begin{equation}
		\label{eq:wd-bar-z_1}
		\wind_C(\rmap{\cW}{\cS} \alpha_0 \cdot \bar z_0) > 2M_v + M_c +2.
	\end{equation}

	By \eqref{eq:close-functions} and \eqref{eq:def-of-bar-z_1} we know that $\rmap{\cW}{\cS} \alpha_1 \cdot \bar z_0$ is confined to $C$. We moreover have that
	\begin{equation*}
		d_{\cU}(\rmap{\cW}{\cU} \alpha_0 \cdot \bar z_0 (i), \rmap{\cW}{\cU} \alpha_1 \cdot \bar z_0(i)) \le 1 \text{ for each } i<\len(\bar z_0),
	\end{equation*}
	and
	\begin{multline*}
		d_{\cU}(\rmap{\cW}{\cU} \alpha_0 \cdot \bar z_0 (j), \rmap{\cW}{\cU} \alpha_1 \cdot \bar z_0(j+1)) \le d_{\cU}(\rmap{\cW}{\cU} \alpha_0 \cdot \bar z_0 (j), \rmap{\cW}{\cU} \alpha_0 \cdot \bar z_0(j+1))+ \\ + d_{\cU}(\rmap{\cW}{\cU} \alpha_0 \cdot \bar z_0 (j+1), \rmap{\cW}{\cU} \alpha_1 \cdot \bar z_0(j+1))\le 2,
	\end{multline*}
	for each $j<\len(\bar z_0) -1$.
	Using \eqref{eq:U-S_2-to-1} we get $d_{\cS}(\rmap{\cW}{\cS} \alpha_0 \cdot \bar z_0 (j), \rmap{\cW}{\cS} \alpha_1 \cdot \bar z_0(j+1)) \le 1$, so we can apply \Cref{lem:close-walks} and \eqref{eq:wd-bar-z_1} to conclude
	\begin{equation}
		\label{eq:wd-2-z_1}
		\wind_C(\rmap{\cW}{\cS} \alpha_1 \cdot \bar z_0) > 2M_v + M_c.
	\end{equation}

	If $\bar z_v$ is the empty walk then $\wind_C(\rmap{\cW}{\cS} \alpha_1 \cdot \bar z) = \wind_C(\rmap{\cW}{\cS} \alpha_1 \cdot \bar z_0)$, so we obtain a contradiction from \eqref{eq:wd-2-z} and \eqref{eq:wd-2-z_1}.
	Otherwise $\alpha_1 \cdot \bar z_v$ refines an initial segment of $w_1$ and $\rmap{\cW}{\cV}\alpha_1(\bar z_v(-1))$ is contained in $v$, and not in $\bar w_1$: indeed
	by definition of $\bar z_0$ the vertex $\rho^\cW_\cU(\alpha_0(\bar z_v(-1)))$ must be $E$-adjacent to one in $\cU \setminus (\rmap{\cU}{\cS})^{-1}(C)$, and this in combination with
	\eqref{eq:close-functions} yields
	\begin{equation*}
		d_{\cU}(\rmap{\cW}{\cU}\alpha_1(\bar z_v(-1)), \cU \setminus (\rmap{\cU}{\cS})^{-1}(C)) \le 2,
	\end{equation*}
	whereas $\rmap{\cU}{\cS}( \ball{2}{\rmap{\cV}{\cU} \cdot \bar w_1}) = \rmap{\cU}{\cS}( \ball{2}{\rmap{\cV}{\cU} \cdot c_+}) \sub C$, by \eqref{eq:annuli}.
	\Cref{lem:wd-refine-initial-segment} and the definition of $M_v$ then give
	\begin{equation}
		\label{eq:wd-2-z_v}
		\Card{\wind_C(\rmap{\cW}{\cS} \alpha_1 \cdot \bar z_v)} \le M_v.
	\end{equation}

	Combining \eqref{eq:wd-2-z_1} and \eqref{eq:wd-2-z_v} we conclude $\wind_C(\rmap{\cW}{\cS} \alpha_1 \cdot \bar z) > M_v + M_c$, again a contradiction with \eqref{eq:wd-2-z}.
\end{proof}

In the next three subsections we verify that each of the three conditions considered in \Cref{thm:the-theorem-introduction} is enough to guarantee the existence of a robust cycle, and therefore is covered by \Cref{thm:the-theorem}.

\subsection{Locally non-planar open subset}

Useful examples of Peano continuum with no locally separating points which have locally non-planar open subsets to keep in mind throughout this subsection are the Menger curve, or any $n$-dimensional closed manifold, for $n \ge 3$.

The next proposition is a combinatorial translation of the Disjoint Arcs Property (see \cite{MR1373283}*{Theorem 1}).
\begin{proposition}[Disjoint Paths Property]
	\label{prop:disjoint-path-property}
	Let $X$ be a Peano continuum with no locally separating points, and let $O \sub X$ be a locally non-planar open set.
	Let $\cU$ be a brick partition of $X$ and $u, u'$ be paths on $\cU$, such that $u$ is confined to $\cU(O)$.
	There are a brick partition $\cV \refines \cU$ and paths $v, v'$ on $\cV$ which are disjoint and such that $\rmap{\cV}{\cU} \cdot v, \rmap{\cV}{\cU} \cdot v'$ monotonically refine $u, u'$, respectively.
	If $u$ is circular, $v$ can be chosen to be circular.
\end{proposition}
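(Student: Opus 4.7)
The plan is to adapt the proof of \nameref{prop:path-doubling} (\Cref{prop:path-doubling}), treating two paths simultaneously and exploiting the fact that every vertex of $\cU$ shared by $u$ and $u'$ lies in $\cU(O)$ and is therefore locally non-planar. This last point is crucial: it allows us to use the crossing-arcs version of the disjoint arcs property, \Cref{fact:existence-of-arcs}.\eqref{itm:existence-of-arcs-cross}, to disentangle the two paths inside such shared vertices.

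First, for each edge $W \E W'$ of $\cU$ traversed by $u$ or $u'$, use \Cref{fact:points-on-boundaries}.\eqref{itm:nls} to pick two distinct points in $\Cl(W) \cap \Cl(W') \cap (W \vee W')$, assigning one to $u$ and one to $u'$. This yields boundary points $x_1, \ldots, x_{\len(u)-1}$ for $u$ and $x'_1, \ldots, x'_{\len(u')-1}$ for $u'$, chosen to be pairwise distinct. Also pick endpoints $x_0 \in u(0)$, $x_{\len(u)} \in u(-1)$, and similarly for $u'$, distinct from all other selected points. If $u$ is circular, instead set $x_0 = x_{\len(u)} \in \Cl(u(-1)) \cap \Cl(u(0)) \cap (u(-1) \vee u(0))$, so that the construction below will close up.

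Second, build the arcs vertex by vertex. For a vertex $W$ of $\cU$ visited only by $u$ (resp.\ only by $u'$), use \Cref{fact:existence-of-arcs}.\eqref{itm:existence-of-arcs} to find a single arc in $W \cup \set{x_i, x_{i+1}}$ (resp.\ $W \cup \set{x'_j, x'_{j+1}}$) connecting the relevant boundary points. For a vertex $W$ shared between $u$ and $u'$, we have $W \in \cU(O)$, so $W$ is locally non-planar, and \Cref{fact:existence-of-arcs}.\eqref{itm:existence-of-arcs-cross} produces two disjoint arcs inside $W$, one pairing the $u$-endpoints and the other the $u'$-endpoints. Concatenating all $u$-arcs and all $u'$-arcs produces two sets $\lambda, \lambda' \sub X$: since $u$ and $u'$ are paths and the arcs are disjoint within each vertex, $\lambda$ and $\lambda'$ are disjoint globally, and $\lambda$ is a topological circle whenever $u$ is circular.

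Third, pick a brick partition $\tilde{\cV} \refines \cU$ fine enough that $\Star(\lambda, \tilde{\cV}) \cap \Star(\lambda', \tilde{\cV}) = \emptyset$, set $P_i \coloneqq \Star(\lambda, \tilde{\cV}(u(i)))$ and $P'_j \coloneqq \Star(\lambda', \tilde{\cV}(u'(j)))$, and let $\cV$ be an amalgam of $\tilde{\cV}$ refining $\cU$ and containing each $\bigvee P_i$ and each $\bigvee P'_j$. Then $v \coloneqq \walk{\bigvee P_0, \ldots, \bigvee P_{\len(u)-1}}$ and $v' \coloneqq \walk{\bigvee P'_0, \ldots, \bigvee P'_{\len(u')-1}}$ are paths on $\cV$ that are disjoint by the choice of $\tilde{\cV}$, satisfy $\rmap{\cV}{\cU} \cdot v = u$ and $\rmap{\cV}{\cU} \cdot v' = u'$ (so the monotone refinement condition holds trivially), and close up into circular paths whenever the corresponding original walk is. The main obstacle is the bookkeeping of the four boundary points inside the vertices shared by $u$ and $u'$, particularly when such a vertex is an endpoint of $u$ or $u'$, or when $u$ and $u'$ share an edge; once four pairwise distinct points are arranged, the crossing arcs property does the essential separating work and the rest is routine brick-partition discretization exactly as in \Cref{prop:path-doubling}.
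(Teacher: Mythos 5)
Your proposal is correct and follows essentially the same route as the paper's proof: select distinct boundary points via \Cref{fact:points-on-boundaries}.\eqref{itm:nls}, use \Cref{fact:existence-of-arcs}.\eqref{itm:existence-of-arcs-cross} in the shared vertices (which lie in $\cU(O)$ and are hence locally non-planar) to obtain crossing disjoint arcs, concatenate into disjoint arcs $\lambda,\lambda'$, and discretize by taking stars in a fine refinement and amalgamating. The bookkeeping details you flag (distinctness of the four points in a shared vertex, the circular case) are handled in the paper exactly as you describe.
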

\begin{proof}
	For each $1 \le i< \len(u)$, let $x_i$ be a point in $\Cl(u(i-1))\cap \Cl(u(i)) \cap (u(i-1) \vee u(i))$, and, for each $1 \le j< \len(u')$, let $x'_j$ be a point in $\Cl(u'(j-1))\cap \Cl(u'(j)) \cap (u'(j-1) \vee u'(j))$.
	We can assume that all these points are distinct by \Cref{fact:points-on-boundaries}.\eqref{itm:nls}, since $u$ and $u'$ are paths, so for each $i < \len(u)$ there is at most one $j<\len(u')$ such that $u(i) = u'(j)$, and vice versa.

	If $u(i) \not = u'(j)$ for every $j < \len(u')$, let $\gamma_i$ be any arc in $u(i) \cup \set{x_i, x_{i+1}}$ connecting $x_i$ to $x_{i+1}$.
	Similarly, if for $j<\len(u')$ there is no $i$ such that $u'(j) =u(i)$, let $\gamma'_j$ be any arc in $u'(j) \cup \set{x'_j, x'_{j+1}}$ connecting $x'_j$ to $x'_{j+1}$.

	Otherwise, for each $U = u(i) = u'(j)$, apply \Cref{fact:existence-of-arcs}.\eqref{itm:existence-of-arcs-cross} (note that in this case $U \subseteq O$) to find two disjoint arcs $\gamma_i, \gamma'_j$ in $U \cup \set{x_i, x_{i+1}, x'_j, x'_{j+1}}$ connecting $x_i$ to $x_{i+1}$ and $x'_j$ to $x'_{j+1}$, respectively.

	Let $\lambda, \lambda'$ be the two disjoint arcs resulting from the concatenations of the $\gamma_i$'s and $\gamma'_j$'s, respectively.
	Let $\tilde \cV \refines \cU$ be a brick partition fine enough that $\Star(\lambda, \tilde \cV) \cap \Star(\lambda', \tilde \cV) = \emptyset$.
	Set $P_i \coloneqq \Star(\lambda, \tilde \cV(u(i))), P'_j \coloneqq \Star(\lambda', \tilde \cV(u'(j)))$, for each $i< \len(u), j<\len(u')$, and let $\cV$ be the amalgam of $\tilde \cV$ refining $\cU$ and containing $\bigvee P_i$ and $\bigvee P'_j$, for all $i< \len(u), j<\len(u')$.
	Then $v \coloneqq \walk{\bigvee P_0, \dots,\bigvee P_{\len(u)-1}}$ and $v' \coloneqq \walk{\bigvee P'_0, \dots, \bigvee P'_{\len(u')-1}}$ are paths on $\cV$ satisfying the conclusions of the proposition.

	If $u$ is circular, we can take $x_0 = x_{\len(u)}$ in $\Cl(w(-1)) \cap \Cl(w(0)) \cap (w(-1) \vee w(0))$, and arcs $\gamma_0, \gamma_{\len(u)}$ connecting $x_0$ to $x_1, x_{\len(u)-1}$, respectively, and add `$\!\!\mod \len(u)$' where needed.
\end{proof}

The \nameref{prop:disjoint-path-property} allows us to find a robust cycle in any locally non-planar open set.

\begin{theorem}
	\label{thm:local-hp-robust-cycle0}
	Let $X$ be a Peano continuum with no locally separating points. If $X$ has a locally non-planar open subset then it has a robust cycle.
\end{theorem}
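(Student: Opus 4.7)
The plan is to place the robust cycle inside the locally non-planar open set $O \sub X$, using the \nameref{prop:disjoint-path-property} (\Cref{prop:disjoint-path-property}) to make room for a winding cycle alongside any prescribed walk.

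Setting up $\cS, C, \cU, u$: Since $O$ is locally non-planar, it contains a simple closed curve $\Gamma$, and by \Cref{thm: brick partitioning theorem} together with an amalgamation step one can take a brick partition $\cS$ of $X$ containing a spaced circular path $C = \walk{S_0, \dots, S_{\ell-1}}$ of length $\ell \ge 4$ with $\bigcup_i \Cl(S_i) \sub O$ and with $\Gamma$ visiting the $S_i$'s in cyclic order. Next, pick points $x_i \in \Cl(S_{i-1}) \cap \Cl(S_i) \cap (S_{i-1} \vee S_i)$ via \Cref{fact:points-on-boundaries}.\eqref{itm:nls}, and arcs $\lambda_i \sub S_i \cup \set{x_i, x_{i+1}}$ joining them via \Cref{fact:existence-of-arcs}.\eqref{itm:existence-of-arcs}, so that $\Gamma' \coloneqq \bigcup_i \lambda_i$ is a simple closed curve in $\bigcup_i S_i$. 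Choose $\cU \refines \cS$ fine enough that a cyclic amalgamation of bricks in $\Star(\Gamma', \cU)$ produces a spaced circular path $D$ on $\cU$ with $\rmap{\cU}{\cS} \cdot D$ monotonically refining $C$, and sitting so deep inside $(\rmap{\cU}{\cS})^{-1}(C)$ that $\ball{2}{D} \sub (\rmap{\cU}{\cS})^{-1}(C)$. Let $u$ be the empty walk on $\cU$, so that any $(\cV, v) \refines (\cU, u)$ imposes no constraint on $v$.

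Verifying robustness: Given $(\cV, v) \refines (\cU, u)$, use \Cref{lem:lemmetto-monotone} to lift $D$ to a spaced circular path $\tilde D$ on $\cV$ monotonically refining $D$, and \nameref{thm:walks-to-paths} (\Cref{thm:walks-to-paths}) to upgrade $v$ to a spaced path on a refinement (still called $\cV$). Extend this path by a path in $\cV$ ending at some vertex $W_0 \in \tilde D$. Apply \Cref{prop:disjoint-path-property} to the circular path $\tilde D$ (confined to $\cV(O)$ since $\bigcup C \sub O$) and to a spaced-path version of the extended walk: this yields a brick partition $\cV' \refines \cV$ with two disjoint spaced paths, a circular $c$ monotonically refining $\tilde D$ and a path $v'$ monotonically refining the extended walk. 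The lasso $v' \conc c$ satisfies the two required conditions: since $\rmap{\cV'}{\cU} \cdot c$ visits only vertices of $D$, one has $\ball{2}{\rmap{\cV'}{\cU} \cdot c} \sub \ball{2}{D} \sub (\rmap{\cU}{\cS})^{-1}(C)$, giving \eqref{eq:fattening-in-C}; and $\rmap{\cV'}{\cS} \cdot c$ monotonically refines $C$ via $D$, so by \Cref{lem:wd-monotonically-refine} its winding number equals $\wind_C(C) = \pm\ell$, giving \eqref{eq:winds-at-least-once}.

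The main technical obstacle is that \Cref{prop:disjoint-path-property} yields fully disjoint paths, whereas a lasso requires $v'(-1) = c(0) = c(-1)$. I plan to address this by choosing the connecting path to $W_0$ carefully so that after refinement the endpoint of $v'$ is adjacent to, or lies in the same preimage brick as, a vertex of $c$; a final amalgamation or a single-edge bridging step in the brick near $W_0$ then merges $v'(-1)$ with a vertex of $c$ to produce a genuine lasso. This local surgery preserves both the monotonic refinement of $C$ and the 2-ball confinement, since it only modifies the single brick at the joining point. The existence of a suitable bridge is ensured by the presence of at least two distinct common boundary points near $W_0$ via \Cref{fact:points-on-boundaries}.\eqref{itm:nls} and by \Cref{fact:existence-of-arcs}.\eqref{itm:existence-of-arcs-nls}, invoked in the locally non-planar brick containing $W_0$.
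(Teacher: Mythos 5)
Your overall strategy matches the paper's: take $u$ to be the empty walk, bury a spaced circular path $D$ on $\cU$ inside the $2$-ball preimage of the cycle $C$ on $\cS$ via \nameref{lem:path-to-spaced}, lift $D$ to the given $\cV$, and use the \nameref{prop:disjoint-path-property} to separate the lifted cycle from a path refining $v$; the fattening and winding conditions then follow exactly as you say. (A secondary difference: the paper builds $C$ purely combinatorially, applying \nameref{prop:path-doubling} to a path between two non-adjacent bricks of $\cW(O)$ and closing the two resulting paths into a cycle, rather than threading a simple closed curve $\Gamma$ through a brick partition; your route is workable but the cyclic amalgamation along $\Gamma$ needs more care than you give it — the stars of consecutive arcs of $\Gamma$ overlap, and the paper only carries out this kind of construction carefully in \Cref{lemma:fine-gamma-compatible}, in the planar setting.)

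The genuine gap is the one you flag yourself: \Cref{prop:disjoint-path-property} outputs two \emph{disjoint} paths, while a lasso path $v'\conc c$ needs $v'(-1) \E c(0)$ together with disjointness of $v'$ and $c$. Your proposed repair does not work as stated. Amalgamating $v'(-1)$ with a vertex of $c$ makes the handle and the loop share a vertex, so $v'\conc c$ is no longer a lasso path; and there is no ``single-edge bridging step'' available, since adjacency in a nerve graph is dictated by the topology of the bricks, not chosen freely. Inserting instead a connecting path from $v'(-1)$ to $c$ inside the brick containing $W_0$ is also problematic: such a path may be forced to cross $v'$ or enter $c$ prematurely, and if one instead connects from an interior vertex of $v'$, the truncated handle may no longer refine $v$. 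The paper resolves this by first reducing to producing a path refining $v$ together with a \emph{disjoint} circular path satisfying \eqref{eq:fattening-in-C} and \eqref{eq:winds-at-least-once}, with no adjacency requirement, and then applying \nameref{prop:path-doubling} to obtain two copies $v'_0, v'_1$ of the path, disjoint from $c$ and from each other except at their endpoints: the lasso traverses $v'_1$ in full (which already guarantees that the projection refines $v$), backtracks along $v'_0$ to the first vertex of a connecting path $P$ from $v'_0\cup v'_1$ to $c$, follows $P$, and then loops around $c$. You should incorporate this reduction, or an equivalent routing argument, to close the gap.
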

\begin{proof}
	Suppose that $O\subseteq X$ is open and locally non-planar.
	Fix any brick partition $\cW$ of $X$ which is fine enough that there are $W, W'$ which are in the same $E$-connected component of $\cW(O)$ but are not adjacent.
	Let $w$ be a path from $W$ to $W'$ in $\cW(O)$.
	By \nameref{prop:path-doubling} (\Cref{prop:path-doubling}), there are a brick partition $\cS \refines \cW$ and two paths $s_{0},s_{1}$ on $\cS$ which are disjoint except for their endpoints and such that $\rmap{\cS}{\cW} \cdot s_{i}$ monotonically refines $w$, for $i < 2$.
	Let $C$ be the circular path which follows $s_0$ forward and then $s_1$ backwards.
	Up to a refinement, we can assume $C$ is spaced, by \nameref{lem:path-to-spaced} (\Cref{lem:path-to-spaced}).
	Since $s_{0}, s_{0}$ have length at least $3$, $C$ has circular length at least $4$.

	By \nameref{lem:path-to-spaced} twice, we find a brick partition $\cU \refines \cS$ and a spaced circular path $D$ on $\cU$ such that $\rmap{\cU}{\cS} \cdot D$ monotonically refines $C$ and
	\begin{equation}
		\label{eq:small-D-lol}
		\rmap{\cU}{\cS}(\ball{2}{D}) \sub C.
	\end{equation}

	Let $u$ be the empty walk and suppose that a brick partition and a walk $(\cV, v) \refines (\cU, u)$ are given. We claim that it suffices to find $(\cV',v')\refines(\cV,v)$ with $v'$ a path, and a circular path $c$ disjoint from $v'$ satisfying \eqref{eq:fattening-in-C} and \eqref{eq:winds-at-least-once} in \Cref{def:robust}. 	Indeed, in this case, by \nameref{prop:path-doubling} we can assume that, up to a refinement, there are two paths $v'_0, v'_1$
	which are disjoint except in their endpoints, disjoint from $c$, and such that $\rmap{\cV'}{\cV} \cdot v'_i \refex v$.
	Let $P$ be a path from $v'_0 \cup v'_1$ to $c$, and say without loss of generality that it starts in $v'_0$.
	Then the lasso that follows $v'_1$ from beginning to end, then $v'_0$ backwards until meeting $P$, then follows $P$, then does a complete lap following $c$, is as required.

	We now proceed to find $v'$ and $c$ as above. Up to a refinement, we can suppose that $v$ is a path, by \nameref{thm:walks-to-paths} (\Cref{thm:walks-to-paths}).
	Use \nameref{lem:lemmetto-monotone} (\Cref{lem:lemmetto-monotone}) to find a spaced circular path $D'$ on $\cV$ such that $\rmap{\cV}{\cU} \cdot D'$ monotonically refines $D$.
	Since $D'$ is confined in $\cV(O)$, by \nameref{prop:disjoint-path-property} (\Cref{prop:disjoint-path-property}) there are a brick partition $\cV' \refines \cV$ and two disjoint paths $v', c$ on $\cV'$ such that $\rmap{\cV'}{\cV} \cdot v', \rmap{\cV'}{\cV} \cdot c$ monotonically refine $v$ and $D'$, respectively, and $c$ is circular.
	Then $c$ satisfies \eqref{eq:fattening-in-C} by \eqref{eq:small-D-lol}, and
	it follows by \Cref{lem:wd-monotonically-refine} that it satisfies \eqref{eq:winds-at-least-once}.
\end{proof}

\subsection{Planar open set with non-locally-separating simple closed curve}

The next two results capture the essential aspects behind the assumption of the existence of a non-locally-separating simple closed curve in \Cref{thm:the-theorem-introduction}.
Good examples to keep in mind are the closed disk and the Sierpi\'{n}ski carpet.

We start with an auxiliary definition.
\begin{definition}
	\label{def:gamma-compatible}
	Let $X$ be a Peano continuum, $\gamma$ be a simple closed curve in $X$, and $\cS$ be a brick partition of $X$.
	A spaced circular path $c = \langle c(0), \dots, c(\ell-1), c(0) \rangle$ on $\cS$ is \emph{$\gamma$-compatible} if
	\begin{enumerate}
		\item \label{itm:contains-gamma} $\set{c(i) : i < \ell} = \Star(\gamma, \cS)$;
		\item \label{itm:intersection-1} $\set{c(i) \cap \gamma : i < \ell}$ is a brick partition of $\gamma$.
	\end{enumerate}
\end{definition}

Throughout this subsection we make repeated use of \Cref{fact:points-on-boundaries}.\eqref{itm:nls_set} and \Cref{fact:existence-of-arcs}.\eqref{itm:existence-of-arcs-nls-arc} applied to a non-locally-separating simple closed curve $\gamma$ in a Peano continuum with no locally separating points.
Indeed, this is possible the since simple closed curves in Peano continua with no locally separating points necessarily have empty interior.

\begin{lemma}
	\label{lemma:fine-gamma-compatible}
	Let $X$ be a Peano continuum with no locally separating points and let $\gamma$ be a non-locally-separating simple closed curve in an open planar subset $O \sub X$.
	For each brick partition $\cU$ of $X$, there are a brick partition $\cV \refines \cU$ and a $\gamma$-compatible spaced circular path $c$ on $\cV$.
\end{lemma}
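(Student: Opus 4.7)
The plan is to equip a neighborhood of $\gamma$ with an annular tubular structure using the planarity of $O$, to mark a cyclic sequence of regions around $\gamma$ using this structure, and then to refine $\cU$ and amalgamate so that these regions become elements of the refinement.

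First, since $\gamma$ is a simple closed curve in the planar open set $O$, I fix a planar embedding $\iota \colon O \hookrightarrow \bbR^2$. By the Schoenflies theorem, $\iota(\gamma)$ has an open annular neighborhood in $\bbR^2$; shrinking it to remain inside $\iota(O)$ and pulling back via $\iota^{-1}$ yields an open neighborhood $W \sub O$ of $\gamma$ together with a homeomorphism $\phi \colon W \to S^1 \times (-1, 1)$ with $\phi(\gamma) = S^1 \times \{0\}$. I then fix $\ell \ge 4$ points $\theta_0, \dots, \theta_{\ell-1} \in S^1$ in cyclic order and a small $\eta \in (0, 1)$, and set $p_i \coloneqq \phi^{-1}(\theta_i, 0)$ and $A_i \coloneqq \phi^{-1}([\theta_{i-1}, \theta_i] \times \{0\})$, so that the $A_i$'s are closed arcs partitioning $\gamma$ with consecutive ones meeting at the $p_i$'s.

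Using \Cref{thm: brick partitioning theorem}, I take a brick partition $\cV_0 \refines \cU$ of mesh small enough that every $V \in \Star(\gamma, \cV_0)$ is contained in $\phi^{-1}(S^1 \times (-\eta, \eta))$ and $\phi(\Cl(V))$ has angular extent smaller than one tenth of $\min_i (\theta_i - \theta_{i-1})$; in particular, $\Cl(V) \cap \gamma$ then lies in the union of at most two consecutive $A_i$'s. I partition $\Star(\gamma, \cV_0)$ into $\ell$ classes $\cF_0, \dots, \cF_{\ell-1}$ by a fixed assignment rule, for instance declaring $V \in \cF_i$ when $\Cl(V) \cap \gamma \sub A_i \setminus \{p_{i-1}\}$ and sending each element that straddles some pivot $p_j$ to the class of the lower index. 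Setting $V_i \coloneqq \bigvee \cF_i$, I form the amalgam $\cV$ of $\cV_0$ that merges each $\cF_i$ into $V_i$ and leaves the other elements of $\cV_0$ unchanged.

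Three verifications then complete the proof: (a) each $\cF_i$ is $\E$-connected, which follows from the general fact that the $\E$-star of any connected subset of $\gamma$ meeting every element of $\cF_i$ is $\E$-connected — a separation of the star would induce a separation of the subset by closed sets; (b) by the tubular bound, $\Cl_X(V_i) \sub \phi^{-1}([\theta_{i-1} - \delta, \theta_i + \delta] \times [-\eta, \eta])$ for a small $\delta$, so $\Cl(V_i) \cap \Cl(V_j) \ne \emptyset$ if and only if $\Card{i - j} \le 1 \mod \ell$, with equality in the adjacent case witnessed by $p_i \in \Cl(V_i) \cap \Cl(V_{i+1})$; (c) each $V_i \cap \gamma$ is a nonempty open connected sub-arc of $\gamma$, and their union is dense in $\gamma$, so the family $\{V_i \cap \gamma\}_{i < \ell}$ forms a brick partition of $\gamma$. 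Setting $c \coloneqq \walk{V_0, V_1, \dots, V_{\ell-1}, V_0}$ then produces a spaced circular path on $\cV$ for which both clauses of $\gamma$-compatibility hold.

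The main obstacle will be the precise choice of assignment rule: it must simultaneously make every $\cF_i$ an $\E$-connected class of $\cV_0$, force the amalgams $V_i$ to realize exactly the cyclic adjacency pattern, and ensure that each $V_i \cap \gamma$ is a single connected open arc rather than a disjoint union of arcs. All three conditions should follow from taking $\cV_0$ sufficiently fine in the tubular coordinates, so that each element's closure projects under $\phi$ to a small neighborhood in the annulus meeting $S^1 \times \{0\}$ in a single short arc.
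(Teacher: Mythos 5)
Your proposal has a genuine gap, and it starts at the very first step. You posit a homeomorphism $\phi \colon W \to S^1 \times (-1,1)$ from a neighborhood $W \sub O$ of $\gamma$ onto an annulus, with $\phi(\gamma) = S^1 \times \{0\}$. But $O$ is only an open subset of a Peano continuum that \emph{embeds} in $\bbR^2$; its image $\iota(O)$ need not be open in the plane, and a neighborhood of $\gamma$ in $X$ is in general nothing like an annulus. The motivating example for this lemma is the Sierpi\'{n}ski carpet, where $\iota(O)$ is nowhere dense in $\bbR^2$ and $W$ is an ``annular piece of carpet.'' Schoenflies gives you an annular neighborhood of $\iota(\gamma)$ \emph{in the plane}, from which you can salvage an angular coordinate on $\iota(O)$ near $\gamma$, but the product structure you rely on throughout — the collar parameter $\eta$, the rectangles $[\theta_{i-1}-\delta,\theta_i+\delta]\times[-\eta,\eta]$ used in step (b) to control $\Cl(V_i)$ — simply does not exist. (The paper instead uses the planar embedding only through the Jordan curve theorem, to run separation arguments.)

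The second, and mathematically central, gap is the claim you defer to the last paragraph: that by taking $\cV_0$ fine enough, each brick closure meets $\gamma$ ``in a single short arc.'' Fineness alone does not give this. A small connected regular open set can perfectly well meet $\gamma$ in a disconnected set — picture two small blobs touching $\gamma$ joined by a thin connector that leaves $\gamma$ — and nothing in your argument rules this out. This is precisely the content of the Claim at the heart of the paper's proof: that $\gamma \cap \Cl(U)$ has finitely many, all nontrivial, connected components. Establishing it requires the \emph{uniform local connectedness} of brick partition elements (two nearby points of $\Cl(U)\cap\gamma$ are joined by a small arc inside $U$, which by a Jordan-curve argument traps the intervening subarc of $\gamma$ inside $\Cl(U)$), a property your proposal never invokes. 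Without it, your step (c) — that each $V_i \cap \gamma$ is a connected open arc, hence that $\set{V_i\cap\gamma}$ is a brick partition of $\gamma$ — is unsupported, and this is exactly clause \eqref{itm:intersection-1} of \Cref{def:gamma-compatible}. A smaller issue in the same vein: your $\E$-connectedness argument for $\cF_i$ via ``the star of a connected set is connected'' does not apply as stated, because your assignment rule excludes from $\cF_i$ some bricks whose closures do meet $A_i$ (the straddlers of $p_{i-1}$), so $\cF_i$ is not the star of any connected set.
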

\begin{proof}
	Up to a refinement we can assume that if $U \in \Star(\gamma, \cU)$, then $\Cl(U) \sub O$.
	By Jordan's theorem, we can embed $O$ in the unit disk in the plane in such a way that $\gamma$ is mapped to the unit circle, since $\gamma$ is non-separating in $O$.
	Fix the usual $\bbR^2$ metric.
	\begin{claim}
		For each $U \in \Star(\gamma, \cU)$, it holds that $\gamma \cap \Cl(U)$ has finitely many connected components, all of which are non-trivial.
		Moreover $\gamma \cap \Bd(U)$ is totally disconnected.
	\end{claim}
	\begin{proof}[Proof of the Claim]
		Let $\varepsilon$ be less than half of $\min\{\diam(U) : U\in\cU\}$, fix once and for all some $U \in \cU$, and let $\delta$ be the uniform local connectedness constant of $U$ relative to $\varepsilon$.
		We prove that if two points $y_0, y_1 \in \gamma \cap \Cl(U)$ have distance less than $\delta$, then they belong to the same connected component.
		Indeed, in this case there is an arc $\lambda$ from $y_0$ to $y_1$ in $(U \setminus \gamma) \cup \set{y_0, y_1}$ of diameter less or equal than $\varepsilon$.
		The arc $\lambda$ separates $O$ in at least two regions, one of which meets $U$, has diameter less that $2\varepsilon$, and contains in its interior an open interval $I \sub \gamma$ with endpoints $y_0, y_1$.
		But then such region is too small to contain any $U' \in \cU$.
		Moreover, it cannot even meet any $U' \ne U$, since $\lambda$ would separate $U'$ and $\lambda$ is contained in $\Cl(U)$.
		Therefore it has to be contained in $\Cl(U)$.
		But then $I \sub U$, and thus $y_0, y_1$ are in the same connected component of $\gamma \cap \Cl(U)$.
		This shows that there are only finitely many components.
		It also shows that $\gamma \cap \Bd(U)$ is totally disconnected, since the open interval between any two points in the same connected component of $\gamma \cap \Cl(U)$ is contained in $U$.

		Suppose towards a contradiction that $\set{x}$ is a trivial connected component of $\Cl(U) \cap \gamma$, so in particular $x \in \Bd(U)$.
		Let $\delta_0$ be the minimum between $\delta$ and the uniform local connectedness constant of $\runion \Star(x, \cU \setminus \set{U})$ relative to $\varepsilon$.
		Take two points $y_0, y_1 \in \gamma \cap \Star(x,\cU \setminus \{U\})$ whose distance is smaller than $\delta_0$ and such that $x \in \gamma(y_0, y_1)$.
		Then there is an arc $\lambda$ of diameter less than $\varepsilon$ connecting $y_0, y_1$ in $\runion \Star(x, \cU \setminus \set{U})$.
		In particular $\lambda \sub O$, so it separates $O$ in at least two regions, one of which contains $x$ in its interior and has diameter less that $2\varepsilon$.
		Arguing as in the first part of the proof we conclude that $U$ cannot be contained in such region, so $x \not \in \Cl(U)$, a contradiction.
	\end{proof}

	Let $\gamma_0, \dots, \gamma_{n-1}$ be the collection of connected components of $\gamma$ in $\Cl(U)$, for $U$ ranging in $\cU$.
	Any three of them have empty intersection, since otherwise $\gamma$ would contain a triod.
	Moreover, if $i \ne j$, then $\Card{\gamma_i \cap \gamma_j} \le 1$.
	Indeed, since they are arcs in the same circle, and $n\ge 3$, their intersection can only by empty, a point, or a nontrivial arc.
	But such an intersection is contained in some $\Bd(U)$, and thus is totally disconnected.
	It follows that their interior in $\gamma$ form a brick partition of $\gamma$, and we can assume that they are ordered so that $\gamma_0 \E \gamma_1 \E \cdots \E \gamma_{n-1} \E \gamma_0$.

	Let $\cV' \refines \cU$ be a brick partition fine enough that $\Star(\gamma_i, \cV')$ and $\Star(\gamma_j, \cV')$ have distance at least 2 in $\cV'$, for any $\Card{i-j} > 1 \mod n$.
	Let $\cV$ be an amalgam of $\cV'$ refining $\cS$ and containing $P_i \coloneqq \bigvee \Star(\gamma_i, \cV'(U))$, where $U\in\cU$ is such that $\gamma_i\subseteq\Cl(U)$ and $i < n$.
	Then $c \coloneqq \walk{P_0, \dots, P_{n-1}, P_0}$ is a $\gamma$-compatible spaced circular walk.
\end{proof}

It is now not hard to see that we can zoom-in and disjoin any path from a $\gamma$-compatible spaced circular path.

\begin{proposition}
	\label{prop:disjoin-gamma-compatible}
	Let $X$ be a Peano continuum with no locally separating points and let $\gamma$ be a non-locally-separating simple closed curve contained in an open planar subset $O \sub X$.
	For each brick partition $\cV$ of $X$ and path $v$ on $\cV$, there are a brick partition and a path $(\cV', v') \refines (\cV, v)$, and a $\gamma$-compatible spaced circular path $c$ on $\cV'$ disjoint from $v'$.
\end{proposition}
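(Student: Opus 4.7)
The plan is to construct an arc $\lambda \subseteq \bigcup_{i<\len(v)} \Cl(v(i))$ representing $v$ that avoids $\gamma$, and then produce $\cV'$ fine enough that the $\gamma$-compatible cycle given by \Cref{lemma:fine-gamma-compatible} and a fattening of $\lambda$ live in disjoint regions of $\cV'$. The tools are the combination of \Cref{fact:points-on-boundaries}.\eqref{itm:nls_set} and \Cref{fact:existence-of-arcs}.\eqref{itm:existence-of-arcs-nls-arc}, which let us pick boundary points and connecting arcs away from a non-locally separating closed set, applied here with $K = \gamma$.

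First, using \Cref{fact:points-on-boundaries}.\eqref{itm:nls_set}, for each $1 \le i < \len(v)$, pick a point $x_i \in \Cl(v(i-1)) \cap \Cl(v(i)) \cap (v(i-1) \vee v(i))$ disjoint from $\gamma$, and fix arbitrary endpoints $x_0 \in v(0) \setminus \gamma$, $x_{\len(v)} \in v(-1) \setminus \gamma$. Using \Cref{fact:existence-of-arcs}.\eqref{itm:existence-of-arcs-nls-arc}, find arcs $\lambda_i \subseteq v(i) \cup \set{x_i, x_{i+1}}$ disjoint from $\gamma$ connecting $x_i$ to $x_{i+1}$. Since $v$ is a path, the $\lambda_i$'s meet at most in their endpoints, so their concatenation $\lambda$ is an arc. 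By compactness, $d(\lambda, \gamma) > 0$, so we may choose a brick partition $\tilde \cV \refines \cV$ fine enough that $\Star(\lambda, \tilde \cV) \cap \Star(\gamma, \tilde \cV) = \emptyset$.

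Next, apply \Cref{lemma:fine-gamma-compatible} to $\tilde \cV$ to obtain a brick partition $\hat \cV \refines \tilde \cV$ together with a $\gamma$-compatible spaced circular path $c$ on $\hat \cV$. Inspecting the proof of \Cref{lemma:fine-gamma-compatible}, the only amalgamations performed in passing from an auxiliary refinement to $\hat \cV$ happen within $\Star(\gamma, \tilde \cV)$; in particular each element of $c$ is contained in $\Star(\gamma, \tilde \cV)$. Define, for each $i < \len(v)$,
\begin{equation*}
    P_i \coloneqq \Star(\lambda_i, \hat \cV(v(i))),
\end{equation*}
which are pairwise disjoint $\E$-connected subfamilies of $\hat \cV$, since $v$ is a path and the $\lambda_i$'s are nearly disjoint arcs inside the $v(i)$'s. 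Moreover each $P_i$ is contained in $\Star(\lambda, \tilde \cV)$ (after picking $\hat \cV$ fine enough), hence disjoint from every element of $c$ in $\hat \cV$.

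Finally, let $\cV'$ be the amalgam of $\hat \cV$ refining $\cV$ that contains each $\bigvee P_i$ and keeps each element of $c$ as a block (such an amalgam exists because the families $\set{P_i}_i$ and the singletons $\set{\set{c(j)}}_j$ are pairwise disjoint and each is $\E$-connected). Then $v' \coloneqq \walk{\bigvee P_0, \dots, \bigvee P_{\len(v)-1}}$ is a path on $\cV'$ with $\rmap{\cV'}{\cV} \cdot v'$ monotonically refining $v$, the walk $c$ is still a $\gamma$-compatible spaced circular path on $\cV'$, and $v'$ and $c$ are disjoint by construction. The main obstacle in writing this out carefully is to verify that the two amalgamations (the one hidden in \Cref{lemma:fine-gamma-compatible} and ours) can be carried out simultaneously without interfering — which is precisely what the separation $\Star(\lambda, \tilde \cV) \cap \Star(\gamma, \tilde \cV) = \emptyset$ ensures.
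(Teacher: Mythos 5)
Your proposal is correct and follows essentially the same route as the paper: build an arc $\lambda$ through the $v(i)$'s avoiding $\gamma$ via \Cref{fact:points-on-boundaries}.\eqref{itm:nls_set} and \Cref{fact:existence-of-arcs}.\eqref{itm:existence-of-arcs-nls-arc}, refine until $\Star(\lambda,\cdot)$ and $\Star(\gamma,\cdot)$ are disjoint, invoke \Cref{lemma:fine-gamma-compatible}, and amalgamate the stars of $\lambda$ inside each $v(i)$ into the blocks of $v'$. The only (immaterial) differences are that the paper obtains the separation and the $\gamma$-compatible path on a single partition $\widetilde{\cV}$, and that the confinement of $c$ to $\Star(\gamma,\widetilde{\cV})$ follows directly from \Cref{def:gamma-compatible}.\eqref{itm:contains-gamma} rather than from inspecting the proof of \Cref{lemma:fine-gamma-compatible}.
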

\begin{proof}
	For each $1 \le i< \len(v)-1$, let $x_i$ be a point in $\Cl(v(i-1))\cap \Cl(v(i)) \cap (v(i-1) \vee v(i))$ which does not belong to $\gamma$.
	Let $\lambda_i$ be an arc in $v(i) \cup \set{x_{i-1}, x_{i}}$ connecting $x_{i-1}$ to $x_{i}$ and disjoint from $\gamma$.

	Let $\lambda$ be the concatenation of the $\lambda_i$'s and
	let $\widetilde{\cV} \refines \cV$ be a brick partition fine enough that
	\begin{equation} \label{eq:gamma_lambda}
		\Star(\lambda, \widetilde{\cV}) \cap \Star(\gamma, \widetilde{\cV}) = \emptyset,
	\end{equation}
	and such that there is a $\gamma$-compatible spaced circular path $c$ on $\widetilde{\cV}$, which is possible by \Cref{lemma:fine-gamma-compatible}.
	Notice that $c$ is confined to $\Star(\gamma, \widetilde{\cV})$.

	Set $P_j \coloneqq \bigvee \Star(\lambda, \widetilde \cV(v(j)))$ for each $j< \len(v)$, and let $\cV'$ be any amalgam of $\widetilde \cV$ refining $\cV$ which contains all $P_j$ and all elements of $c$.
	Then $v' \coloneqq \walk{P_0, \dots, P_{\len(v)-1}}$ is a path that is disjoint from $c$ by \eqref{eq:gamma_lambda}.
\end{proof}

We are ready to prove that $\gamma$-compatible spaced circular paths are robust.

\begin{theorem}
	\label{thm:local-hp-robust-cycle1}
	Let $X$ be a Peano continuum with no locally separating points. If $X$ has a planar open set containing a simple closed curve which is not locally separating
	then it has a robust cycle.
\end{theorem}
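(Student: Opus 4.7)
The plan is to produce a robust cycle from the non-locally separating simple closed curve $\gamma \sub O \sub X$, mimicking the structure of the proof of \Cref{thm:local-hp-robust-cycle0} but replacing its use of the \nameref{prop:disjoint-path-property} by \Cref{prop:disjoin-gamma-compatible} together with the $\gamma$-compatibility formalism of \Cref{def:gamma-compatible}. As in that earlier proof, it suffices to produce, for every $(\cV, v) \refines (\cU, u)$, a refinement $(\cV', v') \refines (\cV, v)$ with $v'$ a path and a circular path $c$ on $\cV'$ disjoint from $v'$ witnessing \eqref{eq:fattening-in-C} and \eqref{eq:winds-at-least-once}; the path-doubling lasso assembly from the proof of \Cref{thm:local-hp-robust-cycle0} then converts $(v',c)$ into the required lasso path, and I would simply reuse that assembly verbatim.

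For the setup, I would apply \Cref{lemma:fine-gamma-compatible} to obtain a brick partition $\cS$ carrying a $\gamma$-compatible spaced circular path $C$, arranging $\ell = \len(C) \ge 4$ by starting from a sufficiently fine brick partition so that $\Star(\gamma, \cS)$ has at least four elements. Then pick a refinement $\cU \refines \cS$ of mesh small enough that
\[
	\rmap{\cU}{\cS}(\ball{2}{\Star(\gamma, \cU)}) \sub C,
\]
where $C$ is identified with its vertex set $\Star(\gamma, \cS)$. This is achievable because for every $S \in \cS \sminus \Star(\gamma, \cS)$ we have $\Cl(S) \cap \gamma = \emptyset$, so the geometric distance $d$ from $\gamma$ to $X \sminus \bigcup \Star(\gamma, \cS)$ is strictly positive; any $\cU$ of mesh less than $d/3$ satisfies the containment via a triangle-inequality argument on closures of $E$-adjacent elements. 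Let $u$ be the empty walk on $\cU$.

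Given $(\cV, v) \refines (\cU, u)$, my plan is to first apply \nameref{thm:walks-to-paths} (\Cref{thm:walks-to-paths}) to refine and assume $v$ is a path, and then apply \Cref{prop:disjoin-gamma-compatible} to produce a further refinement $(\cV', v') \refines (\cV, v)$ carrying a $\gamma$-compatible spaced circular path $c$ on $\cV'$ disjoint from $v'$. Verifying \eqref{eq:fattening-in-C}: $\gamma$-compatibility of $c$ on $\cV'$ forces $\rmap{\cV'}{\cU}(c(i)) \in \Star(\gamma, \cU)$ for every $i$, so $\ball{2}{\rmap{\cV'}{\cU} \cdot c} \sub \ball{2}{\Star(\gamma, \cU)}$, which maps into $C$ by the choice of $\cU$. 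Verifying \eqref{eq:winds-at-least-once}: since $c$ and $C$ are both $\gamma$-compatible, each arc $c(i) \cap \gamma$ is contained in a unique arc $C(f(i)) \cap \gamma$; as $c(i) \cap \gamma$ traverses $\gamma$ exactly once cyclically over one period of $c$, the function $f$ makes one full weakly-monotonic revolution around $\set{0, \dots, \ell - 1}$, and hence $\Card{\wind_C(\rmap{\cV'}{\cS} \cdot c)} = \ell$.

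The main technical obstacle is the lasso assembly, but it is handled entirely by the path-doubling trick already used in the proof of \Cref{thm:local-hp-robust-cycle0}. The specifically new ingredients here are the choice of $\cU$ of sufficient geometric fineness and the observation that two $\gamma$-compatible circular paths—one on $\cS$ and one on the finer $\cV'$—are automatically aligned via the refinement map in a way that makes both \eqref{eq:fattening-in-C} and the winding computation immediate.
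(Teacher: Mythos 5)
Your proposal is correct and follows essentially the same route as the paper's proof: reduce to producing a disjoint pair $(v',c)$ via the lasso assembly of \Cref{thm:local-hp-robust-cycle0}, set up $C$ on $\cS$ with \Cref{lemma:fine-gamma-compatible}, impose a ``thin star'' fineness condition on $\cU$ so that the $2$-ball around $\Star(\gamma,\cU)$ maps into $C$ (the paper's \eqref{eq:thin-star}), and then combine \nameref{thm:walks-to-paths} with \Cref{prop:disjoin-gamma-compatible}, verifying the winding condition by observing that the two $\gamma$-compatible circular paths induce nested brick partitions of $\gamma$ and hence align monotonically. The only cosmetic difference is that you skip the paper's second invocation of \Cref{lemma:fine-gamma-compatible} to name a $\gamma$-compatible path $D$ on $\cU$, which is used there only as a stand-in for $\Star(\gamma,\cU)$ in the confinement step, so nothing is lost.
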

\begin{proof}
	Let $O \subseteq X$ be a planar open set and $\gamma \subseteq O$ a simple closed curve which is not locally separating.
	Fix any brick partition $\cW$ of $X$ which is fine enough that $\Card{\Star(\gamma, \cW)} \ge 4$.
	Let $\cS \refines \cW$ be a brick partition and let $C$ be $\gamma$-compatible spaced circular path $C$ on $\cS$, using \Cref{lemma:fine-gamma-compatible}.
	By the assumption on $\cW$, the circular length $\ell$ of $C$ is at least $4$.

	Let $\cU_0 \refines \cS$ be a brick partition fine enough that
	\begin{equation}
		\label{eq:thin-star}
		d_{\cU_0}(\Star(\gamma, \cU_0), \cU_0(\cS \setminus C)) > 2.
	\end{equation}
	By \Cref{lemma:fine-gamma-compatible}, there are a brick partition $\cU \refines \cU_0$ and a $\gamma$-compatible spaced circular path $D$ on $\cU$.
	Let $u$ be the empty walk on $\cU$.

	Suppose a brick partition and a walk $(\cV, v) \refines (\cU, u)$ are given. As in \Cref{thm:local-hp-robust-cycle0}, it is sufficient to find a brick partition and a path
	$(\cV', v') \refines (\cV, v)$ and a circular path $c$ disjoint from $v'$ satisfying the conditions in \Cref{def:robust}.
	Up to a refinement, we can suppose that $v$ is a path, by \nameref{thm:walks-to-paths} (\Cref{thm:walks-to-paths}).
	By \Cref{prop:disjoin-gamma-compatible}, there are a brick partition and a path $(\cV', v') \refines (\cV, v)$ and a $\gamma$-compatible spaced circular path $c$ on $\cV'$ which is disjoint from $v'$.

	Now $\rmap{\cV'}{\cU} \cdot c$ is confined in $D$, by point \eqref{itm:contains-gamma} of \Cref{def:gamma-compatible}, so \eqref{eq:fattening-in-C} follows from \eqref{eq:thin-star}.
	Since changing the starting vertex of $C$ does not affect the winding number, and flipping the direction of $C$ just flips the sign of $\wind_C(\rmap{\cV'}{\cS} \cdot c)$, we can assume that $c(0) \in \cV'(C(0))$ and that the first time $c$ exits $\cV'(C(0))$ it enters $\cV'(C(1))$.
	Item \eqref{itm:intersection-1} in \Cref{def:gamma-compatible} then implies that both $c$ and $C$ induce brick partitions of $\gamma$, with the former refining the latter.
	It follows that $\rmap{\cV'}{\cS} \cdot c$ monotonically refines $C$, thus by \Cref{lem:wd-monotonically-refine} we can conclude that $\Card{\wind_C(\rmap{\cV'}{\cS} \cdot c)} = \ell$.
\end{proof}

\subsection{Circular covering}

We recall from the introduction that a Peano continuum $X$ has a circular covering if it admits a covering $O_0, \dots, O_{\ell-1}$ in $\ell\ge 4$ connected open subsets such that $\Cl(O_i) \cap \Cl(O_j) \neq \emptyset$ if and only if $\Card{i-j} \le 1 \mod \ell$, and $O_i \setminus \bigcup_{j\ne i} O_j$ is connected for each $i<\ell$.
Examples of Peano continua with no locally separating points admitting a circular covering are the torus, the Menger curve or the Sierpi\'{n}ski carpet.

\begin{proposition}
	\label{prop:circular-bp-equiv-circular-pc}
	A Peano continuum has a circular covering if and only if it admits a brick partition whose nerve graph is a cycle of length at least $4$.
\end{proposition}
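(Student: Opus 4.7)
The proof is a biconditional, with essentially independent constructions in each direction.

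For $(\Leftarrow)$, given a brick partition $\cW = \{W_0,\dots,W_{\ell-1}\}$ whose nerve is a cycle of length $\ell \ge 4$, my plan is to thicken each $\Cl(W_i)$ into a connected open neighborhood $O_i$ whose closure stays disjoint from $\Cl(W_j)$ for non-adjacent $j$. The basic construction is metric: fix a compatible metric on $X$, pick $\delta > 0$ smaller than one third of $\min_{|i-j|\ge 2 \mod \ell} d(\Cl(W_i),\Cl(W_j))$, and let $O_i$ be the connected component of the open $\delta$-neighborhood of $\Cl(W_i)$ containing $\Cl(W_i)$; this is well-defined and gives a connected open neighborhood of $\Cl(W_i)$ by local connectedness of $X$. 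Then $\bigcup_i O_i \supseteq \bigcup_i \Cl(W_i) = X$ since $\bigcup_i W_i$ is dense, and the choice of $\delta$ together with $\Cl(O_i) \supseteq \Cl(W_i)$ yields the correct adjacency pattern of closures along the cycle.

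The delicate condition is connectedness of the core $O_i\setminus\bigcup_{j\ne i}O_j$, which by the choice of $\delta$ equals $O_i\setminus(O_{i-1}\cup O_{i+1})$. The key topological observation is that $\Cl(W_{i-1})\cap\Cl(W_i)$ and $\Cl(W_{i+1})\cap\Cl(W_i)$ are disjoint (by the cycle structure, since $\Cl(W_{i-1})\cap\Cl(W_{i+1})=\emptyset$ when $\ell\ge 4$), hence at positive distance, so for $\delta$ small enough the two forbidden regions $N_\delta(\Cl(W_{i\pm 1}))$ meet $O_i$ in pieces that stay apart from each other. The uniform local connectedness of the brick $W_i$ (built into the definition of a brick partition) should then guarantee that removing these two "tubes" from the connected open set $O_i$ leaves a connected set. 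The rigorous justification is the main obstacle: I expect to handle it by discretization, i.e.\ by working with a sufficiently fine subordinate brick partition $\cV\refines\cW$ and replacing $O_i$ by the amalgam $\runion\cV_i$, where $\cV_i$ consists of those $V\in\cV$ whose closure meets $\Cl(W_i)$ and stays at distance $>\delta'$ from $\Cl(W_j)$ for all $|i-j|\ge 2 \mod \ell$, then invoking \Cref{prop:approximate-core-refinement}-style arguments to ensure the relevant subfamilies are $E$-connected.

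For $(\Rightarrow)$, start with a circular covering $O_0,\dots,O_{\ell-1}$ and let $K_i\coloneqq O_i\setminus\bigcup_{j\ne i}O_j$ be the (assumed connected) cores. Using Bing's \Cref{thm: brick partitioning theorem}, pick a brick partition $\cU$ so fine that every $U\in\cU$ has $\Cl(U)\subseteq O_i$ for some $i$ and meets $\Cl(O_j)$ only for $j$ in a set of at most two consecutive indices modulo $\ell$ (possible since $\Cl(O_i)\cap\Cl(O_j)=\emptyset$ for non-adjacent pairs), and so that both $\Star(K_i,\cU)$ and $\cU(O_i)$ are $E$-connected for every $i$. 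For each $U\in\cU$, the set $I(U)\coloneqq\{i:U\subseteq O_i\}$ is then a nonempty set of at most two consecutive indices. Assign each $U$ a unique $i(U)\in I(U)$: cells meeting some $K_i$ are forced to $i$ (because $K_i\cap O_j=\emptyset$ for $j\ne i$), and cells in an overlap $O_i\cap O_{i+1}$ are assigned greedily, extending the forced class $\Star(K_i,\cU)$ outward along $\cU(O_i)$ so that each $\cU_i\coloneqq\{U:i(U)=i\}$ remains $E$-connected. Finally $W_i\coloneqq\runion\cU_i$ is an amalgam of $\cU$, hence a brick partition with each $W_i$ connected, and $\Cl(W_i)\subseteq\Cl(O_i)$ yields the cycle nerve graph: $\Cl(W_i)\cap\Cl(W_j)\subseteq\Cl(O_i)\cap\Cl(O_j)=\emptyset$ for non-adjacent pairs, while for adjacent pairs the cells on either side of the "cut" between $\cU_i$ and $\cU_{i+1}$ in the overlap region produce the required adjacency. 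The obstacle here is to verify that the greedy assignment can be completed consistently around the whole cycle without disconnecting any $\cU_i$; I expect this to reduce to the connectedness of the cores $K_i$ together with the $E$-connectedness of $\cU(O_i)$.
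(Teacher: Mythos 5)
Your overall architecture coincides with the paper's in both directions, but at the two points where real work is required you defer, and one of the deferrals is a genuine gap. In the direction from a cyclic brick partition to a circular covering, your primary metric construction (components of $\delta$-neighborhoods) does not by itself yield connectedness of the cores $O_i\setminus\bigcup_{j\ne i}O_j$: uniform local connectedness of a single brick does not control what remains after removing two ``tubes'' from its neighborhood. Your stated fallback, however, is exactly the paper's proof: apply \Cref{prop:approximate-core-refinement} (with $\cU=\cV=\cS$) to get a fine $\cW\refines\cS$, set $O_i\coloneqq\runion\Star(\Cl(S_i),\cW)$, and observe that the core is then literally $\runion\Core(S_i,\cW)$, whose nonemptiness and $E$-connectedness is item (1) of that proposition. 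So this half is correctly aimed but not carried out by your primary construction.

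The converse direction contains the real gap. You assign overlap cells ``greedily, extending the forced class $\Star(K_i,\cU)$ outward so that each $\cU_i$ remains $E$-connected,'' and you acknowledge not having verified that this can be completed. This is precisely the crux: a cell $U$ in $\Star(O_i\cap O_{i+1},\cU)$ is joined to the core class inside $\cU(O_i)$, but the joining path may pass through cells already assigned to class $i+1$, so a naive greedy procedure can strand cells or disconnect a class; moreover your auxiliary hypothesis that $\cU(O_i)$ is $E$-connected is not automatic, since $O_i$ is not a union of cells of $\cU$. The paper replaces the greedy step by an explicit rule: with $C_i\coloneqq\Cl(O_i)\setminus\bigcup_{j\ne i}O_j$ and $\cU_i\coloneqq\Star(C_i,\cU)$ (which is $E$-connected because $C_i$ is connected), it splits $\Star(O_i\cap O_{i+1},\cU)$ into $\cP_i$ and $\cQ_{i+1}$ according to whether a cell is at least as close, in the nerve metric, to $\cU_i$ as to $\cU_{i+1}$. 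The key verification is that any shortest path from a cell of $\cP_i$ to $\cU_i$ stays inside $\cU_i\cup\Star(O_i\cap O_{i+1},\cU)$, hence inside $\cP_i$, which is what makes each $\cQ_i\cup\cU_i\cup\cP_i$ $E$-connected. Without an argument of this kind your proof of this direction is incomplete.
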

\begin{proof}
	Suppose that $X$ has a brick partition $\cS$ whose nerve graph is a cycle of length $\ell \ge 4$.
	Say the elements of $\cS$ are $S_0 \E S_1 \E \cdots \E S_{\ell-1} \E S_0$.
	Let $\cW \refines \cS$ be given by \Cref{prop:approximate-core-refinement} applied to $\cU = \cV = \cS$.
	We can moreover pick $\cW$ fine enough so that no element of $\Star(\Cl(S_i), \cW)$ is $E$-adjacent to one of $\Star(\Cl(S_{i+2 \mod \ell}), \cW)$ for every $i < \ell$.
	Note furthermore that every $\Star(\Cl(S_i), \cW)$ is $E$-connected, so setting
	\begin{equation*}
		O_i \coloneqq \runion \Star(\Cl(S_i), \cW),
	\end{equation*}
	we get a covering $O_0, \dots, O_{\ell-1}$ whose pieces are connected and intersect as required.
	Moreover, $O_i \setminus \bigcup_{j\ne i} O_j = \runion \Core(S_i, \cW)$, which is connected by \Cref{prop:approximate-core-refinement}.

	Conversely, suppose $X$ has a circular covering $O_0, \dots, O_{\ell-1}$.
	Define
	\begin{equation*}
		C_i \coloneqq O_i \setminus \bigcup_{j\ne i} O_j.
	\end{equation*}
	These sets are closed and pairwise disjoint, since $x \in C_j \cap C_k$ with $j \not = k$ if and only if $x \not \in O_i$ for every $i < \ell$, which is not possible.

	Let $\cU$ be a brick partition refining $\set{O_0, \dots, O_{\ell-1}}$, which is fine enough that
	\begin{equation} \label{eq:Ci_Cj}
		\Star(C_i, \cU) \cap \Star(C_j, \cU) = \emptyset, \text{ for each } i\ne j,
	\end{equation}
	and
	\begin{equation} \label{eq:Oi_Oj}
		\begin{gathered}
			d_{\cU}\left(\Star\left(O_{i} \cap O_{i+1 \mod \ell}, \mathcal{U}\right), \Star\left(O_{j} \cap O_{j+1 \mod \ell}, \mathcal{U}\right)\right) \geq 2, \text{ for each } i\ne j, \\
			d_{\cU}\left(\Star\left(O_{i} \cap O_{i+1 \bmod \ell}, \cU\right), \Star(C_j, \cU) \right) \geq 2, \text{ for all } j \notin \set{i, i+1 \mod \ell}.
		\end{gathered}
	\end{equation}

	Since $C_i$ is connected, $\cU_i \coloneqq \Star(C_i, \cU)$ is $E$-connected, and all $\cU_i$'s are pairwise disjoint by \eqref{eq:Ci_Cj}.
	Given $i < \ell$, divide $\Star(O_i \cap O_{i+1 \mod \ell}, \cU )$ into two sets $\cP_i, \cQ_{i+1 \mod \ell}$ based on whether $U$ is such that
	\begin{equation*}
		d_\cU(U, \cU_i) \le d_\cU(U, \cU_{i+1 \mod \ell}) \text{ or } d_\cU(U, \cU_{i+1 \mod \ell}) < d_\cU(U, \cU_i).
	\end{equation*}
	Notice that, by \eqref{eq:Oi_Oj}, if $U \in \cP_i$ then all the elements of any path of minimal length, call it $P$, connecting $U$ to $\cU_i$ are either in $\cU_i$, or in $\Star(O_i \cap O_{i+1 \mod \ell}, \cU )$, and therefore in $\cP_i$. Indeed, if $U'$ is the first step in $P$ that does not belong to $\Star(O_i \cap O_{i+1 \mod \ell}, \cU )$, then by \eqref{eq:Oi_Oj} we have $U' \cap O_j = \emptyset$ for all $j \ne i, i+1 \mod \ell$. It thus follows that $U'$ has to belong to either $\cU_i$ or $\cU_{i+1 \mod \ell}$ (but not to both, by \eqref{eq:Ci_Cj}), and the latter is not possible as otherwise $U \in \cQ_i$.

	A similar observation holds for $\cQ_i$. We can thus conclude that the sets $\cQ_i \cup \cU_i \cup \cP_i$ are $E$-connected and form a partition of $\cU$.
	The amalgam $\cS$ of $\cU$ obtained from such partition is the desired cycle of $\ell$ elements.
\end{proof}

We now show that the circular brick partition found in \Cref{prop:circular-bp-equiv-circular-pc} is robust.

\begin{theorem}
	\label{thm:circular-covering-robust}
	Let $X$ be a Peano continuum with no locally separating points.
	If $X$ has a circular covering then it has a robust cycle.
\end{theorem}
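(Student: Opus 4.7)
The plan is to apply \Cref{prop:circular-bp-equiv-circular-pc} to obtain a brick partition $\cS=\{S_0,\ldots,S_{\ell-1}\}$ of $X$ whose nerve graph is a cycle of length $\ell\ge 4$, and to exhibit the required robust cycle by setting $C=\walk{S_0,\ldots,S_{\ell-1},S_0}$, $\cU=\cS$, and $u$ the empty walk. Condition \eqref{eq:fattening-in-C} of \Cref{def:robust} is then immediate, because $\rmap{\cU}{\cS}$ is the identity and $C$ visits every vertex of $\cS$, so $\rmap{\cU}{\cS}(\ball{2}{\rmap{\cV'}{\cU}\cdot c})$ is tautologically contained in $C$ as a vertex set, regardless of $c$. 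Only condition \eqref{eq:winds-at-least-once} requires work.

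Mimicking the opening reductions in the proofs of Theorems~\ref{thm:local-hp-robust-cycle0} and~\ref{thm:local-hp-robust-cycle1}, the task reduces to the following: given $(\cV,v)\refines(\cS,\emptyset)$, produce a brick partition $\cV'\refines\cV$, a path $v'$ on $\cV'$ with $(\cV',v')\refines(\cV,v)$, and a spaced circular path $c$ on $\cV'$ disjoint from $v'$ such that $\rmap{\cV'}{\cS}\cdot c$ monotonically refines $C$. The lasso path is then assembled by \nameref{prop:path-doubling} and a connecting path exactly as in those proofs, and $\Card{\wind_C(\rmap{\cV'}{\cS}\cdot c)}=\ell$ by \Cref{lem:wd-monotonically-refine}.

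To construct $v'$ and $c$, I would first invoke \nameref{thm:walks-to-paths} to refine $v$ to a spaced path $v_0$ on a brick partition $\cV_0\refines\cV$. Next I would construct a simple closed curve $\gamma\subseteq X$ winding once around $\cS$: pick boundary points $x_i\in\Cl(S_i)\cap\Cl(S_{i+1\bmod\ell})$ (mutually distinct because $\Cl(S_i)\cap\Cl(S_j)=\emptyset$ whenever $\Card{i-j}>1\bmod\ell$) and arcs $\alpha_i\subseteq S_i\cup\{x_{i-1\bmod\ell},x_i\}$ provided by \Cref{fact:existence-of-arcs}.\eqref{itm:existence-of-arcs}; in parallel, realize $v_0$ as a thin arc $\lambda\subseteq X$ by picking boundary points $y_j\in\Cl(v_0(j-1))\cap\Cl(v_0(j))$ and arcs $\mu_j\subseteq v_0(j)\cup\{y_j,y_{j+1}\}$. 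Having arranged $\gamma$ and $\lambda$ to be disjoint in $X$, I would take $\cV'\refines\cV_0$ fine enough that $\Star(\gamma,\cV')\cap\Star(\lambda,\cV')=\emptyset$, and define $c$ and $v'$ as the spaced circular and spaced paths whose vertices are the stars around the $\alpha_i$'s and $\mu_j$'s respectively, in the manner of the proof of \nameref{lem:path-to-spaced}. Then $(\cV',v')\refines(\cV,v)$, $c$ is disjoint from $v'$, and $\rmap{\cV'}{\cS}\cdot c$ monotonically refines $C$ by construction.

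The hard part will be arranging $\gamma$ and $\lambda$ to be disjoint in $X$. Because the winding curve $\gamma$ is typically locally separating in $X$ — already for $T^2$ with a four-annulus circular covering, where any meridian locally separates — one cannot simply invoke \Cref{fact:existence-of-arcs}.\eqref{itm:existence-of-arcs-nls-arc} to push $\lambda$ off $\gamma$, as was done in the proof of \Cref{thm:local-hp-robust-cycle1}. My plan is instead to build the arcs $\alpha_i$ and $\mu_j$ in tandem inside each piece $S_i$: use \Cref{fact:points-on-boundaries}.\eqref{itm:nls} to choose separate boundary points for the two curves at each $\Cl(S_i)\cap\Cl(S_{i+1\bmod\ell})$, and then apply \Cref{fact:existence-of-arcs}.\eqref{itm:existence-of-arcs-nls} to produce two disjoint arcs inside each $S_i$ traversed by both. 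The pairing indeterminacy in \Cref{fact:existence-of-arcs}.\eqref{itm:existence-of-arcs-nls} — one cannot dictate which endpoint of $A$ connects to which endpoint of $B$ — should be absorbed by relabelling the output arcs in each piece, since only the disjoint decomposition of curves, not any prescribed endpoint identification, is needed for the final star construction in $\cV'$.
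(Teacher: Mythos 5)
There is a genuine gap, and it sits exactly where you flagged it: the disjointness of the winding curve from (a realization of) $v$. Your reduction asks, for an \emph{arbitrary} $v$ (you take $u$ to be the empty walk), for a refinement $v'$ of $v$ together with a circular path $c$ disjoint from $v'$ such that $\rmap{\cV'}{\cS}\cdot c$ monotonically refines $C$. None of the paper's black boxes delivers this. \Cref{fact:existence-of-arcs}.\eqref{itm:existence-of-arcs-nls-arc} is unavailable because, as you correctly observe, a curve winding once around the covering is typically locally separating; and \Cref{fact:existence-of-arcs}.\eqref{itm:existence-of-arcs-nls} does not do what your ``tandem'' construction needs. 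That fact operates inside a \emph{single} brick $W$ and produces two disjoint arcs joining the \emph{same} pair of two-point sets on $\Cl(W)$. In your construction the two arcs to be separated inside $S_i$ live at different scales: $\mu_j$ must stay inside the single $\cV_0$-brick $v_0(j)$, while $\alpha_i$ must traverse the entire $\cS$-piece $S_i$, which consists of many $\cV_0$-bricks, possibly all of them visited by $v_0$; moreover $v_0$ may enter and leave $S_i$ several times, so $\lambda\cap S_i$ is not a single arc whose endpoints pair up with those of $\alpha_i$. No lemma in the paper separates an arc confined to a prescribed tube from an arc crossing the whole piece, and the circular-covering hypothesis provides no topological input (no local non-planarity, no non-locally-separating curve) from which to manufacture such a separation; this is precisely why the paper does not attempt it.

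The idea you are missing is that the paper's proof avoids the disjointness problem altogether. It does \emph{not} take $u$ empty: it chooses $u$ to be a spaced path with $\rmap{\cU}{\cS}\cdot u$ monotonically refining $C\conc C$, so that $\wind_C(\rmap{\cU}{\cS}\cdot u)=2\ell$ and hence, by \Cref{lem:wd-refine-initial-segment}, \emph{every} $v$ refining $u$ has an initial segment whose winding number exceeds $\ell$ in absolute value. The loop of the lasso is then extracted from $v$ itself: one path-doubles $v$ into $v_0,v_1$, splits the vertices of $v_0\cup v_1$ lying over a suitable $C(k_0)$ into two nonempty classes $A,B$ according to whether the winding number of the initial segment up to that vertex is smaller or larger than $\ell$ in absolute value, joins $A$ to $B$ by a path $z$ confined to $\cV'(C(k_0))$, and splices $z$ with pieces of $v_0$ and $v_1$ to form the circular part $c$; the congruence $\wind_C(\rmap{\cV}{\cS}\cdot t)\equiv k \mod \ell$ for initial segments $t$ ending over $C(k)$ then forces $\Card{\wind_C(\rmap{\cV'}{\cS}\cdot c)}\ge\ell$. (Your observation that \eqref{eq:fattening-in-C} is automatic because $C$ covers all of $\cS$ is correct and is also how the paper dispatches that condition.)
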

\begin{proof}
	By \Cref{prop:circular-bp-equiv-circular-pc}, there is a brick partition $\cS$ of $X$ whose nerve graph is a cycle of length $\ell \ge 4$.
	Let $C$ be a circular path around $\cS$ (in particular, the circular length of $C$ is $\ell$).
	By \nameref{thm:walks-to-paths} (\Cref{thm:walks-to-paths}), there are a brick partition $\cU \refines \cS$ and a spaced path $u$ such that $\rmap{\cU}{\cS} \cdot u$ {monotonically} refines $C\conc C$, which is possible since $C \conc C$ is uncrossed.
	In particular, $\wind_C(\rmap{\cU}{\cS} \cdot u) = 2\ell$, by \Cref{lem:wd-monotonically-refine}.

	Suppose a brick partition and a walk $(\cV, v) \refines (\cU, u)$ are given.
	Suppose without loss of generality that $v(0) \in \cV(C(0))$. Note that for any initial segment $t$ of $v$ such that $\rmap{\cV}{\cS}(t(-1)) \in C(k)$ we get
	\begin{equation} \label{eq:modulo}
		\wind_C(\rmap{\cV}{\cS} \cdot t) \equiv k \mod \ell.
	\end{equation}

	Let $t_0$ be the shortest initial segment of $v$ such that
	$\Card{\wind_C(\rmap{\cV}{\cS} \cdot t_0)} > \ell$.
	Such initial segment exists by \Cref{lem:wd-refine-initial-segment} applied to $v$ and $u$, and let
	$k_0< \ell$ be such that $\rmap{\cV}{\cS}(t_0(-1)) \in C(k_0)$;
	then \eqref{eq:modulo} implies $k_0>0$.

	Using \nameref{prop:path-doubling} (\Cref{prop:path-doubling}) we find a brick partition $\cV' \refines \cV$ and two paths $v_0,v_1$ which are disjoint except at their ends and such that $\rmap{\cV'}{\cV}\cdot v_i$ monotonically refines $v$, for $i < 2$.
	Then \Cref{lem:wd-monotonically-refine} gives
	\begin{equation}
		\label{eq:v_i-same-wd}
		\wind_C(\rmap{\cV'}{\cS}\cdot v_i) = \wind_C(\rmap{\cV}{\cS} \cdot v), \text{ for } i =0,1.
	\end{equation}

	Let $s_i$ be an initial segment of $v_i$ such that $\rmap{\cV'}{\cV}\cdot s_i$ monotonically refines $t_0$.
	Then, again by \Cref{lem:wd-monotonically-refine},
	\begin{equation*}
		\wind_C(\rmap{\cV'}{\cS}\cdot s_i) = \wind_C(\rmap{\cV}{\cS} \cdot t_0) > \ell.
	\end{equation*}
	To lighten notation, let us refer to $\wind_C(\rmap{\cV'}{\cS}\cdot r)$ as $\omega_r$, for any walk $r$ on $\cV'$.

	Let us partition the set of the $V$'s in $\cV'(C(k_0))$ which belong to $v_0 \cup v_1$ into two sets $A, B$, depending on whether $|\omega_r | > \ell$ or $|\omega_r | < \ell$, where $r$ is the initial segment of either $v_0$ or $v_1$ ending in $V$ (by \eqref{eq:modulo} the value $\ell$ is never achieved).
	Both of these sets are nonempty, as witnessed by the $s_i$'s.
	Let $z$ be a path from $A$ to $B$ in $\cV'(C(k_0))$.
	There are two cases.

	\textbf{Case 1:} There is $i < 2$ such that both endpoints of $z$ are contained in $v_i$, and we might as well assume $i = 1$.
	Let $v' \conc c$ be the lasso path in $\cV'$ which follows $v_0$ from beginning to end, then $v_1$ backwards until it hits $z$ for the first time (this is $v'$), then follows $z$ (possibly backwards), then $v_1$ forwards until it hits $z$ again (this is $c$).
	Since $z$ is included in $\cV'(C(k_0))$ it does not contribute to the winding number of $c$, which is therefore equal to the difference $\omega_{r_0} - \omega_{r_1}$ where $r_0, r_1$ are the initial segments of $v_1$ at either end of $z$.  By \eqref{eq:modulo} we have $\omega_{r_0} \equiv \omega_{r_1} \mod \ell$.
	Furthermore, we have that either $r_0 \in  A$ and $r_1 \in B$ or viceversa, hence $|\omega_c| =| \omega_{r_0} - \omega_{r_1} | \ge\ell$, and \eqref{eq:winds-at-least-once}, is satisfied.

	\textbf{Case 2:} Otherwise, let $v' \conc c$ be the lasso path which follows $v_0$ from the beginning until it hits $z$ (this is $v'$), then continues following $v_0$ until the end, then $v_1$ backwards until it hits $z$, then follows $z$, possibly backwards (this is $c$).
	As above, $z$ does not contribute to the winding number of $c$, which is therefore equal to the difference $\omega_{r_0} - \omega_{r_1}$, where $r_0, r_1$ are the final segments of $v_0$ and $v_1$ at either end of $z$.
	But since $v_0, v_1$ have the same winding number by \eqref{eq:v_i-same-wd}, $\omega_{r_0} - \omega_{r_1}$ is equal to the difference of the winding numbers of the respective initial segments.
	We therefore conclude as in Case 1.

	In both cases, $v_0$ is an initial segment of $v' \conc c$, so $(\cV',v' \conc c ) \refines (\cV, v)$.
	Moreover, \eqref{eq:fattening-in-C} is trivially true, since $C$ covers all of $\cS$.
\end{proof}

We close this section showing that the sphere and the real projective plane are the only two closed surfaces without a circular covering.

A continuum is \emph{unicoherent} if it cannot be written as the union of two subcontinua whose intersection is disconnected.
If $X$ has a circular covering $O_0, \dots, O_{\ell-1}$, it is not unicoherent, as witnessed by $\Cl(O_0)$ and $\bigcup_{i \ne 0} \Cl(O_i)$.

\begin{proposition}
	\label{prop:vaguely-circular-surfaces}
	The only two closed surfaces without a circular covering are the sphere and the real projective plane.
\end{proposition}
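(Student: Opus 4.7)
The plan is to prove the two directions separately. For the forward direction, I would observe that both $S^2$ and $\bbR P^2$ are unicoherent (a classical fact; it follows, for instance, from the vanishing of $\check H^1(X;\bbZ)$ for both spaces, together with the standard characterization of unicoherence for Peano continua). Combined with the observation immediately preceding the proposition---that any space with a circular covering fails to be unicoherent---this shows that neither $S^2$ nor $\bbR P^2$ admits a circular covering.

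For the converse direction, let $X$ be a closed surface other than $S^2$ and $\bbR P^2$. By the classification of closed surfaces, $X$ is either an orientable surface $\Sigma_g$ of genus $g \ge 1$ or a non-orientable surface $\#_k \bbR P^2$ with $k \ge 2$. In each such case, $X$ admits a two-sided non-separating simple closed curve $\gamma$: a meridian of a handle works in the orientable case, while in the non-orientable case with $k \ge 2$ one may take a curve obtained by suitably combining two of the standard crosscap generators (these are simple closed curves that are two-sided because they pair trivially with the first Stiefel--Whitney class, and non-separating because their mod $2$ homology class is nonzero).

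With such a $\gamma$ at hand, I would construct the covering as follows. Let $\nu$ be an annular open tubular neighborhood of $\gamma$, and set $X' \coloneqq \overline{X \setminus \nu}$; this is a connected compact surface with two boundary circles, the connectedness being a consequence of $\gamma$ being non-separating. Inside $X'$, choose two disjoint simple closed curves $\gamma_1, \gamma_2$, each parallel to one of the two boundary components. They split $X'$ into three closed connected pieces $A_1, A_2, A_3$: two peripheral collar annuli $A_1, A_3$ and a central region $A_2$ (connected since $X'$ is and $\gamma_1, \gamma_2$ are peripheral). Suitable small open thickenings $O_0 \supseteq \nu$ and $O_i \supseteq A_i$ for $i = 1, 2, 3$ will form the desired circular covering of length $\ell = 4$.

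The main obstacle is verifying the last clause of the definition of circular covering, namely that $O_i \setminus \bigcup_{j \ne i} O_j$ is connected for each $i$. For $O_0$, $O_1$, $O_3$, each corresponding to an annulus, the set in question is the middle sub-annulus, clearly connected. The delicate case is $O_2$: its thickening must be chosen so as to shave off only thin collars near $\gamma_1$ and $\gamma_2$, in such a way that the complement of its overlaps with $O_1$ and $O_3$ remains connected. This is possible precisely because $A_2$ is a connected compact surface whose only boundary components are $\gamma_1$ and $\gamma_2$. The cyclic adjacency pattern and the disjointness relations $\Cl(O_0) \cap \Cl(O_2) = \Cl(O_1) \cap \Cl(O_3) = \emptyset$ follow by construction.
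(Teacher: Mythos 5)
Your forward direction coincides with the paper's: both $S^2$ and the real projective plane are unicoherent, and the observation preceding the proposition (that $\Cl(O_0)$ and $\bigcup_{j\ne 0}\Cl(O_j)$ would witness non-unicoherence) rules out a circular covering. Your converse, however, takes a genuinely different route. The paper exhibits circular coverings on the torus and the Klein bottle and then shows that having a circular covering is stable under connected sum (remove a small disk from the core $O_0 \setminus \bigcup_{j\ne 0} O_j$ and glue in $X'$ minus a disk), which handles all remaining surfaces via the classification theorem. You instead give a single uniform construction: cut along a two-sided non-separating simple closed curve $\gamma$ and assemble a length-$4$ cycle from the annular neighborhood of $\gamma$, two boundary collars of the cut surface, and the central region. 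Both arguments are correct. Yours is more direct and produces the covering explicitly in every case; its only additional input is the existence of a two-sided non-separating simple closed curve on every closed surface other than $S^2$ and $\bbR P^2$, which is standard but in the non-orientable case does require the $w_1$/mod-$2$ intersection-form argument you sketch (or, equivalently, locating a Klein-bottle summand and taking the curve there). The paper's connected-sum argument avoids that input at the price of an induction over the classification. Your treatment of the last clause of the definition is also sound: each core is a sub-annulus or a connected compact surface with thin boundary collars removed, hence connected.
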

\begin{proof}

	By \cite{MR0007095}*{Theorem 7.4}, the sphere and the real projective plane are unicoherent, therefore they do not have a circular covering.

	On the other hand, it is clear that the torus and the Klein bottle have circular coverings.
	Recall that, by the classification theorem for compact surfaces \cite{MR2766102}*{Theorem 6.15} every compact surface is either the sphere, a connected sum of copies of the torus, or a connected sum of copies of the real projective plane $\mathbb RP^2$. Since $\mathbb RP^2\#\mathbb RP^2$ is homeomorphic to the Klein bottle,
	we conclude by showing that the connected sum of a surface $X$ which has a circular covering, with any other surface $X'$, has a circular covering: suppose $O_0, \dots, O_{\ell-1}$ is a circular covering of $X$;
	take an open disk $D$ in $O_0 \setminus \bigcup_{j\ne 0} O_j$, and any open disk $D'$ in $X'$, and glue $X, X'$ along $\Bd(D), \Bd(D')$.
	Then $(O_0 \setminus D) \cup (X' \setminus D'), O_1, \dots, O_{\ell-1}$ is a circular covering of $X \# X'$.
\end{proof}

\section{Classifiability and Dynamics}
\label{sec:Minimal_spaces_of_chains_and_dynamical_consequences}

In this section we elaborate on the dynamical consequences of \Cref{thm:the-theorem-introduction} and we derive from it, under sufficient homogeneity conditions,
\Cref{thm:no_gpp}, \Cref{corollary:no_gpp} and \Cref{corollary:no_classification}
(see Corollaries \ref{thm:dynamical-consequences} and \ref{cor:no_ccs}).

Recall that a Polish group has the generic point property if every minimal flow has a comeager orbit.
Given a Peano continuum $X$ and a subgroup $G \le \Homeo(X)$, we can obtain the failure of the generic point property for $G$ directly from \Cref{thm:the-theorem-introduction},
if we can prove that the action $G \acts \chains(X)$ is minimal.
The first thing to notice is that a necessary prerequisite for this is the minimality the action $G \acts X$, since we recall that the map from $\chains(X)$ onto $X$ associating to each chain $\cC$ its root $\bigcap \cC$ is a continuous $\Homeo(X)$-equivariant map.
Therefore, for any compact manifold with boundary, the action of the homeomorphism group on the space of chains is not minimal.
Minimality of $G \acts X$ is not sufficient, though: while $\Homeo(S^1)$ acts minimally, even transitively on the circle $S^1$, $\Homeo(S^1)\acts \chains(X)$ has a proper closed invariant subset:
take the set of those chains $\cC$ such that for each $C \in \cC \setminus \set{S^1}$ the root of $\cC$ belongs to the boundary of $C$.

The following gives a sufficient condition for minimality of $G \acts \chains(X)$, when $X$ is a Peano continuum with no locally separating points.
\begin{proposition}\label{prop:sufficient-minimality}
	Let $X$ be a Peano continuum with no locally separating points.
	Let $G \le \Homeo(X)$ be such that $G \acts X$ is minimal.
	Suppose that for each $\cC \in \chains(X)$ with root $x_0$, and any connected nonempty open sets $U_0, U_1, V$ with $U_0, U_1 \sub V$, there are $C \in \cC$ and $g \in G$ such that:
	\begin{enumerate}
		\item $g(x_0) \in U_0$,
		\item $g[C] \cap U_1 \ne \emptyset$,
		\item $g[C] \sub V$.
	\end{enumerate}
	Then the action $G \acts \chains(X)$ is minimal.
\end{proposition}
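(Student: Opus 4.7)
The plan is to reduce to the $\pi$-basis of $\chains(X)$ from \Cref{def:Ww} and \Cref{prop:refinement_walk}, and show that every orbit meets every basic open set of the form $\opwalk{\cW}{w}$ for $\cW$ a brick partition and $w = \walk{w(0), \dots, w(n-1)}$ a reduced walk. So fix a chain $\cC \in \chains(X)$ with root $x_0$ and such a basic open set. By minimality of $G \acts X$, I would first find $g_0 \in G$ with $g_0(x_0) \in w(0)$; replacing $\cC$ by $g_0 \cdot \cC$, one may assume $x_0 \in w(0)$. The goal is then to produce $g \in G$ with $g \cdot \cC \in \opwalk{\cW}{w}$.

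The core of the argument is an induction on $n$, iteratively applying the hypothesis of the proposition to extend $\cC$ along the walk one step at a time. Inductively, I would construct $g^{(k)} \in G$ for $k = 0, 1, \dots, n-1$ such that $g^{(k)} \cdot \cC$ contains nested witnesses $D_1 \subseteq \dots \subseteq D_k$ with $D_i \in O(w(0), \dots, w(i-1))$ for each $i \le k$, while the image $g^{(k)}(x_0)$ of the root is confined to a shrinking sequence of connected open neighborhoods $U^{(k)} \subseteq w(0)$ of $x_0$. The base $k = 0$ is trivial with $g^{(0)} = \id$. For the inductive step, I apply the hypothesis to the chain $g^{(k)} \cdot \cC$ with inputs $U_0 = U^{(k+1)}$ a small connected open neighborhood of $g^{(k)}(x_0)$ inside $U^{(k)}$, $U_1 = w(k)$, and $V = V_k := \bigvee_{j \le k} w(j)$, which is connected (as $w$ is a walk in the nerve graph) and contains both $U_0 \subseteq w(0) \subseteq V_k$ and $U_1 = w(k) \subseteq V_k$. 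The hypothesis yields $h_{k+1} \in G$ and $C \in g^{(k)} \cdot \cC$ satisfying $h_{k+1}(g^{(k)}(x_0)) \in U^{(k+1)}$, $h_{k+1}[C] \cap w(k) \ne \emptyset$, and $h_{k+1}[C] \subseteq V_k$. I then set $g^{(k+1)} := h_{k+1} g^{(k)}$ and take $D_{k+1} := h_{k+1}[C']$ for $C'$ an element of $g^{(k)} \cdot \cC$ chosen large enough to contain both $C$ and the previous witness $D_k$, so that $D_{k+1} \supseteq h_{k+1}[D_k]$ already meets $w(0), \dots, w(k-1)$ through the earlier witness and, by construction, also meets $w(k)$ and is contained in $V_k$.

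The main obstacle is to ensure that, upon passing from $g^{(k)}$ to $g^{(k+1)}$, the previously constructed witnesses $h_{k+1}[D_i]$ for $i \le k$ still satisfy $h_{k+1}[D_i] \in O(w(0), \dots, w(i-1))$—i.e., they still meet each $w(j)$ for $j < i$ and remain contained in the smaller set $V_{i-1}$, not merely in $V_k$. Since these containment and meeting conditions are open in the Vietoris topology on $C(X)$, the strategy will be to choose the shrinking neighborhood $U^{(k+1)}$ and the set $V$ at each step finely enough—using nested brick refinements and telescoping the choice of $V$ around the smaller $V_{i-1}$'s—so that $h_{k+1}$ acts as a sufficiently small Vietoris-perturbation on each compact $D_i$ to keep $h_{k+1}[D_i]$ inside the open condition. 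After $n-1$ iterations the composition $g = g^{(n-1)}$ satisfies $g \cdot \cC \in \opwalk{\cW}{w}$, which yields minimality.
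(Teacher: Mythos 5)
There is a genuine gap at the step you yourself flag as the ``main obstacle'': the hypothesis of the proposition is a pure existence statement. It hands you \emph{some} $g \in G$ and \emph{some} $C \in \cC$ satisfying the three conditions about the root and that single element $C$; it gives no control whatsoever on how $g$ acts elsewhere. In particular, no matter how finely you shrink $U^{(k+1)}$ or how carefully you telescope the sets $V$, there is nothing forcing $h_{k+1}$ to be a small Vietoris-perturbation on the previously placed witnesses $D_i$: the homeomorphism produced by the hypothesis may move $D_i$ out of $\runion_{j<i} w(j)$ or make it miss some $w(j)$ entirely. The same issue already appears in your definition of $D_{k+1} = h_{k+1}[C']$: only $h_{k+1}[C] \sub V_k$ is guaranteed, not $h_{k+1}[C'] \sub V_k$ for the possibly larger $C'$. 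So the induction cannot close. A symptom of the problem is that your argument never uses the standing assumption that $X$ has no locally separating points.

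The paper avoids the induction altogether. Using \Cref{prop:refinement_walk} \emph{together with} \Cref{thm:walks-to-paths} (this is where the no-locally-separating-points hypothesis enters), one may take the basic open set to be $\opwalk{\cU}{u}$ for a \emph{spaced path} $u$, not just a reduced walk. For a spaced path, a single application of the hypothesis with $U_0 = u(0)$, $U_1 = u(-1)$ and $V = \runion_{i<\len(u)} u(i)$ suffices: if the image chain has root in $u(0)$ and some element meeting $u(-1)$ and contained in $V$, then connectivity plus the spacing condition force the chain to pass through every intermediate $u(i)$ in order, so all the intermediate Vietoris conditions come for free (\Cref{lem:first-last}). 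If you want to salvage your write-up, replace the induction by this one-shot argument; as written, the proof does not go through.
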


The proof relies on the following straightforward lemma.

\begin{lemma}
	\label{lem:first-last}
	Let $X$ be a Peano continuum, let $\cU$ be a brick partition of $X$ and let $u$ be a spaced path on $\cU$.
	Let $\cC \in \chains(X)$ with root $x_0$ be such that $x_0 \in u(0)$ and suppose there is $C \in \cC$ such that $C \cap u(-1) \ne \emptyset$ and $C \sub \runion_{i<\len(u)} u(i)$.
	Then $\cC \in \opwalk{\cU}{u}$.
\end{lemma}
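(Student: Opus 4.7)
The plan is to parametrize $\cC$ as a continuous arc $\varphi \colon [0,1] \to C(X)$ with $\varphi(0) = \{x_0\}$, $\varphi(1) = X$, and $\varphi(s) \subsetneq \varphi(t)$ for $s<t$. Fix $t_C \in [0,1]$ with $\varphi(t_C) = C$; then $\varphi(t) \sub C \sub \runion_{k<\len(u)} u(k)$ and $x_0 \in \varphi(t)$ for every $t \in [0,t_C]$. The goal is, for each relevant $i$, to produce some $t \in [0,t_C]$ with $\varphi(t) \in O(u(0),\dots,u(i-1))$.

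For each index $j$ set $b_j \coloneqq \inf\{t \in [0,t_C] : \varphi(t) \cap u(j) \neq \emptyset\}$ (first time the chain enters the open brick $u(j)$) and $a_i \coloneqq \sup\{t \in [0,t_C] : \varphi(t) \sub \runion_{k<i} u(k)\}$ (the threshold where the chain escapes from the first $i$ bricks). Both are well-defined in $[0,t_C]$ by Vietoris-openness of the relevant conditions and monotonicity of $\varphi$, with $b_0 = 0$ since $x_0 \in u(0)$. The element $\varphi(t)$ lies in $O(u(0),\dots,u(i-1))$ whenever $\max_{j<i} b_j < t < a_i$, so the lemma reduces to showing $b_j < a_i$ for all $0 \le j < i \le \len(u)$.

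The heart of the argument is a disconnection exploiting that $u$ is a spaced path, which yields $\Cl(u(k)) \cap \Cl(u(k')) = \emptyset$ whenever $|k-k'| \ge 2$. Assume towards a contradiction $b_{j_0} \ge a_i$ for some $0 < j_0 < i$, so $\varphi(t) \cap u(j_0) = \emptyset$ for all $t < a_i$. For such $t$, the connected set $\varphi(t) \sub \bigcup_{k<i} \Cl(u(k))$ meets $\Cl(u(j_0))$ only in $\Bd(u(j_0))$, which by the spaced property is contained in $\Cl(u(j_0-1)) \cup \Cl(u(j_0+1))$; hence $\varphi(t) \sub \bigcup_{k<i,\, k\neq j_0} \Cl(u(k))$. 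This latter set is the disjoint union $\bigcup_{k<j_0}\Cl(u(k)) \sqcup \bigcup_{j_0<k<i}\Cl(u(k))$ (again by spaced), and since $x_0 \in u(0)$ belongs to $\varphi(t)$, connectedness forces $\varphi(t) \sub \bigcup_{k<j_0}\Cl(u(k))$.

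Taking the Hausdorff limit as $t \to a_i^-$ yields $\varphi(a_i) \sub \bigcup_{k<j_0}\Cl(u(k))$. A final local application of the spaced property shows that each point of this union has a neighborhood inside $\bigcup_{k<i}\Cl(u(k))$, so lies in $\runion_{k<i} u(k)$; thus $\varphi(a_i) \sub \runion_{k<i} u(k)$, contradicting the definition of $a_i$ (for $i<\len(u)$ one checks $a_i<t_C$ using $C \cap u(\len(u)-1) \neq \emptyset$, while $b_{\len(u)-1} < t_C = a_{\len(u)}$ follows directly from Vietoris-openness and the same hypothesis). The main obstacle I anticipate is the bookkeeping in the disconnection step: missing the \emph{open} brick $u(j_0)$ (as opposed to its closure) only places $\varphi(t)$ inside $\bigcup_{k<i,\,k\neq j_0}\Cl(u(k)) \cup \Bd(u(j_0))$, so one must verify carefully that this stray boundary contribution is absorbed by the neighboring bricks $u(j_0 \pm 1)$ before the clean spaced-disconnection can be invoked.
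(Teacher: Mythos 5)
The paper does not actually give a proof of this lemma (it is explicitly ``left to the reader''), so there is no argument of the authors to compare against; your strategy --- reduce the claim to $\max_{j<i}b_j<a_i$ and rule out $b_{j_0}\ge a_i$ by a connectedness/disconnection argument exploiting spacedness --- is a reasonable one and, with one repair, goes through.

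The repair concerns your final step. You assert that ``each point of $\bigcup_{k<j_0}\Cl(u(k))$ has a neighborhood inside $\bigcup_{k<i}\Cl(u(k))$'' by ``a final local application of the spaced property.'' This is false as stated: a point of $\Bd(u(k))$ may be adjacent to a brick $U\in\cU$ that does not appear on the path $u$ at all, and then every neighborhood of that point meets $U$, so the point does not lie in $\runion_{k<i}u(k)=\Int\bigl(\bigcup_{k<i}\Cl(u(k))\bigr)$. The spaced property only controls adjacencies among the bricks of $u$; it says nothing about adjacencies to the rest of $\cU$. The conclusion you need --- $\varphi(a_i)\sub\runion_{k<i}u(k)$ --- is nevertheless true, but its proof must invoke the additional fact that $\varphi(a_i)\sub\varphi(t_C)=C\sub\runion_{k<\len(u)}u(k)$: a point $x\in\varphi(a_i)$ therefore has a neighborhood contained in $\bigcup_{k<\len(u)}\Cl(u(k))$, and after removing from it the finitely many $\Cl(u(k))$ with $x\notin\Cl(u(k))$ one gets a neighborhood inside $\bigcup\set{\Cl(u(k)) : x\in\Cl(u(k))}$, which is contained in $\bigcup_{k<i}\Cl(u(k))$ because spacedness confines the indices $k$ with $x\in\Cl(u(k))$ to two consecutive values $\le j_0<i$. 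Note that this is exactly the same phenomenon (off-path bricks) as the ``stray boundary contribution'' you correctly flag in the earlier disconnection step; there the fix works because $\varphi(t)\sub\runion_{k<i}u(k)$ and the bricks are regular open, so a point of $\varphi(t)\cap\Bd(u(j_0))$ must lie in $\Cl(u(k))$ for some $k<i$ with $k\ne j_0$, hence $k=j_0\pm1$. With these two repairs (and the observation that nonemptiness of the sets defining the $b_j$ is not needed, since the contradiction argument applies verbatim when the set is empty), the argument is complete.
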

\begin{proof}
	Since $u$ is spaced, $x_0 \in C$, and $C \in O(u(0), \dots, u(-1))$, it holds that any subcontinuum $C'$ of $C$ which contains $x_0$ is such that
	\begin{equation}
		\label{eq:interseca-quelli-prima}
		\text{ if } C' \cap u(i) \ne \emptyset \text{ then } C' \cap u(j) \ne \emptyset, \text{ for each } j \le i.
	\end{equation}
	By maximality and connectedness of $\cC$, for each $i < \len(u)-1$, there is $x_0 \in C_i \sub C$ in $\cC$ such that $C_i \cap u(i) \ne \emptyset$ but $C_i \cap u(i+1) = \emptyset$.
	By \eqref{eq:interseca-quelli-prima}, $C_i \in O(u(0), \dots, u(i))$, and we are done.
\end{proof}

\begin{proof}[Proof of \Cref{prop:sufficient-minimality}]
	Let $\cC \in \chains(X)$ with root $x_0$ and a nonempty open $\bU \sub \chains(X)$ be given.
	By \Cref{cor:refinement_walk} and \nameref{thm:walks-to-paths} (\Cref{thm:walks-to-paths}), there are a brick partition $\cU$ of $X$ and a spaced path $u$ on $\cU$ such that $\opwalk{\cU}{u} \sub \bU$.

	Since $G$ acts minimally on $X$, there is $g_0 \in G$ such that $g_0(x_0) \in u(0)$.
	By hypothesis there are $g_0[C] \in g_0 \cdot \cC$ and $g_1 \in G$ such that $g_1 g_0( x_0) \in u(0)$, $g_1g_0[C] \cap u(-1) \ne \emptyset$ and $g_1 g_0[C] \sub \runion_{i< \len(u)}u(i)$.
	We conclude that $g_1 g_0 \cdot \cC \in \opwalk{\cU}{u}$, by \Cref{lem:first-last}.
\end{proof}

We briefly recall what it means for an equivalence relation on a Polish space to be classifiable by countable structures and refer the reader to \cite{MR1967835} or \cite{hjorth2000classification} for more details.
Given a countable relational language $\cL=\{R_i\}_{i\in I}$, where each $R_i$ has arity $n_i$, we topologize the collection $\mathrm{Mod}(\cL)$ of all $\cL$-structures with domain $\N$, by identifying it with the product space
\begin{equation*}
	\prod_{i\in I}2^{\N^{n(i)}}
\end{equation*}
where every $R_i\subseteq\N^{n(i)}$ is identified with its indicator function.
Consider now another Polish space $Y$ and an equivalence relation $\equiv$ on $Y$.
We say that $(Y,\equiv)$ is \emph{classifiable by countable structures} if there exists a Borel function $f\colon Y \to\mathrm{Mod}(\cL)$, for some countable relational language $\cL$, such that for all $y,y'\in Y$,
\begin{equation}
	y \equiv y'\iff f(y)\cong f(y').
\end{equation}

In order to prove \Cref{corollary:no_classification}, we rely on the notion of turbulence and on a classic theorem due to Hjorth \cite{hjorth2000classification}.
The following definitions and statements can be found in \cite{MR1967835}*{Sections 7 and 8}.

\begin{definition}
	Let $G \acts Y$ be a continuous action on a topological space.
	Given a point $y \in Y$, an open neighborhood $\bU$ of $y$, and an open neighborhood $G_0$ of the identity in $G$,
	the \emph{$(\bU, G_0)$-local orbit of $y$} is the set of all $y' \in \bU$ such that there is a finite sequence $y = y_0, \dots, y_n = y'$ of elements of $\bU$ verifying $y_{i+1} \in G_0 \cdot y_{i}$ for all $i< n$.
	We shall say that a point $y \in Y$ is \emph{turbulent} if for any $\bU, G_0$ as above, the closure of the $(\bU, G_0)$-local orbit of $y$ has nonempty interior.

	An action is \emph{generically turbulent} if it is minimal, has no comeager orbit and comeagerly many points are turbulent, see \cite{MR1967835}*{Proposition 8.7}.
\end{definition}

Recall that the \emph{orbit equivalence relation} $\equiv_G$ on $Y$ associated to $G \acts Y$ is defined as $y \equiv_G y'$ if and only if $y' \in G\cdot y$.

\begin{theorem}[\cite{hjorth2000classification}]
	\label{fact:Hjorth}
	Let $G$ be a Polish group and $G \acts Y$ be a continuous action on a Polish space.
	The action is generically turbulent if and only if the orbit equivalence relation $\equiv_G$ is not classifiable by countable structures.
\end{theorem}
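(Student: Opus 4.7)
The plan is to argue by contradiction: assume there is a Borel reduction $f \colon Y \to \mathrm{Mod}(\cL)$ from $\equiv_G$ to $\cong$, and derive the existence of a comeager orbit, contradicting turbulence. The approach follows the Becker--Kechris/Hjorth strategy based on Scott analysis combined with a category dichotomy for invariant Borel sets available under turbulence.

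The first and crucial step is the \emph{turbulence $0$--$1$ law}: if $G \acts Y$ is turbulent and $A \sub Y$ is a $G$-invariant set with the Baire property, then $A$ is either meager or comeager. I would prove this by contradiction: if $A$ were non-meager in some open $\bU_0$ and $Y \sminus A$ were non-meager in some open $\bU_1$, then by the Baire property one can find $y_0 \in A$ generic in $\bU_0$ and $y_1 \in Y \sminus A$ generic in $\bU_1$. Using that all local orbits are somewhere dense together with minimality, a back-and-forth construction produces a finite chain $y_0 = z_0, z_1, \dots, z_n = y_1$ with $z_{i+1} \in G_0 \cdot z_i$ for some small open neighborhood $G_0$ of the identity, and with each $z_i$ generic in a prescribed open set; $G$-invariance of $A$ then transports $y_0 \in A$ along the chain to $y_1 \in Y \sminus A$, a contradiction. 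This is precisely the point where turbulence (as opposed to bare minimality) enters.

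Next, I would invoke Scott's theorem: for each countable $M \in \mathrm{Mod}(\cL)$ there is a sentence $\sigma_M$ of $L_{\omega_1 \omega}$ whose models in $\mathrm{Mod}(\cL)$ are precisely the isomorphic copies of $M$, and for every $L_{\omega_1\omega}$-sentence $\phi$ the set $\set{N \in \mathrm{Mod}(\cL) : N \models \phi}$ is Borel and invariant under isomorphism. Pulling back through $f$, for each such $\phi$ the set $A_\phi \coloneqq f^{-1}(\set{N : N \models \phi})$ is Borel, $G$-invariant (since $f$ is a reduction), and hence meager or comeager by the $0$--$1$ law.

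Finally, I would run an inductive Scott-analytic construction along countable ordinals: starting from $Y$, at stage $\alpha + 1$ refine the comeager set $A_{\phi_\alpha}$ by choosing, among the countably many $L_{\omega_1\omega}$ refinements of $\phi_\alpha$ corresponding to one more round of the Scott analysis, the unique one whose pullback remains comeager; at limit stages take conjunctions. A standard boundedness argument produces a countable ordinal at which the chosen formula is a complete Scott sentence $\sigma_M$. Then $f^{-1}(\set{N : N \cong M})$ is comeager in $Y$, so by the reduction property it is contained in a single $G$-orbit, contradicting the absence of a comeager orbit. The main obstacle is establishing the $0$--$1$ law: this requires genuine care in combining the somewhere-dense local orbits with Baire category; once it is in hand, the Scott-analytic descent is essentially bookkeeping.
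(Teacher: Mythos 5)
The paper does not prove this statement; it is quoted as a black box from Hjorth's book (Corollary 3.19 there), so there is no internal proof to compare against. Judged on its own merits, your proposal has the right skeleton (0--1 law for invariant sets, then a Scott-analytic descent to a comeager fibre of a single isomorphism type, contradicting the absence of a comeager orbit), but it contains a genuine gap, and moreover it locates the role of turbulence in the wrong place.

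The 0--1 law for $G$-invariant sets with the Baire property is a consequence of topological transitivity alone (it is the standard topological zero--one law); your back-and-forth chain through local orbits is unnecessary for it, and in particular the somewhere-dense-local-orbits hypothesis is \emph{not} what that step requires. Consequently, as written, your argument only ever uses ``minimal with no comeager orbit,'' and such an argument cannot work: the orbit equivalence relation of any Borel action of a non-archimedean Polish group (e.g.\ a minimal $\bbZ$-odometer, where every orbit is countable and dense, hence meager) is classifiable by countable structures, yet the action is minimal, has no comeager orbit, and satisfies the 0--1 law. The place where turbulence must actually be used is the step you dismiss as ``essentially bookkeeping'': the claim that the generic descent through the Scott hierarchy stabilizes at a countable stage in a \emph{complete} Scott sentence whose isomorphism class has comeager preimage. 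At each countable ordinal one only gets a comeager set, and the intersection over all of $\omega_1$ need not be comeager; there is no ``standard boundedness argument'' that rescues this in general (the odometer example shows it must fail without further hypotheses). In Hjorth's proof, the somewhere-density of the $(\bU,G_0)$-local orbits is the engine of a \emph{local} back-and-forth argument: it shows that for generic $y$ and any tuple $\bar a$ in $f(y)$, a small perturbation of $y$ inside a local orbit realizes a matching tuple in $f(y')$ for non-meagerly many $y'$, and this is what makes the $\alpha$-equivalence classes of the $f(y)$ generically constant at every level simultaneously, yielding a single isomorphism type on a comeager set. Without supplying that argument, the proof is incomplete at its essential point.
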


\subsection{Strongly Locally Homogeneous Spaces}
In \Cref{thm:local-transitive} below we show that the action $G \acts \chains(X)$ is minimal and comeagerly many point are turbulent, as long as $X$ satisfies a strong form
of homogeneity and $G \le \Homeo(X)$ is large enough. We begin by making these last two conditions precise.

For $G \le \Homeo(X)$, a continuous action $G \acts X$ is called \emph{locally transitive}\footnote{Notice that in \cite{gutman_minimal} local transitivity refers to a slightly stronger property. Nevertheless, the two properties are equivalent when $X$ is a Peano continuum with no locally separating points.} if for any $x \in X$ and any neighborhood $V$ of $x$, $G_V \cdot x$ contains an open neighborhood of $x$, where
\begin{equation} \label{eq:rigid}
	G_V \coloneqq \set{g \in G : g(y) = y \text{ for all } y \not \in V}
\end{equation}
is the \emph{rigid stabilizer} of $V$ in $G$.

Spaces $X$ such that $\Homeo(X) \acts X$ is locally transitive are called \emph{strongly locally homogeneous} (see for instance \cite{MR2674033}).
All closed manifolds, as well as the Hilbert cube, are strongly locally homogeneous.
Furthermore, $\Homeo_0(X)$ acts locally transitively on $X$ whenever $X$ is a closed manifold, as do $\Homeo_+(X)$ and $\mathrm{Diffeo}(X)$ when $X$ is moreover orientable or smooth, respectively \cite{gutman_minimal}*{Examples 3.1}.
Other examples of strongly locally homogeneous Peano continua with no locally separating points are the Menger curve $\mu^1$ and the universal $k$-dimensional compactum $\mu^k$, for $k>1$ \cite{MR0920964}*{Theorem 3.2.2}.

\begin{proposition}[\cite{MR2674033}*{Proposition 7}]
	\label{fact:n-local-transitivity}
	If $X$ is a Peano continuum with no locally separating points and $G\le \Homeo(X)$ is such that $G \acts X$ is locally transitive, then for any connected open set $V \sub X$ and any bijection $f \colon A \to B$ between finite subsets of $V$, there is $g \in G_V$ extending $f$.
	In particular, $G$ acts on $X$ transitively.
\end{proposition}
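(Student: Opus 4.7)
The plan is to prove this by induction on $n = |A| = |B|$, with the base case being equivalent to the statement that the rigid stabilizer $G_V$ acts transitively on $V$. A key preliminary fact is that, since $X$ has no locally separating points, $V \sminus F$ is open and connected for every finite $F \sub V$; this follows by induction on $|F|$, iteratively applying the definition of ``not locally separating'' to each successive connected open subset obtained.

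For the base case $n = 1$, I would fix $x \in V$ and show that its orbit $G_V \cdot x$ is open in $V$: for any $y = g(x)$ with $g \in G_V$, local transitivity applied to $y$ with ambient open neighborhood $V$ yields an open $W \sub V$ with $y \in W \sub G_V \cdot y = G_V \cdot x$. Hence all $G_V$-orbits in $V$ are open, and the connectedness of $V$ forces a single orbit.

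For the inductive step, it is convenient to prove the slightly stronger statement that any \emph{injection} $f \colon A \to V$ with $|A| = n+1$ extends to some $g \in G_V$ (the bijection formulation is an immediate special case). Pick $a \in A$ and let $b = f(a)$. If $a = b$, apply the inductive hypothesis to the connected open set $V \sminus \{a\}$ and the restriction $f|_{A \sminus \{a\}}$ (whose image lies in $V \sminus \{a\}$ by injectivity), obtaining $g \in G_{V \sminus \{a\}} \sub G_V$ which automatically fixes $a$ and therefore extends $f$. If instead $a \ne b$, set $F \coloneqq (A \cup f(A)) \sminus \{a, b\}$ and apply the base case to the connected open set $V \sminus F$ to produce $h \in G_{V \sminus F} \sub G_V$ with $h(a) = b$ and $h|_F = \id$; the composition $\tilde f \coloneqq h^{-1} \circ f \colon A \to V$ is then an injection with $\tilde f(a) = a$, so the previous subcase yields $g_0 \in G_V$ extending $\tilde f$, and $h \circ g_0$ extends $f$. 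Transitivity of $G$ on $X$ follows at once by taking $V = X$ and $n = 1$.

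The main subtlety, rather than any deep difficulty, is twofold: first, that the ``no locally separating points'' hypothesis must be upgraded to the analogous statement for finite subsets before it can be used in the induction; and second, that within the inductive step the non-fixed-point case reduces to the fixed-point case by a single well-chosen application of the base case on the connected open set $V \sminus F$.
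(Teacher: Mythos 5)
Your argument is correct. Note, however, that the paper does not prove this proposition at all: it is quoted verbatim from the cited reference (\cite{MR2674033}*{Proposition 7}), so there is no in-paper proof to compare against. Your write-up is essentially the standard argument for this classical fact: (i) upgrade ``no locally separating points'' to ``$V\setminus F$ is open and connected for finite $F$'' by induction on $\Card{F}$; (ii) observe that local transitivity makes every $G_V$-orbit in $V$ open, so connectedness of $V$ gives transitivity of the rigid stabilizer; (iii) induct on $\Card{A}$, splitting off one point at a time and using $G_{V\setminus\{a\}}\sub G_V$. All the small verifications check out: $g[V]=V$ for $g\in G_V$, so orbits of points of $V$ stay in $V$ and partition it; $\tilde f=h^{-1}\circ f$ is injective into $V$ with $\tilde f(a)=a$; and $h\circ g_0\in G_V$ extends $f$. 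Two cosmetic remarks: the ``slightly stronger'' injection formulation is in fact equivalent to the bijection formulation (an injection into $V$ is a bijection onto its image); and in the $a\ne b$ subcase the set $F$ and the condition $h\restr{F}=\id$ are never actually used --- applying the base case on $V$ itself would do --- though including them is harmless.
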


As a consequence, strongly locally homogeneous Peano continuum with no locally separating points are homogeneous. Any Peano continuum which is homogeneous and is not the circle or a closed surface, is locally non-planar \cite{MR2674033}*{Remark 8}.
It then follows from \Cref{thm:the-theorem-introduction} and \Cref{prop:vaguely-circular-surfaces} that:
\begin{corollary}
	\label{cor:homo-chains-later}
	No homogeneous Peano continuum except for the circle and, possibly, the sphere and real projective plane, has a generic chain.
\end{corollary}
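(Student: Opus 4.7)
The plan is to reduce to the main theorem by a structural classification of homogeneous Peano continua. By the classification cited in \cite{MR2674033}*{Remark 8}, every homogeneous Peano continuum falls into exactly one of three classes: the circle, a closed surface, or a locally non-planar continuum. The circle is excluded by hypothesis, so I would split into the two remaining cases.

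For the case of closed surfaces, I would invoke \Cref{thm:surfaces} directly: if $X$ is a closed surface different from $S^2$ and $\mathbb{RP}^2$, then it has no generic chain, so we are done. This case requires nothing new beyond a citation to the already-proven surface corollary.

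For the locally non-planar case, the strategy is to verify the hypotheses of \Cref{thm:the-theorem-introduction}, specifically clause \eqref{itm:non-planar}. The space $X$ trivially contains a locally non-planar open subset (namely itself), so what remains is to show that $X$ has no locally separating points. Here I would exploit homogeneity: the property ``$x$ is a locally separating point of $X$'' is invariant under $\Homeo(X)$, so by homogeneity either every point is locally separating or none is. The key observation to conclude is that a locally non-planar Peano continuum cannot have all points locally separating — any small connected open neighborhood of a point in a locally non-planar space remains connected after removing any single point, because locally non-planar sets contain, heuristically, enough topological dimension and branching to reroute around a point.

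The main obstacle is precisely the last step: rigorously ruling out locally separating points in a homogeneous locally non-planar Peano continuum. One clean way to discharge this is by contradiction: suppose every point of $X$ is locally separating and $X$ is locally non-planar. Then for any $x\in X$ one could find a connected open neighborhood $U$ such that $U\setminus\{x\}$ is disconnected. Combined with homogeneity and local non-planarity, this should contradict \Cref{fact:existence-of-arcs}.\eqref{itm:existence-of-arcs-cross}, which produces in locally non-planar bricks two disjoint arcs connecting four prescribed boundary points — such arcs can be arranged to pass through a given interior point on one side while another arc goes around, forcing that point to not be locally separating. Once this technical point is settled, \Cref{thm:the-theorem-introduction}.\eqref{itm:non-planar} applies and yields the absence of a generic chain, completing the corollary.
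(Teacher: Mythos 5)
Your overall decomposition matches the paper's: split via the classification of homogeneous Peano continua into the circle, closed surfaces, and locally non-planar spaces, dispose of surfaces by \Cref{thm:surfaces}, and handle the locally non-planar case through clause \eqref{itm:non-planar} of \Cref{thm:the-theorem-introduction}. The problem is the step you yourself flag as the main obstacle: your proposed argument that a homogeneous locally non-planar Peano continuum has no locally separating points is circular. \Cref{fact:existence-of-arcs}.\eqref{itm:existence-of-arcs-cross} is stated \emph{under the hypothesis} that $X$ has no locally separating points, so it cannot be the tool that establishes that hypothesis. Moreover, the heuristic you lean on --- that local non-planarity alone lets one ``reroute around a point'' --- is false without homogeneity: the wedge of two Menger curves at a point is locally non-planar everywhere, yet the wedge point is locally separating. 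So neither half of your dichotomy argument (``all points locally separating or none'' plus ``not all can be'') is discharged by what you wrote.

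The paper does not prove this step either; it absorbs it into the cited classification (\cite{MR2674033}*{Remark 8}), which packages the classical fact that a homogeneous Peano continuum other than the circle has no locally separating points together with the trichotomy circle / closed surface / locally non-planar. That is the correct way to close your gap: replace your attempted ad hoc argument with a citation to this result (it goes back to Whyburn's work on local separating points of homogeneous continua). With that substitution, your proof coincides with the paper's: surfaces via \Cref{thm:surfaces} (i.e., the circular-covering case), and locally non-planar spaces via \Cref{thm:the-theorem} together with \Cref{thm:local-hp-robust-cycle0}.
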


We now turn to the main theorem of this section.
We remark that minimality of $G \acts \chains(X)$, for $X$ a closed manifold of dimension at least 2 and $G \le \Homeo(X)$ a subgroup which acts locally transitively on $X$, was already proved in \cite{gutman_minimal}*{Theorem 6.5}\footnote{Gutman's theorem applies to Peano continua which are \emph{strongly arcwise inseparable}, a property that implies that points are not locally separating, but that fails for any space containing a separating arc, as is the case for the Menger curve \cite{MR0096180}.
}.
The following result applies to all the examples of strongly locally homogeneous Peano continua which were described at the beginning of the subsection. In particular, to the Menger curve and its group of homeomorphisms.

\begin{theorem}
	\label{thm:local-transitive}
	Let $X$ be a Peano continuum with no locally separating points.
	Let $G \le \Homeo(X)$ be such that $G \acts X$ is locally transitive.
	Then the action $G \acts \chains(X)$ is minimal and comeagerly many points are turbulent.
\end{theorem}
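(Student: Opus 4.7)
For the minimality assertion, the natural move is to verify the criterion of \Cref{prop:sufficient-minimality}. Given $\cC \in \chains(X)$ with root $x_0$ and connected open sets $U_0, U_1, V$ with $x_0 \in U_0$ and $U_0, U_1 \subseteq V$, I use the fact that $\cC$ is the homeomorphic image of $[0,1]$ in $C(X)$ rooted at $\{x_0\}$ to extract some $C \in \cC$ with $\{x_0\} \subsetneq C \subseteq V$ (by continuity at time $0$). Pick $c_1 \in C \setminus \{x_0\}$ and distinct $x_0' \in U_0$, $y_1 \in U_1$, which exist because a nondegenerate Peano continuum has no isolated points. Applying \Cref{fact:n-local-transitivity} to the connected open set $V$ and the bijection $\{x_0, c_1\} \to \{x_0', y_1\}$ yields $g \in G_V$ with $g(x_0) = x_0'$ and $g(c_1) = y_1$. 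Since $g$ fixes $X \setminus V$ pointwise, $g[V] = V$ and hence $g[C] \subseteq V$, so all three conditions of \Cref{prop:sufficient-minimality} are satisfied.

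For the local orbit statement, let $\cC \in \chains(X)$, let $\bU$ be an open neighborhood of $\cC$, and let $G_0$ be a neighborhood of the identity in $G$. The plan is to exhibit a nonempty open set $\bV \subseteq \bU$ which is entirely contained in the closure of the $(\bU, G_0)$-local orbit of $\cC$. First I discretize: combining \Cref{prop:refinement_walk}, \nameref{thm:walks-to-paths} (\Cref{thm:walks-to-paths}), and the fact that the family $\{H_{\cV,1}\}$ is a neighborhood basis of $\id$ in $\homeo(X)$ (hence in $G$), I select a brick partition $\cV$ and a spaced path $v$ on $\cV$ such that $\cC \in \opwalk{\cV}{v} \subseteq \bU$ and $H_{\cV,1} \subseteq G_0$. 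A direct verification (as in the remarks preceding \eqref{eq:rigid}: $G_V$ preserves $V$ setwise, fixes everything outside, and $\Cl(V') \subseteq \bigvee\ball{1}{V'}$ for every $V' \in \cV$) shows that $G_V \subseteq H_{\cV,1} \subseteq G_0$ for each $V \in \cV$. I then claim that the open set $\bV \coloneqq \opwalk{\cV}{v}$ lies in the closure of the local orbit.

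To prove the claim I use a combing argument. Fix $\cC^* \in \bV$ and an arbitrary basic open neighborhood of $\cC^*$ of the form $\bW \coloneqq \opwalk{\cW}{w^*}$ with $\cW \refines \cV$ and $\rmap{\cW}{\cV} \cdot w^* \refex v$ (possible by \Cref{prop:refinement_walk}); let also $w$ be a walk on $\cW$ with $\cC \in \opwalk{\cW}{w}$ and $\rmap{\cW}{\cV}\cdot w \refex v$. I then enumerate the distinct pieces $V_1, \ldots, V_m \in \cV$ appearing in $v$ and inductively apply elements $g_i \in G_{V_i} \subseteq G_0$ to rearrange the portion of the current chain inside $V_i$ so as to match the subwalk $w^*|_{V_i}$ on the sub-pieces $\cW(V_i)$. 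The existence of each $g_i$ is extracted from \Cref{fact:n-local-transitivity}: choosing finitely many control points in $V_i$ sampling the desired entry times into sub-pieces of $\cW(V_i)$, then applying high transitivity to realize the prescribed bijection in $V_i$. Since $g_i$ fixes $X \setminus V_i$ pointwise, the walk on the coarse partition $\cV$ is preserved, so every intermediate chain $\cC_i$ remains in $\opwalk{\cV}{v} = \bV \subseteq \bU$; after all $V_i$ are processed, $\cC_m \in \opwalk{\cW}{w^*} = \bW$.

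\emph{The main obstacle} is carrying out the combing step rigorously. Chains are continuous families $\{C_t\}_{t\in [0,1]}$ rather than finite configurations, and the condition $\cC_m \in \opwalk{\cW}{w^*}$ requires matching the order in which the evolving chain first touches each sub-piece $W \subseteq V_i$. I expect this to be handled by sampling, within each $V_i$, finitely many ``control points'' recording where the chain meets the closures of each $W \in \cW(V_i)$ at the appropriate times, then invoking \Cref{fact:n-local-transitivity} on the resulting finite bijection; the \nameref{thm:walks-to-paths}-style discretization ensures that these finite samples are adequate to determine the $\cW$-walk. Once this combinatorics-to-topology dictionary is set up properly, the rest of the argument is a straightforward bookkeeping of which $\cV$-piece is currently being modified.
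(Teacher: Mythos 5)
Your minimality argument is essentially the paper's: both verify \Cref{prop:sufficient-minimality} by picking a non-trivial $C \in \cC$ with $C \sub V$ and a point $c_1 \in C \setminus \{x_0\}$, and applying \Cref{fact:n-local-transitivity} to a two-point bijection realized by some $g$ in the rigid stabilizer $G_V$. That half is correct.

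The local-orbit half has the right scaffolding (discretize $\bU$ via $\opwalk{\cV}{v}$ for a spaced path $v$, arrange $H_{\cV,1} \cap G \sub G_0$ so that rigid stabilizers of unions of adjacent pieces lie in $G_0$), but the combing step --- which you yourself flag as the main obstacle --- is where the proof actually lives, and as described it does not go through. Two concrete problems. First, membership in $\opwalk{\cW}{w^*}$ requires, for each initial segment of $w^*$, an element $C$ of the chain that is \emph{contained} in the union of the pieces of that segment. \Cref{fact:n-local-transitivity} only lets you prescribe the images of finitely many points; a homeomorphism supported in $V_i$ that sends your control points into the right sub-pieces of $\cW(V_i)$ can perfectly well leave the rest of $C \cap V_i$ scattered over sub-pieces that the relevant initial segment of $w^*$ has not yet reached. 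No finite sample certifies a containment condition. Second, you process the pieces $V_1, \dots, V_m$ one at a time with homeomorphisms supported in single $\cV$-pieces, but the order in which the evolving chain first enters the sub-pieces of $\cW$ interleaves across different $V_i$'s, and that interleaving is unaffected by a homeomorphism supported in one $V_i$; a target walk $w^*$ alternating between sub-pieces of $V_1$ and $V_2$ in a pattern incompatible with that of $\cC$ cannot be matched this way.

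The paper's proof sidesteps both issues by aiming lower: it never tries to match a particular target chain or walk $w^*$, only to steer $\cC$ into the refined basic open set $\opwalk{\cV}{v}$ itself. It moves a single marked point of a single element of the chain forward through consecutive pieces, using rigid stabilizers of $v(j) \vee v(j+1)$ (after a preliminary move of the root into $v(0)$ along a path $P$ inside $u(0)$), and certifies membership at each stage with \Cref{lem:first-last}, which for a spaced path reduces $\cC \in \opwalk{\cV}{v}$ to three conditions (root in $v(0)$, some $C$ meeting $v(-1)$, that $C$ contained in $\runion_{i} v(i)$) that are visibly preserved or achieved by a homeomorphism fixing everything outside $v(j) \vee v(j+1)$. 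To repair your argument you would need to prove an analogue of \Cref{lem:first-last}, or some other device handling the containment clauses; the finite-sampling idea alone does not suffice.
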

\begin{proof}
	By \Cref{fact:n-local-transitivity}, $G \acts X$ is minimal, so to prove minimality of $G \acts \chains(X)$ it is enough to check that the conditions of \Cref{prop:sufficient-minimality} are satisfied.
	To this end, let $\cC \in \chains(X)$ with root $x_0$ be given, together with nonempty open connected sets $U_0, U_1, V$ such that $U_0, U_1 \sub V$.

	By minimality of $G \acts X$, up to a translation, we can assume that $x_0 \in U_0$.
	Let $C \in \cC$ be non-trivial and such that $C \sub V$, and pick $x_1 \in C \setminus \set{x_0}$.
	Fix moreover $y_1 \in U_1$ distinct from $x_0$.
	By \Cref{fact:n-local-transitivity}, there is $g \in G_V$ such that $g(x_0) = x_0 \in U_0$ and $g(x_1) = y_1 \in U_1 \cap g[C]$.
	As $g$ is the identity outside $V$, we get $g[C] \sub V$, hence the action $G \acts \chains(X)$ is minimal by \Cref{prop:sufficient-minimality}.

	Towards proving that comeagerly many points of $\chains(X)$ are turbulent, fix
	a sequence $\cU_n$ of brick partitions of $X$ such that $\cU_{n+1} \refines \cU_n$ and the mesh of $\cU_n$ tends to zero.
	By \Cref{prop:induced-quasi-partition}, the set $Z \coloneqq \bigcap_{n \in \N} \bigcup \chains(\cU_n)$ is comeager in $\chains(X)$.
	Let $\cC \in Z$ with root $x_0$, an open neighborhood $\bU$ of $\cC$, and an open neighborhood $G_0$ of the identity in $G$ be given.
	Since $\cC \in Z$, there are $n \in \N$ and a walk $u$ on $\cU_n$ such that $\Imm(u) = \cU_n$ and $\cC \in \opwalk{\cU_n}{u} \sub \bU$.
	To lighten notation, let us rename $\cU_n$ to $\cU$.

	We prove that the closure of the $(\bU, G_0)$-local orbit of $\cC$ contains $\opwalk{\cU}{u}$.
	To this end, let $\bV \sub \opwalk{\cU}{u}$ be open and nonempty (notice that $\cC$ need not belong to $\bV$).
	By \Cref{cor:refinement_walk}, there are a brick partition and a walk $(\cV, v) \refines (\cU, u)$ such that $\opwalk{\cV}{v} \sub \bV$.
	Up to a refinement, by \nameref{thm:walks-to-paths}, we can suppose that $v$ is a spaced path, and that $\cV$ is fine enough that
	\begin{equation*}
		G_{\cV,1} \sub G_0,
	\end{equation*}
	where $G_{\cV,1}$ is defined in \eqref{eq:def-HV1}.
	Crucially, for any $V \E V'$ in $\cV$, the rigid stabilizer $G_{V \vee V'}$ (see \eqref{eq:rigid}) is a subset of $G_{\cV,1}$, and hence of $G_0$.
	We use this fact repeatedly from here onwards.

	By picking $\cV$ fine enough we can moreover assume there is $V_0 \in \cV(u(0))$ such that $x_0 \in \Cl(V_0)$ and such that $\ball{1}{V_0} \subseteq \cV(u(0))$. Fix a path
	$P$ from $V_0$ to $v(0)$ in $\cV(u(0))$.
	We will move $x_0$ along $P$, step by step.
	As a preliminary move, let $\cC_{-1} \coloneqq \cC$ and, using \Cref{fact:n-local-transitivity}, find $g_{-1} \in G_{\ball{1}{V_0}} \sub G_{\cV,1}$ mapping $x_0$ to some $y_0 \in V_0 = P(0)$
	and set $\cC_0 \coloneqq g_{-1} \cdot \cC$. Note that $\cC_0 \in \opwalk{\cU}{u}$ since $\cC \in \opwalk{\cU}{u}$ and $g_{-1}$ fixes every element of $u$ setwise.
	Suppose next we are given $\cC_i \in \opwalk{\cU}{u}$ with root $y_i \in P(i)$, for some $i< \len(P)-1$.
	By \Cref{fact:n-local-transitivity}, there is $g_{i} \in G_{P(i)\vee P(i+1)}$ which maps $y_i$ to some point $y_{i+1} \in P(i+1)$.
	Again, since $g_{i}$ fixes every element of $u$ setwise, $\cC_{i+1} \coloneqq g_{i} \cdot \cC_i$ belongs to $\opwalk{\cU}{u}$.

	To ease the notation, let $\cD_0 \coloneqq \cC_{\len(P)-1}$ and $z_0 \coloneqq x_{\len(P)-1}$.
	Then $z_0$ is the root of $\cD_0$ and it belongs to $v(0)$.
	We now proceed by induction.

	Suppose that $\cD_j \in \opwalk{\cU}{u}$ with root $z_0$ is given,
	and $\cD_j \in \opwalk{\cV}{\walk{v(0), \dots, v(j)}}$
	for some $j< \len(v)-1$.
	Let $D_j \in \cD_j$ be such that $D_j \in O(v(0), v(1), \dots, v(j))$, and let $t_j \in v(j) \cap D_j$. In case $j=0$, make sure that $t_0 \not = z_0$.
	By \Cref{fact:n-local-transitivity}, there is $h_{j} \in G_{v(j)\vee v(j+1)}$ which maps $t_j$ to some point $t_{j+1} \in v(j+1)$. Again, if $j=0$, pick $h_j$ so that it also
	satisfies $h_0(z_0) = z_0$.
	Call $\cD_{j+1} \coloneqq h_j \cdot \cD_j$.
	Notice that the root of $\cD_{j+1}$ is $h_j(z_0) = z_0$, since for $j > 0$ the set $v(0)$ is pointwise fixed by $h_j$.
	Moreover, $h_j[D_j] \sub \runion_{k \le j+1} v(k)$, as the set on the right is setwise fixed by $h_j$.
	\Cref{lem:first-last} then implies
	\begin{equation}
		\label{eq:D_in_initial_segment_v}
		\cD_{j+1} \in \opwalk{\cV}{\walk{v(0), \dots, v(j+1)}}.
	\end{equation}

	It is left to prove that $\cD_{j+1} \in \opwalk{\cU}{u}$.
	If $\rmap{\cV}{\cU} \cdot \walk{v(0), \dots, v(j+1)} \refex u$, then
	$\opwalk{\cV}{\walk{v(0), \dots, v(j+1)}} \sub \opwalk{\cU}{u}$, in which case we conclude by \eqref{eq:D_in_initial_segment_v}.
	Otherwise, it holds that $v(j) \vee v(j+1) \sub \runion_{k < \len(u) -1} u(k)$.
	By inductive assumption $\cD_j \in \opwalk{\cU}{u}$, so there is $A \in \cD_j$ with $A \in O(u(0), \dots, u(-1))$.
	Therefore $A \cap u(-1) \ne \emptyset$, and since $h_j$ fixes $u(-1)$, we can infer $h_j[A] \cap u(-1) \ne \emptyset$.
	Moreover, $h_j[A] \sub \runion_{k < \len(u)} u(k)$ since the set on the right is setwise fixed by $h_j$.
	We conclude by \Cref{lem:first-last}.

	Since $g_i, h_j \in G_{\cV,1} \sub G_0$ and $\cC_i, \cD_j \in \opwalk{\cU}{u} \sub \bU$ for each $i<\len(P), j<\len(v)$, it follows that $\cD_{\len(v)-1}$ is in the $(\bU, G_0)$-local orbit of $\cC$.
	But $\cD_{\len(v)-1} \in \opwalk{\cV}{v} \sub \bV$ by \eqref{eq:D_in_initial_segment_v}, so we are done.
\end{proof}

We finally obtain:
\begin{corollary}
	\label{thm:dynamical-consequences}
	Let $X$ be a strongly locally homogeneous Peano continuum which is not the circle, the sphere, or the real projective plane.
	Then $\Homeo(X)$ does not have the generic point property.
	In particular, its universal minimal flow is not metrizable and has no comeager orbit.

	The same is true for any closed $G \le \Homeo(X)$ which acts locally transitively on $X$.
\end{corollary}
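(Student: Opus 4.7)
The plan is to deduce the statement by combining two ingredients already established in the paper: the absence of a generic chain on $X$, provided by \Cref{cor:homo-chains}, and the minimality of the action on the space of chains, provided by \Cref{thm:local-transitive}. First I would note that strong local homogeneity of $X$ means that $\Homeo(X) \acts X$ is locally transitive, so by \Cref{fact:n-local-transitivity} it is in particular transitive, and $X$ is homogeneous. Since $X$ is not the circle, the sphere, or the real projective plane, \Cref{cor:homo-chains} applies and yields that no $\Homeo(X)$-orbit in $\chains(X)$ is comeager. Because $G \le \Homeo(X)$, every $G$-orbit is contained in a $\Homeo(X)$-orbit, so no $G$-orbit in $\chains(X)$ is comeager either.

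Next I would apply \Cref{thm:local-transitive} to $G$. The hypotheses match: $G$ acts locally transitively on $X$ by assumption, and $X$ has no locally separating points — this holds for every homogeneous Peano continuum other than the circle, as is implicit in the discussion surrounding \Cref{cor:homo-chains} (closed surfaces have no locally separating points, and homogeneous locally non-planar Peano continua have none either). The theorem then ensures that $G \acts \chains(X)$ is minimal. Combined with the previous paragraph, $\chains(X)$ is a minimal $G$-flow without a comeager orbit, and hence $G$ does not have the generic point property.

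For the in particular clause, I would invoke the result of \cite{ben2017metrizable} that every Polish group with metrizable universal minimal flow has the generic point property. Since $G$ is a closed subgroup of the Polish group $\Homeo(X)$, it is itself Polish, so the contrapositive yields that its universal minimal flow cannot be metrizable. The unqualified statement for $\Homeo(X)$ is recovered as the instance $G = \Homeo(X)$, which acts locally transitively on $X$ by the strong local homogeneity assumption. The substantive technical work has already been carried out in the proofs of \Cref{thm:the-theorem-introduction} and \Cref{thm:local-transitive}; what remains is essentially bookkeeping, and there is no new obstacle beyond checking that the hypotheses of these two results align.
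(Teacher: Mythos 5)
Your proof is correct and follows essentially the same route as the paper: minimality of $G \acts \chains(X)$ from \Cref{thm:local-transitive}, no comeager $\Homeo(X)$-orbit (hence no comeager $G$-orbit) from \Cref{cor:homo-chains}, and the implication from \cite{ben2017metrizable} for the non-metrizability of the universal minimal flow. The only wrinkle is a slight circularity in your ordering — you invoke \Cref{fact:n-local-transitivity} (whose hypotheses include the absence of locally separating points) to get homogeneity before you have verified that $X$ has no locally separating points — but this is harmless, since strong local homogeneity makes all $\Homeo(X)$-orbits open and connectedness then gives homogeneity directly, exactly as the paper implicitly does.
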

\begin{proof}
	The action $G \acts \chains(X)$ is minimal, by \Cref{thm:local-transitive}.
	But $\Homeo(X) \acts \chains(X)$ has no comeager orbit by \Cref{cor:homo-chains}, so neither does $G \acts \chains(X)$.
\end{proof}

\begin{corollary} \label{cor:no_ccs}
	Let $X$ be a strongly locally homogeneous Peano continuum which is not the circle, the sphere, or the real projective plane.
	Then chains on $X$ are not classifiable by countable structures.
\end{corollary}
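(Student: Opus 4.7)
The plan is to invoke Hjorth's turbulence theorem (\Cref{fact:Hjorth}) applied to the continuous action $\Homeo(X) \acts \chains(X)$. Both $\Homeo(X)$ and $\chains(X)$ are Polish, so the hypotheses of \Cref{fact:Hjorth} will be satisfied once we verify that the action is turbulent, that is, (i) minimal, (ii) without a comeager orbit, and (iii) with all local orbits somewhere dense.

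First, I would observe that since $X$ is strongly locally homogeneous, the action $\Homeo(X) \acts X$ is by definition locally transitive. Since $X$ is a Peano continuum with no locally separating points (this follows from strong local homogeneity together with $X$ not being the circle, via the standard dichotomy for homogeneous Peano continua cited as \cite{MR2674033}*{Remark 8}), \Cref{thm:local-transitive} applies and immediately gives both (i) minimality of $\Homeo(X) \acts \chains(X)$ and (iii) the fact that every local orbit is somewhere dense.

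For (ii), I would quote \Cref{cor:homo-chains-later}: since $X$ is not the circle, sphere, or real projective plane, and $X$ is homogeneous (being strongly locally homogeneous), $X$ admits no generic chain, i.e., no orbit of $\Homeo(X) \acts \chains(X)$ is comeager. Combining (i), (ii), (iii), the action is turbulent, and \Cref{fact:Hjorth} yields that the orbit equivalence relation
\begin{equation*}
    \cC \equiv_{\Homeo(X)} \cC' \iff \cC' \in \Homeo(X) \cdot \cC
\end{equation*}
is not classifiable by countable structures, which is exactly the assertion that chains on $X$ are not classifiable by countable structures.

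There is no real obstacle here, as all three ingredients have already been established earlier in the paper; the proof is essentially a one-line assembly. The only mild subtlety is making sure that the hypothesis ``no locally separating points'' needed for \Cref{thm:local-transitive} is indeed granted: this follows since a strongly locally homogeneous Peano continuum which is not the circle cannot have a locally separating point, as a single locally separating point would contradict local transitivity on any sufficiently small neighborhood of it.
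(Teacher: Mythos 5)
Your proof follows the paper's route exactly: turbulence of $\Homeo(X) \acts \chains(X)$ is assembled from \Cref{thm:local-transitive} (minimality and somewhere-dense local orbits) together with \Cref{cor:homo-chains} (no comeager orbit), and Hjorth's theorem (\Cref{fact:Hjorth}) then yields non-classifiability. The assembly is correct and is precisely what the paper does.

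The one place where you add detail beyond the paper — justifying that $X$ has no locally separating points, which is indeed a hypothesis of \Cref{thm:local-transitive} — your argument does not work as stated. You claim that a locally separating point would ``contradict local transitivity on any sufficiently small neighborhood of it,'' but the circle is strongly locally homogeneous and every one of its points is locally separating, so local transitivity by itself forbids nothing; note that your argument never actually invokes the hypothesis that $X$ is not the circle. The fact you need is true, but for a deeper reason: a connected strongly locally homogeneous space is homogeneous (orbits are open, hence there is only one), and a nondegenerate homogeneous Peano continuum containing a local separating point is a simple closed curve — this is the classical result underlying \cite{MR2674033}*{Remark 8}. Since $X$ is not the circle, it therefore has no locally separating points. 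The paper leaves this step implicit as well, so this is a flaw only in the justification you supplied, not in the structure of the proof.
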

\begin{proof}
	This follows by combining \Cref{fact:Hjorth}, \Cref{thm:local-transitive}, and \Cref{cor:homo-chains}.
\end{proof}

\begin{question}
	What is the exact complexity of the orbit equivalence relation of $\Homeo(X) \acts \chains(X)$, for $X$ the torus or the Menger curve?
\end{question}

\begin{remark}
	\Cref{thm:dynamical-consequences} was proved in the case of closed manifolds of dimension at least 3, and the Hilbert cube, in \cite{GTZ}.
	Moreover, one can more easily show that $\Homeo(Y)$ and the subgroup $\Homeo_+(Y)$ of orientation preserving homeomorphism, for an orientable closed surface of positive genus $Y$, do not have the generic point property by remarking that such property is preserved under quotients, and that the mapping class group $\Homeo_+(Y)/\Homeo_0(Y)$ and extended mapping class group $\Homeo(Y)/\Homeo_0(Y)$ of such surfaces are countably infinite and discrete, and thus do not have the generic point property.
	This line of reasoning does not yield any information on the dynamics of $\Homeo_0(Y)$, unlike our results.
\end{remark}

\appendix

\section{Chains on \texorpdfstring{$1$}{1}-dimensional manifolds}
\label{sec:appendix-cicle-arc}

In this appendix we prove that, in contrast with the results established in Corollaries \ref{cor:homo-chains-later} and \ref{cor:no_ccs}, chains on $1$-dimensional manifolds are classifiable by countable structures and have a generic element.
We prove these results for $[0,1]$ and then reduce the case for $S^1$ to them.
The structure of the proof is inspired by that of \cite{MR2156712}*{Section 6}.

\subsection{Chains on \texorpdfstring{$[0,1]$}{[0,1]}}

We begin by giving a more concrete description of $\chains([0,1])$ which we will then use for the whole section.
Recall from \Cref{ss.chains}, that we denote by $C([0,1])$ the space of subcontinua of $[0,1]$.
The elements of $C([0,1])$ are closed intervals $[x,y]$ with $0\leq x\leq y\leq 1$ and the natural map $\varphi\colon C([0,1])\to\mathbb{R}^2$ given by $\varphi([x,y])=(x,y)$ is a homeomorphism onto its image, which is the closed triangle $T$ with vertices $(0,0)$, $(0,1)$ and $(1,1)$ \cite{illanes1999hyperspaces}*{Example 5.1}.
Elements of $C([0,1])$ are partially ordered by inclusion.
For pairs of points in $T$, this translates to the partial order $(x,y) \refex (x',y')$ iff $x\ge x'$ and $y \le y'$.

Recall that every chain in $\chains([0,1])$ is the image of an injective continuous order-preserving function $c\colon([0,1], \le) \to (C([0,1]), \sub)$ such that $c(0)$ is a singleton and $c(1)=[0,1]$.
Under the identification of $C([0,1])$ with $T$ described above, every chain $\cC\in\chains([0,1])$ gives rise to an arc $\varphi \cdot \cC \sub T$ containing the vertex $(0,1)$ and a point $(x_0,x_0)$, for some $x_0\in[0,1]$, on the diagonal, and such that for all $(x,y), (x', y') \in \varphi \cdot \cC$, either $(x, y) \refex (x', y')$ or $(x', y') \refex (x, y)$.
In the following we will identify $\chains([0,1])$ with the subspace of $C(T)$ of such arcs, dropping any mention of $\varphi$.

$\Homeo_+([0,1])$ acts on $T$ diagonally, that is, $h \cdot(x, y) = (h(x), h(y))$.
The induced action on $\chains([0,1]) \sub C(T)$ coincides with the natural action $\Homeo_+([0,1]) \acts \chains([0,1])$, discussed in \Cref{S.pc}.

\begin{definition}
	Given a chain $\cC\subseteq T$, a closed subset $\cI \subseteq\cC$ is called a \emph{subchain (of $\cC$)} if it is connected (in $T$).
	A non-trivial subchain $\cI$ is
	\begin{enumerate}
		\item \emph{horizontal} if for all $(x,y),(x',y')\in \cI$, we have $y=y'$,
		\item \emph{vertical} if for all $(x,y),(x',y')\in \cI$, we have $x=x'$,
		\item \emph{strictly monotone} in $\cC$ if for all $(x,y),(x',y')\in \cI$, we have $x = x'$ iff $y = y'$.
	\end{enumerate}
	A subchain $\cI \subseteq\cC$ is \emph{maximal horizontal} if it is horizontal and it is not properly contained in any horizontal subchain of $\cC$.
	Analogously, we define \emph{maximal vertical subchains} and \emph{maximal strictly monotone subchains}.
	By Zorn's Lemma, each horizontal/vertical/strictly monotone subchain, is contained in a maximal subchain of the same type.
	A subchain is \emph{maximal} if it is either maximal horizontal, maximal vertical, or maximal strictly monotone.

	Each subchain $\cI \subseteq\cC$ has a maximum and a minimum element with respect to $\refex$.
	If $(x_1, y_1), (x_2, y_2)$ are the maximum and minimum of $\cI$, we call the intervals $[x_1, x_2]$ and $[y_2, y_1]$ in $[0,1]$ the \emph{domain} and \emph{codomain} of $\cI$, respectively, and denote them by $\dom(\cI)$ and $\codom(\cI)$, see \Cref{fig:domain}.
	Notice that if $\cI$ is maximal, then at least one among the domain and codomain is non-trivial.
\end{definition}

\begin{figure}
	\centering{
		\includeinkscape[scale=1]{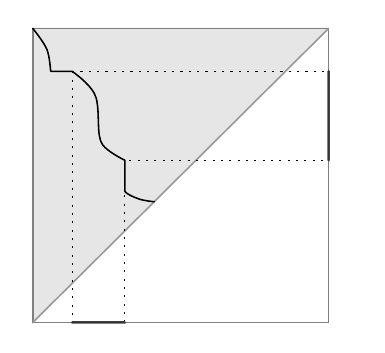}
	}
	\caption{Domain and codomain of a subchain $\cI$.} \label{fig:domain}
\end{figure}

Note that if $\cC\subseteq T$ is a chain,
and $\cI, \cI'$ are distinct maximal subchains, then their domains meet in at most one point, and the same is true for their codomains.
Since the domains of maximal horizontal and maximal strictly monotone subchains are non-trivial, it follows that there are only countably many such subchains.
By reasoning about codomains, we also see that there are countably many maximal vertical subchains, so that $\cC$ contains countably many maximal subchains.

We claim that if $(x_0, x_0)$ is the root of $\cC$, then the union of the domains of maximal subchains of $\cC$ is dense in $[0, x_0]$, while the union of their codomains is dense in $[x_0,1]$.
The projection of $\cC$ on the $x$-axis is $[0, x_0]$ and on the $y$-axis it is $[x_0, 1]$.
Suppose $x_0 \ne 0,1$, as otherwise the statement is trivial.
Let us show the result for the domains, the result for the codomains follows analogously.
Fix $0 \le x_1 \le x_2 \le x_0$, and let $(x_1, y_1), (x_2, y_2)$ be points in $\cC$.
If the subchain $\cI$ with maximum $(x_1, y_1)$ and minimum $(x_2, y_2)$ is not strictly monotone, there is a subchain included in it which is either vertical or horizontal.
In all three cases, $\cI$ intersects a maximal subchain whose domain contains a point between $x_1$ and $x_2$, and we are done.

Before proceeding with the proof of the classifiability result we are after, we need to check that a few sets that will arise in its proof are Borel, which we do in the following two lemmas.

\begin{lemma}\label{lemma:listing endpoints is borel}
	The following subsets of $\chains([0,1])\times C([0,1])$ (identified with a subspace of $C(T)\times T$ as above) are Borel:
	\begin{align*}
		\begin{split}
			\cE^H  \coloneqq \set{(\cC,(x,y)) : & (x,y)\text{ is the maximum of a maximal} \\  &\text{horizontal subchain in }\cC},
		\end{split} \\
		\begin{split}
			\cE^V \coloneqq \set{(\cC,(x,y)) : & (x,y)\text{ is the maximum of a maximal} \\  &\text{vertical subchain in }\cC},
		\end{split}        \\
		\begin{split}
			\cE^S  \coloneqq \set{(\cC,(x,y)) : & (x,y)\text{ is the maximum of a maximal} \\ &\text{strictly monotone subchain in }\cC}.
		\end{split}
	\end{align*}
\end{lemma}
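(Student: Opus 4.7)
My strategy is to write each of the three conditions as a Boolean combination --- with countable quantifications over rationals --- of primitive conditions of the form ``$\cC$ contains (or omits) a specific horizontal or vertical closed segment''. The basic tool is that, if a closed set $F \sub T$ varies continuously with a parameter, then the set of pairs $(\cC, \text{parameter})$ with $F \sub \cC$ is closed, by a direct Hausdorff-convergence argument.

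For $\cE^H$: the $\refex$-maximum of a horizontal subchain is its leftmost endpoint. Hence $(x,y)$ lies in the $\cE^H$-fibre over $\cC$ iff $(x,y) \in \cC$ (closed), the horizontal segment $[x,b] \times \{y\}$ is contained in $\cC$ for some $b > x$ ($F_\sigma$, after quantifying $b = x + 1/n$), and the horizontal segment $[a,x] \times \{y\}$ is not contained in $\cC$ for any $a < x$ ($G_\delta$, after quantifying $a = x - 1/n$). The set $\cE^V$ is entirely symmetric, with vertical segments and the max being the top endpoint.

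The set $\cE^S$ is more delicate. The chain structure gives two useful observations: (i) the subarc of $\cC$ between two $\refex$-comparable points $(a,b) \refex (x,y)$ coincides with $\cC \cap ([x,a] \times [b,y])$; and (ii) any two distinct points of $\cC$ sharing an $x$-coordinate (resp.\ $y$-coordinate) lie on a common vertical (resp.\ horizontal) segment entirely contained in $\cC$. Combining these, a subarc $\cC \cap R$ is strictly monotone iff it contains no horizontal or vertical segment of positive length. I will then characterise $(x,y)$ being the max of a maximal strictly monotone subchain of $\cC$ via $(x,y) \in \cC$ together with: (I) there exist rationals $p > x$ and $q < y$ such that $\cC \cap ([x,p] \times [q,y])$ is strictly monotone and meets $(\{p\} \times [q,y]) \cup ([x,p] \times \{q\})$; and (II) for no rationals $p' < x$ and $q' > y$ is $\cC \cap ([p',x] \times [y,q'])$ strictly monotone and meeting $(\{p'\} \times [y,q']) \cup ([p',x] \times \{q'\})$. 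The equivalence with the original definition is routine: in $(\Rightarrow)$, the witnessing strictly monotone subarc from some $(a,b)$ to $(x,y)$ must cross any rational vertical line $\{p\}$ with $x < p < a$, yielding a point of $\cC$ on the right or bottom edge of $[x,p] \times [q,y]$; in $(\Leftarrow)$, strict monotonicity in $\cC \cap ([x,p] \times [q,y])$ together with observation (ii) prevent an edge-witness from coinciding with $(x,y)$, so the witness provides the sought subarc. Boundary cases (such as $(x,y)=(0,1)$, or $(x,y)$ on the diagonal) fall out automatically.

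The main technical verification, and the only real obstacle, is that for fixed rationals $p,q$ and integer $m \ge 1$, the set of pairs $(\cC, (x,y))$ for which $\cC$ contains a horizontal segment of length $\ge 1/m$ inside $[x,p] \times [q,y]$ is closed: given a convergent sequence of such triples, compactness extracts a limit segment of length $\ge 1/m$ contained in $\cC$ and in the limit rectangle (the positive length bound rules out degeneracies when $(x,y)$ approaches the rectangle's corner). Taking a union over $m$ makes ``$\cC$ has a horizontal segment inside $[x,p] \times [q,y]$'' $F_\sigma$ in $(\cC,(x,y))$; complementing (for both horizontal and vertical segments) and intersecting over rationals $p, q$ shows that strict monotonicity --- and hence conditions (I) and (II) --- are Borel, completing the proof.
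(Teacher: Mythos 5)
Your proposal is correct and follows essentially the same route as the paper: both arguments reduce each of the three conditions to a countable Boolean combination of closed sets detecting horizontal or vertical segments of definite length $\geq 1/m$ in small boxes anchored at $(x,y)$, using the fact that two points of a chain sharing a coordinate span a full segment of the chain. The only substantive difference is in $\cE^S$, where your ``strictly monotone trace reaching the far edge of a rational rectangle'' clauses build nontriviality of the witnessing subchain into the formula and treat the two endpoints of the chain (the root and $(0,1)$) more carefully than the paper's direct quantification over pairs of points in $1/n$-boxes.
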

\begin{proof}
	$(x,y)\in\cC$ is a closed condition.
	Then $(x,y)$ is the maximum of a horizontal subchain in $\cC$ if and only if $(x+1/n,y)\in\cC$ for some $n\in\N$, which is a $\mathbf{\Sigma}^0_2$ condition.
	Finally, it is the maximum of a \emph{maximal} horizontal subchain in $\cC$ if and only if moreover $(x-1/n,y)\not\in\cC$ for all $n\in\N$, which is a $\mathbf{\Pi}^0_2$ condition, showing that $\cE^H$ is Borel.
	The argument for $\cE^V$ is exactly the same, so it remains to show that $\cE^S$ is Borel.
	Indeed, $(x,y)$ is the maximum of a maximal strictly monotone subchain in $\cC$ if and only if the following conditions are all satisfied:
	\begin{enumerate}[label=(\alph*)]
		\item $(x,y)\in\cC$, which is a closed condition,
		\item there exists $n\in\N$ such that for all $m\in\N$ there are no $(x_1,y_1)$ and $(x_2,y_2)\in\cC$ such that for $i\in\{1,2\}$, $x\leq x_i\leq x+1/n$, $y\geq y_i\geq y-1/n$ and either $x_1=x_2$ and $y_1=y_2+1/m$ or $y_1=y_2$ and $x_1=x_2+1/m$, which is a $\mathbf{\Sigma}^0_3$ condition,
		\item for all $n\in\N$ there exists $m\in\N$ and there exist $(x_1,y_1)$ and $(x_2,y_2)\in\cC$ such that for $i\in\{1,2\}$, $x\geq x_i\geq x-1/n$, $y\leq y_i\leq y+1/n$ and either $x_1=x_2$ and $y_1=y_2+1/m$ or $y_1=y_2$ and $x_1=x_2+1/m$, which is a $\mathbf{\Pi}^0_3$ condition,
	\end{enumerate}
	showing that $\cE^S$ is Borel.

\end{proof}

\begin{lemma}\label{lemma:chains with infinitely many subchains are Borel}
	The subspace $\chains^\infty([0,1])\subseteq\chains([0,1])$ consisting of all chains containing infinitely many maximal subchains is Borel.
	The subspace $\chains^0([0,1])$ of chains with no horizontal or vertical subchains is $\mathbf{\Pi}^0_2$.
\end{lemma}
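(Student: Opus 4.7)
The plan is to treat the two statements separately: the first via Lusin--Novikov uniformization applied to the Borel set from \Cref{lemma:listing endpoints is borel}, and the second by a direct closedness argument.

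For the first claim, I would first recall the observation made in the paragraph preceding \Cref{lemma:listing endpoints is borel}: every chain $\cC$ contains at most countably many maximal subchains, since the maximal horizontal and strictly monotone ones have pairwise disjoint non-trivial domains in $[0,x_0]$, while the maximal vertical ones have pairwise disjoint non-trivial codomains in $[x_0,1]$. Consequently, the Borel set $\cE \coloneqq \cE^H \cup \cE^V \cup \cE^S$ produced by \Cref{lemma:listing endpoints is borel} has countable vertical sections over $\chains([0,1])$. By Lusin--Novikov uniformization, $\cE$ can be written as the disjoint union of the graphs of countably many Borel partial functions $f_n \colon \chains([0,1]) \to C([0,1])$, whence
\[
\chains^\infty([0,1]) = \bigcap_{k \in \N}\ \bigcup_{\substack{I \subseteq \N \\ |I| = k}}\ \bigcap_{i \in I} \dom(f_i)
\]
is Borel.

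For the second claim, for each $n \in \N$ I would set
\[
A_n \coloneqq \set{\cC \in \chains([0,1]) : \exists\,(x_1,y),(x_2,y) \in \cC \text{ with } x_2 - x_1 \ge 1/n},
\]
and let $B_n$ be its analogue obtained by exchanging the two coordinates. Both families consist of \emph{closed} sets: if $\cC_k \to \cC$ in Hausdorff topology with witnesses $(x_1^k, y^k), (x_2^k, y^k) \in \cC_k$, then a convergent subsequence in the compact space $T \times T$ yields limit points in $\cC$ that still share the second coordinate and satisfy $x_2 - x_1 \ge 1/n$. Since a chain contains a non-trivial horizontal (resp.\ vertical) subchain if and only if it contains two points sharing a coordinate and lying at positive distance, we obtain
\[
\chains^0([0,1]) = \bigcap_{n \in \N} \left( \chains([0,1]) \setminus (A_n \cup B_n) \right),
\]
a countable intersection of open sets, hence $\mathbf{\Pi}^0_2$.

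I do not anticipate a genuine obstacle; the only slightly delicate point is ensuring countability of the section $\cE_\cC$ in the strictly monotone case in order for Lusin--Novikov to apply, but this is handled by the same disjoint-domains argument already noted for the horizontal case.
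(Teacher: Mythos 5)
Your proof is correct. For the second claim ($\chains^0([0,1])$ is $\mathbf{\Pi}^0_2$) your argument is essentially the paper's: the paper also writes the set of chains with a horizontal (resp.\ vertical) subchain as a countable union of closed sets indexed by a quantitative lower bound $1/m$ on the length of the subchain, and takes the complement. For the first claim your route is genuinely different. The paper avoids Lusin--Novikov at this stage: it observes that a chain has finitely many maximal subchains iff it has finitely many horizontal \emph{and} finitely many vertical ones (between two distinct maximal strictly monotone subchains there must sit a horizontal or vertical one), and then shows directly that ``at least $n$ horizontal subchains'' is $\mathbf{\Sigma}^0_2$ by requiring the $n$ witnesses to have length $\ge 1/m$ and pairwise distance $\ge 1/m$; this yields the explicit bound that $\chains^\infty([0,1])$ is $\mathbf{\Pi}^0_3$. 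You instead apply Lusin--Novikov to the Borel set $\cE^H\cup\cE^V\cup\cE^S$ of the preceding lemma and express membership in $\chains^\infty([0,1])$ as lying in infinitely many of the resulting Borel domains. This is shorter and correct — the sections are countable by the disjoint-domains/codomains argument, distinct maximal subchains have distinct maxima (or at worst each point is the maximum of boundedly many), so infinitude of the section is equivalent to infinitude of the family of maximal subchains — but it uses a heavier tool and does not produce an explicit level in the Borel hierarchy. Since the lemma only asserts Borelness for the first part, either argument suffices; the paper's version has the minor advantage of being self-contained and quantitative, while yours reuses the machinery that the paper in any case deploys in the subsequent classification theorem.
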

\begin{proof}
	Note that a chain contains finitely many maximal subchains if and only if it contains finitely many horizontal subchains and finitely many vertical subchains,
	since between any two distinct maximal strictly monotone subchains there must be a maximal horizontal or vertical subchain.
	Therefore, to prove that $\chains^\infty([0,1])$ is Borel, it suffices to show that, for each $n\in\N$, the sets $\chains^{\geq n}_H([0,1])$ and $\chains^{\geq n}_V([0,1])$ consisting respectively of chains containing at least $n$ horizontal subchains and chains containing at least $n$ vertical subchains are Borel.
	Note that, for all $n,m\in\N$ the set $\chains^{\geq n,m}_H([0,1])$ consisting of all chains that contain at least $n$ horizontal subchains, which are each of length at least $1/m$ and are pairwise at distance at least $1/m$ is closed, and that every element of $\chains^{\geq n}_H([0,1])$ is contained in $\chains^{\geq n,m}_H([0,1])$ for some $n,m\in\N$.
	This shows that
	\begin{equation*}
		\chains^{\geq n}_H([0,1])=\bigcup_{m\in\N}\chains^{\geq n,m}_H([0,1])
	\end{equation*}
	is $\mathbf{\Sigma}^0_2$.
	By an analogous argument $\chains^{\geq n}_V([0,1])$ is also $\mathbf{\Sigma}^0_2$, showing that $\chains^\infty([0,1])$ is $\mathbf{\Pi}^0_3$.

	Moreover, $\chains^0([0,1])$ is the complement of the union of $\chains^{\geq 1}_H([0,1])$ and $\chains^{\geq 1}_H([0,1])$, and thus is $\mathbf{\Pi}^0_2$.
\end{proof}

We are now able to show that there is a generic chain on the interval.

\begin{proposition}
	\label{prop:arc-has-generic-chains}
	The chain $\cC_0 \coloneqq \set{[1/2-\alpha, 1/2+\alpha] : 0\le \alpha \le 1/2}$ is a generic chain on $[0,1]$.
\end{proposition}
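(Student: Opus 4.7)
The plan is to identify a comeager subset of $\chains([0,1])$ which forms a single $\Homeo([0,1])$-orbit containing $\cC_0$, and to argue directly that this subset is precisely $\chains^0([0,1])$. Under the identification with $C(T)$, the chain $\cC_0$ is the line segment from $(1/2,1/2)$ to $(0,1)$, which is strictly decreasing in $x$ and strictly increasing in $y$; so $\cC_0 \in \chains^0([0,1])$.

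First I would set up a correspondence: any $\cC \in \chains^0([0,1])$ with root $(x_0,x_0)$ is the graph of a unique strictly decreasing continuous bijection $f_\cC \colon [0, x_0] \to [x_0, 1]$ with $f_\cC(x_0) = x_0$ and $f_\cC(0) = 1$. Gluing $f_\cC$ with its inverse yields a strictly decreasing continuous involution $\tilde f_\cC \colon [0,1] \to [0,1]$ fixing $x_0$; this operation is a bijection between $\chains^0([0,1])$ and the set of such involutions. Translating the action, $h \in \Homeo_+([0,1])$ sends the chain associated to $\tilde f$ to the one associated to $h \circ \tilde f \circ h^{-1}$. The involution corresponding to $\cC_0$ is $s(x) \coloneqq 1-x$.

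Next, I would show that any two strictly decreasing continuous involutions $\tilde f, \tilde g$ of $[0,1]$, with fixed points $x_0, x_0'$, are conjugate in $\Homeo_+([0,1])$. Pick any increasing homeomorphism $h_0 \colon [0, x_0'] \to [0, x_0]$ and extend it to $h \colon [0,1] \to [0,1]$ by declaring
\begin{equation*}
    h(x) \coloneqq \tilde f(h_0(\tilde g(x))) \text{ for } x \in [x_0', 1].
\end{equation*}
Continuity at $x_0'$ is immediate from $\tilde f(x_0) = x_0$; strict monotonicity follows because $\tilde f \circ h_0 \circ \tilde g$ is a composition (decreasing)$\circ$(increasing)$\circ$(decreasing) on $[x_0', 1]$; finally $h \circ \tilde g = \tilde f \circ h$ is verified separately on the two halves using $\tilde f^2 = \mathrm{id} = \tilde g^2$. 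Applying this with $\tilde f = s$ shows $\chains^0([0,1]) \subseteq \Homeo_+([0,1])\cdot\cC_0 \subseteq \Homeo([0,1])\cdot\cC_0$.

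Finally, to show that $\chains^0([0,1])$ is comeager, recall from \Cref{lemma:chains with infinitely many subchains are Borel} that $\chains^0([0,1])$ is $\bPi^0_2$; only density remains. A basic open set $\bV \subseteq \chains([0,1])$ imposes finitely many conditions $\cC' \cap O(\cB_i) \neq \emptyset$, each cutting out a nonempty open set $R_i \subseteq T$. Since $\bV$ is nonempty, the $R_i$'s can be witnessed by points ordered by $\refex$; by openness one can perturb the witnesses to points $(x_i', y_i') \in R_i$ which are \emph{strictly} linearly ordered in both coordinates, and then connect them by straight segments and extend to $(0,1)$ and to a root $(x^*, x^*)$ on the diagonal to produce a strictly decreasing continuous curve in $T$. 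This curve is a chain in $\chains^0([0,1]) \cap \bV$, so $\chains^0([0,1])$ is dense. Combining the three steps, $\Homeo([0,1]) \cdot \cC_0 \supseteq \chains^0([0,1])$ is comeager, and therefore $\cC_0$ is generic. The main technical point is the conjugacy construction in the second paragraph; the density argument is routine once the correspondence in the first paragraph is in place.
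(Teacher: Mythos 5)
Your proof is correct and follows essentially the same strategy as the paper's: identify $\chains^0([0,1])$ as (part of) the orbit of $\cC_0$ by explicitly building a conjugating homeomorphism out of increasing pieces glued at the root, check density by perturbing witnesses to a strictly decreasing curve, and invoke the $\bPi^0_2$ complexity of $\chains^0([0,1])$ from \Cref{lemma:chains with infinitely many subchains are Borel}. Your packaging via strictly decreasing involutions is a mild reformulation of the paper's construction (which normalizes the root to $1/2$ and then adjusts only the $y$-coordinate), not a substantively different argument.
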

\begin{proof}
	We prove that $\Homeo([0,1]) \cdot \cC_0 = \chains^0([0,1])$ and that $\chains^0([0,1])$ is dense in $\chains([0,1])$, and conclude by \Cref{lemma:chains with infinitely many subchains are Borel}.

	The chain $\cC_0$ is the graph of the function $f_0(x) \coloneqq 1-x$ from $[0,1/2]\to[1/2,1]$.
	Clearly $\chains^0([0,1])$ is invariant under the action of $\Homeo([0,1])$, so fix $\cC \in \chains^0([0,1])$ with the objective of finding $h \in \Homeo([0,1])$ such that $h \cdot \cC = \cC_0$.
	Up to precomposing with a $g \in \Homeo_+([0,1])$ which maps the minimum of $\cC$ to $(1/2, 1/2)$, we can suppose that also $\cC$ has minimum $(1/2, 1/2)$.
	Then $\cC$ is the graph of a continuous strictly decreasing function $f \colon [0,1/2] \to [1/2, 1]$ such that $f(0)=1, f(1/2)=1/2$.
	Let $h \in \Homeo([0,1])$ be the identity on $[0,1/2]$ and $h(y) \coloneqq 1 - f^{-1}(y)$ for $1/2 \le y \le 1$.
	Then
	\begin{equation*}
		hf(x) = 1- x = f_0(x),
	\end{equation*}
	that is, $h \cdot \cC = \cC_0$.

	To check for density, notice that given any finite sequence $(x_0,y_0) \refex \cdots \refex (x_{n-1}, y_{n-1})$ of points in the interior of $T$, and any $\varepsilon >0$, there is a continuous strictly decreasing function $f \colon [0,1] \to [0,1]$ such that $f(0) =1$ and $\Card{f(x_i) - y_i} < \varepsilon$, for all $i<n$, and that the intersection of the graph of $f$ with $T$ is a chain in $\chains^0([0,1])$.
	We conclude by noticing that the above condition determines a basic open set in $\chains([0,1])$.
\end{proof}

\begin{theorem}\label{theorem:classifiability}
	Chains on the interval $[0,1]$ are classifiable by countable structures.
\end{theorem}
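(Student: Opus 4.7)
The plan is to define, for each chain $\cC$, a countable relational structure $\cM(\cC)$ whose isomorphism type captures the $\Homeo([0,1])$-orbit of $\cC$. Take the countable language $\cL$ with two constants $c_\perp, c_\top$, a unary predicate $P_{\mathrm{valid}}$, a binary relation $\le$, a unary predicate $P_S$, and a binary relation $\sim$. The universe of $\cM(\cC)$ is $\N$, partitioned into a finite or countable set of ``valid'' elements (interpreting $c_\perp, c_\top$ and, via a Luzin--Novikov uniformization of the Borel set $\cE^H \cup \cE^V \cup \cE^S$ from \Cref{lemma:listing endpoints is borel}, the countably many maximal subchains of $\cC$) and the remaining ``filler'' elements; $P_{\mathrm{valid}}$ picks out the former. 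We interpret $\le$ as the linear order on the valid elements with $\perp$ (resp.\ $\top$) as minimum (resp.\ maximum) and maximal subchains ordered by their position in $\cC$ from root to top; $P_S(\cI)$ holds iff $\cI$ is strictly monotone; and $\cI \sim \cJ$ iff $\cI$ and $\cJ$ are both horizontal or both vertical. The relation $\sim$ encodes the horizontal--vertical distinction only up to swap, absorbing the $H \leftrightarrow V$ symmetry induced by orientation-reversing homeomorphisms.

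Borel measurability of $\cC \mapsto \cM(\cC)$ then follows from \Cref{lemma:listing endpoints is borel} together with the Luzin--Novikov uniformization. For completeness of the invariant: if $\cC' = h \cdot \cC$ for $h \in \Homeo_+([0,1])$, then $h$ induces a bijection on maximal subchains preserving order and all labels, hence an isomorphism $\cM(\cC) \to \cM(\cC')$. If $h$ is orientation-reversing, the induced action on the triangle $T$ still fixes both the root $(x_0, x_0)$ and the top $(0,1)$, preserves $\le$ and $P_S$, but swaps horizontal with vertical subchains, which is exactly what $\sim$ permits.

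The delicate converse is to construct, from an isomorphism $\Phi \colon \cM(\cC) \to \cM(\cC')$, a homeomorphism $h \in \Homeo([0,1])$ with $h \cdot \cC = \cC'$. The key structural fact is that the domains of the horizontal and strictly monotone maximal subchains of $\cC$ tile a dense subset of $[0, x_0]$ (where $(x_0, x_0)$ is the root of $\cC$) by closed intervals with pairwise disjoint interiors, and analogously the codomains of the vertical and strictly monotone subchains tile a dense subset of $[x_0, 1]$; the same is true for $\cC'$. Depending on whether $\Phi$ preserves or exchanges the two $\sim$-classes, we aim for an orientation-preserving or orientation-reversing $h$, and map each tiling interval of $\cC$ onto the corresponding tiling interval of $\cC'$ (with domains and codomains swapped in the orientation-reversing case), extending to the accumulation points by the unique order-preserving continuous completion.

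The main obstacle is the rigidity imposed by strictly monotone subchains: each such $\cI$ is the graph of a strictly decreasing homeomorphism $\dom(\cI) \to \codom(\cI)$, and the requirement $h \cdot \cI = \Phi(\cI)$ forces $h$ on $\codom(\cI)$ to be the conjugate of $h$ on $\dom(\cI)$ by the defining homeomorphisms of $\cI$ and $\Phi(\cI)$. This couples the construction on $[0, x_0]$ and on $[x_0, 1]$, but can be resolved by a back-and-forth argument: at each stage freely choose $h$ on either the domain of a horizontal subchain or on the codomain of a vertical subchain, and propagate the forced values along adjacent strictly monotone pieces. Since $\Phi$ preserves the order-type of the two tilings, the limit-point behavior matches on both sides, so the piecewise-defined map extends continuously to a single $h \in \Homeo([0,1])$, yielding the classification.
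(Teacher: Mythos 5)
Your overall strategy matches the paper's: encode the countably many maximal subchains, their types, and their $\refex$-order into a countable structure via Lusin--Novikov, and reconstruct a homeomorphism from a structure isomorphism. (Two packaging differences: the paper reduces to the clopen subgroup $\Homeo_+([0,1])$ and invokes Hjorth's lemma on clopen subgroups rather than absorbing orientation reversal into the symmetric relation $\sim$, and it treats chains with finitely many maximal subchains as a separate, easier case rather than using filler elements. Both of your variants look workable.)

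There is, however, a genuine gap in the reconstruction: your ``key structural fact'' is false. The union of the domains of the horizontal and strictly monotone maximal subchains need not be dense in $[0,x_0]$. Consider a chain which, over some interval $[a,b]\subseteq[0,x_0]$ of first coordinates, is the completed graph of a strictly decreasing pure-jump function with jumps at a dense set of abscissas $\set{x_n}\subseteq(a,b)$: every maximal subchain meeting this region is vertical, with domain the singleton $\set{x_n}$, and there is no horizontal or strictly monotone maximal subchain whose domain meets $(a,b)$ at all. Consequently your tiling intervals omit $(a,b)$ entirely, and ``the unique order-preserving continuous completion'' does not exist there: $h$ on $(a,b)$ is heavily constrained (it must send each $x_n$ to the abscissa of the corresponding vertical subchain of $\cC'$), but your construction never imposes this. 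What is true, and what the paper proves and uses, is that the union of the domains of \emph{all} maximal subchains --- including the singleton domains of the vertical ones --- is dense in $[0,x_0]$, and dually for codomains; the partial map must therefore also be pinned down on those singleton domains (and on the singleton codomains of horizontal subchains), after which density and order-preservation do yield a unique continuous extension. Separately, your back-and-forth is unnecessary: distinct maximal subchains share at most endpoints of their domains (and at most endpoints of their codomains), so one may freely choose $h$ on the domain of every maximal subchain and then define it on each codomain (by conjugation in the strictly monotone case, arbitrarily in the vertical case) with no conflicts to propagate.
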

\begin{proof}
	Since $\Homeo_+([0,1])$ is a clopen subgroup of $\Homeo([0,1])$, it is enough to prove that $\Homeo_+([0,1])\curvearrowright\chains([0,1])$ is classifiable by countable structures, and apply \cite{hjorth2000classification}*{Lemma 4.7}.

	We concentrate on $\chains^\infty([0,1])$; the case for chains with finitely many maximal subchains can be shown to be classifiable by countable structures with an analogous, but easier, argument, which we leave to the reader.
	Since $\chains^\infty([0,1])$ is Borel by Lemma \ref{lemma:chains with infinitely many subchains are Borel}, there is no harm in treating the two cases separately.
	Let $\cE^\infty \coloneqq(\chains^\infty([0,1])\times T)\cap(\cE^H\cup\cE^V\cup\cE^S)$, which is Borel by Lemma \ref{lemma:listing endpoints is borel}.
	Since, for every $\cC\in\chains^\infty([0,1])$, the section $\set{(x,y)\in T : (\cC,(x,y))\in\cE^\infty }$ is countably infinite, it follows from the Lusin-Novikov uniformization theorem \cite{KecBook}*{Exercise 18.12} that there exists a sequence $e_n\colon\chains^\infty([0,1])\to T$ of Borel functions such that for every $\cC\in\chains^\infty([0,1])$, $\{e_n(\cC) : n\in\N\}$ is an injective enumeration of $\{(x,y)\in T : (\cC,(x,y))\in\cE^\infty\}$.
	In addition to the quantity and type of maximal subchains occurring in $\cC$ we only need to code their relative position in a countable structure in order to determine $\cC$ up to homeomorphism.
	Let $\cL \coloneqq \{P_H,P_V,P_S,<\}$ be the language containing the three unary predicates $P_H,P_V$ and $P_S$ together with a binary relation $<$.
	Given a chain $\cC$, consider the structure $\cM(\cC)$ on $\N$ defined as follows:
	\begin{enumerate}
		\item $P_H^{\cM(\cC)}(n)$ holds if and only if $(\cC,e_n(\cC))\in\cE^H$,
		\item $P_V^{\cM(\cC)}(n)$ holds if and only if $(\cC,e_n(\cC))\in\cE^V$,
		\item $P_S^{\cM(\cC)}(n)$ holds if and only if $(\cC,e_n(\cC))\in\cE^S$,
		\item $n <^{\cM(\cC)} m$ holds if and only if $e_n(\cC) \refex e_m(\cC)$.
	\end{enumerate}
	Since the $\cE^H$, $\cE^V$, $\cE^S$, all of the $e_n$'s, and the partial order $\refex$ (as a subset of $T^2$) are Borel, the assignment $\cC\mapsto\cM(\cC)$ is Borel.

	It remains to verify that two chains $\cC,\cC'\in\chains^\infty([0,1])$ are in the same $\Homeo_+([0,1])$-orbit if and only if $\cM(\cC)\cong\cM(\cC')$.

	Suppose first that $\cC'=h\cdot\cC$ for some $h\in\Homeo_+([0,1])$.
	Then the function $\psi\colon\cM(\cC)\to\cM(\cC')$, defined by letting $\psi(n)$ be the unique $m$ such that $e_m(\cC')=h(e_n(\cC))$, is the desired isomorphism.

	Conversely, suppose that $\cC$ and $\cC'$ are such that the associated structures are isomorphic and fix an isomorphism $\psi\colon\cM(\cC)\to\cM(\cC')$. Let $x_0$ and $x_0'$ be the roots of $\cC$ and $\cC'$ respectively. We enumerate the maximal subchains of $\cC$ as $\{\cI_n\}_{n\in\N}$, where $\cI_n$ is such that its maximum is $e_n(\cC)$. Analogously we enumerate the maximal subchains of $\cC'$ as $\{\cI'_n\}_{n\in\N}$ so that $e_n(\cC')$ is the maximum of $\cI'_n$.
	For all $n$ we define a pair of order-preserving homeomorphisms $h_n^x\colon \dom(\cI_n) \to \dom(\cI'_{\psi(n)})$ and $h_n^y\colon\codom(\cI_n) \to \codom(\cI'_{\psi(n)})$.
	We will then verify that $h^x=\bigcup_n h_n^x$ is a densely defined, order-preserving partial homeomorphism $[0,x_0]\to[0,x_0']$ and that $h^y$ defined analogously has the same properties as a function $[x_0,1]\to[x_0',1]$ so that $h^x\cup h^y$ can be extended to the desired homeomorphism of $[0,1]$.

	Given $n\in\cM(\cC)$, suppose first that $P^{\cM(\cC)}_H(n)$ holds, so that $\cI_n$ is a maximum horizontal subchain of $\cC$, with maximum $e_n(\cC)=(x_1,y_1)$ and minimum $(x_2,y_1)$.
	Similarly, $\cI_{\psi(n)}'$ is a maximal horizontal subchain of $\cC$ with maximum $e_{\psi(n)}(\cC')=(x_1',y_1')$ and minimum $(x_2',y_1')$.
	Notice that if $y_1 \in [x_1,x_2]$, then $y_1=x_2$, since $(x_2,y_1) \in T$.
	But, in this case, $n$ is the minimum element of $<^{\cM(\cC)}$, and thus also $\psi(n)$ is the minimum element of $<^{\cM(\cC')}$, so $y_1'=x_2'$.
	In any case, define $h_n^x\colon \dom(\cI_n) \to \dom(\cI'_{\psi(n)})$ to be any order preserving homeomorphism and define $h_n^y(y_1) \coloneqq y_1'$.
	Note that
	\begin{equation}
		\label{eq:h-horizontal}
		\cI'_{\psi(n)} = \set{(h_n^x(x), h_n^y(y)) : (x,y) \in \cI_n}.
	\end{equation}

	\smallskip

	The case in which $P_V^{\cM(\cC)}(n)$ holds is analogous, so it remains to deal with the case in which $P_S^{\cM(\cC)}(n)$ holds.
	By definition this means that $\cI_n$ is a maximal strictly monotone subchain in $\cC$, with maximum $e_n(\cC)=(x_1,y_1)$ and minimum $(x_2,y_2)$.
	Similarly, $\cI'_{\psi(n)}$ is a maximal strictly monotone subchain of $\cC'$ with maximum $e_{\psi(n)}(\cC')=(x'_1,y'_1)$ and minimum $(x'_2,y'_2)$.
	Now $\cI_n$ is the graph of a continuous, strictly decreasing function $f_1\colon\dom(\cI_n) \to \codom(\cI_n)$, and similarly we obtain from $\cI'_{\psi(n)}$ a continuous strictly decreasing function $f_2\colon\dom(\cI'_{\psi(n)}) \to \codom(\cI'_{\psi(n)})$.
	If $\dom(\cI_n) \cap \codom(\cI_n) \ne \emptyset$, then they meet only in $x_2 = y_2$, since $(x_2, y_2) \in T$.
	As above, in this case, $n$ is the minimum element of $<^{\cM(\cC)}$, and thus also $\psi(n)$ is the minimum element of $<^{\cM(\cC')}$, so $x_2'=y_2'$.

	In any case, let $h_n^x\colon \dom(\cI_n) \to \dom(\cI'_{\psi(n)})$ be any orientation preserving homeomorphism, and $h_n^y\colon \codom(\cI_n) \to \codom(\cI'_{\psi(n)})$ be $h_n^y \coloneqq f_2h_n^xf_1^{-1}$.
	Notice that also in this case:
	\begin{align}
		\label{eq:h-monotone}
		\cI'_{\psi(n)} & = \set{(x', f_2(x')) : x' \in \dom(\cI'_{\psi(n)})} \\ \nonumber &= \set{(h_n^x(x), h_n^y(f_1(x))) : x \in \dom(\cI_n)} \\  \nonumber &= \set{(h_n^x(x), h_n^y(y)) : (x,y) \in \cI_n}.
	\end{align}

	\smallskip

	Finally, let $h^x=\bigcup_{n\in\N}h_n^x$ and $h^y=\bigcup_{n\in\N}h_n^y$. Note that if $\cI_n$ and $\cI_m$ are two maximal subchains of $\cC$ whose domains $[x_1^n,x_2^n]$ and $[x_1^m,x_2^m]$ have nonempty intersection, then they meet in exactly one point, and we must have either $x_2^n=x_1^m$ or $x_1^n=x_2^m$. Suppose without loss of generality that the former holds, which implies that $m$ is the $<^{\cM(\cC)}$-immediate successor of $n$. We must have that $h_n^x(x_2^n)$ is the minimum of the domain of $\cI'_{\psi(n)}$, while $h_m^x(x_1^m)$ must be the maximum of the domain of $\cI'_{\psi(m)}$, but since $\psi$ is an isomorphism, $\psi(m)$ is the $<^{\cM(\cC')}$-immediate successor of $\psi(n)$, showing that
	\begin{equation*}
		h_n^x(x_2^n)=h_m^x(x_1^m)=h_m^x(x_2^n),
	\end{equation*}
	that is, $h^x$ is well-defined. Since $h^x$ is the union of a family of injective functions that cohere on their common domains, $h^x$ is injective as well. With a similar argument $h^y$ is shown to be well-defined and injective too.

	\begin{claim}
		$h^x$ is a densely defined, order-preserving, partial homeomorphism $[0,x_0]\to[0,x_0']$ with dense image. Similarly, $h^y$ is a densely defined, order-preserving, partial homeomorphism $[x_0,1]\to[x_0',1]$ with dense image.
	\end{claim}
	\begin{proof}[Proof of the Claim.] We only verify the claim for $h^x$, since the case of $h^y$ is analogous. The domain of $h^x$ is dense in $[0,x_0]$ because $h^x$ is defined on the union of the domains of all maximal subchains of $\cC$, while its image is dense in $[0,x_0']$, because $h^x$ has as image the union of the domains of the maximal subchains of $\cC'$.

		In order to show that $h^x$ is order-preserving fix $z<z'$ in its domain and let $n,m\in\N$ be such that $z$ belongs to the domain of $\cI_n$, while $z'$ belongs to the domain of $\cI_m$.
		If $n=m$, $h^x(z) = h^x_n(z) < h^x_n(z') =h^x(z')$, since $h^x_n$ is order preserving.
		Otherwise, by definition of $<^{\cM(\cC)}$, we have $m<^{\cM(\cC)}n$, so that $\psi(m)<^{\cM(\cC')}\psi(n)$. This implies that $h^x(z)\leq h^x(z')$, since the former is in the domain of $\cI'_{\psi(n)}$ while the latter is in the domain of $\cI'_{\psi(m)}$.
		Since $h^x$ is injective, we have $h^x(z)<h^x(z')$, as desired.

		Since each $h_n^x$ is a homeomorphism from its domain onto its image,
		$h^x$ is a homeomorphism, e.g., by the pasting lemma \cite{MR3728284}*{Exercise 2.9(c)}.
	\end{proof}

	By the previous claim $h^x\cup h^y$ extends to an order-preserving homeomorphism $h\colon [0,1]\to[0,1]$, and it only remains to verify that $h\cdot\cC=\cC'$.
	But $h \cdot \cI_n = \cI'_{\psi(n)}$ by \eqref{eq:h-horizontal} and \eqref{eq:h-monotone}, and $\bigcup_n\cI_n, \bigcup_n\cI'_{\psi(n)}$ are dense in $\cC, \cC'$, respectively, so we conclude by continuity of $h$.
\end{proof}

\subsection{Chains on \texorpdfstring{$ S^1 $}{S1}}

In this section we reduce to the results in the previous section to prove analogous statements for the circle.

\begin{theorem} \label{thm:genericS1}
	The orbit equivalence relation of $\Homeo(S^1)\curvearrowright\chains(S^1)$ is continuously reducible to that of $\Homeo([0,1])\curvearrowright\chains([0,1])$.
	Therefore, chains on the circle $S^1$ are classifiable by countable structures.
	Moreover, there is a generic chain in $S^1$.
\end{theorem}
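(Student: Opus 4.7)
The plan is to construct an explicit continuous reduction $F \colon \chains(S^1) \to \chains([0, 1])$, deduce classifiability from \Cref{theorem:classifiability}, and then identify a generic chain. Identify $S^1 = \bbR/\bbZ$ and let $\phi \colon \bbR \to S^1$ be the covering projection. Every proper subcontinuum of $S^1$ containing $0$ is of the form $\phi([a, b])$ for a unique pair $(a, b)$ with $-1 \le a \le 0 \le b \le 1$ and $b - a < 1$, while $S^1$ itself corresponds to all pairs with $b - a = 1$. Writing
\begin{equation*}
	T_* \coloneqq \{(a, b) \in \bbR^2 : a \le 0 \le b,\ b - a \le 1\},
\end{equation*}
partially ordered by $(a, b) \refex (a', b') \Iff a \ge a'$ and $b \le b'$, a chain $\cC \in \chains(S^1)$ with root $0$ is naturally identified with a monotone arc in $T_*$ starting at $(0, 0)$ and ending on the hypotenuse $\{b - a = 1\}$.

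Define $\Psi \colon T_* \to T$ by $\Psi(a, b) \coloneqq (b, a + 1)$, where $T$ is as in \Cref{sec:appendix-cicle-arc}. A direct check shows $\Psi$ is a homeomorphism reversing the partial order of inclusion, so it sends each monotone arc in $T_*$ representing a chain in $\chains(S^1)$ with root $0$ to a monotone arc in $T$ from $(0, 1)$ to the diagonal, that is, to an element of $\chains([0, 1])$ with root $\beta^* \in [0, 1]$ equal to the second coordinate of the $T_*$-endpoint on the hypotenuse. This gives a continuous bijection $\Psi \colon \chains(S^1)_{\mathrm{root}=0} \to \chains([0, 1])$. Equivariance is checked as follows: any orientation-preserving $h \in \Homeo(S^1)$ fixing $0$ lifts uniquely to $\tilde h \colon \bbR \to \bbR$ with $\tilde h(0) = 0$ and $\tilde h(x + 1) = \tilde h(x) + 1$, so $\bar h \coloneqq \tilde h|_{[0, 1]}$ is an orientation-preserving homeomorphism of $[0, 1]$ fixing the endpoints; the action $(a, b) \mapsto (\tilde h(a), \tilde h(b))$ on $T_*$ becomes, via $\Psi$, the coordinatewise action of $\bar h$ on $T$. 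Orientation-reversing elements of $\Homeo(S^1)$ fixing $0$ correspond analogously to orientation-reversing homeomorphisms of $[0, 1]$ acting by $(x, y) \mapsto (\bar h(y), \bar h(x))$ on $T$.

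To extend $\Psi$ to all of $\chains(S^1)$, pre-compose with the continuous "straightening" map $R \colon \chains(S^1) \to \chains(S^1)_{\mathrm{root}=0}$ defined by $R(\cC) \coloneqq r_{-x_0(\cC)} \cdot \cC$, where $r_\theta$ is rotation by $\theta \in S^1$ and $x_0(\cC)$ is the continuously varying root of $\cC$. Since $\Homeo(S^1)$ acts transitively on $S^1$, two chains are $\Homeo(S^1)$-equivalent iff their $R$-images are $\Homeo(S^1)_0$-equivalent, and by equivariance of $\Psi$, iff their $F$-images are $\Homeo([0, 1])$-equivalent, where $F \coloneqq \Psi \circ R$. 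Thus $F$ is the desired continuous reduction, and classifiability follows from \Cref{theorem:classifiability}.

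For the generic chain, let $\cC_0 \coloneqq \{\phi([-r, r]) : 0 \le r \le 1/2\}$, the symmetric chain of arcs around $0$. A direct computation gives $F(\cC_0) = \{[r, 1 - r] : 0 \le r \le 1/2\}$, which is the generic chain on $[0, 1]$ of \Cref{prop:arc-has-generic-chains}, whose $\Homeo([0, 1])$-orbit is the $\mathbf{\Pi}^0_2$ dense set $\chains^0([0, 1])$. Since $F$ reduces orbits, the $\Homeo(S^1)$-orbit of $\cC_0$ equals $F^{-1}(\chains^0([0, 1]))$, which is $\mathbf{\Pi}^0_2$ in $\chains(S^1)$ by continuity of $F$. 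To conclude comeagerness, it remains to check density of this preimage, which we do by a direct perturbation argument parallel to \Cref{prop:arc-has-generic-chains}: given any basic nonempty open $\bV \subseteq \chains(S^1)$ and any $\cC \in \bV$, a small perturbation introducing a strict tilt in the horizontal and vertical portions of the $T_*$-curve of $\cC$ produces a strictly monotone chain still in $\bV$. The main obstacle in this plan is the careful verification of the continuity of $\Psi$ at chains whose $T_*$-curve degenerates along an edge of $T_*$, together with a clean equivariance computation handling both orientation-preserving and orientation-reversing elements of $\Homeo(S^1)_0$.
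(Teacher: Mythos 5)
Your proof is correct and follows essentially the same route as the paper's: continuously normalize each chain by a rotation so that a canonical point lands at a fixed base point, identify the resulting closed subspace of chains equivariantly with $\chains([0,1])$ (using that the point stabilizer in $\Homeo(S^1)$ is isomorphic to $\Homeo([0,1])$), and then invoke \Cref{theorem:classifiability} and \Cref{prop:arc-has-generic-chains}. The only difference is bookkeeping: the paper normalizes the \emph{endpoint} $e(\cC) = \bigcap_{C\in\cC}\Cl(S^1\setminus C)$ to the base point and unrolls via the quotient map directly, whereas you normalize the root and unroll via the order-reversing complement map $\Psi$; the continuity issue you flag on the hypotenuse of $T_*$ is the exact analogue of the (equally unspelled-out) continuity of the paper's endpoint map, and is handled by the same kind of argument.
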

\begin{proof}[Proof sketch.]
	Let $S^1 = [0,1] / \mod 1$, and denote by $\pi$ be quotient map.
	Since the closure of the complement of a subcontinuum of $S^1$ is itself a subcontinuum of $S^1$, to every chain $\cC\in\chains(S^1)$ we can associate not only its root, $\bigcap\cC$ but also its \emph{endpoint}:
	\begin{equation*}
		e(\cC) \coloneqq \bigcap_{C\in\cC}\Cl(S^1\setminus C),
	\end{equation*}
	which is a point, since the diameters of the complements go to zero.

	For each $z \in S^1$, let $\chains_z(S^1)$ be the closed subset of chains whose endpoint is $z$,
	and fix the clockwise $\rho_z$ rotation taking $z$ to $\pi(0)$.
	Let $\cR \colon \chains(S^1) \to \chains_{\pi(0)}(S^1)$ be the continuous map:
	\begin{equation*}
		\cC \mapsto \rho_{e(\cC)} \cdot \cC.
	\end{equation*}
	Then $h \cdot \cC = \cC'$ if and only if $\rho_{e(\cC')} h \rho_{e(\cC)}^{-1} \cdot \cR(\cC) = \cR(\cC')$, showing that $\cR$ is a continuous reduction from the orbit equivalence relation of $\Homeo(S^1)\curvearrowright\chains(S^1)$ to that of $\Homeo_{\pi(0)}(S^1)\curvearrowright\chains_{\pi(0)}(S^1)$, where $\Homeo_{\pi(0)}(S^1)$ is the stabilizer of $\pi(0)$.

	But $\pi$ induces an isomorphism $\pi^*$ of flows between $\Homeo([0,1]) \acts \chains([0,1])$ and $\Homeo_{\pi(0)}(S^1) \acts \chains_{\pi(0)}(S^1)$, since $\pi$ is injective on proper non-trivial subcontinua of $[0,1]$.
	We conclude by \Cref{theorem:classifiability}.

	For the moreover, notice that $\cR^{-1}(\pi^*[\chains^0([0,1])])$ is comeager and an orbit.
\end{proof} \section{The Sierpi\'{n}ski carpet}
\label{sec:appendix-sierpinski}

This appendix is devoted to proving that the action $\Homeo(S) \acts \chains(S)$ is minimal, for the Sierpi\'{n}ski carpet $S$, and thus concluding that $\Homeo(S)$ does not have the generic point property and its universal minimal flow is not metrizable.
Since, as we will see, the Sierpi\'{n}ski carpet is not homogeneous, and thus not strongly locally homogeneous, the minimality of $\Homeo(S) \acts \chains(S)$ does not follow from \Cref{thm:local-transitive}.
We do not settle whether this action is generically turbulent.

An \emph{$S$-curve} is a one-dimensional planar Peano continuum such that its complement in the plane consists of countably many connected open sets whose boundaries are mutually disjoint simple closed curves. \cite{whyburn}*{Theorem 4} states there exists a unique one-dimensional planar Peano continuum $S$ with no locally separating points, and that all $S$-curves are homeomorphic to such $S$. The space $S$ is called the \emph{Sierpi\'{n}ski carpet}.

It is immediate that $S$ satisfies condition \eqref{itm:nls-curve} of \Cref{thm:the-theorem-introduction}, and it is not hard to see that it also satisfies condition \eqref{itm:circular}.
\begin{corollary}
	There is no generic chain on the Sierpi\'{n}ski carpet.
\end{corollary}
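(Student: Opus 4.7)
By Whyburn's characterization \cite{whyburn}*{Theorem 4} cited above, the Sierpi\'{n}ski carpet $S$ has no locally separating points, and by construction $S$ is planar. The plan is to invoke \Cref{thm:the-theorem-introduction} via its second alternative \eqref{itm:nls-curve}, which requires $S$ to contain a planar open subset housing a non-locally-separating simple closed curve. Since every open subset of $S$ is planar, it suffices to exhibit such a curve $\gamma$ lying inside $S$ itself.

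For the curve, realize $S$ as an $S$-curve in the concrete form $S = S^2 \setminus \bigcup_{n \in \N} D_n$, where $\{D_n\}_{n\in\N}$ is a countable family of pairwise disjoint open Jordan disks in the sphere with mutually disjoint boundaries $\gamma_n = \partial D_n$, whose union is dense in $S^2$ and with $\diam(D_n) \to 0$; Whyburn's uniqueness theorem ensures that this presentation is homeomorphic to $S$. Fix $\gamma \coloneqq \gamma_0$, which is a simple closed curve contained in $S$.

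To verify that $\gamma$ is not locally separating in $S$, fix any connected open $U \subseteq S$ meeting $\gamma$; the goal is to show that $U \setminus \gamma$ is connected. The key observation is that $S \setminus \gamma$ is contained in the open disk $S^2 \setminus \overline{D_0}$ and in fact equals the $S$-curve obtained from this open disk by removing the countable dense family $\{D_n\}_{n \ne 0}$ of pairwise disjoint open Jordan disks, which is itself a Peano continuum, and in particular locally arcwise connected. Consequently, any two points $p, q \in U \setminus \gamma$ lie on the same side of $\gamma$ in $S^2$ (the outside of $D_0$), and a short arc in $U$ joining them can be pushed off $\gamma$ into $S^2 \setminus \overline{D_0}$ by a small perturbation that remains inside $U$, using this local arcwise connectedness. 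I expect the main technical obstacle to sit precisely in making this planar-topological perturbation precise: one must carefully track how $\gamma$ is approached from the outside of $D_0$ within $U$, and invoke the fact that near each point of $\gamma$ the set $S$ is homeomorphic to a half-carpet bounded locally by an arc of $\gamma$, so that detouring off $\gamma$ amounts to reaching any nearby point of $S$ through the interior of this half-carpet.

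Once $\gamma$ is verified to be non-locally-separating, \Cref{thm:the-theorem-introduction} applies directly to the planar open subset $S \subseteq S$ and yields that $S$ carries no generic chain.
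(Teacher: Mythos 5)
Your proposal is correct and follows essentially the same route as the paper: exhibit a non-locally-separating simple closed curve in the planar continuum $S$ (a boundary curve of one of the removed disks) and invoke condition \eqref{itm:nls-curve} of \Cref{thm:the-theorem-introduction}. The only difference is that where you sketch a perturbation argument for why the boundary curve is not locally separating, the paper simply takes this from Whyburn's classical work (the rational boundary curves are precisely the non-trivial non-locally-separating proper subcontinua of $S$), so your admitted technical gap concerns a cited classical fact rather than a new step.
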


The Sierpi\'{n}ski carpet has two types of points: \emph{rational} and \emph{irrational}.
The collection of rational points is given by the union of all the boundaries of the maximal connected components of the complements of $S$ in the plane or, equivalently and more abstractly,
it is the union of all non-trivial non-locally-separating proper subcontinua of $S$ (see \cite{MR0200910}*{\S8}).
There are countably many maximal such subcontinua, called \emph{rational boundary curves}, they are all simple closed curves, and their union is dense and co-dense in the space.
Since it is the union of countably many non-locally-separating subcontinua, the set of rational points is a non-locally-separating $F_\sigma$ set, by \cite{MOT}*{Theorem 2.2}.
The set of irrational points is the complement.
A subcontinuum of $S$ is said to be rational/irrational if all of its points are rational/irrational.

The action $\Homeo(S) \acts S$ has two orbits, the rational and irrational points \cite{krasinkiewicz}, thus it is minimal, but not locally transitive.

We need a couple of preliminary statements before showing that the action $\Homeo(S) \acts \chains(S)$ is minimal.
The following proposition mirrors Facts \ref{fact:points-on-boundaries} and \ref{fact:existence-of-arcs}.

\begin{proposition}
	\label{prop:facts-about-sierpinski}
	Let $\cW$ be a brick partition of $S$ and $W, W' \in \cW$.
	\begin{enumerate}
		\item \label{itm:irrational-points} If $W \E W'$ then there is an irrational point $x \in \Cl(W)\cap \Cl(W') \cap (W \vee W')$.
		\item \label{itm:irrational-arcs} For any $x, y \in \Cl(W)$, there is an arc $\gamma \sub W \cup \set{x,y}$ connecting $x, y$, which is irrational except possibly at $x,y$.
		\item \label{itm:existence-of-arcs-sierpinski} For any $A, B \sub \Cl(W)$, disjoint and containing exactly two points each, there are two disjoint arcs $\gamma_0, \gamma_1 \sub W \cup A \cup B$, connecting $A$ to $B$ which are irrational except possibly at their endpoints.
	\end{enumerate}
\end{proposition}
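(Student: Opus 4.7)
The plan rests on two structural facts about the Sierpi\'{n}ski carpet $S$: first, its rational points form the countable union $R = \bigcup_i C_i$ of pairwise disjoint rational boundary curves, each of which is a closed, non-locally separating simple closed curve with empty interior in $S$; second, any closed subset $K \sub C_i$ is itself non-locally separating in $S$, since for any open connected $V \sub S$, the set $V \setminus C_i$ is connected and dense in $V$ (as $C_i$ has empty interior in $S$), hence dense in the superset $V \setminus K$, which is therefore also connected as the closure of a connected dense set. Each of the three items will be deduced by a Baire category argument in an appropriate Polish space.

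For \eqref{itm:irrational-points}, set $Y \coloneqq \Cl(W)\cap\Cl(W')\cap(W\vee W')$, non-empty by \Cref{fact:points-on-boundaries}.\eqref{itm:bing} and Polish as a $G_\delta$ subset of the compact metric space $\Cl(W)\cap\Cl(W')$. It suffices to prove that each $Y \cap C_i$ is nowhere dense in $Y$. Suppose, for contradiction, that some non-empty open $U \sub Y$ is contained in $C_i$; pick $p \in U$ and a small connected open neighborhood $N$ of $p$ with $N \sub W \vee W'$, $N \cap Y \sub U$, and $N \setminus C_i$ connected (using that $C_i$ is non-locally separating). Since $W, W'$ are disjoint and regular open, the decomposition $W \vee W' = W \sqcup Y \sqcup W'$ holds; as $C_i$ is nowhere dense in $S$, we can choose points $q \in (N \cap W) \setminus C_i$ and $q' \in (N \cap W') \setminus C_i$. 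Any path in $N \setminus C_i$ connecting $q$ to $q'$ must then cross $Y$, but every crossing point lies in $N \cap Y \sub U \sub C_i$, contradicting its disjointness from $C_i$. Baire's theorem then yields an irrational point in $Y$.

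For \eqref{itm:irrational-arcs} and \eqref{itm:existence-of-arcs-sierpinski}, we work in the Polish hyperspace $\cA$ of arcs from $x$ to $y$ in $\Cl(W)$ (resp.\ of pairs of disjoint arcs from $A$ to $B$ in $\Cl(W)$) with the Hausdorff topology. The subspace $\cA_0 \sub \cA$ of arcs in $W \cup \set{x,y}$ (resp.\ pairs in $W \cup A \cup B$) is a dense $G_\delta$ and non-empty by \Cref{fact:existence-of-arcs}.\eqref{itm:existence-of-arcs} (resp.\ \eqref{itm:existence-of-arcs-nls}). For each $i$ and $n \in \N$, let $\cA_{i,n} \sub \cA_0$ denote the set of $\gamma$ with $\gamma \cap C_i$ contained in the $1/n$-neighborhood of the endpoint set; this is open in $\cA_0$, and we claim it is dense. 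Given $\gamma \in \cA_0$ and $\varepsilon > 0$, cover the compact set $\gamma \cap (C_i \setminus B(\mathrm{endpoints}, 1/n))$ by finitely many sub-arcs of $\gamma$ of diameter less than $\varepsilon$ with endpoints off $C_i$ (treating maximal sub-arcs of $\gamma$ lying in $C_i$ as single chunks), and, for each such sub-arc, use \Cref{thm: brick partitioning theorem} to refine $\cW$ into a brick partition containing a sufficiently small brick $V \sub W$ in whose closure the endpoints lie. Apply \Cref{fact:existence-of-arcs}.\eqref{itm:existence-of-arcs-nls-arc} (resp.\ \eqref{itm:existence-of-arcs-nls}) within $V$ with $K = C_i \cap \Cl(V)$ — non-locally separating by the second structural fact — to replace the sub-arc with an arc in $V$ avoiding $C_i$. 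Stitching these replacements yields $\gamma' \in \cA_{i,n}$ within $\varepsilon$ of $\gamma$. By Baire's theorem, $\bigcap_{i,n} \cA_{i,n}$ is comeager, hence non-empty, providing the required arc (resp.\ pair).

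The main obstacle is density of $\cA_{i,n}$ when $\gamma$ contains an entire sub-arc of $C_i$: a Hausdorff-close perturbation off $C_i$ seems to lack room. This is resolved by choosing the brick $V$ to be an arbitrarily thin tubular neighborhood of the offending sub-arc, using Bing's theorem applied to $\Cl(V)$ as a Peano continuum: $V$ still contains plenty of irrational points (since $C_i$ is nowhere dense), and $K = C_i \cap \Cl(V)$ remains non-locally separating by the initial structural fact, so the invoked result produces an arc (or pair of arcs, for \eqref{itm:existence-of-arcs-sierpinski}) in $V$ avoiding $K$. For \eqref{itm:existence-of-arcs-sierpinski} one additionally handles pairs of sub-arcs lying near the same rational boundary curve within a common tube using \eqref{itm:existence-of-arcs-nls}, thereby ensuring the two replacements stay disjoint throughout the modification.
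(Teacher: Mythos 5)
Your argument for item (1) is correct and is essentially a localized, Baire-categorical version of the paper's: both hinge on the decomposition $W \vee W' = W \sqcup Y \sqcup W'$ with $Y = \Cl(W)\cap\Cl(W')\cap(W\vee W')$, the paper arguing directly that an all-rational $Y$ would be a countable union of non-locally separating subcontinua and hence could not separate $W$ from $W'$ inside $W\vee W'$.

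For items (2) and (3), however, there are genuine gaps. First, the ambient space of your Baire argument is not known to be a Baire space: the collection of arcs from $x$ to $y$ in $\Cl(W)$ is not closed in the hyperspace with the Hausdorff metric (limits of arcs need not be arcs), and it is not obviously $G_\delta$ either, since being an arc involves local connectedness, whose Borel complexity exceeds $G_\delta$; so ``Polish'', and with it the conclusion that a comeager set is nonempty, is unjustified. The standard repair is to work in the genuinely Polish space of parametrized embeddings of $[0,1]$, but then density must be proved in the uniform metric, which is strictly harder than the Hausdorff-metric surgery you describe. Second, the density of $\cA_{i,n}$ is not actually established: after replacing finitely many sub-arcs of $\gamma$ by arcs avoiding $C_i$, the resulting union need not be injective (the replacement arcs can meet one another or the untouched portion of $\gamma$), so it need not be an arc; extracting an arc from the union destroys the Hausdorff-distance control in one direction; and for item (3) nothing guarantees that the surgeries performed on $\gamma_0$ remain disjoint from $\gamma_1$ and from each other beyond the single case you mention. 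The paper sidesteps all of this with a direct construction: for (2) it joins irrational points of $W$ by irrational arcs (the set of rational points being non-locally separating) and runs the limiting construction of \cite{nadler2017continuum}*{8.32(a)} to reach $x,y\in\Cl(W)$; for (3) it takes the two disjoint arcs given by \Cref{fact:existence-of-arcs}, separates them into disjoint unions of bricks of a fine refinement, and applies item (2) inside each, which yields disjointness for free. If you want to salvage a categorical proof you would need to fix both the choice of ambient Polish space and the surgery step; as written, the argument does not go through.
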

\begin{proof}
	(\ref{itm:irrational-points}) and (\ref{itm:irrational-arcs}) follow directly from \Cref{fact:existence-of-arcs}.\eqref{itm:existence-of-arcs-nls-arc} and \Cref{fact:points-on-boundaries}.\eqref{itm:nls_set}, respectively.

	(\ref{itm:existence-of-arcs-sierpinski}).
	By \Cref{fact:existence-of-arcs}.\eqref{itm:existence-of-arcs-nls}, there are two disjoint arcs $\lambda_0, \lambda_1$ connecting $A$ to $B$ in $W \cup A \cup B$.
	Let $\cV \refines \cW$ be fine enough that $\Star(\lambda_0, \cV)$ and $\Star(\lambda_1, \cV)$ are disjoint, and call $P_i \coloneqq \runion \Star(\lambda_1, \cV)$, $i=0,1$.
	By item \eqref{itm:irrational-arcs} of the current proposition, there are $\gamma_i$ in $P_i \cup A \cup B$, with the same endpoints as $\lambda_i$, and which are irrational except possibly at such endpoints.
	Since the $P_i$'s are disjoint, so are the $\gamma_i$'s.
\end{proof}

We recall that $C(S)$ is the space of all closed connected subsets of $S$, with the Vietoris topology (see \Cref{ss.chains}).
\begin{proposition}
	\label{prop:sierpinski-irrational-closed-curves}
	Irrational simple closed curves are dense in $C(S)$.
\end{proposition}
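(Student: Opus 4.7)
To establish density, for any basic open set $O(U_0, \dots, U_{n-1})$ of $C(S)$ --- where each $U_i$ is open connected and $V := \runion_i U_i$ is connected --- I would exhibit an irrational simple closed curve $\gamma \subseteq V$ meeting every $U_i$. The strategy proceeds in two halves: first, combinatorially build a spaced circular path $C$ in a fine enough brick partition of $S$ whose image visits at least one brick contained in each $U_i$, with all closures sitting inside $V$; second, assemble $\gamma$ by concatenating irrational arcs, one per brick of $C$, joined at irrational boundary points, using \Cref{prop:facts-about-sierpinski}.

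For the combinatorial half, I would fix a brick partition $\cW$ fine enough that $\{W \in \cW : \Cl(W) \subseteq V\}$ is $E$-connected (by \Cref{prop:approximate-core-refinement}) and contains, for each $i < n$, a brick $W_i^\ast \subseteq U_i$. Pick a reduced walk $u$ in this subgraph visiting every $W_i^\ast$. Applying \nameref{thm:walks-to-paths} (\Cref{thm:walks-to-paths}) yields a spaced path $w$ on a refinement $\cW' \refines \cW$ with $\rmap{\cW'}{\cW} \cdot w \refex u$; then \nameref{prop:path-doubling} (\Cref{prop:path-doubling}) produces a refinement $\cZ' \refines \cW'$ and two paths $v_0, v_1$ on $\cZ'$, disjoint except at their common endpoints and each monotonically refining $w$. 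The concatenation of $v_0$ with $v_1$ traversed backwards is then a circular path $\tilde C$ on $\cZ'$ whose image under $\rmap{\cZ'}{\cW}$ still covers every $W_i^\ast$. Finally, the circular version of \nameref{lem:path-to-spaced} (\Cref{lem:path-to-spaced}) delivers a refinement $\cZ \refines \cZ'$ and a spaced circular path $C = \walk{Z_0, \dots, Z_{k-1}, Z_0}$ on $\cZ$ whose image monotonically refines $\tilde C$. In particular, for every $i$ there is some $j_i$ with $Z_{j_i} \subseteq W_i^\ast \subseteq U_i$, and $\Cl(Z_j) \subseteq V$ for all $j$.

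To assemble $\gamma$, for each $j \mod k$ I would pick, using \Cref{prop:facts-about-sierpinski}.\eqref{itm:irrational-points}, an irrational point
\[
p_j \in \Cl(Z_j) \cap \Cl(Z_{j+1}) \cap (Z_j \vee Z_{j+1}).
\]
Because $C$ is spaced, $\Cl(Z_j) \cap \Cl(Z_{j'}) = \emptyset$ whenever $|j - j'| \ge 2 \mod k$, which forces all the $p_j$'s to be pairwise distinct. Next, by \Cref{prop:facts-about-sierpinski}.\eqref{itm:irrational-arcs}, pick an arc $\gamma_j \subseteq Z_j \cup \{p_{j-1}, p_j\}$ from $p_{j-1}$ to $p_j$ that is irrational off its endpoints (themselves irrational). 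Set $\gamma := \bigcup_j \gamma_j$. Each $\gamma_j$ sits inside $\Cl(Z_j)$, so non-consecutive arcs are disjoint and consecutive arcs meet only at the shared endpoint $p_j$; hence $\gamma$ is a simple closed curve. By construction it is irrational, it is contained in $V$, and it meets each $U_i$ through $\gamma_{j_i}$.

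The main obstacle is the passage from a walk visiting a prescribed finite set of bricks to a \emph{circular path} visiting them, since a connected graph need not admit such a circular path in general (consider a tree). This is exactly what the chain \nameref{thm:walks-to-paths} $\to$ \nameref{prop:path-doubling} $\to$ \nameref{lem:path-to-spaced} achieves, exploiting the no-locally-separating-points hypothesis on $S$. The remaining technicalities --- distinctness of the $p_j$'s and the clean intersection pattern of the $\gamma_j$'s --- are handled automatically by the spaced property of $C$.
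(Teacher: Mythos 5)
Your proof is correct and follows essentially the same route as the paper's: reduce to a basic open set determined by an $E$-connected family of bricks, produce a circular path through them via \nameref{thm:walks-to-paths} (\Cref{thm:walks-to-paths}) and \nameref{prop:path-doubling} (\Cref{prop:path-doubling}), and then thread an irrational simple closed curve through consecutive bricks using \Cref{prop:facts-about-sierpinski}. Your additional application of \nameref{lem:path-to-spaced} (\Cref{lem:path-to-spaced}) to guarantee distinctness of the junction points $p_j$ addresses a detail the paper leaves implicit, and the only imprecision is the citation of \Cref{prop:approximate-core-refinement} for the $E$-connectedness of the bricks inside $V$ (that proposition is stated for elements of a brick partition, not arbitrary open sets), but the needed fact follows from the standard tree argument used throughout the paper.
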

\begin{proof}
	It is easy to see, arguing as in \Cref{sec:Walks on Brick Partitions and their Refinements}, that $C(\cW) \coloneqq \set{O(\cW_0) : \cW_0 \sub \cW \text{ nonempty and $E$-connected}}$ is a quasi partition of $C(X)$ of $d_H$ mesh controlled by the $d$-mesh of $\cW$.
	We can thus suppose we are given a nonempty open set of $C(S)$ of the form $O(\cW_0)$, for an $E$-connected subset $\cW_0 \sub \cW$ of some brick partition $\cW$ on $S$.
	Let $w$ be a reduced walk on $\cW$, which is confined to and visits all vertices of $\cW_0$.

	By \nameref{thm:walks-to-paths} (\Cref{thm:walks-to-paths}) and \nameref{prop:path-doubling} (\Cref{prop:path-doubling}), there are a brick partition $\cV \refines \cW$ and two paths $v_{0},v_{1}$ on $\cV$ which are disjoint except for their endpoints and such that $\rmap{\cV}{\cW} \cdot v_{i} \refex w$, for $i < 2$.
	Let $v$ be the circular path which follows $v_0$ forward and then $v_1$ backwards.
	For each $i < \ell$, let $x_i$ be an irrational point in $\Cl(v(i))\cap \Cl(v(i+1 \mod \ell)) \cap (v(i) \vee v(i+1 \mod \ell))$, which exists by \Cref{prop:facts-about-sierpinski}.\eqref{itm:irrational-points}.
	By point \eqref{itm:irrational-arcs} of the same proposition, there are irrational arcs $\gamma_i$ in $v(i) \cup \set{x_i, x_{i+1 \mod \ell}}$ connecting $x_i$ to $x_{i+1 \mod \ell}$.
	Let $\gamma$ be the union of such arcs.
	It is an irrational simple closed curve which meets each $v(i)$, for $i<\len(v)$, and is contained in $\runion_{i<\len(v)} v(i)$.
	Since $\rmap{\cV}{\cW} \cdot v \refex w$, it follows that $\gamma$ meets each $W \in \cW_0$ and is contained in $\runion \cW_0$, that is, $\gamma \in O(\cW_0)$.
\end{proof}

The proof of \Cref{thm:minimality-sierpinski} builds upon the following fundamental homogeneity result for the Sierpi\'{n}ski carpet from \cite{whyburn} (see also \cite{MR0200910}*{(7.3)}).
\begin{theorem}[{\cite{whyburn}}]
	\label{fact:extension-sierpinski}
	Let $\gamma$ be a rational boundary curve of $S$ and let $h_0$ be a homeomorphism of $\gamma$ onto itself.
	Then there is $h \in \Homeo(S)$ which extends $h_0$.
\end{theorem}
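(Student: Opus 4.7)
The plan is to extend $h_0$ in two stages: first to an ambient homeomorphism of the $2$-sphere, and then to correct this ambient extension so that it preserves $S$ setwise.

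For the first stage, I would embed $S$ into $S^2$ (possible since $S$ is planar) so that $\gamma$ bounds one of the complementary open disks $D$ of $S$. The Schoenflies theorem allows us to extend $h_0\colon \gamma \to \gamma$ to a homeomorphism $\tilde h\colon S^2 \to S^2$, and by composing with a reflection if necessary we may arrange $\tilde h[D] = D$. The image $\tilde h[S]$ is then a one-dimensional planar Peano continuum without locally separating points, hence itself an $S$-curve by Whyburn's characterization theorem; moreover $\gamma$ is a rational boundary curve of $\tilde h[S]$, bounding the same complementary disk $D$. It therefore suffices to construct a homeomorphism $\psi\colon S^2 \to S^2$ fixing $\Cl(D)$ pointwise and satisfying $\psi[\tilde h[S]] = S$, for then the restriction of $\psi \circ \tilde h$ to $S$ is the desired extension of $h_0$.

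The main obstacle is the construction of $\psi$, which I would obtain by a back-and-forth argument. Outside of $\Cl(D)$, both $\tilde h[S]$ and $S$ have countably many complementary open disks in $S^2$, forming null sequences of Jordan domains with pairwise disjoint closures. Enumerating these two families as $\{E_n\}_{n \in \N}$ and $\{F_n\}_{n \in \N}$ respectively, I would build inductively homeomorphisms $\psi_n$ of $S^2$, each fixing $\Cl(D)$ pointwise as well as all previously matched disks, such that $\psi_n$ sends the first disks in an alternating enumeration of one family onto disks of the other, applying the Schoenflies theorem at each stage to realize the required match inside the region not yet matched. The null-sequence property guarantees that the $\psi_n$ converge uniformly to a homeomorphism $\psi$, and the back-and-forth bookkeeping ensures that $\psi$ carries $\bigcup_n E_n$ bijectively onto $\bigcup_n F_n$; taking complements in $S^2$ and subtracting $D$ then gives $\psi[\tilde h[S]] = S$, as required.
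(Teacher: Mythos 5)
The paper does not actually prove this statement: it is imported as a black box from Whyburn's paper, so your argument has to stand on its own. Your first stage is sound and is indeed how the classical argument begins: Schoenflies extends $h_0$ to $\tilde h\in\Homeo(S^2)$ with $\tilde h[D]=D$, and you do not even need the characterization theorem to see that $\tilde h[S]$ is an $S$-curve with $D$ as a complementary disk bounded by $\gamma$ — it is the image of one under an ambient homeomorphism, so its complementary domains are exactly the $\tilde h$-images of those of $S$. The problem thus correctly reduces to showing that two $S$-curves in $S^2$ sharing the complementary disk $D$ are carried onto one another by a homeomorphism fixing $\Cl(D)$ pointwise.

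The gap is in the construction of $\psi$. First, not every bijection between the two null families $\set{E_n}$ and $\set{F_n}$ is induced by a homeomorphism of $S^2$: any such homeomorphism must preserve the separation pattern, i.e.\ if some Jordan curve contained in $\tilde h[S]$ has $E_{n_1},E_{n_2}$ on one side and $E_{n_3}$ on the other, then the corresponding $F$'s must be separated the same way. An unconstrained back-and-forth has no reason to output a compatible bijection, and an incompatible one is not realized by \emph{any} ambient homeomorphism, let alone by the limit of your $\psi_n$. Second, even for a realizable bijection, the assertion that the null-sequence property forces the $\psi_n$ to converge uniformly to a homeomorphism is false: the correction carrying the $n$-th disk onto its target must be supported on a connected region containing both disks, and that region can have diameter bounded away from $0$ no matter how small the disks themselves are, so the sequence need not be Cauchy; moreover a uniform limit of homeomorphisms of a compactum need not be injective. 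The standard repair — and the substance of Whyburn's actual proof — is hierarchical: one matches finer and finer finite subdivisions of the two carpets into sub-carpets of small mesh with the same adjacency and separation pattern, each subdivision refining the previous one, and only moves disks within corresponding pieces; then successive corrections are supported in sets of diameter tending to $0$ (giving the Cauchy estimate) and corresponding pieces nest (giving injectivity of the limit). Without such a subdivision scheme the convergence step does not close.
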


Let $\gamma$ be a simple closed curve in $S$, and let $W$ be a connected component of $S \setminus \gamma$.
As noted in \cite{whyburn}*{\S2}, if every rational maximal subcontinuum of $S$ that is contained in $W \cup \gamma$ is either disjoint from $\gamma$ or coincides with $\gamma$, then $W \cup \gamma$ is an $S$-curve, and hence it is homeomorphic to $S$.
This is the case for instance if $\gamma$ is irrational and $W$ is any one of the two components of $S \setminus \gamma$.
A fact that we will often use implicitly is that a point in $W$ is irrational in $W \cup \gamma$ if and only if it is irrational in $S$.

The last ingredient is the following technical lemma.

\begin{lemma}
	\label{lem:separate-stuff-sierpinski}
	Let $\gamma$ be a rational boundary curve of $S$ and let $U_0,U_1 \sub S$ be disjoint nonempty open connected subsets such that $U_0 \cap \gamma \ne \emptyset \ne U_1 \cap \gamma$.
	Then there are open sets $W_0 \sub W'_0$, with $\Cl(W'_0) \sub U_0 \setminus \gamma$, such that for any $x \in S \setminus \Cl(W'_0)$ there are three disjoint arcs $\theta, \zeta, \eta$ whose endpoints belong to $\gamma$ and which are irrational everywhere else, and which are such that:
	\begin{enumerate}
		\item $\theta, \zeta \sub U_0$, one of the connected components of $S \setminus \theta$ is contained in $U_0$, and the same is true of one of the connected components of $S \setminus \zeta$,
		\item $\eta \sub U_1$, and one of the connected components of $S \setminus \eta$ is contained in $U_1$,
		\item \label{item2:lemmaS} $\zeta$ separates $\theta$ from $x$ and $\eta$,
		\item \label{item3:lemmaS} $\theta$ separates $W_0$ from $\zeta$.
	\end{enumerate}
\end{lemma}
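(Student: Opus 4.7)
The plan is to construct $\theta, \zeta, \eta$ as irrational arcs bounding nested planar ``half-disk'' sub-regions of $U_0$ (for $\theta, \zeta$) and $U_1$ (for $\eta$). The key observation is that an irrational arc $\alpha$ in $S$ with endpoints $a, b$ on $\gamma$ forms, together with the sub-arc $[a,b] \subseteq \gamma$, a Jordan curve in $S$; the bounded planar disk $R$ of its complement intersected with $S$, together with $(a,b) \subseteq \gamma$, is a connected component of $S \setminus \alpha$, and it lies in $U_i$ as long as $R \cap S \subseteq U_i$, which can be arranged by choosing $\alpha$ close enough to $[a,b]$ in $U_i$. By \cite{whyburn}, $R \cap S$ is itself an $S$-curve, which gives us the structure we need to pick irrational arcs inside it.

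To set up, I would first pick disjoint open sub-arcs $I_0 \subseteq U_0 \cap \gamma$ and $I_1 \subseteq U_1 \cap \gamma$ with irrational endpoints. An application of \Cref{prop:facts-about-sierpinski}.\eqref{itm:irrational-arcs} inside a brick of a sufficiently fine partition located in a planar neighborhood of a sub-arc of $I_1$ contained in $U_1$ yields the arc $\eta$, which is then fixed. I would then pick an irrational point $p_0 \in I_0$, an irrational arc $\sigma \subseteq U_0$ from $p_0$ to some $q_0 \in U_0 \setminus \gamma$, and a small open neighborhood $W'_0$ of $q_0$ with $\Cl(W'_0) \subseteq U_0 \setminus \gamma$ disjoint from $\sigma \setminus \{q_0\}$; take $W_0 \subseteq W'_0$ to be any smaller open set. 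The arc $\sigma$ will serve as a ``handle'' linking $W'_0$ to $\gamma$.

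Given $x \in S \setminus \Cl(W'_0)$, I would choose a brick partition $\cW$ fine enough that the star $\cN = \Star(\Cl(W'_0) \cup \sigma, \cW)$ is $E$-connected, is contained in $U_0$, is disjoint from both $x$ and $\eta$, and meets $\gamma$ only along a single open sub-arc $J \subseteq I_0$ containing $p_0$. Setting $W = \runion \cN$, I would pick four distinct irrational points $a_\zeta, a_\theta, b_\theta, b_\zeta$ in $J$ in this cyclic order on $\gamma$, and construct two disjoint irrational arcs $\theta$ (from $a_\theta$ to $b_\theta$) and $\zeta$ (from $a_\zeta$ to $b_\zeta$) inside $W$. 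The key observation then yields nested planar disks $R_\theta \subseteq R_\zeta \subseteq W$; the ``inside'' components of $S \setminus \theta$ and $S \setminus \zeta$ are contained in $U_0$, both contain $\Cl(W'_0) \supseteq W_0$, and both are disjoint from $x$ and $\eta$. The four requirements then follow: $\theta$ separates $W_0 \subseteq R_\theta$ from $\zeta \subseteq W \setminus R_\theta$, and $\zeta$ separates $\theta \subseteq R_\zeta$ from both $x$ (outside $W$) and $\eta$ (in $U_1$, hence outside $W$).

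The main technical obstacle is constructing $\theta$ and $\zeta$ as \emph{disjoint} arcs with the \emph{non-crossing} pairing of endpoints $\{a_\theta \to b_\theta,\ a_\zeta \to b_\zeta\}$, rather than the crossing pairing $\{a_\theta \to a_\zeta,\ b_\theta \to b_\zeta\}$ that a naive application of \Cref{prop:facts-about-sierpinski}.\eqref{itm:existence-of-arcs-sierpinski} with $A = \{a_\theta, b_\theta\}$ and $B = \{a_\zeta, b_\zeta\}$ would produce (given the cyclic order on $\gamma$). This will require building the two arcs in separate sub-regions of $W$ isolated via an auxiliary refinement of $\cW$: a thin ``inner'' strip along $[a_\theta, b_\theta]$ containing $\Cl(W'_0) \cup \sigma$, in which \Cref{prop:facts-about-sierpinski}.\eqref{itm:irrational-arcs} produces $\theta$, and an annular ``outer'' strip around the inner one, in which the same proposition produces $\zeta$.
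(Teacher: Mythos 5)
Your overall architecture --- fix $\eta$ in $U_1$ and nested regions $W_0 \sub W'_0$ in $U_0$ in advance, then, given $x$, enclose them by two nested arcs $\theta, \zeta$ rooted on $\gamma$ --- is the same as the paper's, and you correctly spotted that a naive use of \Cref{prop:facts-about-sierpinski}.\eqref{itm:existence-of-arcs-sierpinski} yields the wrong (crossing) endpoint pairing. There are, however, two genuine gaps. The first is a quantifier problem with the handle $\sigma$: you fix $\sigma$ together with $W'_0$, \emph{before} $x$ is given, but the region $\runion \cN \supseteq \Cl(W'_0) \cup \sigma$ in which you later build $\theta$ and $\zeta$ must be disjoint from $x$. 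This is impossible when $x \in \sigma \setminus \Cl(W'_0)$: any brick whose closure contains such an $x$ meets $\sigma$ and hence lies in $\Star(\Cl(W'_0) \cup \sigma, \cW)$, and such an $x$ necessarily ends up inside the Jordan region bounded by $\zeta$, i.e.\ on the same side as $\theta$, so item \eqref{item2:lemmaS} fails. The connection between $\gamma$ and $W'_0$ has to be chosen \emph{after} $x$ is known; the paper does this by taking $W'_0$ to be the Jordan region bounded by an irrational simple closed curve $\gamma'_0$ and then, once $x$ is given, finding a path in $U_0$ from a $\gamma$-compatible circular path to a $\gamma'_0$-compatible one that misses $\Star(x, \cdot)$ --- possible because \nameref{prop:path-doubling} (\Cref{prop:path-doubling}) provides two disjoint such paths and $x$ can obstruct at most one.

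The second gap is that the enclosure of $W_0$ by $\theta$ is not justified: \Cref{prop:facts-about-sierpinski}.\eqref{itm:irrational-arcs} applied in a strip containing $\Cl(W'_0) \cup \sigma$ only produces \emph{some} irrational arc from $a_\theta$ to $b_\theta$ inside that strip, and nothing prevents it from running between $W'_0$ and $\gamma$, in which case $W_0$ lies outside the region $R_\theta$ and item \eqref{item3:lemmaS} fails (the same concern applies to $\zeta$ and $W'_0$). The paper forces the enclosure by letting $W_0$ and $W'_0$ be the Jordan regions bounded by irrational simple closed curves $\gamma_0$ and $\gamma'_0$, and assembling $\theta$ (resp.\ $\zeta$) from two disjoint arcs joining $\gamma$ to $\gamma_0$ (resp.\ $\gamma'_0$) together with the appropriate sub-arc of $\gamma_0$ (resp.\ $\gamma'_0$); with the correct choice of sub-arc the resulting arc visibly separates $W_0$ (resp.\ $W'_0$) from the rest. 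Finally, a minor but recurring slip: $\gamma$ is a \emph{rational} boundary curve, so every point of $\gamma$ is rational; the ``irrational endpoints'' of $I_0, I_1$ and the ``irrational points'' $p_0, a_\theta, b_\theta, a_\zeta, b_\zeta \in \gamma$ do not exist. This is harmless --- the lemma only requires irrationality away from the endpoints, which lie on $\gamma$ and are therefore rational --- but it should be removed.
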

\begin{proof}
	Apply \Cref{prop:sierpinski-irrational-closed-curves} to find an irrational simple closed curve $\gamma'_0$ which is entirely contained in $U_0$.
	It separates $S$ into two nonempty components, one of which is contained in $U_0$.
	Call this latter component $W'_0$.
	In particular, $\Cl(W'_0) \sub U_0 \setminus \gamma$.

	Repeat this operation to find an irrational simple closed curve $\gamma_0$ contained in $W'_0$.
	It separates $S$ in two components, one of which, call it $W_0$, is contained in $W'_0$.

	Now, let $x \in S \setminus \Cl(W'_0)$ be given.
	Notice that $\Sigma \coloneqq S \setminus W'_0$ is an $S$-curve and that $\gamma, \gamma'_0$ are rational boundary curves in it.
	By \Cref{lemma:fine-gamma-compatible}, there are a brick partition $\cV'$ of $\Sigma$ and two disjoint spaced circular paths $c, c'_0$ on $\cV'$ which are $\gamma$-compatible and $\gamma'_0$-compatible, respectively.
	We can moreover suppose that $\cV'$ is fine enough so that:
	\begin{enumerate}[label=(\roman*)]
		\item there is $i < \len(c)$ such that $c(i) \in \cV'(U_1)$, and
		\item there is a path $P'$ from $c$ to $c'_0$ in $\cV'(U_0)$ which is disjoint from $\Star(x, \cV')$,
	\end{enumerate}
	which is possible by \nameref{prop:path-doubling} (\Cref{prop:path-doubling}).
	Since $c$ is $\gamma$-compatible, $\Card{\gamma \cap c(i)} \ge 2$, for each $i < \len(c)$, and the same is true for $c'_0$ and $\gamma'_0$.

	Using \Cref{prop:facts-about-sierpinski}.\eqref{itm:irrational-arcs}, we find an arc $\eta \sub c(i)$ whose endpoints belong to $\gamma$ and whose other points are irrational.
	In particular, $\eta \sub U_1$, and since $\gamma \cap c(i)$ is connected, one the connected components of $S \setminus (\gamma \cup \eta)$ is contained in $U_1$.

	By \Cref{prop:facts-about-sierpinski}.\eqref{itm:existence-of-arcs-sierpinski}, there are two disjoint arcs $\zeta_0, \zeta_1$ in $\runion_{i \in \len(P')} P'(i)$ which are irrational (in $\Sigma$) except at their endpoint and connect $\gamma'_0$ to $\gamma$.
	Now, $\gamma'_0$ is split into two arcs by the endpoints of $\zeta_0, \zeta_1$.
	One of these arcs, call it $\zeta_2$, is such that $\zeta \coloneqq \zeta_0 \cup \zeta_2 \cup \zeta_1$ separates $S$ into two components, one of which contains $W'_0$ and is contained in $W'_0 \cup \runion_{i \in \len(P')} P'(i) \sub U_0$.
	Call this component $V'_0$.
	Then $W_0 \sub V'_0$, while $x$ and $U_1$ are in $S \setminus V'_0$.

	By the same reasoning as above, there are two disjoint arcs $\theta_0, \theta_1$ in $V'_0 \cup \gamma$ which are irrational (in $S \setminus W_0$) except at their endpoint and connect $\gamma_0$ to $\gamma$.
	Now, $\gamma_0$ is split into two arcs by the endpoints of $\theta_0, \theta_1$.
	One of these arcs, call it $\theta_2$, is such that $\theta \coloneqq \theta_0 \cup \theta_2 \cup \theta_1$ separates $S$ into two components, one of which contains $W_0$ and is contained in $V'_0$.
	Call this component $V_0$.

	Then $\theta \sub V'_0$, while $x$ and $U_1$ are in $S \setminus V'_0$, so point \eqref{item2:lemmaS} is satisfied.
	On the other hand, $W_0 \sub V_0$ and $\zeta \sub S \setminus V_0$, so also point \eqref{item3:lemmaS} is taken care of.
\end{proof}

\begin{theorem}
	\label{thm:minimality-sierpinski}
	The action $\Homeo(S) \acts \chains(S)$ is minimal, for the Sierpi\'{n}ski carpet $S$.
\end{theorem}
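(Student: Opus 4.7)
The plan is to verify the sufficient condition from \Cref{prop:sufficient-minimality}. Fix $\cC \in \chains(S)$ with root $x_0$ and open connected sets $U_0, U_1, V$ with $x_0 \in U_0$ and $U_0, U_1 \sub V$. I seek $C \in \cC$ and $g \in \Homeo(S)$ with $g(x_0) \in U_0$, $g[C] \cap U_1 \ne \emptyset$ and $g[C] \sub V$. The two main ingredients are \Cref{lem:separate-stuff-sierpinski}, to manufacture suitable arcs and barriers inside $V$, and the extension theorem \Cref{fact:extension-sierpinski}, which stands in for the local transitivity that the Sierpi\'{n}ski carpet lacks.

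The first step, and already a nontrivial one, is to locate a rational boundary curve $\gamma \sub V$ meeting both $U_0$ and $U_1$. Since the rational boundary curves form a countable family with diameters tending to zero, such a $\gamma$ need not exist for the given data, so a preliminary reduction via a homeomorphism is required. Concretely, one picks a convenient rational boundary curve $\gamma' \sub V$ meeting $U_0$ (available by density), chooses suitable target points on $\gamma'$, and applies \Cref{fact:extension-sierpinski} iteratively to build a $g_0 \in \Homeo(S)$ that brings $\gamma'$ into a bridging position. After relabeling everything by $g_0$, I may assume $\gamma$ meets both $U_0$ and $U_1$.

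With $\gamma$ in hand, \Cref{lem:separate-stuff-sierpinski} delivers open sets $W_0 \sub W'_0 \sub U_0 \setminus \gamma$ and, choosing $x = x_0$ after ensuring $x_0 \notin \Cl(W'_0)$, three disjoint arcs $\theta, \zeta, \eta$ with endpoints on $\gamma$ and irrational otherwise, arranged as nested barriers between $W_0$ and $\{x_0, \eta\}$. Each of the arcs $\theta$ and $\eta$, together with the relevant sub-arc of $\gamma$, bounds a sub-continuum of $S$ which, having no locally separating points, is an $S$-curve, hence homeomorphic to $S$ by Whyburn's characterization. I then design a self-homeomorphism $h_0 \colon \gamma \to \gamma$ which sends the endpoints of $\theta$ to those of $\eta$ and vice versa, and extend it via \Cref{fact:extension-sierpinski}, piecing the extension together independently on each sub-$S$-curve, to obtain $g \in \Homeo(S)$ which interchanges the $\theta$-region (inside $U_0$) with the $\eta$-region (inside $U_1$), while fixing $x_0$ and being supported in $V$. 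Picking $C \in \cC$ large enough to cross $\zeta$ and reach into the $\theta$-region, the image $g[C]$ acquires a point in the $\eta$-region, hence in $U_1$, while remaining inside $V$ by the support of $g$.

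The hardest parts are the production of the bridging curve $\gamma$ — the authors themselves flag this as where finer transitivity results for $\Homeo(S) \acts S^n$ would yield a major simplification — and the control of the non-unique extension $h_0 \mapsto g$, ensuring that a specific extension performs the desired region swap while behaving on the rest of $V$ as required. A subsidiary subtlety is guaranteeing the existence of a $C \in \cC$ that reaches the $\theta$-region while staying inside $V$, which may require a careful choice of the point $x$ in \Cref{lem:separate-stuff-sierpinski} or a further adjustment via \Cref{fact:extension-sierpinski}.
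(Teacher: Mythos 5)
Your overall architecture matches the paper's — verify the hypothesis of \Cref{prop:sufficient-minimality}, produce a bridging closed curve inside $V$, apply \Cref{lem:separate-stuff-sierpinski}, and assemble the homeomorphism region-by-region from \Cref{fact:extension-sierpinski} — but two of the steps you flag as hard contain genuine gaps, and in both cases the paper resolves them by a different arrangement. First, the bridging curve: you try to position a \emph{rational boundary curve} of $S$ so that it meets $U_0$ and $U_1$ inside $V$, by ``iterating'' \Cref{fact:extension-sierpinski}. That theorem only extends self-homeomorphisms of a fixed rational boundary curve, so every homeomorphism it produces preserves that curve setwise and can never move it into bridging position; the transitivity statement you would need is exactly what the authors say is missing from the literature. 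The paper avoids the issue entirely: irrational simple closed curves are \emph{dense} in $C(S)$ (\Cref{prop:sierpinski-irrational-closed-curves}), so one directly picks an irrational $\gamma \in O(U_0,U_1,V)$ with no preliminary homeomorphism; cutting along $\gamma$ yields a sub-carpet $\Sigma \sub V$ in which $\gamma$ \emph{becomes} a rational boundary curve, and all the rational-boundary-curve machinery (\Cref{lem:separate-stuff-sierpinski}, \Cref{fact:extension-sierpinski}) is then applied inside $\Sigma$.

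Second, you have the roles of the root and of the chain element reversed, and this breaks the endgame. You feed the root $x_0$ into \Cref{lem:separate-stuff-sierpinski} as the point $x$, so $x_0$ ends up beyond $\zeta$, in the middle region near $\eta$; you then want a homeomorphism swapping the $\theta$- and $\eta$-regions ``while fixing $x_0$''. But \Cref{fact:extension-sierpinski} gives no pointwise control in the interior of the pieces, the middle region containing $x_0$ is not contained in $U_0$, and a swap that is the identity off the two regions is not continuous along their boundaries. Moreover, the existence of a $C \in \cC$ that reaches the $\theta$-region while still lying in $V$ — which you defer as a ``subsidiary subtlety'' — cannot be guaranteed once $x_0$ sits outside $\Cl(W'_0)$. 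The paper's order of quantifiers is the opposite and is what makes everything work: place $x_0$ \emph{inside} $W_0$ (using minimality of $\Homeo(S)\acts S$), use maximality/continuity of the chain to find $C \sub Z \sub V$ that has just escaped $\Cl(W'_0)$, pick $x_1 \in C \setminus \Cl(W'_0)$, and only then invoke the lemma with $x = x_1$. The resulting homeomorphism is not a swap but a ``shift'' $\Sigma_0 \to \Sigma_{\langle 0,1\rangle}$, $\Sigma_1 \to \Sigma_2$, $\Sigma_{\langle 2,3\rangle} \to \Sigma_3$, obtained from three applications of \Cref{fact:extension-sierpinski} that agree on the common boundary arcs; it sends $x_0$ into $\Sigma_{\langle 0,1\rangle} \sub U_0$, sends $x_1$ into $\Sigma_3 \sub U_1$, and fixes $Z$ setwise so that $g[C] \sub V$. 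As written, your proposal does not close either gap.
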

\begin{proof}
	Since $\Homeo(S) \acts S$ is minimal, it suffices to verify the conditions of \Cref{prop:sufficient-minimality}.
	Let $\cC \in \chains(X)$ with root $x_0$ be given, together with nonempty open connected sets $U_0, U_1, V$ such that $U_0, U_1 \sub V$. In case $U_0 \cap U_1 \not = \emptyset$,
	by minimality of $\Homeo(S) \acts S$ we can assume that $x_0 \in U_0 \cap U_1$, so the conclusion follows. We henceforth consider the case
	$U_0 \cap U_1 = \emptyset$.

	\sloppy
	By \Cref{prop:sierpinski-irrational-closed-curves}, there is a simple closed irrational curve $\gamma \in O(U_0, U_1, V)$.
	Such curve $\gamma$ separates $S$ into two nonempty components, one of which is contained in $V$ and meets $U_0$ and $U_1$.
	Call this component $Z$, and its closure $\Sigma \coloneqq Z \cup \gamma$.
	Call the other component $Z'$, with closure $\Sigma' \coloneqq Z' \cup \gamma$. Up to shrinking $U_0$ and $U_1$, we can assume that $U_0 \cap \Sigma$ and $U_1 \cap \Sigma$ are connected.

	Let $W_0 \sub W'_0 \sub U_0 \setminus \gamma$ be given by \Cref{lem:separate-stuff-sierpinski} applied to $\gamma$ and $\Sigma$.
	By minimality of $\Homeo(S) \acts S$, we can assume that $x_0 \in W_0$, hence by maximality of $\cC$, there is $C \in \cC$ such that $C \sub Z$ and $C \nsubseteq \Cl(W'_0)$.
	Let $x_1$ be a point of $C \setminus \Cl(W'_0)$.

	By \Cref{lem:separate-stuff-sierpinski}, there are three disjoint arcs $\theta, \zeta, \eta$ whose endpoints belong to $\gamma$ and which are irrational everywhere else, and which are such that:

	\begin{enumerate}
		\item \label{item1:sierp} $\theta, \zeta \sub U_0$, one of the connected components of $S \setminus \theta$ is contained in $U_0$, and the same is true of one of the connected components of $S \setminus \zeta$,
		\item \label{item1andahalf:sierp}$\eta \sub U_1$, and one of the connected components of $S \setminus \eta$ is contained in $U_1$,
		\item \label{item2:sierp} $\zeta$ separates $\theta$ from $x_1$ and $\eta$,
		\item \label{item3:sierp} $\theta$ separates $W_0$ from $\zeta$.
	\end{enumerate}

	\begin{figure}[h!]
		\label{fig:sierpi}
		\centering{
			\includeinkscape[scale=1]{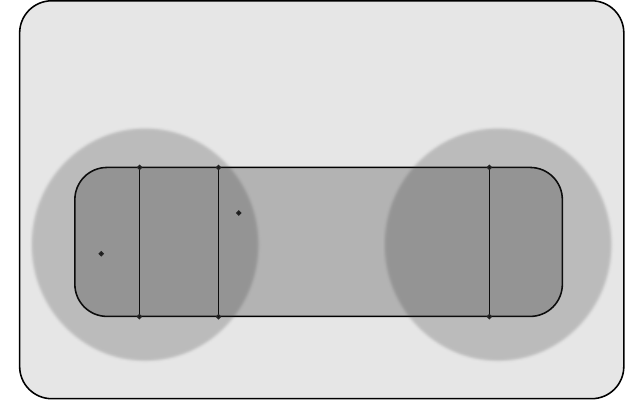}
		}
		\caption{The construction in the proof of \Cref{thm:minimality-sierpinski}.}
	\end{figure}
	
	The endpoints of $\theta, \zeta, \eta$ separate $\gamma$ into $6$ arcs $\gamma_0, \dots, \gamma_5$, such that
	\begin{align*}
		\sigma_0 & \coloneqq \theta \cup \gamma_0,                                    \\
		\sigma_1 & \coloneqq \theta \cup \gamma_1 \cup \zeta \cup \gamma_5,           \\
		\sigma_2 & \coloneqq \zeta \cup \gamma_2 \cup \eta \cup \gamma_4, \text{ and} \\
		\sigma_3 & \coloneqq \eta \cup \gamma_3
	\end{align*}
	are all simple closed curves.
	Also, let $\sigma_{<0,1>} \coloneqq \gamma_0 \cup \gamma_1 \cup \zeta \cup \gamma_5$, and $\sigma_{<2,3>} \coloneqq \zeta \cup \gamma_2 \cup \gamma_3 \cup \gamma_4$; these too are simple closed curves.
	Moreover, all of them are irrational curves in $S$, and they each separate $S$ into two components, one of which is contained in $\Sigma$.
	Call these components $Z_i$, and their closures $\Sigma_i = Z_i \cup \sigma_i$, for $i \in \set{0,1,2,3, <0,1>, <2,3>}$.
	Then all the $\Sigma_i$'s are $S$-curves, and $\Sigma_{<0,1>} = \Sigma_0 \cup \Sigma_1$ and $\Sigma_{<2,3>} = \Sigma_2 \cup \Sigma_3$.

	By condition \eqref{item3:sierp}, $x_0 \in \Sigma_0$;
	by condition \eqref{item1:sierp}, $\Sigma_{<0,1>} \sub U_0$;
	by condition \eqref{item2:sierp}, $x_1 \in \Sigma_2$;
	and by condition \eqref{item1andahalf:sierp}, $\Sigma_3 \sub U_1$.

	Let $\tilde h_{2,3} \colon \sigma_{<2,3>} \to \sigma_3$ be any orientation preserving homeomorphism which maps $\zeta$ onto $\eta$.
	By \Cref{fact:extension-sierpinski}, $\tilde h_{2,3}$ extends to a homeomorphism $h_{2,3} \colon
		\Sigma_{<2,3>} \to \Sigma_3$.

	Let $\tilde h_{1} \colon \sigma_1 \to \sigma_2$ be any orientation preserving homeomorphism which maps $\theta$ onto $\zeta$ and agrees with $\tilde h_{2,3}$ on $\zeta$.
	By \Cref{fact:extension-sierpinski}, $\tilde h_{1}$ extends to a homeomorphism $h_{1} \colon
		\Sigma_1 \to \Sigma_2$.

	Finally, let $\tilde h_{0} \colon \sigma_0 \to \sigma_{<0,1>}$ be any orientation preserving homeomorphism which agrees with $\tilde h_{1}$ on $\theta$.
	By \Cref{fact:extension-sierpinski}, $\tilde h_{0}$ extends to a homeomorphism $h_{0} \colon
		\Sigma_0 \to \Sigma_{<0,1>}$.

	The homeomorphisms $h_0, h_1, h_{2,3}$ all agree on their common domains, so they give rise to a homeomorphism $h \colon \Sigma \to \Sigma$ that fixes set-wise $\gamma$.
	We can use one last time \Cref{fact:extension-sierpinski} on $\tilde h \coloneqq h \restr{\gamma}$ to obtain a homeomorphism
	$h' \colon \Sigma' \to \Sigma'$ which agrees with $h$ on $\gamma$. Their union $g$ is a homeomorphism of $S$ such that $g(x_0) \in U_0$, $g(x_1) \in U_1$ and
	$g[Z] = Z \sub V$, so in particular $g[C] \subseteq V$, as desired.
\end{proof}

Therefore, we conclude that:

\begin{corollary}
	$\Homeo(S)$, for the Sierpi\'{n}ski carpet $S$, does not have the generic point property. In particular, its universal minimal flow is not metrizable and has no comeager orbit.
\end{corollary}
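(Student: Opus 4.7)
The plan is to combine the two main inputs available at this point: the minimality of $\Homeo(S) \acts \chains(S)$ established in \Cref{thm:minimality-sierpinski}, and the fact that $S$ admits no generic chain, which is already recorded in the introduction as a consequence of part~\eqref{itm:nls-curve} of \Cref{thm:the-theorem-introduction}. Recall from the discussion following \Cref{thm:the-theorem-introduction} that the Sierpi\'{n}ski carpet is planar and contains plenty of non locally separating simple closed curves (any irrational simple closed curve given by \Cref{prop:sierpinski-irrational-closed-curves} provides one, for instance), so the hypotheses of \Cref{thm:the-theorem-introduction}\eqref{itm:nls-curve} are satisfied and therefore $S$ has no generic chain.

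Given these two ingredients, the first assertion is immediate. Indeed, by \Cref{thm:minimality-sierpinski} the flow $\Homeo(S) \acts \chains(S)$ is a minimal $\Homeo(S)$-flow. If $\Homeo(S)$ had the generic point property, this particular minimal flow would have a comeager orbit, that is, there would exist a chain $\cC \in \chains(S)$ whose $\Homeo(S)$-orbit is comeager in $\chains(S)$. By definition, $\cC$ would be a generic chain on $S$, contradicting the conclusion of \Cref{thm:the-theorem-introduction} applied to $S$.

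For the second assertion, I would invoke the classical fact, recalled in the introduction, that every Polish group with a metrizable universal minimal flow has the generic point property (this is the result of \cite{ben2017metrizable}). Since $\Homeo(S)$ is a Polish group (as the homeomorphism group of a compact metrizable space) and it fails the generic point property by the preceding paragraph, its universal minimal flow cannot be metrizable.

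There is no substantial obstacle here: the entire content of the corollary has been packaged into \Cref{thm:minimality-sierpinski} on one side and the applicability of \Cref{thm:the-theorem-introduction}\eqref{itm:nls-curve} on the other, so the proof is just a one-line logical deduction from these two results together with the standard implication that metrizable universal minimal flow forces the generic point property.
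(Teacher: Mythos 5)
Your proof is correct and follows exactly the paper's (implicit) argument: combine the minimality of $\Homeo(S) \acts \chains(S)$ from \Cref{thm:minimality-sierpinski} with the non-existence of a generic chain on $S$ (already recorded as a corollary of \Cref{thm:the-theorem-introduction}\eqref{itm:nls-curve}), then use that a metrizable universal minimal flow would imply the generic point property. One small slip in your parenthetical: irrational simple closed curves are \emph{not} the witnesses for condition \eqref{itm:nls-curve} — being planar curves in $S$ they separate, and indeed every non-trivial non locally separating subcontinuum of $S$ consists of rational points, so the non locally separating simple closed curves are the rational boundary curves; this is inessential here since the non-existence of a generic chain on $S$ is cited as an already established result.
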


\bibliography{chains.bib}

\end{document}